\DeclareMathOperator{\hth}{ht}
\DeclareMathOperator{\depth}{depth}
\DeclareMathOperator{\codim}{codim}
\DeclareMathOperator{\Supp}{Supp}
\DeclareMathOperator{\Ann}{Ann}
\DeclareMathOperator{\Ind}{Ind}
\DeclareMathOperator{\tot}{tot}
\DeclareMathOperator{\Spec}{Spec}
\DeclareMathOperator{\sgn}{sgn}
\DeclareMathAlphabet{\mathpzc}{OT1}{pzc}{m}{it}
\newcommand{\mbb}{\mathbb}
\newcommand{\mc}{\mathcal}
\newcommand{\pim}{\mathrm{Im}}
\newcommand{\del}{\partial}
\newcommand{\FS}{\mathcal{O}}
\newcommand{\comp}[1]{{#1}^{\bullet}}
\newcommand{\Coh}{\mathrm{Coh}}
\newcommand{\id}{{\rm id}}
\newcommand{\Hom}{{\rm Hom}}
\newcommand{\G}{\mathfrak}
\newcommand{\B}{\mathbf}
\newcommand{\Ext}{{\rm Ext}}
\newcommand{\Tor}{{\rm Tor}}
\newcommand{\Hilb}{{\rm Hilb}}
\newcommand{\GHilb}{{\rm GHilb}}
\newcommand{\per}{\times}
\newcommand{\ra}{\rightarrow}
\newcommand{\perm}{\mathfrak{S}}
\newcommand{\an}[1]{{#1}_{\rm{an}}}
\newcommand{\rest}{ {\bigg |} }
\newcommand{\trest}{{\big |} }
\newcommand{\bkrh}{\mathbf{\Phi}}
\newcommand{\mf}{\mathfrak}
\newcommand{\Aut}{\mathrm{Aut}}
\newcommand{\tens}{\otimes}
\newcommand{\Tens}{\bigotimes}
\newcommand{\w}{\wedge}
\newcommand{\Stab}{\mathrm{Stab}}
\newcommand{\bicomp}[1]{{#1}^{\bullet,\bullet}}
\newcommand{\mcomp}[1]{{#1}^{\bullet, \dots,\bullet}}
\newcommand{\second}{\prime \prime}
\newtheorem{theorem}{Theorem}[subsection]
\newtheorem{lemma}[theorem]{Lemma}
\newtheorem{pps}[theorem]{Proposition}
\newtheorem{crl}[theorem]{Corollary}
\newtheorem{thma}{Theorem}[section]
\newtheorem{lemmaa}[thma]{Lemma}
\newtheorem{ppsa}[thma]{Proposition}
\newtheorem{crla}[thma]{Corollary}
\theoremstyle{definition}
\newtheorem{definition}[theorem]{Definition}
\newtheorem{remark}[theorem]{Remark}
\newtheorem{remarka}[thma]{Remark}
\theoremstyle{remark}
\newtheorem{notata}[thma]{Notation}
\newtheorem{notat}[theorem]{Notation}
\numberwithin{equation}{section}
\mathchardef\phi="0127
\mathchardef\varphi="011E
\mathchardef\alpha="710B
\mathchardef\beta="710C
\mathchardef\gamma="710D
\mathchardef\delta="710E
\mathchardef\epsilon="7122
\mathchardef\zeta="7110
\mathchardef\eta="7111
\mathchardef\theta="7112
\mathchardef\iota="7113
\mathchardef\kappa="7114
\mathchardef\lambda="7115
\mathchardef\mu="7116
\mathchardef\nu="7117
\mathchardef\xi="7118
\mathchardef\pi="7119
\mathchardef\rho="711A
\mathchardef\sigma="711B
\mathchardef\tau="711C
\mathchardef\upsilon="711D
\mathchardef\chi="711F
\mathchardef\psi="7120
\mathchardef\omega="7121
\mathchardef\varepsilon="710F
\mathchardef\vartheta="7123
\mathchardef\varpi="7124
\mathchardef\varrho="7125
\mathchardef\varsigma="7126
\begin{document}
\title[Tautological bundles on Hilbert schemes of points]{Cohomology of the Hilbert scheme of points on a surface with values in 
representations of tautological bundles}
\author{Luca Scala} 
\date{}
\subjclass[2000]{Primary 14C05, 14F05; Secondary 18E30, 20C30}
\hyphenation{rep-re-sen-ta-tion}

\maketitle

\begin{abstract}Let $X$ a smooth quasi-projective algebraic surface,
$L$ a line bundle on $X$. Let $X^{[n]}$ the Hilbert scheme of $n$ 
points on $X$ and $L^{[n]}$ the tautological bundle on $X^{[n]}$
naturally associated to the line bundle $L$ on $X$. We explicitely 
compute the image $\bkrh(L^{[n]})$ of the tautological bundle $L^{[n]}$ for the
Bridgeland-King-Reid equivalence $\bkrh : \B{D}^b(X^{[n]}) \ra \B{D}^b_{\perm_n}(X^n)$ in terms of a
complex $\comp{\mc{C}}_L$ of $\perm_n$-equivariant sheaves in
$\B{D}^b_{\perm_n}(X^n)$. We give, moreover, a characterization of the image $\bkrh(L^{[n]} \tens \cdots \tens L^{[n]})$ 
in terms of 
of the hyperderived spectral sequence $E^{p,q}_1$ associated to the 
derived $k$-fold
tensor power of the complex
$\comp{\mc{C}}_L$. The study of
the $\perm_n$-invariants of this spectral sequence allows 
to get the derived direct images 
of the double 
tensor power 
 and of the general $k$-fold exterior power 
of the tautological bundle  for the 
Hilbert-Chow morphism,
providing Danila-Brion-type formulas in these two cases.
This yields easily the computation of the cohomology of 
$X^{[n]}$ with values in $L^{[n]} \tens L^{[n]}$ and $\Lambda^k L^{[n]}$. 
\end{abstract}
\section*{Introduction}
The aim of this article is the study of tautological bundles and their tensor powers on Hilbert schemes $X^{[n]}$  of $n$ points  over a smooth quasi-projective algebraic surface $X$, with a particular interest in their cohomology.
A tautological bundle $L^{[n]}$ is a rank $n$ vector bundle on the Hilbert scheme $X^{[n]}$ built in a natural way starting from a line bundle $L$ on the surface $X$; 
it arises as the image of $L$ for the Fourier-Mukai functor 
$\Phi^{\Xi}_{X \ra X^{[n]}}:\B{D}^b(X) \rTo \B{D}^b(X^{[n]})$ with kernel the universal family $\Xi \subseteq X^{[n]} \times X$ of the Hilbert scheme. 

Tautological bundles have turned out being of fundamental importance for the investigation of the topology of the Hilbert scheme $X^{[n]}$, in particular for 
the study of its Chern classes and its cohomology ring, as shown
in \cite{Lehn1999}, \cite{EllingsrudGoettscheLehn2001}. Moreover the 
cohomology
of the Hilbert scheme $\mbb{P}_2^{[n]}$ with values in symmetric powers of tautological bundles is closely related to 
Le Potier's Strange Duality conjecture on the projective plane, as shown in~
\cite{Danilathese}, \cite{Danila2000}, \cite{Danila2002}, \cite{Scala2007}.

Here we present a point of view inspired to the derived McKay 
correspondence and strongly influenced by the fundamental works of Bridgeland-King-Reid \cite{BridgelandKingReid2001} and Haiman \cite{Haiman2001}, \cite{Haiman2002}. By Haiman's results \cite{Haiman2001}, the Hilbert scheme $X^{[n]}$
 of $n$ points on a surface can be identified with the Hilbert scheme 
$\Hilb^{\perm_n}(X^n)$ of $\perm_n$-orbits on $X^n$, in the sense of Nakamura \cite{ItoNakamura1996}, 
\cite{ItoNakamura1999}, \cite{Nakamura2001}. For the action of a finite group 
$G$ on a smooth quasi-projective algebraic variety $M$, the  deep work by Bridgeland, King and Reid
shows that the Nakamura $G$-Hilbert scheme $\Hilb^G(M)$ is the right scheme to consider in order to formulate and prove the derived McKay correspondence: under some hypothesis, the scheme $\Hilb^G(M)$ turns out to be smooth and to provide
a crepant resolution of the quotient $M/G$; furthermore, the  
Fourier-Mukai functor $$\Phi^{\mc{Z}}_{\Hilb^G(M) \ra M} : \B{D}^b(\Hilb^G(M)) \rTo 
\B{D}^b_G(M) $$between the bounded derived category of coherent sheaves on $\Hilb^G(M)$ and the bounded derived category of $G$-equivariant coherent sheaves on $M$, with kernel the universal family $\mc{Z} \subseteq \Hilb^G(M) \times M$, is an equivalence. Haiman proved that the 
Bridgeland-King-Reid theorem can be applied
in the case of the Hilbert scheme of points $X^{[n]}$; consequently we get a Fourier-Mukai equivalence:
\begin{equation*} \bkrh := \Phi^{B^n}_{X^{[n]} \ra X^n} = \B{R}p_*\circ q^*:   \B{D}^b(X^{[n]}) 
\rTo \B{D}^b_{\perm_n}(X^n) \; ,\end{equation*}where the universal family $\mc{Z}$ above is identified here with Haiman's isospectral  Hilbert scheme~$B^n$ and where 
$p$ and $q$ denote the projections from $B^n$ to $X^n$ and $X^{[n]}$, respectively. 

It is natural to expect that the Fourier-Mukai equivalence $\bkrh$ is bound to give new insights on the Hilbert scheme $X^{[n]}$, by translating any object in $\B{D}^b(X^{[n]})$ --~and therefore any coherent sheaf on $X^{[n]}$~-- in an 
object of $\B{D}^b_{\perm_n}(X^n)$, that is, a hopefully simpler object in $\B{D}^b(X^n)$ together with an additional \emph{combinatorial} structure, given by the action of the symmetric group $\perm_n$. After the results by Bridgeland, 
King, Reid and Haiman, it is then natural, in order to understand objects on the Hilbert scheme $X^{[n]}$, to compute their
image by the Fourier-Mukai transform~$\bkrh$. 

This is what we planned to perform for a tautological bundle $L^{[n]}$ 
and its tensor powers $L^{[n]}~\tens~\cdots~\tens~L^{[n]}$. The image 
$\bkrh(L^{[n]} \tens \cdots \tens L^{[n]})$
of the general $k$-fold tensor power of a tautological bundle for the Bridgeland-King-Reid 
transform can be expressed, making fundamental use of 
Haiman's difficult vanishing theorem~\cite{Haiman2002}, in terms of a Fourier-Mukai functor $\B{D}^b(X^k) \rTo \B{D}^b(X^n)$ whose kernel is the structural sheaf of the polygraph $D(n,k) \subseteq X^n \times X^k$.
For $k=1$ the existence of a nice $\perm_n$-equivariant resolution of the
structural sheaf of $D(n,1)$ allows us to
 obtain an explicit expression for the image $\bkrh(L^{[n]})$ of $L^{[n]}$ for the Bridgeland-King-Reid transform; more precisely we defined an explicit 
{\v C}ech-type $\perm_n$-equivariant complex $\mc{C}^{\bullet}_L \in \B{D}^b_{\perm_n}(X^n)$ in terms of the partial diagonals $\Delta_I$ of $X^n$: 
$$ \mc{C}^p_L := \oplus_{|I|=p+1} L_I \quad \; \quad (\partial^p_{L} x)_J = \sum_{i \in J} \epsilon_{i,J} x_{J \setminus \{ i \} } \trest_{\Delta_{J}} $$where 
$\emptyset \neq I,J \subseteq \{ 1, \dots, n \}$ are multi-indexes, $L_I$ denotes the pull-back of $L$ for the projection $\Delta_I \simeq \Delta \times X^{\bar{I}} \rTo \Delta \simeq X$, $x$ denotes a local section of $\mc{C}^p_L$ and $\epsilon_{i,J}$ is an adequate sign. One of the main result of this article is the 
proof that  the image $\bkrh(L^{[n]})$ is quasi-isomorphic to the complex $\comp{\mc{C}}_L$: 
\begin{equation*} \bkrh(L^{[n]}) \simeq \comp{\mc{C}}_L \;. \tag{A}
\end{equation*}
 By taking $\perm_n$-invariants in (A), there follows immediately the Brion-Danila formula for the derived direct images of the tautological bundle for the Hilbert-Chow morphism (see \cite[proposition 6.1]{Danila2001}).
 
For the $k$-fold 
tensor power of $L^{[n]}$ 
we succeed as well in characterizing the image $\bkrh(L^{[n]} \tens \cdots \tens L^{[n]})$ of the Bridgeland-King-Reid transform, but
the information we get becomes less clear and combinatorially more and more 
involved as $k$ grows up, reflecting the more and more complicated combinatorial structure of the polygraph $D(n,k)$ and of its structural sheaf for high $k$. We start from the comparison of the image $\bkrh(L^{[n]} \tens \cdots \tens  L^{[n]})$ with the derived tensor product $\comp{\mc{C}}_L \tens^L \cdots \tens^L \comp{\mc{C}}_L$ 
via a natural morphism $\alpha$ in~$\B{D}^b_{\perm_n}(X^n)$: 
\begin{equation*}\alpha : \comp{\mc{C}}_L \tens^L \cdots \tens^L \comp{\mc{C}}_L \rTo  \bkrh(L^{[n]} \tens \cdots \tens L^{[n]})\;.
\tag{B}\end{equation*}A direct consequence of Haiman's vanishing theorem is 
that the mapping cone of the morphism $\alpha$ is acyclic in degree $>0$, which means that $\bkrh(L^{[n]} \tens \cdots \tens  L^{[n]})$ is  concentrated in degree $0$, or, equivalently, that $ H^i(\bkrh(L^{[n]} \tens \cdots \tens  L^{[n]}))= 
R^i p_* q^* (L^{[n]} \tens \cdots \tens  L^{[n]})=~0$ for all $i>0$. In degree $0$ we get a surjection: 
$$ p_*q^*(L^{[n]}) \tens \cdots \tens p_*q^*(L^{[n]}) \rOnto p_* q^* (L^{[n]} \tens \dots \tens L^{[n]}) \;;$$since the sheaf $p_* q^* (L^{[n]} \tens \dots \tens L^{[n]})$ is  torsion free,  the kernel of the previous epimorphism turns out 
to be the torsion subsheaf. Consequently we obtain that the sheaf $p_* q^* (L^{[n]} \tens \dots \tens L^{[n]}) $ is isomorphic to the term $E^{0,0}_{\infty}$ of the 
$\perm_n$-equivariant hyperderived spectral sequence 
$$E^{p,q}_1 = \bigoplus_{i_1 + \dots + i_k=p} \Tor_{-q}(\mc{C}^{i_1}_L , \dots, \mc{C}^{i_k}_L)$$ associated to the derived tensor product $
\comp{\mc{C}}_L \tens^L \cdots \tens^L \comp{\mc{C}}_L$ and abutting to $\Tor_{-p-q}(\comp{\mc{C}}_L, \dots, \comp{\mc{C}}_L)$: this is the second main result of this article. Even though this characterization
yields a theoretical answer to the problem, we face remarkable combinatorial difficulties when working out explicitely the term 
$E^{0,0}_{\infty}$.

The first idea in order to extract effective information from the $\perm_n$-equivariant 
spectral sequence $E^{p,q}_1$ is to take invariants for the $\perm_n$-action: we obtain in this way a far simpler spectral sequence of invariants 
$\mc{E}^{p,q}_1$ on $S^nX$, whose term $\mc{E}^{0,0}_{\infty}$ can be identified with the direct image $\mu_*(L^{[n]} \tens \cdots \tens L^{[n]})$ of the $k$-fold tensor power of tautological bundles for the Hilbert-Chow morphism $\mu: X^{[n]} \rTo S^nX$. This sequence 
 provides  useful information at least for the case $k=2$, since it degenerates at level $\mc{E}_2$, as shown in 
~\cite{ScalaPhD},~\cite{Scala2006}. 

Here we simplify the argument looking at the restriction $j^*\mc{E}^{p,q}_1$ 
of $\mc{E}^{p,q}_1$ to the big open set $S^n_{**}X$, given by $0$-cycles having 
at most a multiple point of multiplicity $2$. For $k=2$ 
the spectral sequence $j^*\mc{E}^{p,q}_1$ becomes drastically simpler and 
degenerates at level $j^*\mc{E}_2$, yielding the formula: $j^* 
\B{R} \mu_*(L^{[n]} \tens L^{[n]}) \simeq 
(j^* \comp{\mc{C}}_L \tens j^* \comp{\mc{C}}_L)^{\perm_n}$ on $S^n_{**}X$. A local cohomology argument for the complementary of $S^n_{**}X$ in $S^nX$ allows us to extend it to all the symmetric variety, thus providing the Brion-Danila-type formula:
\begin{equation*}\B{R} \mu_*(L^{[n]} \tens L^{[n]}) \simeq 0 \rTo (\mc{C}_L^0 \tens \mc{C}^0_L)^{\perm_n} \rOnto^{(\partial^0_L \tens \id)^{\perm_n}} (\mc{C}_L^1 \tens \mc{C}^0_L)^{\perm_n} \rTo 0 \;.
\tag{C}\end{equation*}
For higher $k$ difficulties appear since nor $\mc{E}^{p,q}_1$ nor its restriction $j^*\mc{E}^{p,q}_1$ degenerates at level~$\mc{E}_2$. 

The next step is  to consider the $k$-fold tensor power 
$L^{[n]} \tens \cdots \tens L^{[n]}$ as a $\perm_k$-equivariant sheaf on $X^{[n]}$: 
the group $\perm_k$ here permutes the factors of the tensor product. 
This permutative $\perm_k$-action can be extended to the $k$-fold derived tensor product
$\comp{\mc{C}}_L \tens^L \cdots \tens^L \comp{\mc{C}}_L$ in such a way that the morphism $\alpha$ 
above (B) is $\perm_k$-equivariant: the spectral sequence $E^{p,q}_1$ above inherits the $\perm_k$-action and is therefore $\perm_n \times 
\perm_k$-equivariant.
 If $\lambda$ is a partition of $k$, we can then characterize 
the image $\bkrh(S^{\lambda}L^{[n]})$  of any Schur functor $S^{\lambda}L^{[n]}$ of $L^{[n]}$ 
for the  transform $\bkrh$ as  the $\perm_k$-invariants  
$(E^{0,0}_{\infty} \tens V_{\lambda})^{\perm_k}$, where $V_{\lambda}$ is the irreducible representation of $\perm_k$ indexed by the partition $\lambda$. Taking furthermore invariants by the geometric symmetric group $\perm_n$ yields a characterization of  the direct image $\mu_*(S^{\lambda} L^{[n]})$ of the Schur functor $S^{\lambda}L^{[n]}$ for the Hilbert-Chow morphism as the $\perm_n \times \perm_k$-invariants: 
$$ \mu_*(S^{\lambda}L^{[n]}) \simeq (E^{0,0}_{\infty} \tens V_{\lambda})^{\perm_n \times \perm_k } \simeq (\mc{E}^{0,0}_{\infty} \tens V_{\lambda})^{\perm_k}\;.$$We study in detail the case of the simpler Schur functor, that is, the $k$-fold exterior power $\Lambda^k L^{[n]}$; 
in this case we take $\lambda=\lambda_{\epsilon}:=(1, \dots,1 )$, so that $V_{\lambda}= \epsilon_k$ is the alternant representation of the symmetric group $\perm_k$.
Restricting everything to the open set $S^n_{**}X$ the spectral sequence $(j^* \mc{E}^{p,q}_1 \tens \epsilon_k)^{\perm_k}$ becomes treatable and it is easy to show that it degenerates at level $\mc{E}_2$. The degeneration yields on $S^n_{**}X$ the Brion-Danila type formula: $j^* \B{R} \mu_*(\Lambda^k L^{[n]}) \simeq j^* (\Lambda^k \comp{\mc{C}}_L)^{\perm_n}$. Since the complex $j^*(\Lambda^k \comp{\mc{C}}_L)^{\perm_n}$ is quasi-isomorphic to the  sheaf $j^*(\Lambda^k \mc{C}^0_L)^{\perm_n}$ and since $\Lambda^kL^{[n]}$ and $\Lambda^k \mc{C}^0_L$ are locally free, a local cohomology argument allows to get an answer on all the symmetric variety $S^nX$: we prove that 
\begin{equation*}  \B{R} \mu_*(\Lambda^k L^{[n]}) \simeq  (\Lambda^k \mc{C}^0_L)^{\perm_n} \;.\tag{D} \end{equation*}

We finally give an immediate application of the results obtained so far to the computation of the cohomology of the Hilbert scheme with values in the double tensor power
and the general $k$-fold exterior power of tautological bundles. 
Making use of (C), we get the following explicit formula for 
$H^*(X^{[n]}, L^{[n]}~\tens~L^{[n]})$: 
\begin{equation*}
 H^*(X^{[n]}, L^{[n]} \tens L^{[n]} ) \simeq H^*(L)^{\tens^ 2} \tens S^{n-2}H^*(\FS_X) \bigoplus H^*(L^{\tens ^2}) \tens \mc{J}
\end{equation*}where $\mc{J}$ is the ideal of classes in $S^{n-1}H^*(\FS_X)$ vanishing on the scheme $y + S^{n-2}X$, for a fixed point $y \in X$. The formula gives an isomorphism of $\mbb{Z}$-graded modules and of $\perm_2$-representations. Taking invariants and anti-invariants for the $\perm_2$-action, we get analogous formulas for the double 
symmetric and exterior power, generalizing known formulas \cite[theorems 1.1--1.4]{Danila2004} 
for all $n$. More generally, let $A$ be a second line bundle on $X$ and consider  the determinant line bundle $\mc{D}_A := \mu^*(A^{\boxtimes ^n}/\perm_n)$ on the Hilbert scheme $X^{[n]}$. As a consequence of (C), we  establish a long exact cohomology sequence for the cohomology of $L^{[n]} \tens L^{[n]}$ twisted by the determinant $\mc{D}_A$, generalizing \cite[theorems 1.6 -- 1.8]{Danila2004} for all~$n$:  
\begin{multline*}
\cdots \rTo H^*(X^{[n]}, L^{[n]} \tens L^{[n]} \tens \mc{D}_A) \rTo \begin{array}
{c}H^*(L^{\tens ^2} \tens A) \tens S^{n-1} H^*(A) \\ \bigoplus \\  
H^*(L \tens A)^{\tens^2} \tens S^{n-2}H^*(A)  \end{array}\rTo \\ 
\rTo H^*(L^{\tens ^2} \tens A^{\tens ^2} ) \tens S^{n-2}H^*(A) \rTo H^{*+1} 
(X^{[n]}, L^{[n]} \tens L^{[n]} \tens \mc{D}_A) \rTo \cdots  \;.
\end{multline*}
Finally, formula (D) implies immediately the isomorphism of 
$\mbb{Z}$-graded modules, for all $0 \leq k \leq n$: 
$$ H^*(X^{[n]}, \Lambda^k L^{[n]} \tens \mc{D}_A) \simeq 
\Lambda^kH^*(L \tens A) \tens S^{n-k}H^*(A) \;.$$It can be thought as a generalization of results 
\cite{Danila2001} for all $k$.

The article is organized as follows. After recalling some preliminary results in section \ref{sect:prel}, in section \ref{section3} we prove the quasi-isomorphism (A) : 
$\bkrh(L^{[n]}) \simeq \comp{\mc{C}}_L$ and we characterize the image $\bkrh(L^{[n]} \tens \cdots \tens L^{[n]})$ in terms  of the hyperderived spectral sequence associated to the derived tensor product 
$\comp{\mc{C}}_L \tens ^L \tens \cdots \tens^L \comp{\mc{C}}_L$. Taking invariants, in sections \ref{sect:inv} and \ref{sect:ext} we get Brion-Danila's formulas (C) and (D) for 
$\B{R} \mu_*(L^{[n]} \tens L^{[n]})$ and $\B{R} \mu_* (\Lambda^k L^{[n]})$, respectively. In section \ref{sect:cohom} we apply these results to the explicit computation 
of the cohomology of $X^{[n]}$ with values in $L^{[n]} \tens L^{[n]}$ and $\Lambda^k L^{[n]}$. In order not to break the main line of the argument, we isolated in the appendixes some independent results and some technical propositions which are used throughout the main text.

{\bf Acknowledgements.} Most of the results presented here were obtained  
during my Ph.D. thesis \cite{ScalaPhD} at University Paris 7, under the 
direction of Prof. Joseph Le Potier, to whom I am deeply indebted: I will 
never forget his unvaluable help, his force, encouragement and generosity, 
his beautiful ideas, his way of doing Mathematics. 

This work was completed during my stay at 
Max-Planck Institut f\"ur Mathematik in Bonn; I thank all the staff for the 
excellent working conditions. 

\section{Preliminary material}\label{sect:prel}
We will always work with varieties and schemes over $\mbb{C}$. 
Unless otherwise explicitely indicated, by point we will always mean a closed point. 
\subsection{Hilbert schemes of points on a surface}
Let $X$ be a smooth quasi-projective complex algebraic surface. Let $n \in \mbb{N}^*$. We will denote with $S^nX$ the $n$-fold symmetric product of $X$, that is, the quotient $X^n/\perm_n$ of $X^n$ by the symmetric group $\perm_n$. 
The symmetric product $S^nX$ is normal and Gorenstein, and, since it is a quotient of a smooth variety by a finite group, it has only rational singularities (see~\cite{Burns1974},~\cite{Boutot1987}).

We will indicate with $X^{[n]}$ the Hilbert scheme of zero dimensional 
closed subschemes of $X$ of length $n$, or, more briefly, the Hilbert scheme of 
$n$ points on $X$. It is well known that $X^{[n]}$ is smooth and that the Hilbert-Chow morphism 
\begin{diagram}[height=0.5cm]
\mu: X^{[n]} & \rTo  & S^nX \\
\quad \xi & \rMapsto & \sum_{x \in X} ({\rm lg}_x \xi) x
\end{diagram}
provides a crepant, semismall resolution of singularities for $S^nX$.
We indicate with $\Xi \subset X^{[n]} \times X$ the universal family and with 
$p_{X^{[n]}}$ and $p_{X}$ the projections from $X^{[n]} \times X$ onto $X^{[n]}$ and $X$, respectively.
\subsection{The $G$-Hilbert scheme}
We will briefly describe the $G$-Hilbert scheme as explained in Reid \cite{Reid2002} and
Nakamura \cite{Nakamura2001}. See also \cite{ItoNakamura1996},
\cite{ItoNakamura1999} and \cite{CrawReid2002}.
Let $G$ be a finite group and $M$ a smooth quasi-projective variety on
which the group $G$ acts. 
\begin{definition} The $G$-Hilbert scheme $\GHilb(M)$ of $G$-clusters on $M$ is the
scheme representing the functor: 
$$ \underline{\GHilb(M)}: \textrm{Sch}/\mbb{C} \rTo \textrm{Sets}
$$associating to a scheme $S$ the set 
\begin{multline*} \underline{\GHilb(M)}(S):= \{ Z \subset S \times M,
  Z \textrm{
closed
$G$-invariant subscheme}, \\ \textrm{flat and finite over 
 $S$ such that  $H^0(\FS_{Z_{s}}) \simeq
\mbb{C} [G] $ for all $s \in S$}  \} \;. 
\end{multline*}
\end{definition}
The closed points of $\GHilb(M)$ parametrize 
$G$-clusters, or, in other words, 
$G$-invariant subschemes of $M$ of length $|G|$ such that 
$H^0(\FS_Z) \simeq \mbb{C}[G]$. We have a $G$-Hilbert-Chow morphism: 
$\GHilb(M) \rTo ^{\tau} M/G $; $\tau$ is projective and surjective, 
birational on the irreducible component containing the free orbits. 
The $G$-Hilbert scheme $\GHilb(M)$ just defined is generally
very bad: it is not irreducible, not equidimensional or even connected.
In order to avoid such problems and to have a good candidate for a crepant resolution of $M/G$, define the \emph{Nakamura $G$-Hilbert scheme} 
$\Hilb^G(M)$ as the irreducible component of $\GHilb(M)$ containing free orbits.
The Nakamura $G$-Hilbert scheme is a fine moduli space for $G$-clusters, with universal family $\mc{Z}$ the restriction of the universal family of $\GHilb(M)$ to $\Hilb^G(M)$. 
The $G$-Hilbert-Chow morphism 
$ \tau : \Hilb^G(M) \rTo M/G$ is now birational and surjective.
The Nakamura $G$-Hilbert scheme $\Hilb^G(M)$ was built by Ito and Nakamura in 
\cite{ItoNakamura1996} and~\cite{ItoNakamura1999}.
\subsection{The BKR theorem}
Let $M$ be a smooth quasi-projective variety and $G$ a finite group acting
on $M$ with the property that the canonical line bundle $\omega_M$ is
locally trivial as a $G$-sheaf. This is equivalent to the fact that 
the stabilizer $G_x$ of a point $x \in M$ acts on $T_xM$ as a subgroup of
$SL(T_xM)$, or to the fact that the quotient $M/G$ is Gorenstein.
Denote with $Y$ the Nakamura $G$-Hilbert scheme $Y=\Hilb^G(M)$.
Consider
the universal family $\mc{Z} \subseteq 
Y \per M$. In the diagram:
\begin{diagram}
\mc{Z} & \rTo^p & M \\
\dTo^q &        & \dTo^{\pi} \\
Y & \rTo ^{\mu} & M/G 
\end{diagram}$\pi$ and $q$ are 
finite of degree $|G|$, $q$ is flat, and $p$ and 
$\mu$ are birational. The structural sheaf $\FS_{\mc{Z}}$ can be seen as 
as a $\{ 1 \} \per G$-equivariant sheaf on $Y \per M$. Therefore we can define
an equivariant Fourier-Mukai functor:
$$ \Phi^{\FS_{\mc{Z}}}_{Y \ra M} : \B{R}p_* \circ q^*: \B{D}^b(Y) \rTo \B{D}^b_G(M) \; .$$
The main result proved by Bridgeland, King and Reid in 
\cite{BridgelandKingReid2001} is the following theorem.
\begin{theorem}\label{BKR}Let $M$ be a smooth quasi-projective variety of dimension $n$, $G$ a 
finite subgroup of ${\rm Aut}(M)$ such that the canonical line bundle 
$\omega_M$ is locally 
trivial as a $G$-sheaf. Let $Y=\Hilb^G(M)$ and $\mc{Z} \subseteq
Y \per M$ the universal closed subscheme. Suppose that 
$ \dim Y \per_{M/G} Y \leq n+1 \; .$ Then $Y$ is a crepant 
resolution of $M/G$ and the Fourier-Mukai functor
$$ \Phi^{\FS_{\mc{Z}}}_{Y \ra M} : \B{D}^b(Y) \rTo \B{D}^b_G(M) $$is
an equivalence
of categories. 
\end{theorem}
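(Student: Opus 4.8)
The plan is to follow the strategy of Bridgeland, King and Reid: first establish that the Fourier--Mukai functor $\Phi := \Phi^{\FS_{\mc{Z}}}_{Y \ra M} = \B{R}p_* \circ q^*$ is \emph{fully faithful} by an orthogonality criterion applied to the images of point objects, and then promote full faithfulness to an equivalence using the crepancy hypothesis on $\omega_M$. To begin, I would record that $\Phi$ is a well-defined exact functor $\B{D}^b(Y) \ra \B{D}^b_G(M)$: since $q$ is finite and flat, $q^*$ is exact, and since $p$ is projective, $\B{R}p_*$ preserves boundedness and coherence. By Grothendieck--Verdier duality together with the smoothness of $Y$, the functor $\Phi$ admits both a left adjoint $\Phi_L$ and a right adjoint $\Phi_R$, each again of Fourier--Mukai type, with kernel obtained from $\FS_{\mc{Z}}$ by the appropriate dualization and twist by relative dualizing complexes.

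The core of the argument is the abstract fully-faithfulness criterion for point objects. For a closed point $y \in Y$ the $G$-cluster structure gives $\Phi(\FS_y) \simeq \FS_{Z_y}$, the structure sheaf of the corresponding $G$-cluster $Z_y \subseteq M$, regarded as a $G$-equivariant sheaf. The skyscrapers $\{\FS_y\}_y$ form a spanning class of $\B{D}^b(Y)$, so by the Bondal--Orlov criterion (in the form used by Bridgeland--King--Reid) it suffices to verify, for $P_y := \Phi(\FS_y)$, the two conditions: (i) $\Hom_{\B{D}^b_G(M)}(P_{y_1}, P_{y_2}[i]) = 0$ for all $i \in \mbb{Z}$ whenever $y_1 \neq y_2$; and (ii) $\Hom(P_y, P_y) = \mbb{C}$, with $\Hom(P_y, P_y[i]) = 0$ for $i < 0$ and for $i > n$. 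Granting (i) and (ii), the criterion yields that the unit $\id \ra \Phi_R \Phi$ is an isomorphism, so $\Phi$ is fully faithful.

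Verifying (i)--(ii) is the technical heart, and it is here that the dimension hypothesis $\dim(Y \per_{M/G} Y) \leq n+1$ enters decisively. When $y_1$ and $y_2$ lie over distinct points of $M/G$, the clusters $Z_{y_1}$ and $Z_{y_2}$ have disjoint supports and all the $\Hom$-groups vanish at once; the delicate case is that of clusters over the same point of $M/G$, including $y_1 = y_2$. I would compute $\bigoplus_i \Hom(P_{y_1}, P_{y_2}[i])$, after adjunction, as governed by the fibre at $(y_1,y_2)$ of the kernel of the composite $\Phi_R \Phi$, whose support is contained in $Y \per_{M/G} Y$. The dimension bound forces this fibre product to have dimension at most $n+1$, while $Y$ itself has dimension $n$; a dimension count combined with a local-to-global spectral sequence then shows that the nontrivial contributions concentrate on the diagonal and in the degrees $[0,n]$, yielding exactly the orthogonality (i) and the normalization (ii). \textbf{I expect this estimate to be the main obstacle:} one must control simultaneously the off-diagonal vanishing and the self-$\Ext$ algebra of a single cluster, and it is precisely the inequality $\dim(Y \per_{M/G} Y) \leq n+1$ that makes the relevant $\Tor$- and $\Ext$-sheaves degenerate, so establishing sharpness requires the local analysis of $G$-clusters along the exceptional locus.

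Finally I would upgrade full faithfulness to an equivalence. The hypothesis that $\omega_M$ is locally trivial as a $G$-sheaf makes $M/G$ Gorenstein and the morphism $\mu : Y \ra M/G$ crepant; together with the smoothness of $Y = \Hilb^G(M)$ and the semismallness extracted from the same dimension bound, this proves the first assertion of the theorem. Crepancy means the relative dualizing complex is trivial, so $\Phi$ intertwines the Serre functor of $\B{D}^b(Y)$ with the equivariant Serre functor of $\B{D}^b_G(M)$ up to a common shift. A fully faithful exact functor between derived categories that admit Serre functors and that commutes with them has image a full triangulated subcategory which is both left and right admissible; its orthogonal complement is then stable under the Serre functor and, being orthogonal to a spanning class, must vanish. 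Hence $\Phi$ is essentially surjective, and therefore an equivalence, which completes the proof.
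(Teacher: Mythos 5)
First, a point of order: the paper does not prove Theorem \ref{BKR} at all --- it is stated as the main result of \cite{BridgelandKingReid2001} and used as a black box --- so your attempt has to be measured against Bridgeland--King--Reid's original argument. Your outline does follow the broad architecture of that argument (point objects, an orthogonality criterion, promotion to an equivalence via triviality of the canonical bundle), but it contains a genuine gap, in two related places. The first is a circularity: smoothness of $Y = \Hilb^G(M)$ is \emph{part of the conclusion} of the theorem --- ``$Y$ is a crepant resolution of $M/G$'' --- not a hypothesis, yet you use it from the outset: to construct the adjoints $\Phi_L$, $\Phi_R$ by Grothendieck--Verdier duality, to invoke the Bondal--Orlov/Bridgeland criterion (stated for smooth varieties), and in your dimension count against $\dim Y = n$. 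Nothing in your argument ever establishes that $Y$ is smooth. In \cite{BridgelandKingReid2001} nonsingularity is deduced, not assumed: the homological argument shows that the residue fields $k(y)$ have finite projective dimension over the local rings $\FS_{Y,y}$, which forces these rings to be regular by Serre's criterion.

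The second place is the step you yourself flag as the main obstacle, and it is indeed fatal as written: a ``dimension count combined with a local-to-global spectral sequence'' cannot convert the hypothesis $\dim (Y \times_{M/G} Y) \leq n+1$ into the vanishing of $\Hom^{\bullet}(P_{y_1}, P_{y_2})$ for $y_1 \neq y_2$. The mechanism in \cite{BridgelandKingReid2001} is the intersection theorem of commutative algebra (Peskine--Szpiro, Roberts): a bounded complex of finitely generated free modules over a local ring with nonzero homology of finite length must have length at least the dimension of the ring. Applied to a local free representative of the kernel of $\Phi_R \Phi$, whose homology is supported on $Y \times_{M/G} Y$, and combined with the duality $\Hom^i(P_{y_1},P_{y_2}) \simeq \Hom^{n-i}(P_{y_2},P_{y_1})^*$ --- available despite non-properness because the $P_y$ have zero-dimensional support and $\omega_M$ is locally trivial as a $G$-sheaf --- this is what forces all homology onto the diagonal, and it is the same argument that delivers the smoothness of $Y$ above. (A weaker, acyclicity-type form of this circle of ideas appears as lemma \ref{PS} in the appendix of the present paper.) A smaller inaccuracy in your closing paragraph: on quasi-projective, non-proper varieties the categories $\B{D}^b(Y)$ and $\B{D}^b_G(M)$ admit no Serre functors, so the ``commutes with Serre functors'' argument is not available verbatim; Bridgeland--King--Reid replace it by the pointwise condition $P_y \otimes \omega_M \simeq P_y$ and then show that the orthogonal complement of the image of $\Phi$ has empty support.
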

\subsection{Isospectral Hilbert schemes and polygraphs}
We will now present the construction of the isospectral Hilbert 
scheme, as defined by Haiman \cite{Haiman2001}, and a brief description 
of its properties. Haiman proves everything for the affine plane $\mbb{A}^2_{\mbb{C}}$, but all 
works for a general smooth quasi-projective surface; we sketch here how to
extend 
some of his 
proofs.
\begin{definition}
The isospectral Hilbert scheme $B^n$ is the reduced fibered product:
\begin{diagram}[LaTeXeqno]\label{dg: isospectral}
B^n & \rTo^{p} &X^n \\
\dTo^{q} & & \dTo ^{\pi} \\
X^{[n]}& \rTo^{\mu}& S^n X  
\end{diagram}that is $B^n : = \big( X^{[n]}\times_{S^nX} X^n \big)_{\textrm{red}}$.
\end{definition}
\begin{remark}
In the definition above it is \emph{necessary} to take the reduced scheme underlying 
the fiber product, since the  fiber product $X^{[n]} \times _{S^n X}X^n $ is \emph{never} reduced, if $n \geq 2$. 
\end{remark}
The following simple lemma allows to extend several of 
Haiman's results to an arbitrary smooth
quasi-projective surface.
\begin{lemma}\label{qproj}
Let $X$ be a quasi-projective variety. Then each point in $X^n$ has an affine 
open neighbourhood of the form $U^n$, where $U$ is an affine open set in $X$.
\end{lemma}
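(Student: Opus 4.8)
The plan is to reduce the statement to the purely geometric fact that any finite set of closed points of a quasi-projective variety lies in a common affine open subset. Write the given point as $(x_1, \dots, x_n) \in X^n$, where by the standing convention the $x_i$ are (not necessarily distinct) closed points of $X$. If I can produce an affine open $U \subseteq X$ with $x_1, \dots, x_n \in U$, then $U^n$ is open in $X^n$ and contains $(x_1, \dots, x_n)$; moreover $U^n$ is affine, since the $n$-fold product over the field $\mbb{C}$ of an affine scheme of finite type is again affine, one having $U^n = \Spec (\FS(U)^{\tens n})$ with $\FS(U)^{\tens n}$ a finitely generated $\mbb{C}$-algebra. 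Thus everything comes down to producing the affine open $U$.

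To build $U$ I would exploit quasi-projectivity by fixing a locally closed embedding $X \hookrightarrow \mbb{P}^N$, letting $\bar{X}$ denote the closure of $X$ in $\mbb{P}^N$, which is projective, and $Z := \bar{X} \setminus X$ the resulting closed boundary. The crucial step is to find a single homogeneous polynomial $F$ on $\mbb{P}^N$ vanishing on all of $Z$ but at none of the points $x_1, \dots, x_n$. Granting such an $F$, I would set $U := \bar{X} \setminus V(F)$: it is open in $\bar{X}$; it is contained in $X$, because it avoids $Z \subseteq V(F)$; it contains every $x_i$, because $F$ does not vanish at $x_i$; and it is affine, being a closed subvariety of the affine variety $\mbb{P}^N \setminus V(F)$, the complement of a hypersurface in projective space.

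The construction of $F$ is the \emph{heart} of the argument, and the main obstacle. For a fixed index $i$, since $x_i \notin Z = V(I(Z))$, there is a homogeneous element $F_i$ of the homogeneous ideal $I(Z)$ not vanishing at $x_i$; after multiplying $F_i$ by a suitable power of a linear form nonvanishing at $x_i$, I may assume that all the $F_i$ share one common degree $d$, still lie in $I(Z)$, and still do not vanish at $x_i$. It then remains to pass from these individual polynomials to a single simultaneous one. A generic $\mbb{C}$-linear combination $F = \sum_i c_i F_i$ lies in $I(Z)$ and satisfies $F(x_j) \neq 0$ for every $j$: indeed, as a function of the coefficients $(c_i)$ the condition $F(x_j) = 0$ cuts out a proper linear subspace (its $c_j$-coefficient being $F_j(x_j) \neq 0$), and over the infinite field $\mbb{C}$ a finite union of proper subspaces cannot exhaust the whole coefficient space. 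This genericity, together with the degree-matching device, is exactly where the finiteness of the point set and the infiniteness of the ground field are used; the remaining verifications that $U$ is affine, open, inside $X$, and contains the $x_i$ are then formal, and the passage to $U^n$ is immediate from the first paragraph.
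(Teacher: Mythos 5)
Your proof is correct and follows essentially the same route as the paper: reduce to placing the closed points $x_1,\dots,x_n$ in a common affine open $U \subseteq X$, embed $X$ in $\mbb{P}^N$, produce a homogeneous form vanishing on the boundary $Z = \bar{X}\setminus X$ but at none of the $x_i$, and take $U$ to be the complement of its zero locus in $\bar{X}$. The paper phrases the avoidance step via sections of $\mc{I}_Z(l)$ for $l \gg 0$ and a choice outside the union of the hyperplanes $H_i$ of sections vanishing at $x_i$, while you use degree-matched elements of the homogeneous ideal and a generic linear combination, but this is the same infinite-field avoidance argument.
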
\begin{proof}It suffices to prove that given a 
quasi-projective variety $X$ and $n$ points $x_1$, \dots, $x_n$, there 
exists an affine open set $U$ such that all $x_i \in U$. To prove this, embed
$X$ in a projective space $\mbb{P}^N$ and take its projective closure 
$Y = \bar{X}$. Then $Z=Y \setminus X$ is a closed subset of the 
 projective variety $Y$. For large $l$, there exists sections $s_i \in H^0(Y, \mc{I}_Z(l)) \subseteq   H^0(Y, \FS_Y(l))$, vanishing on $Z$ 
but nonzero on $x_i$. If $l$ is large, the subspace $H_i \subseteq 
H^0(Y, \mc{I}_Z(l))$ consisting of sections of $\mc{I}_Z(l)$ vanishing on 
$x_i$ form a hyperplane in $H^0(Y, \mc{I}_Z(l))$ for all $i$. Consider
now a section
$u \in H^0(Y, \mc{I}_Z(l)) \setminus \cup_{i=1}^n H_i$. The affine open set $U$
defined by $u \neq 0$ in $Y$ is contained in $X$ and contains all the points~$x_i$. \end{proof}

The isospectral Hilbert scheme has the following fundamental properties.
\begin{theorem}\label{CM}The isospectral 
Hilbert scheme $B^n$ is irreducible of dimension $2n$, normal, Cohen-Macauley and Gorenstein. Moreover it can be identified with the blow up of $X^n$ along the 
scheme-theoretic union of all its pairwise diagonals. 
\end{theorem}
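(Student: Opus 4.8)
The plan is to deduce everything from Haiman's theorems for the affine plane \cite{Haiman2001} by a local argument, using Lemma \ref{qproj} to reduce to an affine surface and then the smoothness of $X$ to reduce to $\mbb{A}^2_{\mbb{C}}$. I first observe that \emph{all} the assertions are local on the base $X^n$. Blowing up along a closed subscheme commutes with restriction to a Zariski-open; normality, Cohen--Macaulayness and the Gorenstein property can be checked at the points of $B^n$; and, as will be seen at the end, irreducibility and the computation of the dimension follow formally once the blow-up description is established. By Lemma \ref{qproj} the variety $X^n$ is covered by opens of the form $U^n$ with $U\subseteq X$ affine. Since $U^n\subseteq X^n$ is $\perm_n$-invariant, its image $S^nU$ is open in $S^nX$, and one has $\pi^{-1}(S^nU)=U^n$ and $\mu^{-1}(S^nU)=U^{[n]}$; hence the whole diagram (\ref{dg: isospectral}) restricts to the corresponding diagram for $U$, and the reduced fibre product restricts to the isospectral scheme $B^n(U)$ of $U$. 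Thus it suffices to treat the case in which $X=U$ is a smooth affine surface.

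Next I would reduce the smooth affine case to $\mbb{A}^2$. The key point is that the only non-trivial geometry of $B^n$ is concentrated along the pairwise diagonals, so the local structure at a closed point is governed by a formal neighbourhood of $X$. Concretely, a closed point of $B^n$ lies over a configuration $\sum_j n_j\, y_j\in S^nX$ with the $y_j$ pairwise distinct; by the standard \'etale-local product structure of the Hilbert--Chow picture, \'etale-locally near such a point all of $X^{[n]}$, $S^nX$, $X^n$ and therefore $B^n$ split as products of the punctual models at the individual points $y_j$, the factor at $y_j$ being the isospectral scheme $B^{n_j}$ near the totally-collided point. Since $X$ is a smooth surface, the completion of $X$ at each $y_j$ is isomorphic to $\mbb{C}[[x,y]]$, i.e.\ to the completion of $\mbb{A}^2$ at the origin, and the construction of $B^{n_j}$ is compatible with this formal base change. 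Consequently the completed local rings of $B^n$ agree with those computed by Haiman for $\mbb{A}^2$. Because finite-type $\mbb{C}$-algebras are excellent, normality passes between a local ring and its completion, while the Cohen--Macaulay and Gorenstein properties are insensitive to completion and to the smooth product factors; Haiman's theorem that the isospectral scheme over $\mbb{A}^2$ is normal, Cohen--Macaulay and Gorenstein therefore yields the same three properties for $B^n$.

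For the blow-up description let $\mc{J}$ be the ideal sheaf of the scheme-theoretic union $W=\bigcup_{i<j}\Delta_{ij}$ of the pairwise diagonals in $X^n$. I would first check, again locally and by comparison with the $\mbb{A}^2$ computation, that the pullback $\mc{J}\cdot\FS_{B^n}$ is an invertible ideal; by the universal property of the blow-up this produces a canonical $X^n$-morphism $B^n \rTo \Bl_W X^n$. That this morphism is an isomorphism is once more a local statement, and it follows from Haiman's identification of the isospectral scheme over $\mbb{A}^2$ with the blow-up of $(\mbb{A}^2)^n$ along its pairwise diagonals, transported through the formal/\'etale comparison of the previous paragraph and glued over the cover $\{U^n\}$ by the canonicity of the construction. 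Finally, since $X^n$ is integral of dimension $2n$ and $W$ is a proper closed subscheme, $\Bl_W X^n$ is integral of dimension $2n$; transporting this back shows that $B^n$ is irreducible of dimension $2n$, as claimed.

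The step I expect to be the main obstacle is precisely the reduction from a general smooth surface to $\mbb{A}^2$, that is, the verification that the formation of $B^n$ --- and in particular the freeness statements underlying Haiman's polygraph theorem, on which normality and Cohen--Macaulayness ultimately rest --- is genuinely compatible with the \'etale/formal localization that replaces a smooth surface point by the origin of $\mbb{A}^2$. Once this compatibility and the \'etale-local product decomposition are in place, the remaining arguments are purely formal transfers of Haiman's results.
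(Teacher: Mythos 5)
Your proposal is correct, and its skeleton --- localize on $X^n$ by Lemma \ref{qproj}, then quote Haiman for the model case --- is the same as the paper's; but the two key steps are carried out by genuinely different mechanisms. (i) \emph{The affine case.} The paper does not transport Haiman's results: it asserts that Haiman's \emph{proofs} for $\mbb{A}^2_{\mbb{C}}$ run verbatim over any smooth affine surface, and only then localizes. You instead keep Haiman's theorem as a black box over $\mbb{A}^2_{\mbb{C}}$ and move it across an \'etale/formal comparison: punctual product decomposition, $\hat{\FS}_{X,y}\simeq \mbb{C}[[x,y]]$, excellence to pass normality (and completion-invariance to pass Cohen--Macaulay and Gorenstein) between a local ring and its completion, and descent to see that $\mc{J}\cdot \FS_{B^n}$ is invertible and that the canonical map to the blow-up is an isomorphism. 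This is more robust --- one never has to ask whether Haiman's polygraph algebra is sufficiently coordinate-free --- and it is in fact the same technique the paper itself uses to globalize Haiman's vanishing theorem (Theorem \ref{thm: Haimanvanishing}); the compatibility you flag as the main obstacle is essentially Haiman's product structure for the isospectral scheme (\cite[Lemma 3.3.1, Proposition 3.3.2]{Haiman2001}, which the paper asserts holds for any smooth quasi-projective surface) together with the standard facts that punctual length-$n$ data and reduced fibre products depend only on formal neighbourhoods, so it is a real but surmountable debt, of the same order as the paper's own unexpanded assertion. (ii) \emph{Irreducibility.} Since irreducibility is not Zariski-local, the paper proves it by a separate global argument, citing the product structure directly; you deduce integrality and $\dim B^n = 2n$ from the blow-up description, via the globally defined canonical morphism $B^n \rTo \Bl_W X^n$ (universal property, $W$ being the scheme-theoretic union of the pairwise diagonals) being an isomorphism locally, hence globally, plus the fact that the blow-up of the integral scheme $X^n$ along the proper closed subscheme $W$ is integral and birational to $X^n$. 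This is valid and arguably cleaner --- it also makes explicit the gluing that the paper leaves implicit when it declares the blow-up statement ``local'' --- though it does not spare you the product structure, which your formal comparison still consumes.
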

\begin{proof}The irreducibility follows in all generality, exactly as in 
the case of the affine plane, from the product structure for 
the isospectral Hilbert scheme 
(see \cite[lemma 3.3.1, proposition 3.3.2]{Haiman2001}), which holds true 
for  an arbitrary smooth quasi-projective surface. 
The other properties are proved by Haiman
in the case of the affine plane \cite{Haiman2001}. 
For a smooth affine surface the proof goes exactly as in the case of 
$\mbb{A}^2_{\mbb{C}}$. 
Passing to a smooth quasi-projective surface $X$ 
is now simple, owing to the 
preceding lemma. Consider the projection 
$p: B^n \rTo X^n$, $Q$ a point on $B^n$ and its image $P=p(Q) \in X^n$ 
for the map $p$. 
Let $U^n$ be the affine open set containing $P$, 
found in lemma \ref{qproj}. The isospectral Hilbert scheme $B^n_U$ 
associated to the affine surface $U$ is now the blow-up $B^n_U \rTo U^n$ 
of the scheme-theoretic union of the pairwise diagonals in $U^n$. 
Now the scheme $B^n_U$ has the wanted properties and can be 
identified with the open set $p^{-1}(U^n)$, the inverse image of $U^n$ for the projection $p$. Since the statement is local on $B^n$ and  
on $X^n$, we are done.
\end{proof}
\begin{remark}\label{rem:CMflatmor}
Let $f: X \rTo Y$ be a finite and surjective morphism of schemes.
If $X$ is Cohen-Macauley and $Y$ is
 smooth, then $f$ is flat 
\cite[exercise 18.17]{EisenbudCA}. On the other hand if $f$ is flat and $Y$ is 
Cohen-Macauley
$Y$, then $X$ is Cohen-Macauley. In this case $X$ is Gorenstein if and only if $f$ 
has Gorenstein fibers~
\cite[chapter 10, n.7,~\S 2,~\S 3]{BourbakiAC}. 
\end{remark}
\begin{remark}The previous remark applies  to the
  isospectral Hilbert scheme $B^n$, since by theorem \ref{CM} it 
is Cohen-Macauley and $q: B^n \rTo X^{[n]}$ is finite and surjective with $X^{[n]}$ smooth. Hence the Cohen-Macauley property of 
$B^n$ is equivalent to the flatness of the morphism $q$. Therefore in the diagram (\ref{dg: isospectral})
$q$ is flat of degree $n!$, even if $\pi$ is not. 
\end{remark}
\begin{remark}\label{rem:isounivfam}
The subscheme $Z \subseteq B^n \times X$, defined as the pull-back 
$(q \times \id)^{-1}(\Xi)$, 
where $\Xi$ is the universal family over $X^{[n]}$, is called the 
 \emph{universal subscheme} for the isospectral Hilbert scheme. 
Since $\Xi$ is flat and finite over $X^{[n]}$, 
$Z$ is flat and finite over $B^n$, hence Cohen-Macauley. It is
reduced, because generically reduced. By definition, the scheme $Z$ is isomorphic to the fibered product $Z \simeq B^n \times_{X^{[n]}} \Xi$ of $B^n$ and $\Xi$ over the Hilbert scheme $X^{[n]}$.
While $\Xi$ is irreducible, $Z$ has $n$ irreducible components 
$$Z_i = (p \times \id)^{-1}(D_i) \;,$$ where $D_i$ is the diagonal 
$$D_i= \Delta_{i,n+1} \subseteq X^n \times X \;.$$ 
It is clear that $Z_i \simeq B^n$, hence $Z_i$ are normal, 
Cohen-Macauley and Gorenstein.
\end{remark}
\subsection{The Hilbert scheme $X^{[n]}$ as the $\Hilb^{\perm_n}(X^n)$ scheme.}
We will now explain briefly why the scheme
$\Hilb^{\perm_n}(X^n)$ can be identified with the Hilbert scheme $X^{[n]}$ 
of $n$ points on $X$ and why the action of the symmetric group $\perm_n$  on 
the product $X^n$ of a smooth 
quasi-projective surface $X$
satisfies
the hypothesis of the BKR theorem \ref{BKR}. 
See \cite{Haiman2001} and \cite{Haiman1999}. 

The key point for the comparison between $X^{[n]}$ and $\Hilb^{\perm_n}(X^n)$ is  the Cohen-Macauley property of $B^n$, or, equivalently, 
the flatness of the morphism $q: B^n \rTo X^n$. Since $B^n$ inherits from $X^n$ a $\perm_n$-action\footnote{because $B^n$ is the blow-up of $X^n$ along a 
$\perm_n$-invariant subscheme, or directly from the definition}, the direct image $q_* \FS_{B^n}$ is a rank $n!$ locally free $\perm_n$-equivariant sheaf of algebras over $X^{[n]}$: the $\perm_n$-action induces locally a morphism: $\FS_{X^{[n]}}[\perm_n] \rTo q_* \FS_{B^n}$, which is easily an isomorphism, since it is 
such generically. As a consequence, 
$X^{[n]}$ can be viewed as the quotient of $B^n$ by the $\perm_n$-action. Moreover, for every closed subscheme $\xi \in X^{[n]}$, the fiber $B^n_{\xi}$ is a $\perm_n$-invariant 
subscheme of $X^n$, such that $H^0(\FS_{B^n_{\xi}}) \simeq \mbb{C}[\perm_n]$. Hence, 
$B^n$ 
is a flat family of $\perm_n$-clusters on $X^n$. By the universal property of the $\perm_n$-Hilbert scheme $\perm_n\Hilb(X^n)$, and since $X^{[n]}$ is irreducible, the family $B^n$ gives rise to a morphism: $$ \varphi: X^{[n]} \rTo \Hilb^{\perm_n}(X^n) \;.$$
Haiman proves in \cite{Haiman2001} that $\varphi$ is an isomorphism, 
by explicitly exhibiting an inverse. The proof needs by no means  the fact that $X$ is isomorphic to $\mbb{A}^2_{\mbb{C}}$ or that $X$ is affine, and it can be carried on in exactly the same way for any smooth quasi-projective surface; hence:
\begin{theorem}The Hilbert scheme of $n$ points on a smooth 
quasi-projective surface $X$ is isomorphic, over the symmetric variety $S^n X$, to the scheme
$\Hilb^{\perm_n}(X^n)$. 
\end{theorem}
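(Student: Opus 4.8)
The morphism $\varphi \colon X^{[n]} \ra \Hilb^{\perm_n}(X^n)$ has already been produced above from the family $B^n$, so the plan is to prove it is an isomorphism over $S^nX$ by exhibiting an inverse and checking that the two maps are mutually inverse on a dense open set, whence everywhere by reducedness. First I would make the input for the construction of $\varphi$ fully explicit, namely that $B^n$ is a flat family of $\perm_n$-clusters: by Theorem \ref{CM} the scheme $B^n$ is Cohen--Macaulay, so by Remark \ref{rem:CMflatmor} the finite surjection $q \colon B^n \ra X^{[n]}$ onto the smooth scheme $X^{[n]}$ is flat, necessarily of degree $n!$; and the $\perm_n$-equivariant isomorphism $\FS_{X^{[n]}}[\perm_n] \simeq q_* \FS_{B^n}$ shows, after base change to a point $\xi \in X^{[n]}$, that $H^0(\FS_{B^n_\xi}) \simeq \mbb{C}[\perm_n]$ as a $\perm_n$-module. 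I would also record that $\varphi$ is a morphism over $S^nX$: the commutativity $\mu \circ q = \pi \circ p$ of the square (\ref{dg: isospectral}) descends to $\tau \circ \varphi = \mu$, where $\tau \colon \Hilb^{\perm_n}(X^n) \ra S^nX$ is the $\perm_n$-Hilbert--Chow morphism.

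Next I would construct the inverse $\psi \colon \Hilb^{\perm_n}(X^n) \ra X^{[n]}$ following Haiman. Writing $Y := \Hilb^{\perm_n}(X^n)$ with universal $\perm_n$-cluster $\mc{Z} \subseteq Y \times X^n$, and letting $\rho \colon X^n \ra X$ be the first coordinate projection, the natural candidate sends a cluster to the length-$n$ subscheme of $X$ cut out as the scheme-theoretic image of $\mc{Z}$ under $\id_Y \times \rho$. Concretely one forms the closed subscheme $\Xi' \subseteq Y \times X$ and must verify that it is flat and finite of degree $n$ over $Y$, so that the universal property of $X^{[n]}$ yields the morphism $\psi$. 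Over the free locus this is transparent: an orbit of $n$ distinct points $(a_{\sigma(1)},\dots,a_{\sigma(n)})$ projects to the reduced subscheme $\{a_1,\dots,a_n\}$, so $\varphi$ and $\psi$ are visibly inverse to one another there.

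The main obstacle is to check, near the non-free locus, that $\Xi'$ is genuinely flat of length $n$ and that the two composites $\psi \circ \varphi$ and $\varphi \circ \psi$ agree with the identity; this is exactly Haiman's computation in \cite{Haiman2001}, which rests on the Cohen--Macaulay and Gorenstein geometry of $B^n$ recorded in Theorem \ref{CM}. Once the composites are known to coincide with the identity on the dense free locus, reducedness and separatedness of the two irreducible varieties force them to be the identity everywhere, so $\varphi$ is an isomorphism. It then remains only to dispose of the hypothesis that $X$ be affine (or equal to $\mbb{A}^2_{\mbb{C}}$), and here Lemma \ref{qproj} is the decisive tool: the formation of $B^n$, of $q$, of $\mc{Z}$ and of the morphisms $\varphi$, $\psi$ is local over $X^n$, and every point of $X^n$ lies in an open set of the form $U^n$ with $U \subseteq X$ affine. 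Since the statement is local on the target $S^nX$, and since Haiman's argument applies verbatim to the smooth affine surface $U$, gluing the resulting local isomorphisms produces the desired isomorphism $X^{[n]} \simeq \Hilb^{\perm_n}(X^n)$ over $S^nX$.
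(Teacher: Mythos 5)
Your proposal is correct and takes essentially the same route as the paper: $\varphi$ is produced from the flat family $B^n$ of $\perm_n$-clusters (via the Cohen--Macaulay property of $B^n$, flatness of $q$, and the isomorphism $\FS_{X^{[n]}}[\perm_n] \simeq q_*\FS_{B^n}$), the inverse is deferred to Haiman's explicit construction, and the quasi-projective case is reduced to the affine one by locality over $S^nX$ via Lemma \ref{qproj}. The paper states this last reduction more briefly (Haiman's proof ``can be carried on in exactly the same way'' for any smooth quasi-projective surface), but your localize-and-glue argument is precisely the mechanism it has in mind, mirroring the proof of Theorem \ref{CM}.
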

At this point, to prove that the $\perm_n$-action on $X^n$ satisfies the hypothesis 
of theorem \ref{BKR} we have to prove that $\omega_{X^n}$ is locally trivial as 
$\perm_n$-sheaf --- which is equivalent to the fact that $S^nX$ is Gorenstein
--- and the smallness condition 
$ \dim (X^{[n]} \times_{S^n X} X^{[n]}) \leq 2n +1 \; ;$
but this is a direct consequence of the fact that the Hilbert-Chow morphism:
$ \mu : X^{[n]} \rTo S^n X $ is a semismall resolution  
\cite[proposition 2.11]{DeCataldoMigliorini2004}.
As a consequence, we get the following remarkable particular case of theorem \ref{BKR}:
\begin{theorem}[Haiman] \label{BKRH}
Let $X$ be a smooth quasi-projective surface, $X^{[n]}$ the Hilbert scheme of 
$n$-points on $X$, $B^n$ the isospectral Hilbert scheme. Let $q: B^n \rTo 
X^{[n]}$ and $p: B^n \rTo X^n$ be the projections on the Hilbert scheme and on the product variety, 
respectively.
The Fourier-Mukai functor:
\begin{equation}
\label{BKRHequiv}
 \bkrh = 
 \B{R}p_* \circ q^*:
\B{D}^b(X^{[n]}) \rTo \B{D}^b_{\perm_n}(X^n) \end{equation}
is an equivalence. 
\end{theorem}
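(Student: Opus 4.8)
The plan is to deduce Theorem \ref{BKRH} as the special case of the Bridgeland-King-Reid theorem \ref{BKR} obtained by taking $M = X^n$, $G = \perm_n$, $Y = \Hilb^{\perm_n}(X^n)$ and universal subscheme $\mc{Z} = B^n$. First I would record that, since $X$ is a surface, $M = X^n$ is smooth quasi-projective of dimension $2n$ and that $\perm_n \subseteq \Aut(X^n)$ is finite; under Haiman's isomorphism $\varphi : X^{[n]} \rTo \Hilb^{\perm_n}(X^n)$ the universal family $\mc{Z}$ is carried to the isospectral Hilbert scheme $B^n$, so that the kernel $\FS_{\mc{Z}}$ becomes $\FS_{B^n}$ and the functor $\Phi^{\FS_{\mc{Z}}}_{Y \ra M}$ is identified with $\bkrh = \B{R}p_* \circ q^*$. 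It then remains only to verify the two hypotheses of \ref{BKR}: (i) that $\omega_{X^n}$ is locally trivial as a $\perm_n$-sheaf, and (ii) the smallness bound $\dim(Y \times_{M/G} Y) \le \dim M + 1$, which here reads $\dim(X^{[n]} \times_{S^nX} X^{[n]}) \le 2n + 1$.

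For (i) I would use the equivalent criterion, recalled above, that the stabilizer of every point of $X^n$ act on the tangent space through the special linear group. At a point $P = (x_1, \dots, x_n)$ the stabilizer $(\perm_n)_P$ permutes the two-dimensional summands of $T_P X^n = \bigoplus_{k=1}^n T_{x_k} X$; any such permutation $\sigma$ acts with determinant $\sgn(\sigma)^2 = 1$, because each summand has even dimension. Hence $(\perm_n)_P \subseteq SL(T_P X^n)$ for every $P$, which is exactly condition (i); equivalently, this is the Gorenstein property of $S^nX$ noted in the preliminary material.

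For (ii) I would invoke the semismallness of the Hilbert-Chow morphism $\mu : X^{[n]} \rTo S^nX$. For a resolution $Y \rTo Z$, semismallness is precisely the inequality $\dim(Y \times_Z Y) \le \dim Y$; applying this to $\mu$, whose semismallness is established in \cite[proposition 2.11]{DeCataldoMigliorini2004}, gives $\dim(X^{[n]} \times_{S^nX} X^{[n]}) \le \dim X^{[n]} = 2n$, comfortably within the bound $2n + 1$ demanded by \ref{BKR}. With (i) and (ii) in hand, Theorem \ref{BKR} applies verbatim and yields simultaneously that $X^{[n]}$ is a crepant resolution of $S^nX$ and that $\bkrh$ is an equivalence of categories.

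I do not expect a serious obstacle to remain at this stage: the genuinely difficult content has already been absorbed into the preliminary results — the Cohen-Macaulay, indeed Gorenstein, property of $B^n$ (Theorem \ref{CM}), which is what makes $B^n$ a flat family of $\perm_n$-clusters and identifies $X^{[n]}$ with $\Hilb^{\perm_n}(X^n)$, together with Haiman's explicit inverse to $\varphi$. The proof of \ref{BKRH} itself is therefore a verification that the two numerical and geometric conditions of \ref{BKR} hold, the more substantive of the two being the smallness estimate (ii); but even this is handed to us directly by the semismallness of $\mu$.
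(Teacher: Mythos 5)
Your proposal is correct and follows essentially the same route as the paper: identify $X^{[n]}$ with $\Hilb^{\perm_n}(X^n)$ via Haiman's theorem, then check the two hypotheses of Theorem \ref{BKR}, with the smallness bound $\dim(X^{[n]} \times_{S^nX} X^{[n]}) \leq 2n+1$ coming from semismallness of the Hilbert--Chow morphism exactly as in the paper (same citation to de Cataldo--Migliorini). The only cosmetic difference is in hypothesis (i): you verify the $SL(T_PX^n)$ criterion directly by the block-permutation determinant computation $\sgn(\sigma)^2 = 1$, whereas the paper invokes the equivalent fact, recorded in its preliminaries, that $S^nX$ is Gorenstein; these are the two equivalent formulations the paper itself lists, so the content is the same.
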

\subsection{Polygraphs}
\begin{definition}
Let $X$ be a smooth quasi-projective variety. Let $D \subseteq X^n \times
X$ be the scheme-theoretic union of 
pairwise diagonals $D_i = \cup_{i=1}^n \Delta_{i,n+1}$. The
polygraph
$D(n,k) \subseteq X^n \times X^k$ is the reduced $k$-fold  fiber product 
$$ D(n,k): = \left(
\underset{\textrm{$k$-times}}{\underbrace{ D \times_{X^n} \dots
    \times_{X^n} D }} \right)_{\textrm{red}}  \;.$$ 
\end{definition}The polygraph $D(n,k)$ is finite over $X^n$, but 
\emph{not} flat. Its generic degree over $X^n$ is~$n^k$.
The term polygraph is explained by the following characterization. {\sloppy  Let $f : \{1, \dots, k\} \rTo \{1, \dots,n \}$ be a map.
Let $E_f \subseteq X^n \times X^k$ be the graph of the morphism $X^n \rTo X^k$, sending $(x_1, \dots, x_n) \rMapsto (x_{f(1)}, \dots, x_{f(k)})$. Then the scheme $D(n,k)$ is exactly the scheme-theoretic union $D(n,k) = \cup_f E_f$, where the union runs over all maps $f : \{1, \dots,k \} \rTo \{1, \dots,n \}$. }

We now take the $k$-fold fibered products of $\Xi$ and $Z$ (over $X^{[n]}$ and $B^n$, respectively) and we compare them to the polygraph $D(n,k)$.
Let us denote with 
$\Xi(n,k)$  the $k$-fold fibered product:
$$ \Xi(n,k) := \underset{\textrm{$k$-times}}{\underbrace{
\Xi \per_{X^{[n]}} \dots \per_{X^{[n]}}
  \Xi}} \; .$$The scheme $\Xi(n,k)$ is flat and finite over $X^{[n]}$ of degree $n^k$ and hence Cohen-Macauley. 
 By analogy
with the previous definitions we set:
$$Z(n,k):= \underset{\textrm{$k$-times}}{\underbrace{Z \times_{B^n} \dots \times_{B^n} Z
  }} \;. $$The scheme $Z(n,k)$ is flat and finite over $B^n$ of degree $n^k$: this implies that
it is Cohen-Macauley and hence reduced, since generically
reduced. Since 
$Z$ is isomorphic to the fiber product 
of $\Xi$ and $B^n$ over the Hilbert
  scheme (see remark~\ref{rem:isounivfam}), it is immediate to see that:
\begin{equation}\label{EEE}Z(n,k) = \Xi(n,k) \times_{X^{[n]}} B^n \;,
\end{equation}or, equivalently, that it is 
isomorphic to the pull back:
$Z(n,k)= (q \times \id)^{-1}(\Xi(n,k))$, where $ (q \times \id) :
B^n \times X^k \rTo X^{[n]} \times X^k$. We come now to the fundamental 
Haiman's vanishing theorem \cite[proposition~5.1]{Haiman2002}.
In~\cite{ScalaPhD} we gave an independent proof for the case $k=1$.
\begin{theorem}[Haiman]\label{thm: Haimanvanishing} Consider the map:
$$ f := p \times \id : B^n \times X^k \rTo X^n \times X^k \quad .$$
Then the derived direct image $\B{R}f_* \FS_{Z(n,k)}$ of the
structural sheaf of $Z(n,k)$ is the structural sheaf of the polygraph 
$D(n,k)$:
$$ \B{R}f_* \FS_{Z(n,k)} \simeq \FS_{D(n,k) } \; .$$
\end{theorem}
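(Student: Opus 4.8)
The plan is to reduce the statement to Haiman's computation for the affine plane and to control the residual geometry through the Cohen--Macaulay and Gorenstein properties of Theorem~\ref{CM}. Since the asserted quasi-isomorphism $\B{R}f_* \FS_{Z(n,k)} \simeq \FS_{D(n,k)}$ is local on the target $X^n \per X^k$, I would first invoke Lemma~\ref{qproj} to replace $X$ by an affine open set $U$, so that a neighbourhood of any point of $X^n$ has the shape $U^n$. Near a fixed $P = (x_1, \dots, x_n) \in X^n$ only the pairwise diagonals $\Delta_{ij}$ with $x_i = x_j$ matter, and the product structure of the isospectral Hilbert scheme (\cite[lemma 3.3.1, proposition 3.3.2]{Haiman2001}, already exploited in Theorem~\ref{CM}) decomposes $B^n$ and its universal subscheme as a product of the isospectral Hilbert schemes attached to the clusters of coinciding points; the polygraph $D(n,k)$ and the scheme $Z(n,k)$ split compatibly, and $\B{R}f_*$ is multiplicative for this product. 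Choosing étale coordinates that identify a neighbourhood of each $x_i$ in $X$ with one in $\mbb{A}^2_{\mbb{C}}$, and shrinking $U$ so that distinct points remain distinct, I would thereby reduce to $X=\mbb{A}^2_{\mbb{C}}$, where the statement is exactly \cite[proposition 5.1]{Haiman2002}.

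For $X=\mbb{A}^2_{\mbb{C}}$ the geometric content is the following. Pushing forward along the closed immersion $Z(n,k) \hookrightarrow B^n \per X^k$ identifies $\B{R}f_* \FS_{Z(n,k)}$ with $\B{R}g_* \FS_{Z(n,k)}$, where $g := f\trest_{Z(n,k)}$ maps $Z(n,k)$ onto $D(n,k)$ and is an isomorphism over the open set of $X^n$ where the $n$ points are distinct; thus $g$ is proper and birational, $Z(n,k)$ is reduced and Cohen--Macaulay by the discussion following~(\ref{EEE}), and $D(n,k)$ is reduced by definition. The claim splits into the higher vanishing $R^i g_* \FS_{Z(n,k)} = 0$ for $i>0$ and the degree-$0$ identification $g_* \FS_{Z(n,k)} = \FS_{D(n,k)}$. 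The natural map $\FS_{D(n,k)} \rTo g_* \FS_{Z(n,k)}$ is injective since $g$ is birational and both sides are reduced; the substance is its surjectivity, i.e. that the blow-up introduces no extra regular functions and reconstructs the precise non-normal gluing of the reducible scheme $D(n,k)$ rather than its normalisation.

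Both statements I would extract from the freeness part of Haiman's polygraph theorem, namely that the coordinate ring $R(n,k) = \FS_{D(n,k)}$ is a free module over the polynomial subring $\mbb{C}[y_1, \dots, y_n]$ cut out by one coordinate direction of $\mbb{A}^2_{\mbb{C}}$. This freeness makes $y_1, \dots, y_n$ a regular sequence on $R(n,k)$ and, with the attendant analysis of the fibres, yields that $D(n,k)$ satisfies Serre's condition $S_2$. Since the blow-up centre has codimension $2$ in $X^n$, the morphism $g$ is an isomorphism away from a closed set of codimension $\geq 2$ in $D(n,k)$; the $S_2$ property then forces $g_* \FS_{Z(n,k)} = \FS_{D(n,k)}$, because any regular function on $g^{-1}(V)$ restricts to the good locus, extends uniquely across codimension $2$, and coincides with its own pullback on the reduced scheme $Z(n,k)$. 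For the higher vanishing I would combine the Gorenstein property of $B^n$, hence of $Z(n,k)$, from Theorem~\ref{CM} with relative duality for the projective morphism $g$ and the depth estimates furnished by the same freeness, reducing $R^{>0} g_* \FS_{Z(n,k)} = 0$ to a local-cohomology vanishing along the diagonal strata.

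The main obstacle is squarely the polygraph theorem: the freeness of $R(n,k)$ over $\mbb{C}[y_1, \dots, y_n]$ is Haiman's deepest and most delicate result, and it is what makes both the higher vanishing and the exact reconstruction of $\FS_{D(n,k)}$ go through; I would invoke it rather than reprove it. A secondary but genuine difficulty, specific to the general-surface setting here, is to make the reduction to $\mbb{A}^2_{\mbb{C}}$ rigorous --- checking that the Hilbert-scheme, isospectral and polygraph constructions all commute with the étale base change near $P$ and that the fibres of $g$ over the higher-codimension strata are governed by the product structure --- precisely as in the proof of Theorem~\ref{CM}. For $k=1$ one can bypass the polygraph theorem altogether: there $Z = Z(n,1) = \bigcup_i Z_i$ with each $Z_i \simeq B^n$ mapping by $p$ onto $D_i \simeq X^n$, so a Mayer--Vietoris ({\v C}ech-type) resolution of $\FS_Z$ reduces everything to the single vanishing $\B{R}p_* \FS_{B^n} \simeq \FS_{X^n}$ on each stratum --- which is exactly the mechanism producing the complex $\comp{\mc{C}}_L$ of the introduction.
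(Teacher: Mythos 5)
Your overall skeleton --- quote Haiman's result for $\mathbb{A}^2_{\mathbb{C}}$ and reduce the general surface to it by a local product decomposition --- matches the paper's, but the reduction step, which here \emph{is} the entire content of the proof, has a genuine gap. You write that one can choose ``\'etale coordinates that identify a neighbourhood of each $x_i$ in $X$ with one in $\mathbb{A}^2_{\mathbb{C}}$''; no such identification exists. An \'etale map $U \to \mathbb{A}^2$ is not an open immersion, and for a non-rational surface $X$ no Zariski open subset of $X$ is isomorphic to an open subset of $\mathbb{A}^2$ (that would make $X$ rational). So after Lemma \ref{qproj} you are on an affine surface, but still nowhere near the affine plane, and the statement you need is not literally the one in \cite{Haiman2002}. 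To push the theorem through an \'etale map $U \to \mathbb{A}^2$ you would have to prove that $B^n$, $Z(n,k)$, $D(n,k)$ and the formation of $\mathbf{R}f_*$ are all compatible with \'etale base change, and only over the locus where the map creates no accidental collisions: an \'etale map can identify distinct points of $U$, so the diagonals and the polygraph of $U^n$ are \emph{not} simply pulled back from those of $(\mathbb{A}^2)^n$. You flag this as a ``secondary'' difficulty, but it is not secondary: since the $\mathbb{A}^2$ case is quoted from Haiman, this compatibility is exactly the theorem being proved, and your proposal defers it rather than proving it.

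The paper supplies precisely the missing mechanism, and does so analytically rather than \'etale-locally: it passes to the associated complex-analytic spaces, where a small neighbourhood of a point of $X$ genuinely \emph{is} isomorphic to an open subset of $\mathbb{C}^2$, so that over a small analytic open $V$ the spaces decompose as actual products $\prod_j D_{U_j}(n_j,k_j)$ and $\prod_j Z_{U_j}(n_j,k_j)$ with $U_j$ open in $\mathbb{C}^2$, and $f$ becomes the product map. The price is the passage between algebraic and analytic derived direct images, paid by the comparison formula $\mathbf{R}g_{\mathrm{an}*}\mathcal{O}_{S'_{\mathrm{an}}} \simeq \mathbf{R}g_*\mathcal{O}_{S'} \otimes_{\mathcal{O}_S}\mathcal{O}_{S_{\mathrm{an}}}$ for projective morphisms together with the faithful flatness of analytification (GAGA); these convert the algebraic statement into an analytic-local one, where the local identification with $\mathbb{C}^2$ is legitimate, and back. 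Your proposal contains no analogue of this step. (Your sketch of the $\mathbb{A}^2$ case itself --- $S_2$ plus birationality in degree $0$, duality and depth for the higher vanishing --- is harmless but redundant, since you invoke Haiman's proposition 5.1 as a black box anyway, exactly as the paper does; and your closing remark that $k=1$ admits an independent \v{C}ech-type proof agrees with the paper's comment that \cite{ScalaPhD} contains one.)
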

\begin{proof}
The case of the affine plane $X=\mbb{A}^2_{\mbb{C}}$ has been proved Haiman \cite{Haiman2002}. To
prove it for an arbitrary smooth quasi-projective surface we use the following comparison formula between algebraic and complex-analytic higher direct images (see 
\cite[theorem 2.48]{KollarMori1998}, or \cite[lemma 2.14]{ScalaPhD}, for a proof): if $g : S^{\prime} \rTo S$ is a projective morphism between complex algebraic varieties, and $g_{\rm {an}} : S^{\prime}_{\rm{an}} \rTo S_{\rm{an}}$ is the associated morphism between complex analytic spaces, we have: 
$$ \B{R} {g_{\rm{an}}}_* \FS_{S^{\prime}_{\rm{an}}} \simeq \B{R}{g_*} \FS_{S^{\prime}} \tens_{\FS_{S}} \FS_{S_{\rm{an}}} \;.$$We apply this remark to the morphism
$ f : Z(n,k) \rTo D(n,k)$, keeping into account that, by GAGA principle \cite{SerreGAGA}, 
for all complex algebraic variety $S$, the morphism of ringed spaces 
$ ( \an{S}, \FS_{\an{S}}) \rTo ( S, \FS_{S}) $ is faithfully
flat. This implies that:
$$ \B{R}f_*  \FS_{Z(n,k)} \simeq \FS_{D(n,k) } \iff \B{R}f_*
\FS_{\an{Z(n,k)}} 
\simeq \FS_{\an{D(n,k)} } \; .$$Since the facts are local in nature,
it suffices to prove the statement on a small analytic open subset $V
$ of 
$D(n,k)$. We can always choose it of the form:
$$ V \simeq \prod_{j=1}^s D_{U_j}(n_j,k_j) $$with $U_j$ small analytic
open set of $\mbb{C}^2$, $n_j,k_j$ positive natural numbers such that 
$\sum_j n_j =n$ and $\sum_j k_j = k$
and $D_{U_j}(n_j,k_j)$ the analytic polygraph relative
to $U_j$. Over $V$, the analytic space  $\an{Z(n,k)}$ is now of the form 
$$ \an{Z(n,k)}\rest_V \simeq \prod_{j=1}^s Z_{U_j}(n_j, k_j) $$and the map $f$
is now the product map. Since the $U_j$ are now analytic open sets of
$\mbb{C}^2$, and since the result is true for analytic open sets of
$\mbb{C}^2$, because it is true algebraically for $\mbb{C}^2$, we are
done.\end{proof}
\subsection{Equivariant Fourier-Mukai functors and cohomology}
We  will now
describe a general method to compute cohomology of a quotient via 
equivariant hypercohomology. First, a lemma. 
\begin{lemma}\label{lem:invariants}Let $k$ be a field with ${\rm char}(k)=0$, $R$ a $k$-algebra, $G$ a finite group 
and $M$ an $R[G]$-module. Let $N$ be a 
$R$-module (that is, $G$ acts trivially on $N$).  Then 
$$ ( M\tens^L_R N)^G = M^G \tens^L_R N \; .$$
\end{lemma}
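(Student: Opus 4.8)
The plan is to leverage the hypothesis $\mathrm{char}(k)=0$, which forces $|G|$ to be invertible in $R$. The averaging element $e := \frac{1}{|G|}\sum_{g \in G} g$ is then a central idempotent of the group algebra $R[G]$, and on any $R[G]$-module the invariants functor $(-)^G$ coincides with multiplication by $e$, since $em = \frac{1}{|G|}\sum_g gm$ is $G$-fixed and restricts to the identity on fixed vectors. Consequently $(-)^G$ is an \emph{exact} functor on $R[G]$-modules. This exactness is the real content of the characteristic-zero assumption: it means $(-)^G$ requires no derivation, so that $\B{R}(-)^G = (-)^G$, it descends to the derived category, and it commutes with the formation of cohomology of a complex. (Were $|G|$ not invertible, derived invariants — group cohomology — would intervene and the asserted equality would break down.)

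First I would represent the derived tensor product concretely. Choose a projective resolution $\pcomp{P} \rTo N$ of $N$ as an $R$-module and equip it with the trivial $G$-action; since $N$ itself carries no $G$-action this imposes no equivariance constraint. By definition $M \tens^L_R N$ is then represented by the complex of $R[G]$-modules $M \tens_R \pcomp{P}$, with $G$ acting through the factor $M$ alone. The equivariant structure on $M \tens^L_R N$ is exactly this one, induced from $M$, which is the point that legitimizes computing invariants termwise below.

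Applying the exact functor $(-)^G$ term by term, it remains to identify $(M \tens_R P)^G$ with $M^G \tens_R P$ for an arbitrary $R$-module $P$ carrying the trivial $G$-action. Because $G$ acts trivially on $P$, the idempotent $e$ acts on $M \tens_R P$ as $e_M \tens \id_P$, where $e_M = \frac{1}{|G|}\sum_{g} g$ is the associated $R$-linear idempotent on $M$ (it is $R$-linear since $R$ is central in $R[G]$). The induced splitting $M = M^G \oplus (1-e_M)M$ of $R$-modules yields, after tensoring over $R$ with $P$, a decomposition $M \tens_R P = (M^G \tens_R P) \oplus ((1-e_M)M \tens_R P)$ on which $e_M \tens \id_P$ acts as the identity on the first summand and as zero on the second. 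Hence $(M \tens_R P)^G = \mathrm{im}(e_M \tens \id_P) = M^G \tens_R P$, with no flatness of $P$ needed.

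Assembling these steps gives $(M \tens^L_R N)^G \simeq (M \tens_R \pcomp{P})^G = M^G \tens_R \pcomp{P} \simeq M^G \tens^L_R N$, the final quasi-isomorphism holding because $\pcomp{P}$ remains a projective resolution of $N$ computing the derived tensor product with $M^G$. I do not anticipate a genuine obstacle: the single essential ingredient is the exactness of $(-)^G$ in characteristic zero, and the only point demanding care is verifying that the $G$-action used throughout is the one induced from $M$ on the representative $M \tens_R \pcomp{P}$, so that the termwise computation is valid.
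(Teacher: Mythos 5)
Your proof is correct, and it is the mirror image of the paper's argument rather than a reproduction of it. The paper resolves the \emph{equivariant} factor: it takes a projective resolution $P^{\bullet} \to M$ of $R[G]$-modules, uses exactness of $(-)^G$ (the same averaging idempotent $e$ you introduce) to conclude that $(P^{\bullet})^G \to M^G$ is still a resolution, notes that each $(P^i)^G$ is projective because it is a direct factor of the projective $P^i$, and then invokes --- without proof --- the commutation $(-\otimes_R N)^G \simeq (-)^G \otimes_R N$ for trivial $G$-action on $N$, applied to this resolution. You instead resolve the \emph{inert} factor $N$ by projective $R$-modules with trivial action, so the $G$-action stays concentrated on $M$ in every degree, and the whole content becomes the termwise identity $(M \otimes_R P)^G = M^G \otimes_R P$, which you genuinely prove via the splitting $M = M^G \oplus (1-e)M$. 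What each route buys: yours dispenses with the observation that $(-)^G$ preserves projectivity and supplies a proof of precisely the commutation statement that the paper only asserts, valid for arbitrary $M$ and with no flatness hypotheses; the paper's keeps every representative projective over $R[G]$, so identifying both sides with the relevant derived tensor products needs no further comment. The one point you pass over quickly is that $M \otimes_R \pcomp{P}$, with the action through $M$, really does represent $M \otimes^L_R N$ in the equivariant derived category, i.e.\ agrees with the complex obtained from an equivariant projective resolution of $M$; this follows from the standard balancing bicomplex, using that projective $R[G]$-modules are flat over $R$, and deserves a line since you then apply $(-)^G$ termwise to that particular representative.
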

\begin{proof}Resolve $M$ with a projective resolution
$P^{\bullet} \rTo M$. Applying the fixed points functor we get a
resolution:
$ (P^G)^{\small \bullet} \rTo M^G$ of $M^G$. Now the
$(P^i)^G$ are projective elements, because they are direct factors of
the $P^i$, which are themselves projective, and a direct factor of a
projective is projective. Hence $(P^G)^{\bullet} \rTo M^G$ is a
projective resolution of $M^G$. Now the tensor product $- \tens N$
commutes with the fixed points functor, since $G$ does not act on $N$:
$ (-)^G \tens N \simeq (- \tens N)^G$. Deriving, since the fixed
points functor takes projectives to projectives, we are done. 
\end{proof}
\begin{pps}\label{pps:invamor}
Let $G$ be a finite group.
Let $p: X \rTo Y$ be a proper $G$-equivariant morphism of algebraic varieties. 
Let $q :X \rTo X/G$ and 
$\pi: Y \rTo Y/G$ be the quotients by $G$ of $X$ and $Y$, respectively. Let $\mu: X/G \rTo Y/G$ be the map induced by $p$ on the 
quotient level. Let $\Phi$ be the equivariant Fourier-Mukai functor $\Phi = \B{R} p_* \circ \B{L} q^*
: \B{D}^{-}(X/G) \rTo \B{D}^{-}_G(Y)$. Then:
\begin{enumerate}
\item $\B{R} \mu_* \simeq \pi_*^G \circ \Phi $;
\item the hypercohomology of a complex $\mc{F}^{\bullet} \in \B{D}^{-}(X/G)$ is 
the $G$-equivariant hypercohomology of $\Phi(\mc{F}^{\bullet})$, or the hypercohomology of the quotient $Y/G$ with values in the invariants 
of the image $\Phi(\comp{\mc{F}})^G$:
$$ \mbb{H}(X/G, \mc{F}^{\bullet}) \simeq  \mbb{H}_G(Y, \Phi(\mc{F}^{\bullet})) 
\simeq \mbb{H}(Y/G, \Phi(\comp{\mc{F}})^G)\;.$$
\end{enumerate}  
\end{pps}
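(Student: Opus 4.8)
The plan is to reduce everything to the commutative square
\begin{diagram}
X & \rTo^{p} & Y \\
\dTo^{q} & & \dTo^{\pi} \\
X/G & \rTo^{\mu} & Y/G
\end{diagram}
in which $\pi \circ p = \mu \circ q$, and to exploit two soft facts that hold because $G$ is finite and $\mathrm{char}(k)=0$: first, the quotient maps $q$ and $\pi$ are finite, hence affine, so that $\B{R}q_* = q_*$ and $\B{R}\pi_* = \pi_*$; second, the invariants functor $(-)^G$ on $G$-equivariant complexes is exact, being the splitting off of the direct summand cut out by the idempotent $\frac{1}{|G|}\sum_{g\in G}g$. In particular $(-)^G$ commutes with every additive (triangulated) functor, so with $\B{R}\mu_*$ and with $\B{R}\Gamma$.

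For part (1) I would start from the right-hand side. Using $\B{R}\pi_* = \pi_*$, the commutativity of the square, and the finiteness of $q$,
$$ \pi_*\,\Phi(\comp{\mc{F}}) = \pi_*\,\B{R}p_*\,\B{L}q^*\comp{\mc{F}} \simeq \B{R}(\pi p)_*\,\B{L}q^*\comp{\mc{F}} \simeq \B{R}\mu_*\,q_*\,\B{L}q^*\comp{\mc{F}}\;. $$
Applying $(-)^G$ and commuting it through $\B{R}\mu_*$, the whole of (1) comes down to identifying $(q_*\,\B{L}q^*\comp{\mc{F}})^G$ with $\comp{\mc{F}}$. This is the single genuinely geometric step, and the only place I expect real work. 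Working locally on $X/G$, where $q$ restricts to a finite map $\Spec B \rTo \Spec B^G$ with $B$ a $G$-algebra, the projection formula for the finite morphism $q$ gives
$$ q_*\,\B{L}q^*\comp{\mc{F}} \simeq \comp{\mc{F}} \tens^L_{\FS_{X/G}} q_*\FS_X\;, $$
and Lemma \ref{lem:invariants}, applied with the $G$-module $q_*\FS_X$ and the trivial module $\comp{\mc{F}}$, yields $(\comp{\mc{F}} \tens^L q_*\FS_X)^G \simeq \comp{\mc{F}} \tens^L (q_*\FS_X)^G$. Since $X/G$ is by definition the quotient, $(q_*\FS_X)^G = \FS_{X/G}$, so this is just $\comp{\mc{F}}$. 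Assembling the chain gives $\B{R}\mu_*\comp{\mc{F}} \simeq (\pi_*\Phi(\comp{\mc{F}}))^G = \pi_*^G\circ\Phi(\comp{\mc{F}})$, which is (1).

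For part (2) I would feed (1) into the standard factorization of hypercohomology through the base. Since $\B{R}\Gamma(X/G,-) \simeq \B{R}\Gamma(Y/G,-)\circ \B{R}\mu_*$, part (1) gives
$$ \mbb{H}(X/G,\comp{\mc{F}}) \simeq \B{R}\Gamma\big(Y/G,\B{R}\mu_*\comp{\mc{F}}\big) \simeq \B{R}\Gamma\big(Y/G,(\pi_*\Phi(\comp{\mc{F}}))^G\big) = \mbb{H}\big(Y/G,\Phi(\comp{\mc{F}})^G\big)\;, $$
which is the third isomorphism. For the middle term I would use the definition of $G$-equivariant hypercohomology $\mbb{H}_G(Y,-)=(\B{R}\Gamma(Y,-))^G$: since $(-)^G$ is exact and commutes with $\B{R}\Gamma$, and since $\Gamma(Y,-)^G = \Gamma(Y/G,(\pi_*-)^G)$ (invariants of global sections computed over the quotient, where $G$ acts trivially on $Y/G$), one obtains $\mbb{H}_G(Y,\Phi(\comp{\mc{F}})) \simeq \B{R}\Gamma(Y/G,(\pi_*\Phi(\comp{\mc{F}}))^G)$, matching the other two expressions. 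The only subtlety worth flagging throughout is the repeated interchange of $(-)^G$ with derived functors, which is exactly what the exactness of invariants in characteristic zero licenses.
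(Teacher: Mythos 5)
Your proof is correct and follows essentially the same route as the paper's: both reduce part (1) to the chain $\pi_*\Phi(\comp{\mc{F}})\simeq \B{R}\mu_*\,q_*\B{L}q^*\comp{\mc{F}}$ via the commutative square and finiteness of $q$, then identify $(q_*\B{L}q^*\comp{\mc{F}})^G\simeq\comp{\mc{F}}$ by the projection formula, Lemma \ref{lem:invariants}, and $(q_*\FS_X)^G=\FS_{X/G}$, and both deduce part (2) by factoring (equivariant) global sections through $\pi_*^G$ and applying part (1). The only differences are presentational (you start from the right-hand side and flag the exactness of $(-)^G$ more explicitly), not mathematical.
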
\begin{proof}\begin{eqnarray*}
\pi_*^G \circ \Phi (\mc{F}^{\bullet})& =&  
\pi_*^G \circ \B{R} p_* \circ \B{L} q^* (\mc{F}^{\bullet})
 \simeq  [ \B{R} \mu_* \circ q_*( \B{L} q^* (\mc{F}^{\bullet}) )]^G \\
 & \simeq & \B{R} \mu_* q^G_* ( \B{L}q^* (\mc{F}^{\bullet})) \simeq 
\B{R} \mu_* (
\comp{\mc{F}} \tens^L_{\FS_{X/G}} q_*^G \FS_X) \\
 & \simeq   & \B{R} \mu_* (
\comp{\mc{F}} )
\end{eqnarray*}because $\mu$ is $G$-invariant, because of lemma \ref{lem:invariants}, the projection formula and the
fact that $q_*^G \FS_X \simeq \FS_{X/G}$. 
The second statement is now an easy consequence:
\begin{eqnarray*} \mbb{H}_G(Y, \Phi(\mc{F}^{\bullet})) & = & \B{R} \Gamma_Y^G \circ  \Phi(\mc{F}^{\bullet})  = \B{R} \Gamma_{Y/G} \circ \pi_*^G \circ
\Phi(\mc{F}^{\bullet}) \\
& \simeq & \B{R} \Gamma_{Y/G}  \circ \B{R} \mu_* ( \mc{F}^{\bullet}) 
\simeq  \B{R} \Gamma_{X/G} ( \mc{F}^{\bullet}) \\ & \simeq &\mbb{H}(X/G, \mc{F}^{\bullet})\;.
\end{eqnarray*}
\end{proof}
As an application of the preliminary results explained in this section we prove the following vanishing.
\begin{pps}\label{pps: direct}Consider the morphism $p : B^n \rTo X^n$.
We have $ \B{R} p_* \FS_{B^n} \simeq \FS_{X^n}$.
\end{pps}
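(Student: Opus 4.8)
The plan is to show that $\B{R}p_* \FS_{B^n}$ is concentrated in degree $0$ with $H^0 \simeq \FS_{X^n}$, treating the degree-zero part and the vanishing of the higher direct images separately, and routing the latter through the finite quotient map $\pi$.

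First I would handle degree $0$. The morphism $p : B^n \ra X^n$ is proper (indeed projective, being a blow-up by Theorem \ref{CM}) and birational, and both $B^n$ and $X^n$ are integral with $X^n$ smooth, hence normal; moreover $B^n$ is normal by Theorem \ref{CM}. By Zariski's main theorem the fibres of $p$ are connected and $p_* \FS_{B^n} \simeq \FS_{X^n}$, so $H^0(\B{R}p_* \FS_{B^n}) \simeq \FS_{X^n}$. It then remains to prove $R^i p_* \FS_{B^n} = 0$ for $i > 0$.

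For the vanishing I would exploit the commutativity of the diagram (\ref{dg: isospectral}), i.e. $\pi \circ p = \mu \circ q$. Since $\pi : X^n \ra S^n X$ is finite, hence affine, the functor $\pi_*$ is exact and faithful, so it suffices to prove $\pi_* R^i p_* \FS_{B^n} = 0$ for $i > 0$. Because $q$ and $\pi$ are finite, the relevant Grothendieck spectral sequences degenerate and yield
\[
\pi_* R^i p_* \FS_{B^n} \simeq R^i(\pi \circ p)_* \FS_{B^n} = R^i(\mu \circ q)_* \FS_{B^n} \simeq R^i \mu_*\bigl(q_* \FS_{B^n}\bigr).
\]
Now $q_* \FS_{B^n}$ is the rank $n!$ locally free sheaf $\FS_{X^{[n]}}[\perm_n]$ carrying the regular representation, hence isomorphic, as a plain $\FS_{X^{[n]}}$-module, to $\FS_{X^{[n]}}^{\oplus n!}$; therefore $R^i \mu_*(q_* \FS_{B^n}) \simeq (R^i \mu_* \FS_{X^{[n]}})^{\oplus n!}$. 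Finally, since $S^n X$ has rational singularities and $\mu$ is a resolution, one has $\B{R}\mu_* \FS_{X^{[n]}} \simeq \FS_{S^n X}$, so $R^i \mu_* \FS_{X^{[n]}} = 0$ for $i > 0$. Combining the two steps gives $\B{R}p_* \FS_{B^n} \simeq \FS_{X^n}$.

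The main obstacle is precisely the vanishing of the higher direct images: $p$ is the blow-up of $X^n$ along the badly singular, non-lci union of the pairwise diagonals, so no naive vanishing theorem applies directly on $X^n$. The decisive move is that pushing forward along the finite map $\pi$ converts the problem into a statement about $\B{R}\mu_* \FS_{X^{[n]}}$, which is controlled by the rational-singularities property of the symmetric product recalled in the preliminaries; the identification $q_* \FS_{B^n} \simeq \FS_{X^{[n]}}[\perm_n]$ is what makes the regular-representation bookkeeping collapse to $n!$ copies of $\B{R}\mu_* \FS_{X^{[n]}}$.
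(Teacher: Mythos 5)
Your degree-zero step (Zariski's main theorem applied to the proper birational map $p$ with $B^n$ integral and $X^n$ normal) and your formal reductions (exactness and faithfulness of $\pi_*$, degeneration of the Leray spectral sequences for the finite maps $\pi$ and $q$, and $\B{R}\mu_*\FS_{X^{[n]}} \simeq \FS_{S^nX}$ from the rational singularities of $S^nX$) are all correct. The gap is exactly at the step you yourself call decisive: the claim that $q_*\FS_{B^n} \simeq \FS_{X^{[n]}}^{\oplus n!}$ as $\FS_{X^{[n]}}$-modules. The paper only asserts that the orbit morphism $\FS_{X^{[n]}}[\perm_n] \rTo q_*\FS_{B^n}$ is an isomorphism \emph{locally} on $X^{[n]}$ (defining it requires a local ``normal basis'' section); globally $q_*\FS_{B^n}$ is a nontrivial vector bundle (Haiman's Procesi bundle). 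Concretely, for $n=2$ the map $q : B^2 \rTo X^{[2]}$ is the double cover branched along the exceptional divisor $E$ of $\mu$, so $q_*\FS_{B^2} \simeq \FS_{X^{[2]}} \oplus \FS_{X^{[2]}}(-\delta)$ where $\delta$ is the divisor class with $2\delta = E$; the summand $\FS_{X^{[2]}}(-\delta)$ has nonzero degree on the $\mbb{P}^1$-fibers of $E$ over the diagonal, hence is nontrivial on every open set of the form $\mu^{-1}(U)$. This is fatal because $R^i\mu_*$ is computed precisely on such saturated open sets $\mu^{-1}(U)$, $U \subseteq S^nX$, which contain entire $\mu$-fibers: local freeness on $X^{[n]}$ gives you no way to pull the direct sum decomposition out of $R^i\mu_*$.

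Once the global triviality claim is removed, the argument becomes circular: by the very identities you establish, $R^i\mu_*(q_*\FS_{B^n}) \simeq \pi_* R^i p_*\FS_{B^n}$, so the vanishing you need is literally equivalent to the proposition being proved, and rational singularities of $S^nX$ only control the derived pushforward of the \emph{structure sheaf} -- higher direct images of a nontrivial bundle under a resolution need not vanish even when the base has rational singularities. Controlling $R^i\mu_*$ of the Procesi bundle is the genuinely hard content here; it is essentially the $k=0$ case of Haiman's vanishing theorem \ref{thm: Haimanvanishing}. The paper's own proof sidesteps all of this geometry: using that $\bkrh = \B{R}p_* \circ q^*$ is an equivalence together with proposition \ref{pps:invamor}, it shows that $\Hom^i(\bkrh(\FS_{X^{[n]}}), \bkrh(\comp{G})) \simeq \mbb{H}^i(X^{[n]}, \comp{G}) \simeq \Hom^i(\FS_{X^n}, \bkrh(\comp{G}))$ for every $\comp{G} \in \B{D}^b(X^{[n]})$, and then concludes $\bkrh(\FS_{X^{[n]}}) = \B{R}p_*\FS_{B^n} \simeq \FS_{X^n}$ by the Yoneda lemma, since every object of $\B{D}^b_{\perm_n}(X^n)$ is of the form $\bkrh(\comp{G})$. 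In particular neither your degree-zero computation nor any pushforward bookkeeping is needed in that approach.
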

\begin{proof}Since $\bkrh: \B{D}^b(X^{[n]}) \rTo \B{D}^b_{\perm_n}(X^n)$ is an equivalence, for all complex $\comp{G}
\in  \B{D}^b(X^{[n]})$ and for all~$i$:
\begin{eqnarray*}
\Hom^i_{\B{D}^b_{\perm_n}(X^n) } ( \bkrh( \FS_{X^{[n]}} ), \bkrh(
\comp{G}))  & \simeq  & \Hom_{\B{D}^b(X^{[n]})}^i (\FS_{X^{[n]}},
\comp{G}) 
  \simeq  \Ext^i_{X^{[n]}}(\FS_{X^{[n]}}, \comp{G}) \\
 & \simeq & \mbb{H}^i( X^{[n]} , \comp{G}) \;.
\end{eqnarray*}
Now, by proposition\footnote{Remark that the equivariant hypercohomology 
$\mbb{H}^i_{\perm_n}( X^n, \bkrh(
\comp{G}))$ coincides with the invariant hypercohomology $\mbb{H}^i ( X^n, \bkrh(
\comp{G}))^{\perm_n}$ because the fixed point functor $[-]^{\perm_n}$ is exact} \ref{pps:invamor} 
$\mbb{H}^i( X^{[n]} , \comp{G}) \simeq \mbb{H}^i ( X^n, \bkrh(
\comp{G}))^{\perm_n}$ and the last term is
$$ \mbb{H}^i ( X^n, \bkrh(
\comp{G}))^{\perm_n} \simeq \perm_n\Ext^i_{X^{n}}( \FS_{X^n}, \bkrh(
\comp{G}))\simeq \Hom^i_{\B{D}^b_{\perm_n}(X^n) } ( \FS_{X^n}, \bkrh(
\comp{G})) \; .$$Therefore for all $\comp{G} \in \B{D}^b(X^{[n]})$:
$$ \Hom_{\B{D}^b_{\perm_n}(X^n) } ( \bkrh( \FS_{X^{[n]}} ), \bkrh(
\comp{G})) \simeq \Hom_{\B{D}^b_{\perm_n}(X^n) } ( \FS_{X^n}, \bkrh(
\comp{G})) \; $$and since every object in $\B{D}^b_{\perm_n}(X^n)$ can
be written as $\bkrh(
\comp{G})$ for some $\comp{G} \in \B{D}^b(X^{[n]})$, because $\bkrh$ 
is an equivalence, 
by Yoneda Lemma we
obtain:
$ \bkrh( \FS_{X^{[n]}}) \simeq \FS_{X^n}$, which is exactly:
$\B{R} p_* \FS_{B^n}
\simeq \FS_{X^n} $. \end{proof}
\section{The image of a tensor product of tautological bundles for the BKR transform} \label{section3}
In this section we are going to compute the image $\bkrh(L^{[n]} \tens \cdots \tens L^{[n]})$ of a $k$-fold 
tensor power of a
tautological bundle for the BKR transform. 
We will give a simple and explicit answer for $k=1$ in terms of a $\perm_n$-equivariant complex $\comp{\mc{C}}_L \in \B{D}^b_{\perm_n}(X^n)$; for higher $k$ 
we will give a characterization of the image in terms of a 
$\perm_n$-equivariant 
spectral sequence of coherent sheaves on $X^n$. 
\subsection{Tautological bundles}
\begin{definition}Let $L$ be a line bundle on the smooth quasi-projective 
algebraic surface $X$. 
The \emph{tautological bundle} $L^{[n]}$ on the Hilbert scheme $X^{[n]}$, associated to the line bundle $L$ on $X$, is the rank $n$ vector bundle defined as the 
image of the line bundle $L$ for Fourier-Mukai functor 
$\Phi^{\FS_{\Xi}}_{X \ra X^{[n]}} : \B{D}^b(X) \rTo \B{D}^b(X^{[n]} )$ :
$$ L^{[n]} := \Phi^{\FS_{\Xi}}_{X \ra X^{[n]}}(L) = (p_{X^{[n]}})_*(\FS_{\Xi} \tens p_X^*L) \;.$$
\end{definition}
\begin{remark}
The tautological bundle $L^{[n]}$ is a rank $n$ vector bundle because the universal family $\Xi$ is flat and finite of degree $n$ over the Hilbert scheme $X^{[n]}$ (see 
\cite[corollary 5.5]{KollarMori1998}).
\end{remark}
We want now to express as well the tensor power
of a tautological bundle on the Hilbert scheme in terms of a 
Fourier-Mukai functor. Consider the following diagram:
\begin{diagram}
\Xi(n,k) & \rInto ^{j} & \Xi ^k &
\rTo ^{p_X^k} & X^k \\
\dTo ^{w} & \Box  & \dTo _{p^k_{X^{[n]}}} & & \\
X^{[n]} & \rInto ^{i} & (X^{[n]})^k & & 
\end{diagram}where the square is cartesian, and where $i$ and $j$ denote the
diagonal immersions. We remark that $p_{X^{[n]}}^k$ and $w$ are flat and finite of
degree $n^k$. 
Consider a line bundle $L$ on $X$ and
its $k$-fold exterior tensor 
product $L \boxtimes \cdots \boxtimes L$ on $X^k$. 
It's clear that 
$$ i^* {p_{X^{[n]}}^k}_* (p_X^k )^* (L \boxtimes \cdots \boxtimes L ) 
= L^{[n]} \tens
\cdots \tens L^{[n]} = w_* j^* (p_X^k)^*(L \boxtimes \cdots \boxtimes L)
$$by base change. Therefore we can express
the tensor product of tautological bundles $L^{[n]} \tens
\cdots \tens L^{[n]}$ in terms of the Fourier-Mukai functor with kernel $\Xi(n,k)$:
\begin{equation}\label{A4}
L^{[n]} \tens
\cdots \tens L^{[n]} = \Phi^{\FS_{\Xi(n,k)}}_{X^k \ra X^{[n]}}(L
\boxtimes \cdots \boxtimes L) \;. 
\end{equation}

We want now to find the image $\bkrh(L^{[n]} \tens \cdots \tens L^{[n]})$ of 
the tensor product of tautological bundles by the BKR equivalence 
(\ref{BKRHequiv}). By 
(\ref{A4}) and by the fact that a composition of Fourier-Mukai functors is a Fourier-Mukai functor \cite[proposition 1.3]{Mukai1981}, we have:
\begin{eqnarray*}
\bkrh (L^{[n]}
\tens \cdots \tens L^{[n]} ) & \simeq & 
 \Phi^{ \FS_{B^n} }_{ X^{[n]} \ra X^n } 
\Phi^{\FS_{\Xi(n,k)}}_{X^k \ra X^{[n]}}(L \boxtimes \cdots \boxtimes
L)  \\ & \simeq & \Phi^{ \mc{K} }_{ X^k \ra X^n}(L 
\boxtimes \cdots \boxtimes
L)
\end{eqnarray*}where the kernel 
$\mc{K}$ of the composition is 
$ \mc{K} \simeq \B{R}f_* ( \FS_{B^n}
\tens^L_{\FS_{X^{[n]}}} \FS_{\Xi(n,k)} )$, where we naturally consider 
$\FS_{B^n}
\tens^L_{\FS_{X^{[n]}}} \FS_{\Xi(n,k)}$ as a complex on $B^n \times X^k$.
Since $\Xi(n,k)$ is flat over
$X^{[n]}$, the latter reduces to:
$$ \mc{K} \simeq \B{R}f_* ( \FS_{B^n}
\tens_{\FS_{X^{[n]}}} \FS_{\Xi(n,k)} ) \simeq \B{R}f_* ( \FS_{B^n \times
  _{X^{[n]}} \Xi(n,k) } ) \;.$$Now, by (\ref{EEE}),
$\FS_{ B^n \times
  _{X^{[n]}} \Xi(n,k) } \simeq \FS_{Z(n,k)}$; Haiman's vanishing theorem \ref{thm: Haimanvanishing} 
then yields
$$ \mc{K} \simeq \B{R} f_* \FS_{Z(n,k)} \simeq
\FS_{D(n,k)} \;.$$Therefore the image of the 
tensor power of a tautological bundle for the BKR transform~$\bkrh$~is:
\begin{equation}\label{C4}
 \bkrh (L^{[n]}
\tens \cdots \tens L^{[n]} ) \simeq \Phi^{\FS_{D(n,k)}}_{X^k \ra X^n}(L \boxtimes \dots \boxtimes
L) \;.
\end{equation}
\subsection{The case $k=1$}
In the case $k=1$, for a line bundle $L$ on $X$ we have: 
\begin{equation} \label{k=1}\bkrh(L^{[n]}) \simeq \Phi^{\FS_D}_{X \ra X^n}(L) \;.
\end{equation}
We are going to express the image $\bkrh(L^{[n]})$ in terms of a bounded complex
$\comp{\mc{C}}_L$ of $\perm_n$-equivariant sheaves on $X^n$. The method we follow consists in finding a $\perm_n$-equivariant resolution of the sheaf $\FS_D$ in terms of structural 
sheaves of its components (the pairwise diagonals $D_i = \Delta_{i,n+1}$) and their intersections. 
For any multi-index $\emptyset \neq 
I \subseteq \{1, \dots, n \}$ denote with $D_I$ the partial diagonal
$D_I := \cap_{i \in I}D_i$. Applying corollary \ref{ccx} we find a 
{\v C}ech-type 
right complex $\comp{\check{\mc{K}}}$ 
defined by: 
$$ {\check{\mc{ K}}^p} := \bigoplus_{|I|=p+1} \FS_{D_I} \qquad \partial^p(f)_J = 
\sum_{i \in J} \epsilon_{i,J} f_{J\setminus \{i\}} \trest_{D_J} \;,$$
where $\epsilon_{i,J}$ is the sign $\epsilon_{i,J}:= (-1)^{\sharp \{h \in J \; | \; 
h <i \}}$;
the complex $\comp{\check{\mc{K}}}$ is $\perm_n$-equivariant and provides a resolution of the sheaf $\FS_D$:
$$ \FS_D \simeq  \comp{\check{\mc{K}}} \;.$$
\emph{The complex $\comp{\mc{C}}_L$.}
We now introduce the 
complex of $\perm_n$-equivariant sheaves $\comp{\mc{C}}_L$ in $\B{D}^b_{\perm_n}(X^n)$.
Let $p_I : X^n \rTo X^I$ be the projection onto 
the factors in $I$, and let $i_I : X \rInto X^I$ be the diagonal immersion of $X$ into $X^I$. If $L$ is a line bundle on $X$, let us denote with $L_I$ the sheaf
$L_I := p_I^* (i_I)_* L$, supported on $\Delta_I$. Let us define now define 
the complex $(\comp{\mc{C}}_L, \partial^{\bullet}_{L})$ in the following way:
$$ \mc{C}^p_L := \bigoplus_{|I|=p+1} L_I \qquad \partial^p_L(x)_J = \sum_{i \in J} \epsilon_{i,J} x_{J\setminus \{i\}} \trest_{\Delta_J} \;.$$
\begin{remark}\label{farfalla}
  Let $\sigma \in \perm_n$; denote with $\sigma_* $
the automorphism $\sigma_* \in \Aut(X^n)$ defined as: $\sigma_*(x_1, \dots, x_n) := (x_{\sigma^{-1}(1)}, \dots, x_{\sigma^{-1}(n)})$. We have $\sigma_*(L_I) = L_{\sigma(I)}$ for all 
$ \emptyset \neq I \subseteq \{1, \dots,n \}$. The $\perm_n$-linearization on $\mc{C}^p_L := 
\oplus_{|I|=p+1} L_I$ can then be defined setting $(\sigma.x)_J := \epsilon_{\sigma, J} \sigma_* x_{\sigma^{-1}(J)}$, where $(x_{I})_I$ is a local section of $\mc{C}^p_L$ and where
$\epsilon_{\sigma,J}$ is the signature of the only permutation 
$\tau$ such that $\sigma^{-1}\tau$ is strictly increasing. The sign $\epsilon_{\sigma,J}$ is necessary in order to balance the signs present in the definition of the differential $\partial^p_L$ and to make it $\perm_n$-equivariant. \end{remark}
\begin{theorem}\label{B3}
Let $X$ be a smooth quasi-projective algebraic surface and $L$ a line bundle
on $X$. Let $L^{[n]}$ be the tautological bundle on the Hilbert
scheme $X^{[n]}$ associated to $L$. Let 
$$ \bkrh=\Phi^{\FS_{B^n}}_{X^{[n]} \ra X^n} : \B{D}^b(X^{[n]}) \rTo
  \B{D}^b_{\perm_n}(X^n) $$be the Bridgeland-King-Reid
  equivalence. Then the image of the tautological bundle 
$L^{[n]}$ for the equivalence $\bkrh$ is
  isomorphic in $\B{D}^b_{\perm_n}(X^n)$ to the complex
  $(\comp{\mc{C}}_L, \comp{\del}_L )$:
$$ \bkrh ( L^{[n]}) \simeq
\comp{\mc{C}}_L \; .$$
\end{theorem}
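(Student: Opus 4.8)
The plan is to start from the identification $\bkrh(L^{[n]})\simeq\Phi^{\FS_D}_{X\ra X^n}(L)$ of (\ref{k=1}) and to compute this Fourier--Mukai transform by resolving its kernel $\FS_D$. Writing $p_{X^n}$ and $p_X$ for the two projections of $X^n\times X$, we have $\Phi^{\FS_D}_{X\ra X^n}(L)=\B{R}(p_{X^n})_*\big(\FS_D\tens^L p_X^*L\big)$. Since $p_X^*L$ is locally free the derived tensor product is an ordinary one, so I would replace $\FS_D$ by the $\perm_n$-equivariant {\v C}ech-type resolution $\FS_D\simeq\comp{\check{\mc K}}$ constructed above and obtain $\bkrh(L^{[n]})\simeq\B{R}(p_{X^n})_*\big(\comp{\check{\mc K}}\tens p_X^*L\big)$. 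Everything then reduces to a termwise computation of the pushforward, together with a control of the differentials and of the $\perm_n$-linearization.

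The key geometric observation is that for every $\emptyset\neq I\subseteq\{1,\dots,n\}$ the restriction of $p_{X^n}$ to the component $D_I=\bigcap_{i\in I}\Delta_{i,n+1}$ is a closed immersion onto the partial diagonal $\Delta_I\subseteq X^n$: a point $(x_1,\dots,x_n,x_{n+1})\in D_I$ satisfies $x_i=x_{n+1}$ for $i\in I$, so the last coordinate is recovered from its image and $p_{X^n}|_{D_I}:D_I\xrightarrow{\ \sim\ }\Delta_I$ is an isomorphism. In particular $p_{X^n}|_{D_I}$ is affine, hence $R^q(p_{X^n})_*(\FS_{D_I}\tens p_X^*L)=0$ for $q>0$; and since $p_X$ sends a point of $D_I$ to the common value $x_{n+1}$ of its $I$-coordinates, the identification $D_I\cong\Delta_I$ carries $\FS_{D_I}\tens p_X^*L$ precisely to $L_I=p_I^*(i_I)_*L$. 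Thus $(p_{X^n})_*(\check{\mc K}^p\tens p_X^*L)\cong\bigoplus_{|I|=p+1}L_I=\mc C^p_L$, and every term of $\comp{\check{\mc K}}\tens p_X^*L$ is $(p_{X^n})_*$-acyclic. As a bounded complex of $f_*$-acyclic sheaves computes $\B{R}f_*$ by applying $f_*$ termwise, the object $\B{R}(p_{X^n})_*\big(\comp{\check{\mc K}}\tens p_X^*L\big)$ is represented by the complex whose $p$-th term is $\mc C^p_L$.

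It then remains to match differentials and equivariant structures. The differential of $\comp{\check{\mc K}}$ is the alternating sum of the restriction maps $\FS_{D_{J\setminus\{i\}}}\to\FS_{D_J}$ attached to the inclusions $D_J\subseteq D_{J\setminus\{i\}}$; after tensoring with the fixed locally free sheaf $p_X^*L$ and pushing forward along $p_{X^n}$, these become, under the identifications above, the maps $L_{J\setminus\{i\}}\trest_{\Delta_J}\to L_J$ with the same signs $\epsilon_{i,J}$, which is exactly $\del^p_L$. Hence the pushforward complex is $(\comp{\mc C}_L,\comp{\del}_L)$, and since the resolution $\comp{\check{\mc K}}$, the twist by $p_X^*L$ and $\B{R}(p_{X^n})_*$ are all $\perm_n$-equivariant, the resulting quasi-isomorphism lives in $\B{D}^b_{\perm_n}(X^n)$. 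I expect the main obstacle to be precisely the bookkeeping of this last step: one must check that the pushforward identifies $\FS_{D_I}\tens p_X^*L$ with $L_I$ compatibly both with the {\v C}ech differentials (signs $\epsilon_{i,J}$) and with the permutation linearization $\sigma\mapsto(\sigma_*,\,I\mapsto\sigma(I))$, whose signs $\epsilon_{\sigma,J}$ of Remark \ref{farfalla} arise from reordering the direct summands, so that the comparison is genuinely an isomorphism of complexes of $\perm_n$-equivariant sheaves rather than a merely termwise one.
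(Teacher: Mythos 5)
Your proposal is correct and follows essentially the same route as the paper's proof: identify $\bkrh(L^{[n]})$ with $\Phi^{\FS_D}_{X\ra X^n}(L)$, replace $\FS_D$ by the {\v C}ech-type resolution $\comp{\check{\mc K}}$, push forward termwise using the isomorphisms $p_{X^n}\trest_{D_I}:D_I\rTo^{\simeq}\Delta_I$ to identify the terms with $\bigoplus_{|I|=p+1}L_I$, and match differentials and $\perm_n$-linearizations. Your explicit justification that the terms are $(p_{X^n})_*$-acyclic (so that the underived pushforward computes $\B{R}(p_{X^n})_*$) is a point the paper leaves implicit, but it is the same argument.
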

\begin{proof}By (\ref{k=1}) the image 
 $\bkrh(L^{[n]})$ of the tautological bundle $L^{[n]}$ is given by the image of $L$ for the Fourier-Mukai functor $\Phi^{\FS_D}_{X \ra X^n}$:
$$ \bkrh(L^{[n]}) \simeq \Phi^{\FS_D}_{X \ra X^n}(L) \;.$$The latter reduces simply to:
$ \Phi^{\FS_D}_{X \ra X^n}(L) \simeq (p_{X^n})_*( \FS_D \tens p_X^*L)$. 
Replacing $\FS_D$ with its {\v C}ech resolution $\comp{\check{\mc{K}}}$ we get:
$$ \bkrh(L^{[n]}) \simeq (p_{X^n})_*( \comp{\check{\mc{K}}} \tens p_X^*L) \;.$$
It remains now to identify the complex $(p_{X^n})_*( \comp{\check{\mc{K}}} \tens p_X^*L) $ with 
the complex $\comp{\mc{C}}_L$. The term in degree $0$ is isomorphic to:
$$ (p_{X^n})_*(\check{\mc{K}}^0 \tens p_X^* L ) \simeq \bigoplus_{i=1}^n 
(p_{X^n})_*(\FS_{D_i} \tens p_X^* L ) \;.$$Since $p_{X^n}: D_i \rTo X^n$ 
is an isomorphism and the projection
$p_X :D_i  \rTo   X$ factorizes as the composition  
$$ D_i \rTo^{p_{X^n}} X^n \rTo^{p_i} X \;,$$
by projection formula
$ (p_{X^n})_*(\FS_{D_i} \tens p_X^* L ) \simeq p_i^* L \simeq L_i$.
For the terms in higher degree, it is sufficient to note that 
$p_{X^n}\trest_{D_I} : D_I \rTo \Delta_I $ is an isomorphism, and consequently that
$$  (p_{X^n})_*(\check{\mc{K}}^p \tens p_X^* L ) \simeq \bigoplus_{|I|=p+1}
(p_{X^n})_*(\FS_{D_I} \tens p_X^* L ) \simeq \bigoplus_{|I|=p+1} L_I \;.$$The differentials are 
induced by the differentials of $\comp{\check{\mc{K}}}$ and clearly coincide with $\partial^p_L$.
\end{proof}
\subsection{The general case}For the image $\bkrh (L^{[n]} \tens \cdots \tens L^{[n]} ) $ we have the following characterization:
\begin{theorem}\label{treotto}
There is a natural morphism in $\B{D}^b_{\perm_n}(X^n)$:
\begin{equation}\label{E4}
\alpha: \comp{\mc{C}}_{L} \tens^L \cdots \tens^L \comp{\mc{C}}_{L} \rTo 
\bkrh (L^{[n]} \tens \cdots \tens L^{[n]} ) 
\end{equation}whose mapping cone is acyclic in degree $>0$. This means that
$$H^i(\bkrh (L^{[n]} \tens \cdots \tens L^{[n]} )) \simeq 
R^ip_* q^*(L^{[n]} \tens \cdots \tens L^{[n]} )=0 \quad \forall i>0 $$
and in degree $0$ the morphism:
\begin{equation}\label{eq:cf} p_*q^*(L^{[n]}) \tens \cdots \tens p_*q^*(L^{[n]}) \rTo 
p_* q^*(L^{[n]} \tens \cdots \tens L^{[n]} )\end{equation}is surjective and the 
kernel is the torsion subsheaf.
\end{theorem}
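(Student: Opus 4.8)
The plan is to construct the natural morphism $\alpha$ by combining the identification \eqref{C4} of the image $\bkrh(L^{[n]} \tens \cdots \tens L^{[n]})$ as $\Phi^{\FS_{D(n,k)}}_{X^k \ra X^n}(L \boxtimes \cdots \boxtimes L)$ with the isomorphism of Theorem \ref{B3}, $\bkrh(L^{[n]}) \simeq \comp{\mc{C}}_L$. The key observation is that the structural sheaves of the polygraphs assemble multiplicatively: there is a natural surjection of $\perm_n$-equivariant sheaves on $X^n \times X^k$ obtained from the $k$ projections $X^n \times X^k \rTo X^n \times X$ onto pairs of factors, expressing $\FS_{D(n,k)}$ as a quotient of the tensor product $\FS_D \boxtimes_{X^n} \cdots \boxtimes_{X^n} \FS_D$ (the $k$-fold fiber product structure from the definition of the polygraph). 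Pulling back the {\v C}ech resolution $\comp{\check{\mc{K}}}$ of $\FS_D$ along each projection and taking the derived tensor product, one obtains a complex computing $\comp{\mc{C}}_L \tens^L \cdots \tens^L \comp{\mc{C}}_L$; the comparison map to $\FS_{D(n,k)}$ then induces $\alpha$ after applying the Fourier-Mukai pushforward. The $\perm_n$-equivariance is automatic since every sheaf and map in sight is built from $\perm_n$-invariant geometric data (the diagonals).

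\textbf{The cone is acyclic in positive degree.} Here I would invoke Haiman's vanishing theorem \ref{thm: Haimanvanishing} as the decisive input. The complex $\comp{\mc{C}}_L \tens^L \cdots \tens^L \comp{\mc{C}}_L$ computes the derived pushforward of $\FS_D \tens^L \cdots \tens^L \FS_D$ twisted by $L \boxtimes \cdots \boxtimes L$, whereas the target $\bkrh(L^{[n]} \tens \cdots \tens L^{[n]})$ computes the pushforward of $\FS_{D(n,k)}$ twisted by the same. The point is that $\FS_{D(n,k)}$, being the \emph{reduced} $k$-fold fiber product, is a quotient of the derived tensor product, and Theorem \ref{thm: Haimanvanishing} asserts $\B{R}f_* \FS_{Z(n,k)} \simeq \FS_{D(n,k)}$ with no higher cohomology. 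Combined with the fact that $Z(n,k)$ is Cohen-Macauley and flat of degree $n^k$ over $B^n$ (hence $\FS_{Z(n,k)}$ has no higher $\Tor$ against $\FS_{B^n}$), one deduces that the image $\bkrh(L^{[n]} \tens \cdots \tens L^{[n]})$ is concentrated in degree $0$, so that $R^i p_* q^*(L^{[n]} \tens \cdots \tens L^{[n]}) = 0$ for $i > 0$. Since the source $\comp{\mc{C}}_L \tens^L \cdots \tens^L \comp{\mc{C}}_L$ may itself have cohomology in negative degrees but none in positive degrees, the cone of $\alpha$ lives in nonpositive degrees except possibly in degree $0$; the vanishing of higher cohomology on both sides forces the cone to be acyclic in degree $> 0$.

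\textbf{Surjectivity and the torsion kernel in degree $0$.} Taking $H^0$ of $\alpha$ yields a map $p_* q^*(L^{[n]}) \tens \cdots \tens p_* q^*(L^{[n]}) \rTo p_* q^*(L^{[n]} \tens \cdots \tens L^{[n]})$. Surjectivity follows because the cone of $\alpha$ has no cohomology in positive degree, so the long exact sequence in cohomology shows $H^0(\alpha)$ is surjective onto $H^0$ of the target. To identify the kernel with the torsion subsheaf, the plan is first to argue that $p_* q^*(L^{[n]} \tens \cdots \tens L^{[n]})$ is torsion-free: since $q$ is flat of degree $n!$ and $p$ is birational (generically an isomorphism onto $X^n$), the sheaf is a pushforward from the irreducible $B^n$ of a torsion-free (indeed locally free, being a pullback of a vector bundle) sheaf, and a generically injective map from a direct sum of such sheaves has torsion cokernel only in its kernel. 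More precisely, away from the diagonals the map \eqref{eq:cf} is an isomorphism (both sides restrict to $L^{\tens k}$ on the generic fiber, where the polygraph is just $n^k$ copies of a point), so the kernel is supported on the diagonals and is therefore torsion; conversely, since the target is torsion-free, every torsion section must map to zero, hence lies in the kernel.

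\textbf{Main obstacle.} The hard part will be the precise construction of $\alpha$ at the level of complexes: one must check that the multiplicative comparison $\FS_D \tens^L \cdots \tens^L \FS_D \rTo \FS_{D(n,k)}$ is a genuine morphism in the equivariant derived category and that, after applying the Fourier-Mukai pushforward and the identifications of \eqref{C4} and Theorem \ref{B3}, it descends to the stated map $\comp{\mc{C}}_L \tens^L \cdots \tens^L \comp{\mc{C}}_L \rTo \bkrh(L^{[n]} \tens \cdots \tens L^{[n]})$ compatibly with all the base-change isomorphisms. The interplay between the \emph{derived} tensor product on the source and the \emph{reduced} (non-derived) fiber product defining the polygraph is exactly where Haiman's vanishing does the essential work, and keeping track of the $\perm_n$-linearizations throughout requires care.
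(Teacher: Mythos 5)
Your overall route coincides with the paper's: you build $\alpha$ from the epimorphism $\FS_{\widetilde{D(n,k)}} \twoheadrightarrow \FS_{D(n,k)}$, where $\widetilde{D(n,k)} = D \times_{X^n} \cdots \times_{X^n} D$ is the non-reduced fibre product; you use Haiman's vanishing theorem, through the identification (\ref{C4}), to see that $\bkrh(L^{[n]} \tens \cdots \tens L^{[n]}) \simeq \tilde{w}_*(\FS_{D(n,k)} \tens_{\FS_{X^k}} L \boxtimes \cdots \boxtimes L)$ is a sheaf concentrated in degree $0$ (the polygraph being finite over $X^n$); and your identification of the kernel with the torsion subsheaf is the paper's argument verbatim, the only (harmless) variant being that you deduce torsion-freeness of $p_*q^*(L^{[n]} \tens \cdots \tens L^{[n]})$ from irreducibility of $B^n$ and birationality of $p$, where the paper instead invokes $p_*\FS_{B^n} \simeq \FS_{X^n}$ (proposition \ref{pps: direct}).

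The genuine gap is the surjectivity of (\ref{eq:cf}). You assert that it ``follows because the cone of $\alpha$ has no cohomology in positive degree, so the long exact sequence shows $H^0(\alpha)$ is surjective.'' This is a non sequitur. Write $A$, $B$, $C$ for source, target and cone of $\alpha$. Since each factor $\comp{\mc{C}}_L$ is quasi-isomorphic to the sheaf $p_*q^*L^{[n]}$, one has $H^i(A)=0$ for $i>0$, and the long exact sequence gives $H^0(A) \to H^0(B) \to H^0(C) \to H^1(A) = 0$, i.e.\ $H^0(C) \simeq \coker H^0(\alpha)$; acyclicity of $C$ in degrees $>0$ carries no information whatsoever about $H^0(C)$ (the zero map between two nonzero sheaves has cone acyclic in all positive degrees but is not surjective). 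Surjectivity of $H^0(\alpha)$ is precisely acyclicity of the cone in degree $0$ as well, so it is an \emph{additional} statement requiring a direct proof, and this is exactly where the paper does extra work: by base change in the cartesian square (\ref{numero}) (legitimate because the vertical arrows are finite and $L \boxtimes \cdots \boxtimes L$ is locally free), the degree-zero cohomology of the source is $i^*{p^k_{X^n}}_*(p^k_X)^*(L \boxtimes \cdots \boxtimes L) \simeq \tilde{w}_*\bigl(\FS_{\widetilde{D(n,k)}} \tens_{\FS_{X^k}} L \boxtimes \cdots \boxtimes L\bigr)$, and under this identification $H^0(\alpha)$ becomes $\tilde{w}_*$ applied to the sheaf epimorphism $\FS_{\widetilde{D(n,k)}} \tens L \boxtimes \cdots \boxtimes L \twoheadrightarrow \FS_{D(n,k)} \tens L \boxtimes \cdots \boxtimes L$; since $\tilde{w}$ is finite, hence affine, $\tilde{w}_*$ is exact and preserves epimorphisms. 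You already possess every ingredient for this argument --- the epimorphism of structure sheaves is the heart of your construction of $\alpha$ --- but the justification you actually give in its place is invalid and must be replaced by this identification.
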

\begin{proof}
To build the morphism $\alpha$ consider the cartesian diagram:
\begin{diagram}[LaTeXeqno] \label{numero}
\widetilde{D(n,k)} & \rInto ^j & D^k & \rTo ^{p^k_X} & X^k \\
\dTo ^{\tilde{w}} &  \Box & \dTo _{{p^k_{X^n}}} & &\\
X^n & \rInto ^{i} & (X^n)^k & & 
\end{diagram}where $\widetilde{D(n,k)}$ is the (non reduced) $k$-fold fibered 
product $ D\times_{X^n} \dots \times_{X^n}D$ of $D$ over $X^n$. 
There is a natural morphism of functors:
\begin{equation*}\label{A}
 \B{L}i^* \circ {p^k_{X^n}}_* \rTo \tilde{w}_* \circ \B{L} j^* \;.\end{equation*}
On the other hand, the epimorphism of sheaves:
\begin{equation}\label{eq:epimorsheaves} \FS_{\widetilde{D(n,k)}} \rOnto \FS_{D(n,k)} \rTo 0\end{equation}induces a morphism of 
Fourier-Mukai functors: $\Phi^{\FS_{\widetilde{D(n,k)}}}_{X^k \ra X^n} \rTo \Phi^{\FS_{D(n,k)}}_{X^k \ra X^n}$. Knowing that $\Phi^{\FS_{\widetilde{D(n,k)}}}_{X^k \ra X^n} \simeq 
\tilde{w}_* \circ \B{L} j^* \circ (p_X^k)^*$ we get a  morphism:
$$ \B{L}i^* \circ {p^k_{X^n}}_*\circ (p_X^k)^* \rTo \Phi^{\FS_{D(n,k)}}_{X^k \ra X^n} \;.$$Applying the previous morphism of functors to $L \boxtimes \dots \boxtimes L$ we get the desired morphism:
$$ \comp{\mc{C}}_{L} \tens^L \cdots \tens^L \comp{\mc{C}}_{L} \simeq 
\B{L}i^* \circ {p^k_{X^n}}_*
\circ (p_X^k)^*(L \boxtimes \cdots \boxtimes L)\rTo^{\alpha}
\bkrh (L^{[n]} \tens \cdots \tens L^{[n]} ) \;.$$

The fact  that $\bkrh (L^{[n]} \tens \cdots \tens L^{[n]} )$ is 
quasi-isomorphic to the sheaf $p_* q^*(L^{[n]} \tens \cdots \tens L^{[n]} )$
comes immediately from Haiman's vanishing theorem \ref{thm: Haimanvanishing} and from the fact that
$L \boxtimes \cdots \boxtimes L$ is locally free. Indeed:
$$ \bkrh (L^{[n]} \tens \cdots \tens L^{[n]} ) \simeq 
\Phi^{\FS_{D(n,k)}}_{X^k \ra X^n}(L \boxtimes \cdots \boxtimes L) \simeq 
\tilde{w}_* (\FS_{D(n,k)} \tens_{\FS_{X^k}} L \boxtimes \cdots \boxtimes L)$$which is a sheaf concentrated in degree $0$. 

In degree $0$ the morphism (\ref{eq:cf}) is an epimorphism since it is the composition:
$$ i^* {p^k_{X^n}}_* (p_X^k)^*(L \boxtimes \cdots \boxtimes L) \simeq 
\tilde{w}_*(\FS_{\widetilde{D(n,k)}} \tens_{\FS_{X^k}} L \boxtimes \cdots \boxtimes L ) \rOnto \tilde{w}_* (\FS_{D(n,k)} \tens_{\FS_{X^k}} L \boxtimes \cdots \boxtimes L)) $$
where the first isomorphism is a consequence of the base change for the square in 
(\ref{numero}), because the vertical arrows are finite and 
$L \boxtimes \dots \boxtimes L$ locally free, and the second epimorphism is induced by the epimorphism (\ref{eq:epimorsheaves}). 

To prove that the sheaf 
$p_* q^*(L^{[n]} \tens \cdots \tens L^{[n]} )$ is torsion free, remark that
it is torsion free as $p_* \FS_{B^n}$-module, since $q^*(L^{[n]} \tens 
\cdots \tens L^{[n]})$ is 
locally free. Hence, by proposition \ref{pps: direct}, it is torsion free as $\FS_{X^n}$-module. 
Now, the epimorphism (\ref{eq:cf}) is an isomorphism out of the diagonal, 
hence the kernel is torsion; since the sheaf $p_*q^*(L^{[n]} \tens \cdots \tens L^{[n]})$ is torsion free, the kernel has to coincide with the torsion 
subsheaf of 
$p_*q^*(L^{[n]}) \tens \cdots \tens p_*q^*(L^{[n]})$.
\end{proof}
\begin{crl}\label{cor:imageidentif}\sloppy
The term $p_* q^*( L^{[n]} \tens \cdots \tens L^{[n]})$ can be
identified with the term $ E^{0,0}_{\infty} $ of the hyperderived
spectral sequence \begin{equation}\label{eq: fss} E^{p,q}_1 := \bigoplus_{i_1 + \dots +i_k=p}
\Tor_{-q}(\mc{C}^{i_1}_{L}, \dots, \mc{C}^{i_k}_{L}) \;,\end{equation}associated to $\comp{\mc{C}}_{L} \tens^L
\cdots \tens^L \comp{\mc{C}}_{L}$ and abutting to the 
hypercohomology 
$ H^{p+q}(\comp{\mc{C}}_{L} \tens^L
\cdots \tens^L 
\comp{\mc{C}}_{L}) \simeq \Tor_{-p-q}(p_*q^*L^{[n]}, \dots,
p_*q^*L^{[n]}) 
$. 
\end{crl}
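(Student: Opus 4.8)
The plan is to deduce the statement from the degree-$0$ information already contained in Theorem \ref{treotto}, by recognising $E^{0,0}_{\infty}$ as the image of the edge homomorphism of the spectral sequence. First I would note that the case $k=1$ of Theorem \ref{treotto} gives $R^ip_*q^*L^{[n]}=0$ for $i>0$, so that $\comp{\mc{C}}_L \simeq \bkrh(L^{[n]})$ is quasi-isomorphic to the single sheaf $p_*q^*L^{[n]}=\ke\partial^0_L$ placed in degree $0$. Hence $\comp{\mc{C}}_L \tens^L \cdots \tens^L \comp{\mc{C}}_L$ is isomorphic in $\B{D}^b_{\perm_n}(X^n)$ to the $k$-fold derived tensor power of this sheaf; this at once justifies the stated abutment $H^{p+q} \simeq \Tor_{-p-q}(p_*q^*L^{[n]},\dots,p_*q^*L^{[n]})$ and identifies $H^0 \simeq (p_*q^*L^{[n]})^{\tens k}$ with the source of the epimorphism (\ref{eq:cf}).

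Next I would use that $E^{p,q}_1=0$ for $p<0$: no differential ever enters any $E^{0,0}_r$, so the terms $E^{0,0}_r$ form a decreasing chain of subsheaves of $E^{0,0}_1=\Tor_0(\mc{C}^0_L,\dots,\mc{C}^0_L)=(\mc{C}^0_L)^{\tens k}$, and $E^{0,0}_{\infty}$ is exactly the image of the edge map $N\colon H^0 \rTo E^{0,0}_1$. The point I expect to require the most care is the identification of this edge map with the natural morphism obtained by tensoring together the inclusions $p_*q^*L^{[n]}=\ke\partial^0_L \rInto \mc{C}^0_L$; this I would obtain by applying $H^0$ to the degree-$0$ projections $\comp{\mc{C}}_L \rTo \mc{C}^0_L$ in each factor, using that $\mc{C}^0_L$ is flat so that the column $p=0$ carries no higher $\Tor$.

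Granting $E^{0,0}_{\infty}=\mathrm{im}\,N$, the image is computed by a torsion argument. Since $\mc{C}^0_L=\oplus_{i=1}^n p_i^*L$ is locally free, $(\mc{C}^0_L)^{\tens k}$ is torsion free, whence $\ke N$ contains the torsion of $(p_*q^*L^{[n]})^{\tens k}$. Conversely, over the open locus of configurations with pairwise distinct coordinates $\partial^0_L$ vanishes, so $\ke\partial^0_L \rInto \mc{C}^0_L$ is there an isomorphism; thus $N$ is generically an isomorphism and $\ke N$, being supported on the pairwise diagonals, is torsion. Therefore $\ke N$ is precisely the torsion subsheaf and $E^{0,0}_{\infty}\simeq (p_*q^*L^{[n]})^{\tens k}/\mathrm{torsion}$.

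Finally I would invoke Theorem \ref{treotto} once more: the morphism (\ref{eq:cf}) is surjective onto the torsion-free sheaf $p_*q^*(L^{[n]} \tens \cdots \tens L^{[n]})$ with kernel the torsion subsheaf, so this sheaf is canonically the torsion-free quotient $(p_*q^*L^{[n]})^{\tens k}/\mathrm{torsion}$. Comparing with the preceding step yields the asserted identification $E^{0,0}_{\infty}\simeq p_*q^*(L^{[n]} \tens \cdots \tens L^{[n]})$.
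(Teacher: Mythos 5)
Your proof is correct, and its skeleton is the same as the paper's: identify $E^{0,0}_\infty$ with the image of the edge map out of $H^0 \simeq (p_*q^*L^{[n]})^{\tens k}$, show that this image is the torsion-free quotient, and match it against the characterization of $p_*q^*(L^{[n]}\tens\cdots\tens L^{[n]})$ supplied by Theorem \ref{treotto}. The one point of divergence is how the kernel of the edge map is shown to be torsion. The paper never identifies the edge map at all: it observes that the kernel is the first filtration step of $H^0$, whose graded pieces $E^{p,-p}_\infty$, $p>0$, are subquotients of $E^{p,-p}_1$ and hence supported in the big diagonal. You instead identify the edge map with $(\ker\partial^0_L \hookrightarrow \mc{C}^0_L)^{\tens k}$ and observe that it is an isomorphism off the big diagonal, where $\partial^0_L=0$. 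Your identification does go through --- the projection onto the $p=0$ column of the resolution double complex $R^{\bullet,\bullet}\tens\cdots\tens R^{\bullet,\bullet}$ computes simultaneously the edge homomorphism and $H^0$ of the degree-zero chain projections $\comp{\mc{C}}_L \to \mc{C}^0_L$ --- but it is precisely the step you flag as delicate, and the paper's support argument avoids it entirely. What your route buys in exchange is an explicit description of the map $H^0 \to E^{0,0}_1$, which the paper's purely formal argument leaves implicit.
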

\begin{proof} 
The term  $E^{0,0}_{\infty}$ is 
torsion-free, because subsheaf of the vector bundle $E^{0,0}_{1} \simeq \mc{C}_L^0 \tens \cdots \tens \mc{C}^0_L$. 
Furthermore the kernel of the 
epimorphism
$H^0(\comp{\mc{C}}_{L} \tens^L
\cdots \tens^L 
\comp{\mc{C}}_{L}) \simeq p_*q^*L^{[n]} \tens \cdots \tens
p_*q^*L^{[n]}
\rOnto E^{0,0}_{\infty}$ is torsion, because its support is contained in the 
union of supports of $E^{p,-p}_1$ for $p>0$, hence in the big diagonal of $X^n$.
Therefore the kernel is exactly the torsion subsheaf, and $E^{0,0}_{\infty}$ can be identified with $p_*q^*( L^{[n]} \tens \cdots \tens L^{[n]})$.
\end{proof}
\begin{remark}
The spectral sequence (\ref{eq: fss}) can be built in a standard way with a procedure analogous to the construction done in \cite{EGA3.2} for the spectral sequence (6.5.4.1)\footnote{in our case, since we use the level $E^{p,q}_1$ instead of $E^{p,q}_2$, we don't take the cohomology $\mc{H}^p$ in \cite[(6.5.4.1)]{EGA3.2}}. 
The spectral sequence $E^{p,q}_1$ is obtained as the standard spectral sequence ${}^{\prime}E^{p,q}_1(\tilde{M}^{\bullet,\bullet}):= H^{q}_{II}(\tilde{M}^{p,\bullet})$
of the double complex $\tilde{M}^{\bullet,\bullet} := R^{\bullet,\bullet}\tens \cdots \tens  R^{\bullet,\bullet}$ (for the sum of the first degrees and for the sum of the second ones) built starting from a (term to term) locally
free resolution $R^{\bullet, \bullet} \rTo \comp{\mc{C}}_L$ of the complex 
$\comp{\mc{C}}_L$ with a double complex $R^{\bullet, \bullet}$. See also~\cite[XVII, 2, ($1^{\prime}$)]{CartanEilenbergHA}. 
\end{remark}

The $\perm_n$-equivariant spectral sequence $E^{p,q}_1$ carries a
very rich piece of information about the tensor power of tautological bundles; however, 
working it out in all generality is hard, due to evident technical and combinatorial issues. 
In the following section we try to extract information from $E^{p,q}_1$ by 
taking its $\perm_n$-invariants.
\section{Invariants}\label{sect:inv}
The aim of this section is the computation the invariants 
$\bkrh(L^{[n]} \tens \cdots \tens L^{[n]})^{\perm_n}$ of the image of 
a tensor power of tautological bundles for the Bridgeland-King-Reid 
transform. By proposition \ref{pps:invamor}, point~1, 
they coincide with the derived direct image of $L^{[n]} \tens \cdots \tens L^{[n]}$ for the Hilbert-Chow morphism $\mu$:
\begin{equation}\label{GGG}
\bkrh(L^{[n]} \tens \cdots \tens L^{[n]})^{\perm_n} := 
\pi^{\perm_n}_* \bkrh(L^{[n]} \tens \cdots \tens L^{[n]}) = \B{R} \mu_* (
L^{[n]} \tens \cdots \tens L^{[n]}) \;.\end{equation}

This information, though  poorer than the one given by the full image, is sufficient to 
compute the cohomology of the Hilbert scheme with values in the 
tensor power of tautological bundles. Indeed, applying 
proposition \ref{pps:invamor}, point 2, to our situation, 
we can compute the cohomology of any coherent sheaf $F$ on 
the Hilbert scheme $X^{[n]}$ as the $\perm_n$-equivariant hypercohomology of 
$X^n$ with values in $\bkrh(F)$, or as the hypercohomology
of $S^nX$ with values in the $\perm_n$-invariants of $\bkrh(F)$:
\begin{equation*}
H^*(X^{[n]},F) \simeq \mbb{H}^*_{\perm_n}(X^n, \bkrh(F)) = 
\mbb{H}^*(S^nX, \bkrh(F)^{\perm_n})\;.\end{equation*}

\subsection{Preliminary lemmas}\label{preliminary}
We recall here Danila's lemma, which will be fundamental in  what follows. In appendix \ref{app: Danila} we discuss a version of this lemma for morphisms.

Let $G$ be a finite group acting transitively on a finite nonempty set $I$. Let $k$ be a field of characteristic zero and $R$ a $k$-algebra. Let $M$ be a $R[G]$-module admitting a decomposition: $M = \oplus_{i \in I} M_i$ compatible with the $G$-action on $I$, that is, for all $i \in I$ and for all $g \in G$, 
the action of $g$ on $M$, restricted to $M_i$, gives an
isomorphism $g :M_i \rTo M_{g(i)}$. For all $g \in \Stab_G(i)$ we have 
$g \in \Aut(M_i)$. Danila's lemma \cite[lemma 2.2]{Danila2001} states:
\begin{lemma}\label{Danila}\sloppy The projection ${\rm pr }_i : M \rTo M_i$ is
  $\Stab_G(i)$-equivariant and induces an isomorphism
$ M^G \rTo M_i^{\Stab_G(i)}$.
\end{lemma}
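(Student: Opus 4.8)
The plan is to prove Danila's lemma by exploiting the transitivity of the $G$-action on $I$ to reduce the computation of $G$-invariants of the whole module $M$ to $\Stab_G(i)$-invariants of a single summand. First I would fix the index $i \in I$ and set $H := \Stab_G(i)$. The projection ${\rm pr}_i : M \rTo M_i$ is clearly $H$-equivariant: for $h \in H$ we have $h(M_i) = M_{h(i)} = M_i$, so projecting onto $M_i$ commutes with the $H$-action. Restricting ${\rm pr}_i$ to the invariants $M^G$ therefore lands in $M_i^H$, giving a well-defined $k$-linear map $\Psi : M^G \rTo M_i^H$, and the whole content of the lemma is that $\Psi$ is an isomorphism.

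To construct the inverse, I would choose a set of left coset representatives $g_1, \dots, g_r$ for $G/H$ (so $r = |I|$ by transitivity, the orbit-stabilizer correspondence sending $g_jH \mapsto g_j(i)$ being a bijection $G/H \rTo I$). Given $m \in M_i^H$, define $\Phi(m) := \sum_{j=1}^r g_j \cdot m \in M$. The key point is that this is independent of the choice of representatives: replacing $g_j$ by $g_j h$ with $h \in H$ changes the summand $g_j\cdot m$ into $g_j h \cdot m = g_j \cdot m$ since $m$ is $H$-invariant. From independence of representatives one checks that $\Phi(m)$ is actually $G$-invariant, because left multiplication by any $g \in G$ simply permutes the cosets $g_jH$ and hence permutes the summands. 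Thus $\Phi : M_i^H \rTo M^G$ is well defined.

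It then remains to verify $\Psi \circ \Phi = \id$ and $\Phi \circ \Psi = \id$. For the first composition, in $\Phi(m) = \sum_j g_j \cdot m$ the summand $g_j \cdot m$ lies in $M_{g_j(i)}$, and exactly one representative (the one with $g_j \in H$, giving $g_j(i) = i$) contributes to the $M_i$-component; since $m$ is $H$-invariant that contribution is $m$ itself, so ${\rm pr}_i(\Phi(m)) = m$. For the second, take $x \in M^G$ and write $x = \sum_{i' \in I} x_{i'}$ with $x_{i'} \in M_{i'}$; the compatibility of the decomposition with the $G$-action forces $x_{g_j(i)} = g_j \cdot x_i$ (apply $g_j$ to the $M_i$-component of $x$ and use $G$-invariance of $x$), so that $\Phi({\rm pr}_i(x)) = \sum_j g_j \cdot x_i = \sum_{i'} x_{i'} = x$. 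This last identity is where transitivity is genuinely used, since it guarantees that every index $i'$ arises as some $g_j(i)$.

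The only real subtlety, and the step I would treat most carefully, is the well-definedness and $G$-invariance of $\Phi$, i.e. checking that the expression $\sum_j g_j \cdot m$ does not depend on the coset representatives and is genuinely fixed by all of $G$; everything else is a direct unwinding of the compatibility hypothesis on the decomposition $M = \oplus_{i} M_i$. No characteristic or averaging hypothesis is actually needed for this particular argument, so the assumption $\tm{char}(k)=0$ is not essential here, but I would keep it for consistency with the statement and with the fact that the lemma is applied to equivariant sheaves where exactness of the invariants functor is used elsewhere.
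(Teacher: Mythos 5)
Your proof is correct and is essentially the argument the paper relies on: the paper itself only cites Danila's article for this lemma, but the inverse you construct, $m \mapsto \sum_{[g] \in G/\Stab_G(i)} g\cdot m$, is exactly the identification $M_{i_0}^{\Stab_G(i_0)} \simeq M^G$, $u \mapsto \sum_{[g] \in G/\Stab_G(i_0)} gu$, that the paper records and uses at the start of the proof of lemma \ref{Danilamorphism} in appendix \ref{app: Danila}. Your closing observation is also accurate: since the inverse is a sum over coset representatives rather than an average, no characteristic-zero hypothesis is needed for this lemma, and that assumption only matters elsewhere (e.g.\ for exactness of the invariants functor).
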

\begin{remark}\label{danirmk}Suppose  that the $G$ action on $I$ is not transitive. Taking the orbits $\FS_1, \dots, 
\FS_l$ for the $G$-action on $I$ we can always reduce the problem to the case of a transitive action,
  by applying Danila's lemma \ref{Danila} to
  $G$-homogeneous modules $M_{\FS_j} = \oplus_{i \in \FS_j} M_i$.  
\end{remark}
\begin{remark}We will apply Danila's lemma in the following situation.
Let $G$ be a group acting on a variety $Y$ and
on a finite set $I$. Let $F$ be a $G$-sheaf on $Y$, admitting a decomposition 
$F = \oplus_{i \in I}F_i$ compatible with the $G$-action on $I$.
 This
means that for all $i \in I$ and for all $g \in G$ we have canonical isomorphisms:
$ h_g : F_{i} \rTo^{\simeq} g^* F_{g(i)}$. 
Consider the spaces of sections 
$M_i = H^0(F_i)$ and $M = H^0(F)$. 
We inherit morphisms (see \cite[section 2.2]{Danila2001}):
$ \lambda_g: M_i \rTo M_{g(i)}$ setting $\lambda_g(s) := h_g s \circ
g^{-1}
$. 
In particular $M$
becomes a left-$G$-representation, with a decomposition $M = \oplus_{i \in I}M_i$ compatible with the $G$-action on $I$. 
\end{remark}
Since we will be using it throughout the next section we will include the following:
\begin{lemma}\label{lemmino}Let $X$ be a smooth quasi-projective algebraic surface. Let $E \rTo^f F \rTo^g G$ be a 
sequence of sheaves on $X^n$, with ${\rm im} f \subseteq \ker g$. Then the sequence of sheaves on $S^nX$:
\begin{equation}\label{sequenzq} E^{\perm_n} \rTo F^{\perm_n} \rTo G^{\perm_n} \end{equation}is exact if and only if, for all affine open subsets $U$ of $X$, the induced 
sequence
\begin{equation}\label{sequenz2}  H^0(U^n, E)^{\perm_n} \rTo H^0(U^n, F)^{\perm_n} \rTo H^0(U^n, G)^{\perm_n} \end{equation}is exact.
\end{lemma}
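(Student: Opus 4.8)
The plan is to reduce the biconditional to one standard principle: a complex of quasi-coherent sheaves on a scheme $S$ whose composite vanishes is exact if and only if it becomes exact after applying $\Gamma(W,-)$ for each member $W$ of an affine open cover of $S$. This holds because exactness of a sheaf complex is a local property, while over an affine scheme the global-section functor is an exact equivalence between quasi-coherent sheaves and modules. The right affine cover of $S^n X$ to use is the family $\{S^n U\}$, where $U$ ranges over the affine open subsets of $X$ and $S^n U := U^n/\perm_n$; the task is then to match $\Gamma(S^n U,-)$ with the invariant-section functor $H^0(U^n,-)^{\perm_n}$ appearing in (\ref{sequenz2}). Throughout I use that the sheaves $E,F,G$ are quasi-coherent, as is the case everywhere in this paper.

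First I would record the geometric facts. Since $U$ is affine, $U^n$ is affine and its quotient $S^n U = \Spec \FS(U^n)^{\perm_n}$ by the finite group $\perm_n$ is again affine; because $U^n$ is $\perm_n$-invariant it is saturated, so $\pi^{-1}(S^n U) = U^n$ and $S^n U$ is open in $S^n X$. By Lemma \ref{qproj} every point of $X^n$ lies in some such $U^n$, hence the sets $S^n U$ form an affine open cover of $S^n X$. Next, writing $F^{\perm_n} = (\pi_* F)^{\perm_n}$ for the invariant direct image, the sheaf $(\pi_* F)^{\perm_n}$ is a direct summand of the quasi-coherent sheaf $\pi_* F$ in characteristic zero (via the averaging projector $\tfrac{1}{n!}\sum_{\sigma} \sigma$), so $E^{\perm_n}, F^{\perm_n}, G^{\perm_n}$ are quasi-coherent on $S^n X$. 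Finally, by the very definition of the invariant subsheaf together with the saturation identity $\pi^{-1}(S^n U) = U^n$, one has $\Gamma(S^n U, F^{\perm_n}) = \Gamma(U^n, F)^{\perm_n} = H^0(U^n, F)^{\perm_n}$, and likewise for $E$ and $G$; thus the section sequence obtained from (\ref{sequenzq}) over $S^n U$ is exactly (\ref{sequenz2}).

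With these preparations the two implications are immediate. For the forward direction, if (\ref{sequenzq}) is exact on $S^n X$ then it is exact on each affine open $S^n U$, and applying the exact functor $\Gamma(S^n U,-)$ to quasi-coherent sheaves yields exactness of (\ref{sequenz2}). Conversely, if (\ref{sequenz2}) is exact for every affine $U$, then over each $S^n U$ the complex (\ref{sequenzq}) corresponds to an exact sequence of $\FS(S^n U)$-modules and is therefore exact; since the $S^n U$ cover $S^n X$, exactness of (\ref{sequenzq}) follows by locality. The only points that genuinely require care are that the exactness of $\Gamma$ may be invoked solely because $S^n U$ is affine and the sheaves are quasi-coherent, and that the legitimacy of the local check rests on the cover by the $S^n U$, which is supplied precisely by Lemma \ref{qproj}. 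I anticipate no real obstacle beyond keeping these hypotheses — affineness of $S^n U$, quasi-coherence of the invariants, and saturation of $U^n$ — explicit.
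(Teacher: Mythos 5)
Your proof is correct and follows essentially the same route as the paper's: cover $S^nX$ by the affine opens $S^nU$ supplied by Lemma \ref{qproj}, test exactness of (\ref{sequenzq}) on global sections over these opens (using quasi-coherence and exactness of $\Gamma$ on affines), and identify $\Gamma(S^nU, F^{\perm_n})$ with $H^0(U^n,F)^{\perm_n}$ via the definition of $\pi_*^{\perm_n}$. The extra details you supply (affineness of $S^nU$, saturation $\pi^{-1}(S^nU)=U^n$, quasi-coherence of the invariant sheaves via the averaging projector) are exactly the points the paper leaves implicit.
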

\begin{proof}Since, by lemma \ref{qproj}, the affine open sets of the form $S^nU$, where $U$ is an affine open subset of $X$, cover the variety $S^nX$, we can test on them the statement 
	the lemma. The sequence (\ref{sequenzq}) is exact if and only if the sequence $$H^0(S^nU, E^{\perm_n}) \rTo H^0(S^nU,F^{\perm_n}) \rTo H^0(S^nU, G^{\perm_n})$$ is exact for any affine open subset $U$ of $X$. By definition of the $\perm_n$-invariant push-forward $\pi_*^{\perm_n}$ this sequence is exactly the sequence (\ref{sequenz2}).
\end{proof}

\subsection{The case $k=1$.} In the case $k=1$ we already obtained a full 
answer for the image of a tautological bundle $L^{[n]}$ 
for the BKR transform $\bkrh$. Computing the invariants is now very simple: we obtain
another simple proof of the Brion-Danila formula \cite[proposition 6.1]{Danila2001}. Here and in the following, if $A$ is a finite set, we will indicate with $\perm(A)$ the symmetric group of the set $A$. Moreover, if $I \subseteq \{1, \dots,n \}$ is a multi-index, we will denote with $\bar{I}$ its complementary.
\begin{theorem}\label{thm:BDk=1}
Let $X$ be a smooth algebraic surface, $L$ a line bundle on $X$. Let 
$X^{[n]}$ be the Hilbert scheme of $n$ points on $X$, $S^n X$ the
symmetric variety and $\mu: X^{[n]} \rTo S^n X$ the Hilbert-Chow
morphism. Then 
$$ \bkrh(L^{[n]})^{\perm_n} \simeq \B{R} \mu_* ( L^{[n]} ) \simeq 
\pi_*^{\perm_n} \left ( \bigoplus_{i=1}^n L_i \right ) \;.$$
\end{theorem}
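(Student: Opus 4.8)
The plan is to combine the explicit description of $\bkrh(L^{[n]})$ from Theorem \ref{B3} with the invariant-pushforward formula of Proposition \ref{pps:invamor}, and then to compute the $\perm_n$-invariants of each term of the complex $\comp{\mc{C}}_L$ by means of Danila's Lemma \ref{Danila}. First I would apply Proposition \ref{pps:invamor}, point 1, to $L^{[n]}$, which gives at once the first isomorphism $\bkrh(L^{[n]})^{\perm_n} = \pi_*^{\perm_n}\bkrh(L^{[n]}) \simeq \B{R}\mu_*(L^{[n]})$. By Theorem \ref{B3} I may then replace $\bkrh(L^{[n]})$ by $\comp{\mc{C}}_L$, so that $\B{R}\mu_*(L^{[n]}) \simeq \pi_*^{\perm_n}(\comp{\mc{C}}_L)$. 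The point that makes this tractable is that $\pi_*^{\perm_n}$ is an \emph{exact} functor: the morphism $\pi: X^n \rTo S^nX$ is finite, so $\pi_*$ is exact, and since we work in characteristic zero the functor of $\perm_n$-invariants is exact (it is a direct summand via the Reynolds operator). Hence $\pi_*^{\perm_n}(\comp{\mc{C}}_L)$ is computed termwise, with no higher derived contributions, and its cohomology is that of the complex of invariant sheaves $\pi_*^{\perm_n}(\mc{C}^p_L)$.

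Next I would compute, for each $p$, the invariants $(\mc{C}^p_L)^{\perm_n} = \pi_*^{\perm_n}\big(\bigoplus_{|I|=p+1} L_I\big)$. The group $\perm_n$ acts transitively on the set of $(p+1)$-element subsets $I \subseteq \{1,\dots,n\}$, compatibly with the decomposition of $\mc{C}^p_L$, so Danila's Lemma \ref{Danila} applies and yields $(\mc{C}^p_L)^{\perm_n} \simeq (L_I)^{\Stab_{\perm_n}(I)}$ for any fixed $I$ of cardinality $p+1$. For $p=0$ this is precisely $\pi_*^{\perm_n}\big(\bigoplus_{i=1}^n L_i\big)$, the desired right-hand side, so the whole content is to show that these invariants vanish for $p \geq 1$.

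To establish the vanishing, I would fix $I$ with $|I| = p+1 \geq 2$, pick two distinct indices $a,b \in I$, and consider the transposition $\sigma = (a\,b) \in \Stab_{\perm_n}(I)$. On one hand, $L_I = p_I^*(i_I)_* L$ is supported on the partial diagonal $\Delta_I$, on which all coordinates indexed by $I$ coincide; hence the geometric automorphism $\sigma_*$ fixes $\Delta_I$ pointwise and, since $L_I$ is pulled back from the common diagonal coordinate, the induced map $\sigma_*: L_I \rTo L_{\sigma(I)} = L_I$ is the identity. On the other hand, by the linearization of Remark \ref{farfalla}, $\sigma$ acts on $L_I$ as $\epsilon_{\sigma,I}\,\sigma_*$, and $\epsilon_{\sigma,I}$ is the sign of the permutation induced by $\sigma$ on the ordered set $I$, namely $-1$ for a transposition within $I$. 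Thus $\sigma$ acts on $L_I$ by $-\id$, and as $\sigma$ lies in $\Stab_{\perm_n}(I)$ we conclude $(L_I)^{\Stab_{\perm_n}(I)} = 0$.

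Putting these facts together, $\pi_*^{\perm_n}(\comp{\mc{C}}_L)$ is the complex $\pi_*^{\perm_n}\big(\bigoplus_{i=1}^n L_i\big) \rTo 0 \rTo 0 \rTo \cdots$ concentrated in degree $0$, whence $\B{R}\mu_*(L^{[n]}) \simeq \pi_*^{\perm_n}\big(\bigoplus_{i=1}^n L_i\big)$, as claimed. I expect the only genuinely delicate step to be the sign computation: one must verify that on $\Delta_I$ the geometric factor $\sigma_*$ contributes the identity while the combinatorial factor $\epsilon_{\sigma,I}$ contributes $-1$, so that the two do not cancel and the invariants really die. The remaining steps—exactness of $\pi_*^{\perm_n}$ and the bookkeeping of Danila's lemma on $(p+1)$-subsets—are formal.
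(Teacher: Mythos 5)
Your proof is correct and follows essentially the same route as the paper's: Theorem \ref{B3}, exactness of $\pi_*^{\perm_n}$, Danila's Lemma \ref{Danila}, and the vanishing of the invariants of $L_I$ for $|I|=p+1\geq 2$ because $\Stab_{\perm_n}(I)$ acts on $L_I$ through the sign character (geometric action trivial on $\Delta_I$, linearization contributing $\epsilon_{\sigma,I}$). The only cosmetic difference is that you kill the invariants with a single transposition acting by $-\id$, whereas the paper reduces to sections over affine opens $U^n$ (via Lemma \ref{lemmino}) and computes the full invariants by K\"unneth and the alternating representation — the same sign mechanism either way.
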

\begin{proof}
By theorem \ref{B3}  it suffices to apply the $\perm_n$-invariant push-forward 
$\pi_*^{\perm_n}$
to the complex 
$\comp{\mc{C}}_L$. Since $\pi_*^{\perm_n}$ is an exact functor, the complex $
(\comp{\mc{C}}_L)^{\perm_n}$  provides a right resolution of 
$\mu_*(L^{[n]})$. Then it suffices to prove that, for all $p>0$, 
$(\mc{C}^p_L)^{\perm_n}=0$.
By lemma \ref{lemmino}, this is equivalent to proving that
the $\perm_n$-invariant sections $H^0(U^n, \mc{C}^p_{L})^{\perm_n}$ vanish for $p>0$ for  all affine open sets $U$ of $X$. 
Now, by lemma \ref{Danila}, fixed a  multiindex $I_0$ of length $p+1$:
$$ H^0(U^n, \mc{C}^p_{L} )^{\perm_n}= H^0(U^n, \bigoplus_{|I|=p+1} 
L_I)^{\perm_n} \simeq H^0(U^n, L_{I_0})^{\Stab_{\perm_n}(I_0)} \;.$$ 
Since $\Stab_{\perm_n}(I_0) \simeq \perm(I_0) \times \perm(\overline{I_0})$ 
 and since 
$\perm(I_0)$ acts on the fibers of $L_{I_0}$ over $\Delta_{I_0}$ via the 
alternant representation $\epsilon_{|I_0|}$, we have:
\begin{eqnarray*}  H^0(U^n, L_{I_0})^{\Stab_{\perm_n}(I_0)} & \simeq & \left[ 
H^0(U,L) \tens H^0(U, \FS_X)^{\tens ^{|\overline{I_0}|}} \tens \epsilon_{|I_0|} \right]^{\perm(I_0) \times \perm(\overline{I_0})} \\
& = & \left[ H^0(U,L)  \tens \epsilon_{|I_0|} \right]^{\perm(I_0)} \tens S^{|\overline{I_0}|}H^0(U, \FS_X) =0 \end{eqnarray*}
for $p>0$, where the first equality follows from K\"unneth formula, remembering that 
the sheaf $L_{I_0}$ is supported in $\Delta_{I_0}= \Delta_{X^{I_0}} \times X^{\overline{I_0}}$.
\end{proof}As an easy  consequence, we could now compute the  cohomology $H^*(X^{[n]}, L^{[n]})$ of the Hilbert scheme with values in the tautological bundle $L^{[n]}$,   with 
the same proof as in Danila~\cite{Danila2001}. In section \ref{cohoext} we will give a generalization of this argument to the $k$-fold exterior power 
$\Lambda^kL^{[n]}$.
\subsection{The spectral sequence of invariants}
For $k \geq 2$, we  have to use results from section 
\ref{section3} and in particular the characterization of theorem \ref{treotto}
and corollary \ref{cor:imageidentif}. 
We remarked in \ref{GGG} that the invariants 
$\bkrh(L^{[n]} \tens \cdots \tens 
L^{[n]} )^{\perm_n}$
coincide with the derived direct 
image $\B{R} \mu_* (L^{[n]} \tens \cdots \tens L^{[n]})$ of $L^{[n]} \tens \cdots \tens L^{[n]}$ for the
Hilbert-Chow morphism. Applying the functor $[-]^{\perm_n}$ of 
$\perm_n$-invariants to the morphism $\alpha$ in (\ref{E4}) we get a morphism 
in $\B{D}^b(S^nX)$: 
$$ (\comp{\mc{C}}_{L} \tens^L \cdots 
\tens^L \comp{\mc{C}}_{L})^{\perm_n} \rTo^{\alpha^{\perm_n}} \B{R}\mu_*(L^{[n]} \tens \cdots \tens L^{[n]}) $$whose mapping cone is acyclic in degree $>0$; this implies the vanishing
\begin{equation*} R^{i}\mu_*(L^{[n]} \tens \cdots \tens L^{[n]}) =0 \quad \forall i>0 \;.
\end{equation*}Moreover we have the following characterization of $\mu_*(L^{[n]} \tens \cdots \tens L^{[n]})$.
\begin{crl}\label{crl: inva}The derived direct image $\mu_*(   
 L^{[n]} \tens \cdots \tens 
L^{[n]}) $ of the $k$-fold tensor power $L^{[n]} \tens \cdots  \tens L^{[n]}$ for Hilbert-Chow morphism  can be identified 
with the term $\mc{E}^{0,0}_{\infty}$ of the
spectral sequence $$ \mc{E}^{p,q}_1 = ( E^{p,q}_1)^{\perm_n}$$of $\perm_n$-invariants of the spectral sequence $E^{p,q}_1$, abutting to $H^{p+q}(\comp{\mc{C}}_{L} \tens^L \cdots 
\tens^L \comp{\mc{C}}_{L})^{\perm_n}$.
\end{crl}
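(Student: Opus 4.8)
The plan is to reduce everything to the single fact that, in characteristic zero and for the finite group $\perm_n$ with $\pi$ finite, the invariant push-forward $\pi_*^{\perm_n} = (-)^{\perm_n} \circ \pi_*$ is an \emph{exact} functor, and then to apply it term by term to the $\perm_n$-equivariant spectral sequence $E^{p,q}_1$ of Corollary \ref{cor:imageidentif}. Exactness holds because $\pi$ finite makes $\pi_*$ exact, while the fixed-point functor $(-)^{\perm_n}$ is exact since $|\perm_n|$ is invertible (the Reynolds operator splits off the invariants as a direct summand, as already noted in the footnote to Proposition \ref{pps: direct}).

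First I would verify that applying $\pi_*^{\perm_n}$ to $E$ really produces a spectral sequence of invariants with the expected behaviour. Setting $\mc{E}^{p,q}_r := \pi_*^{\perm_n}(E^{p,q}_r)$ with differentials $\pi_*^{\perm_n}(d_r)$, exactness lets the functor commute with kernels, images and cokernels, hence with passage from one page to the next: the cohomology of the $r$-th page of $\mc{E}$ is $\pi_*^{\perm_n}$ of the cohomology of the $r$-th page of $E$, so that $\mc{E}^{p,q}_{r+1} = \pi_*^{\perm_n}(E^{p,q}_{r+1})$. Since $\comp{\mc{C}}_L$ is bounded and $k$ is finite, and since $E^{p,q}_1$ is supported in the region $p \geq 0$, $q \leq 0$, the spectral sequence is bounded, so every spot stabilises after finitely many pages; by induction $\mc{E}^{p,q}_{\infty} = \pi_*^{\perm_n}(E^{p,q}_{\infty})$, and $\mc{E}^{p,q}_1$ abuts to $\pi_*^{\perm_n}$ of the abutment of $E$, i.e.\ to $H^{p+q}(\comp{\mc{C}}_L \tens^L \cdots \tens^L \comp{\mc{C}}_L)^{\perm_n}$, as stated.

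It then remains to identify the corner. At the spot $(0,0)$ the incoming differentials $d_r\colon E^{-r,r-1}_r \rTo E^{0,0}_r$ vanish, since their source lies in the half-plane $p<0$; hence $E^{0,0}_{\infty}$ is a subobject of $E^{0,0}_1$ and $\mc{E}^{0,0}_{\infty} = \pi_*^{\perm_n}(E^{0,0}_{\infty})$. By Corollary \ref{cor:imageidentif}, $E^{0,0}_{\infty} \simeq p_* q^*(L^{[n]} \tens \cdots \tens L^{[n]})$, which by Theorem \ref{treotto} is precisely the sheaf underlying $\bkrh(L^{[n]} \tens \cdots \tens L^{[n]})$, concentrated in degree $0$. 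Finally Proposition \ref{pps:invamor}, point 1, together with the vanishing $R^i\mu_* = 0$ for $i>0$, gives $\pi_*^{\perm_n}\bkrh(L^{[n]} \tens \cdots \tens L^{[n]}) = \B{R}\mu_*(L^{[n]} \tens \cdots \tens L^{[n]}) = \mu_*(L^{[n]} \tens \cdots \tens L^{[n]})$, whence $\mc{E}^{0,0}_{\infty} \simeq \mu_*(L^{[n]} \tens \cdots \tens L^{[n]})$.

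I do not expect a genuine obstacle here: once exactness of $\pi_*^{\perm_n}$ is in hand the argument is formal. The only point deserving care is the interchange of the invariant functor with the \emph{inductive} construction of the pages $E_r$ and with the limit $E_\infty$ — one must know that $\pi_*^{\perm_n}$ commutes not merely with a single cohomology group but with the entire subquotient structure defining the spectral sequence. This is exactly what exactness together with boundedness secures, the latter ensuring that each $E^{p,q}_\infty$ is a bona fide finite subquotient rather than a limit requiring a separate convergence discussion.
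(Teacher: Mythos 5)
Your proposal is correct and follows essentially the same route as the paper: both rest on the exactness of the invariant push-forward $\pi_*^{\perm_n}$ (finite quotient map in characteristic zero), which lets one apply $[-]^{\perm_n}$ termwise to the bounded spectral sequence of Corollary \ref{cor:imageidentif} and then identify the corner $\mc{E}^{0,0}_{\infty}=\pi_*^{\perm_n}(E^{0,0}_{\infty})\simeq\pi_*^{\perm_n}\,p_*q^*(L^{[n]}\tens\cdots\tens L^{[n]})$ with $\mu_*(L^{[n]}\tens\cdots\tens L^{[n]})$ via Theorem \ref{treotto} and Proposition \ref{pps:invamor}. The only difference is that you make explicit the (routine) verification that exactness and boundedness allow the invariants functor to commute with the page-by-page construction and with $E_{\infty}$, a point the paper leaves implicit.
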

The new spectral sequence $\mc{E}^{p,q}_1$ of coherent sheaves over $S^n X$ turns
out to be much simpler than the original one;
it was considered for $k=2$
in \cite{ScalaPhD}. We proved there that it degenerates at level $\mc{E}_2$; this
yields immediately a nice description of the direct image $\mu_*(L^{[n]} \tens L^{[n]})$ in terms of the short 
exact sequence  (\ref{eq:BDk=2}) below. The proof we give here simplifies the arguments of \cite{ScalaPhD} by working out only the restriction of the spectral sequence of invariants $\mc{E}^{p,q}_1$ to a big open subset $S^n_{**}X$ of $S^nX$, which will be precised in the next section. We will see that the information carried by this restriction is already sufficient to draw results 
on the direct image of the general exterior power of tautological bundles.
\subsection{Reduction to the open set $S^n_{**}X$.}
Consider the closed subscheme of $X^n$:
$$ W:=  \bigcup_{\substack{|I|=|I^{\prime}|=2 \\ I \neq I^{\prime}}} \Delta_I \cap \Delta_{I^{\prime}} \;.$$Its irreducible components are the partial diagonals $\Delta_{J}$, $|J| = 3$ and the intersections $\Delta_{I} \cap \Delta_{I^{\prime}}$, $|I|=|I^{\prime}|=2$, $I \cap I^{\prime}= \emptyset$; hence they are smooth of codimension $4$.
\begin{definition}We define the open set
 $X^n_{**}$ of $X^n$ as being the complementary of 
$W$ in $X^n$:
$ X^n_{**} := X^n \setminus W$. It is $\perm_n$-invariant.
Analogously we will define the open subsets  $S^n_{**}X$, 
$X^{[n]}_{**}$ of $S^nX$ and $X^{[n]}$, respectively, as:
$S^nX_{**} := \pi(X^n_{**})=X^n_{**}/\perm_n$, 
$X^{[n]}_{**}:= \mu^{-1}(S^nX_{**})$.
We will indicate with $j_{X^n}$, $j_{S^nX}$, 
$j_{X^{[n]}}$ the open immersions of  open sets $X^n_{**}$, $S^nX_{**}$, 
$X^{[n]}_{**}$ into $X^n$, $S^nX$,  $X^{[n]}$, respectively. 
When there will be no risk of confusion, we will omit the subscripts.
\end{definition}
\begin{remark}Remark that $X^n_{**}$ and $S^n_{**}X$ are complementary of closed of codimension 
$4$ in $X^n$ and $S^nX$, respectively, while 
$X^{[n]}_{**}$ is the
complementary of a closed of codimension $2$ 
in~$X^{[n]}$.
\end{remark}
In the rest of the article we will be concerned with the detailed study of the restrictions $j^* E^{p,q}_1$, $j^* \mc{E}^{p,q}_1$ of the spectral sequences 
$E^{p,q}_1$ and $\mc{E}^{p,q}_1$, to the  open sets $X^n_{**}$ and $S^n_{**}X$, respectively. Since $j^*$ is an exact functor, we obtain other two spectral 
sequences $j^*E^{p,q}_1$, $j^*\mc{E}^{p,q}_1$, associated to the derived tensor product $j^*\comp{\mc{C}}_{L} \tens^L 
\cdots 
\tens^L j^*\comp{\mc{C}}_{L}$ and its invariants $(j^*\comp{\mc{C}}_{L} \tens^L 
\cdots 
\tens^L j^*\comp{\mc{C}}_{L})^{\perm_n}$, and abutting to 
$H^{p+q}(j^*\comp{\mc{C}}_{L} \tens^L 
\cdots 
\tens^L j^*\comp{\mc{C}}_{L})$ and 
$H^{p+q}(j^*\comp{\mc{C}}_{L} \tens^L 
\cdots 
\tens^L j^*\comp{\mc{C}}_{L})^{\perm_n}$, respectively.  The 
big advantage of these spectral sequences is that now the complex 
$j^* \comp{\mc{C}}_L$ is the $2$-term complex, concentrated in degrees~$0$ and~$1$: $$ 0 \rTo j^* \mc{C}^0_L \rTo 
j^* \mc{C}^1_L \rTo 0 \; ;$$hence the terms $j^* E^{p,q}_1$ and $j^*\mc{E}^{p,q}_1$ vanish if 
$p>n$.
\begin{crl}
The sheaf $j^*\mu_*(L^{[n]} \tens \cdots \tens  L^{[n]})$ can be identified with 
the term $j^* \mc{E}^{0,0}_{\infty}$ of the spectral sequence 
$j^* \mc{E}^{p,q}_1 = (j^* E^{p,q}_1)^{\perm_n}$.
\end{crl}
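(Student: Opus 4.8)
The statement is the restriction to the open set $S^n_{**}X$ of Corollary \ref{crl: inva}, and the plan is to deduce it from that corollary by applying the \emph{exact} functor $j^*$ and checking that $j^*$ commutes with the three operations at play: the invariant push-forward $\pi_*^{\perm_n}$, the formation of the pages $\mc{E}^{p,q}_r$ of the spectral sequence, and the passage to its abutment $\mc{E}^{0,0}_\infty$. Since $j^*$ is the restriction to an open subset, it is exact, preserves local freeness and carries quasi-isomorphisms to quasi-isomorphisms; these facts are what drive the argument.

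First I would verify that $j^*$ commutes with the invariants. Recall that $\mc{E}^{p,q}_1 = (E^{p,q}_1)^{\perm_n} = \pi_*^{\perm_n} E^{p,q}_1$ with $\pi : X^n \rTo S^nX$, and write $\pi' : X^n_{**} \rTo S^n_{**}X$ for the restriction of $\pi$. Because $W$ is $\perm_n$-invariant, $X^n_{**}$ is $\perm_n$-saturated and $X^n_{**} = \pi^{-1}(S^n_{**}X)$, so the square
\begin{diagram}
X^n_{**} & \rInto^{j_{X^n}} & X^n \\
\dTo^{\pi'} & & \dTo_{\pi} \\
S^n_{**}X & \rInto^{j_{S^nX}} & S^nX
\end{diagram}
is cartesian. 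For such a saturated open, restriction of a push-forward is elementary, $j^*_{S^nX}\pi_* \simeq \pi'_* j^*_{X^n}$, and since in characteristic zero the $\perm_n$-invariants functor is exact and commutes with the exact $j^*$, I obtain
$$ j^*_{S^nX}\,\mc{E}^{p,q}_1 \simeq (\pi'_*\, j^*_{X^n} E^{p,q}_1)^{\perm_n} = (j^*_{X^n} E^{p,q}_1)^{\perm_n}\;, $$
which is exactly the first page $j^*\mc{E}^{p,q}_1$ of the statement. Because $j^*$ is exact and preserves local freeness, it carries the term-to-term locally free resolution $R^{\bullet,\bullet} \rTo \comp{\mc{C}}_L$ (and the double complex built from it in the Remark after Corollary \ref{cor:imageidentif}) to the corresponding data for $j^*\comp{\mc{C}}_L$; hence it commutes with all differentials and with every page $\mc{E}^{p,q}_r$.

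Next I would pass to the abutment. The restricted spectral sequence is bounded: since $j^*\comp{\mc{C}}_L$ is concentrated in degrees $0$ and $1$ one has $j^*E^{p,q}_1 = 0$ for $p>n$, and the $q$-range is bounded as well, so for each $(p,q)$ the pages stabilize after finitely many steps. The exact functor $j^*$ therefore commutes with the passage to the limit, giving $j^*_{S^nX}(\mc{E}^{0,0}_\infty) \simeq (j^*\mc{E})^{0,0}_\infty$. Applying $j^*_{S^nX}$ to the isomorphism $\mu_*(L^{[n]} \tens \cdots \tens L^{[n]}) \simeq \mc{E}^{0,0}_\infty$ of Corollary \ref{crl: inva} then yields
$$ j^*\mu_*(L^{[n]} \tens \cdots \tens L^{[n]}) \simeq j^*_{S^nX}(\mc{E}^{0,0}_\infty) \simeq (j^*\mc{E})^{0,0}_\infty\;, $$
which is the desired identification of $j^*\mu_*(L^{[n]} \tens \cdots \tens L^{[n]})$ with the term $j^*\mc{E}^{0,0}_\infty$ of $j^*\mc{E}^{p,q}_1 = (j^* E^{p,q}_1)^{\perm_n}$.

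The whole argument is formal exactness of $j^*$, so there is no deep obstacle; the only points genuinely requiring care are the compatibility $j^*_{S^nX}\pi_*^{\perm_n} \simeq \pi'^{\,\perm_n}_* j^*_{X^n}$ (where the interaction of restriction with the geometric quotient enters, handled via the cartesian square above) and the commutation of $j^*$ with the abutment $\mc{E}^{0,0}_\infty$ (handled via boundedness of the restricted spectral sequence). Both are routine once these observations are made.
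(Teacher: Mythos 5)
Your proof is correct and follows exactly the route the paper intends: the paper states this corollary without proof, treating it as an immediate consequence of Corollary \ref{crl: inva} together with the exactness of $j^*$ (the commutation $j^*\pi_*^{\perm_n} \simeq \pi'^{\,\perm_n}_* j^*$ you check is the same flat-base-change observation the paper itself records in Lemma \ref{415}). Your writeup simply makes explicit the routine verifications — compatibility with invariants, with the pages, and with the abutment via boundedness — that the paper leaves implicit.
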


The following lemmas allow to deduce results on the whole $S^nX$ from the results on the restriction to the open set $S^n_{**}X$. 
\begin{lemma}\label{414}Consider the open immersion: 
$ j : S^nX_{**} \rInto S^nX $. Then, for any vector bundle $A$ on $X^{[n]}$,
$ \mu_* (A)  \simeq j_* j^* (\mu_* A)$.
\end{lemma}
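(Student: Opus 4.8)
The plan is to lift the identity to the Hilbert scheme $X^{[n]}$, where the relevant complement has codimension $2$, establish it there using that $A$ is locally free, and then transport it down to $S^nX$ by base change along open immersions. First I would record that, since $X^{[n]}_{**}=\mu^{-1}(S^n_{**}X)$ by definition, the square
\begin{diagram}
X^{[n]}_{**} & \rInto^{j_{X^{[n]}}} & X^{[n]} \\
\dTo^{\mu_{**}} & \Box & \dTo_{\mu} \\
S^n_{**}X & \rInto^{j} & S^nX
\end{diagram}
is cartesian, where $\mu_{**}$ denotes the restriction of $\mu$. As $j$ is an open immersion, flat base change along the bottom arrow gives the identity $j^* \mu_* A \simeq (\mu_{**})_* j_{X^{[n]}}^* A$; this is really just the tautological fact that restricting $\mu_* A$ to the open set $S^n_{**}X$ may be computed by restricting both the morphism and the sheaf.

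The heart of the argument is the claim that, writing $T := X^{[n]} \setminus X^{[n]}_{**}$, the adjunction unit $A \rTo (j_{X^{[n]}})_* j_{X^{[n]}}^* A$ is an isomorphism on $X^{[n]}$. Here I would invoke that $X^{[n]}$ is smooth and that, by the remark above, $T$ has codimension $2$. Since $A$ is a vector bundle, it is locally free, hence $A_x$ is free over the regular local ring $\FS_{X^{[n]},x}$, so that $\depth_{\FS_{X^{[n]},x}} A_x = \dim \FS_{X^{[n]},x}\geq 2$ for every point $x\in T$. Feeding this into the four term local cohomology sequence
$$ 0 \rTo \mc{H}^0_T(A) \rTo A \rTo (j_{X^{[n]}})_* j_{X^{[n]}}^* A \rTo \mc{H}^1_T(A) \rTo 0 \;, $$
the depth estimate forces $\mc{H}^0_T(A) = \mc{H}^1_T(A)=0$, whence the claimed isomorphism. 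This $S_2$/codimension $2$ step is the only non-formal point of the proof and is where the smoothness of $X^{[n]}$ and the codimension of $T$ enter decisively; note that the codimension $4$ of the complement of $S^n_{**}X$ in $S^nX$, used elsewhere, is not needed for this lemma.

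It then remains to assemble the pieces. Applying $\mu_*$ to the isomorphism just proved and using functoriality of the direct image together with the equality $\mu \circ j_{X^{[n]}} = j \circ \mu_{**}$, I obtain
$$ \mu_* A \simeq \mu_*(j_{X^{[n]}})_* j_{X^{[n]}}^* A = (j \circ \mu_{**})_* j_{X^{[n]}}^* A = j_* (\mu_{**})_* j_{X^{[n]}}^* A \simeq j_* j^* \mu_* A \;, $$
the last isomorphism being the base change recorded above. This is exactly the assertion $\mu_*(A) \simeq j_* j^*(\mu_* A)$.
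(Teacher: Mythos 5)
Your proof is correct and follows essentially the same route as the paper: flat base change on the cartesian square $X^{[n]}_{**} \to X^{[n]}$ over $S^n_{**}X \to S^nX$, followed by the identification $(j_{X^{[n]}})_* j_{X^{[n]}}^* A \simeq A$, which the paper asserts directly from $A$ being a vector bundle and $X^{[n]} \setminus X^{[n]}_{**}$ having codimension $2$. Your explicit depth/local-cohomology justification of that last step is exactly the argument the paper leaves implicit (compare its lemma \ref{localcoho}), so there is nothing to add.
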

\begin{proof}Consider the commutative diagram:
\begin{diagram}
X^{[n]}_{**} & \rInto^j & X^{[n]} \\
\dTo^{\mu} & & \dTo^{\mu} \\
S^nX_{**} & \rInto ^j & S^nX 
\end{diagram}It is a flat change base,
hence $ j^* (  \mu_* A) \simeq  \mu_* ( j^* A)$. 
Now, applying $j_*$, we have: 
$$ j_* j^* (  \mu_* A) \simeq  j_* \mu_* ( j^* A) \simeq \mu_* j_* j^* A \simeq 
\mu_* A $$because $j_*j^*A \simeq A$, since $A$ is a vector bundle and 
$X^{[n]}_{**}$ is the complementary of a closed subset of codimension $2$ in $X^{[n]}$. \end{proof}
\begin{lemma}\label{415}Let $A$ be a $\perm_n$-vector bundle on $X^n$. Then 
$ j_*j^* A^{\perm_n} \simeq A^{\perm_n}$.
\end{lemma}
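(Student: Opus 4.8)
The plan is to reduce the statement to the smooth variety $X^n$, where $A$ is a genuine vector bundle, by flat base change, in the same spirit as the proof of the previous lemma. First I would record the diagram
\begin{diagram}
X^n_{**} & \rInto^{j_{X^n}} & X^n \\
\dTo^{\pi_{**}} & \Box & \dTo^{\pi} \\
S^n_{**}X & \rInto^{j} & S^nX
\end{diagram}
where $\pi: X^n \rTo S^nX$ is the quotient and $\pi_{**}$ is its restriction. This square is cartesian: since $X^n_{**}$ is $\perm_n$-invariant it is $\pi$-saturated, so $\pi^{-1}(S^n_{**}X) = X^n_{**}$. All four maps are $\perm_n$-equivariant (trivial action downstairs), the horizontal ones are open immersions, hence flat, and $\pi$ is finite, hence proper.

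The first step is to identify $j^* A^{\perm_n}$. Working in characteristic zero, the invariant push-forward $\pi_*^{\perm_n}(-) = (\pi_*(-))^{\perm_n}$ is exact and, being a direct summand via the Reynolds operator, commutes with the flat pullback $j^*$. Combining this with flat base change $j^* \pi_* \simeq (\pi_{**})_* j_{X^n}^*$ for the cartesian square yields
$$ j^* A^{\perm_n} \simeq (\pi_{**})_*^{\perm_n}\bigl(j_{X^n}^* A\bigr)\;. $$
The second step is to push forward by $j$. From the commutativity $j \circ \pi_{**} = \pi \circ j_{X^n}$ I get the equality of functors $j_*(\pi_{**})_* = \pi_*(j_{X^n})_*$, and since $[-]^{\perm_n}$ commutes with the equivariant direct images (again by exactness of invariants and the fact that the Reynolds operator is functorial) this gives
$$ j_* j^* A^{\perm_n} \simeq \pi_*^{\perm_n}\bigl((j_{X^n})_* j_{X^n}^* A\bigr)\;. $$

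The final step is the geometric input. The sheaf $A$ is locally free on the smooth, hence normal, variety $X^n$, and by the remark above the complement $W = X^n \setminus X^n_{**}$ has codimension $4 \geq 2$. Therefore sections of $A$ extend across $W$ (Hartogs extension for locally free, in particular reflexive, sheaves on a normal variety), that is $(j_{X^n})_* j_{X^n}^* A \simeq A$. Substituting this in the last display produces $j_* j^* A^{\perm_n} \simeq \pi_*^{\perm_n} A = A^{\perm_n}$, which is the assertion.

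The only point requiring care—the \emph{main obstacle}, such as it is—is the bookkeeping that $[-]^{\perm_n}$ genuinely commutes both with the flat pullback $j^*$ and with the equivariant push-forwards $(j_{X^n})_*$ and $\pi_*$; all of this is a formal consequence of the exactness and direct-summand nature of the invariants functor in characteristic zero, while the substantive content is simply that $X^n$ is smooth and $W$ has codimension $\geq 2$, which is exactly what makes the Hartogs extension apply. As an alternative avoiding the reduction to $X^n$, one could argue directly that $A^{\perm_n}$ is a direct summand of the Cohen--Macaulay sheaf $\pi_*A$, hence satisfies Serre's condition $S_2$ on the normal variety $S^nX$, and then invoke the insensitivity of $S_2$ sheaves to the removal of a closed subset of codimension $\geq 2$.
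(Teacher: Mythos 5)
Your proof is correct and follows essentially the same route as the paper's: the paper likewise commutes $j^*$ and $j_*$ with $\pi_*$ (flat base change and commutativity of the square) and with the invariants functor $[-]^{\perm_n}$, and then concludes from $j_*j^*A \simeq A$ on $X^n$, which holds exactly for the reason you give — $A$ is a vector bundle and the complement of $X^n_{**}$ has codimension $4 \geq 2$. The $S_2$ argument on $S^nX$ that you sketch as an alternative is a genuinely different (and also valid) route, but your main argument coincides with the paper's proof.
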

\begin{proof} By flat base change we prove again that $j^*$ commutes with 
$\pi_*$. It is clear that $j_*$ commutes with $\pi_*$ as well. 
The lemma follows from the fact that both $j^*$ and $j_*$ 
commute with $[-]^{\perm_n}$.
Then $ j_*j^* A^{\perm_n} \simeq (j_*j^* A)^{\perm_n} \simeq A^{\perm_n}$.
\end{proof}
\subsubsection{The spectral sequences $j^*E^{p,q}_1$ and $j^* \mc{E}^{p,q}_1$}
We make now some remarks on the terms $j^* E^{p,q}_1$. We introduce first  some notations.
\begin{notat}\label{not: puremultitor}
Let $Y$ be an algebraic variety and $F \in \Coh(Y)$. We will denote with 
$\Tor_q ^l(F)$ the $l$-fold multitor
$$ 	\Tor_q ^l(F) := \Tor_q(\underset{l-{\rm times}}{\underbrace{F,\cdots, F}} ) \;.
$$
\end{notat}
\begin{notat}\label{notaziona}
Let $\mc{P}:= \{ I \; | \; \emptyset \neq I \subseteq \{1, \dots, n \} \}$. 
For each $a \in \mc{P}^{\{1, \dots,k \}}$, $k \in \mbb{N}^*$, we will indicate with $I_0(a)$ the multi-index 
$I_0(a) := \cup_{i \, | \, |a(i)| \geq 2 } \, a(i)$, 
$l(a) = \sum_{i=1}^k |a(i)| -k$ and
$J(a) = \cup_{i \, | \, |a(i)|=1} \, a(i)$. We finally set 
$S_0(a):= \{i \in \{1,\dots,k \} \; | \; |a(i)| \geq 2 \}$
and $\lambda(a) : \{0\} \cup J(a) \rTo \mbb{N}$ the function defined by : $\lambda_0(a) = |S_0(a)|$, $\lambda_j(a) = 
|a^{-1}(\{j \})|$ for all\footnote{Here and in the following we consider $J(a)$ as a subset of $\{1, \dots,n\}$.} $j \in J(a)$. 
\end{notat}
\begin{remark}\label{remarchetto}Each term $j^*E^{p,q}_1$ can be written as the direct sum:
\begin{eqnarray*} 
j^*E^{p,q}_1  & = & \bigoplus_{i_1 + \dots + i_k =p} j^* \Tor_{-q}(\mc{C}^{i_1}_L, \dots, \mc{C}^{i_k}_L)  =  \bigoplus_{i_1 + \dots + i_k =p} \bigoplus_{|I_j|=i_j +1 } j^*\Tor_{-q}
(L_{I_1}, \dots, L_{I_k}) \\
& = &\bigoplus_{a \in \mc{P}^{\{1, \dots,k \}}, \; l(a)=p} j^*\Tor_{-q}(L_{a(1)}, \dots, L_{a(k)})
\end{eqnarray*}The support of each term $\Tor_{-q}
(L_{a(1)}, \dots, L_{a(k)})$ is contained in $\cap_{i \in S_0(a)} \Delta_{a(i)}$ if
$I_0(a) \neq \emptyset$, otherwise it is the whole $X^n$. Therefore $j^* \Tor_{-q}(L_{a(1)}, \dots, L_{a(k)})=0$ if $|I_0(a)| \gneq 2$. Consequently we have:
\begin{equation} \label{ee} j^*E^{p,q}_1 = \bigoplus_{\substack{a \in \mc{P}^{\{1, \dots,k\}}, \; l(a)=p \\ |I_0(a)|  \leq 2}} j^*\Tor_{-q}(L_{a(1)}, \dots, L_{a(k)}) \;.
\end{equation}
Motivated by this fact we define 
$\mc{I}^p  = \{ a  \in \mc{P}^{\{1, \dots,k\}}, \; l(a)=p \, , \; |I_0(a)|  \leq 2 \}$, and $(E^{p,q}_1)_0$ the subsheaf of $E^{p,q}_1$ given by: 
\begin{equation}\label{def: e} (E^{p,q}_1)_0 := \bigoplus_{a \in \mc{I}^p} \Tor_{-q}(L_{a(1)}, \dots, L_{a(k)}) \;.\end{equation}By definition of $(E^{p,q}_1)_0$  equation (\ref{ee}) becomes 
\begin{equation} \label{DDD}
j^* E^{p,q}_1 = j^* ( E^{p,q}_1)_0 \;.\end{equation}
It is immediate to see that 
$ (E^{0,0}_1)_0 = E^{0,0}_1$
and that $(E^{0,q}_1)_0 =0$ if $q <0$. Moreover, if $1 \leq p \leq k$ and $a \in \mc{I}^p$, then $|I_0(a)|=2$; hence, if $1 \leq p \leq k$:
$$ (E^{p,q}_1)_0 = \bigoplus_{a \in \mc{I}^p}  \Tor_{-q}(L_{a(1)}, \dots, L_{a(k)}) \simeq  \bigoplus_{a \in \mc{I}^p}
 \Tor_{-q}^p(L_{I_0(a)}) \tens \Tens_{j\in J(a)} L^{\tens^ {\lambda_j(a)}}_{j} \;.$$
We will prove in lemma \ref{torinv} that 
$ \Tor_{-q}^p(L_{I_0(a)}) \simeq 
\Lambda^{-q}(N_{I_0(a)}^* \tens \rho_p) \tens L_{I_0(a)}^{p}$, with 
$N_{I_0(a)}$ the normal bundle of $\Delta_{I_0(a)}$ in $X^n$, and 
$\rho_p$ the standard representation of the symmetric group $\perm_p$. Therefore the sheaf $(E^{p,q}_1)_0$ is a direct sum of vector bundles restricted to pairwise diagonals $\Delta_{I_0(a)}$, $a \in \mc{I}^p$, hence Cohen-Macauley and 
pure of codimension 2. It vanishes exactly when $q < 2(1-p)$.  
It is clear that $(E^{p,q}_1)_0=0$ if $p >k$.

\sloppy The differentials $d^{p,q}_1$ of the spectral sequence 
$E^{p,q}_1$ induce differentials $(d^{p,q}_1)_0: (E^{p,q}_1)_0 \rTo 
(E^{p+1,q}_1)_0$, making $(E^{\bullet,q}_1)_0$ into a complex;  the natural projection $E^{\bullet,q}_1 \rTo (E^{\bullet,q}_1)_0,$ is then an epimorphism of 
complexes. 
Since the 
$G$-action on the complex $E^{\bullet,q}_1$ preserves the dimensional components, the complex $(E^{\bullet,q}_1)_0$ inherit the $G$-linearization. We can therefore define 
the complexes $(\mc{E}^{\bullet,q}_1)_0$ as the $\perm_n$-invariants $(\mc{E}^{\bullet,q}_1)_0 := (E^{\bullet,q}_1)_0^{\perm_n}$ of the complexes $(E^{\bullet,q}_1)_0$. \end{remark}

We have the following lemma:
\begin{lemma}\label{localcoho}
Let $X$ be a smooth algebraic variety, $F \in \Coh(X)$, such that $F = 
\oplus_{i=1}^l F_i$, with $F_i$ vector bundles on smooth subvarieties $Z_i$ of $X$, extended by zero on $X \setminus Z_i$. Let  $Y$ be a  closed subscheme, $Y \subseteq \cap_i Z_i$, with 
$\codim_{Z_i}Y \geq 2$ for all $i$. Then, if $j$ is the open immersion $j : X \setminus Y \rInto X$, we have that $ j_*j^*F =F$. 
\end{lemma}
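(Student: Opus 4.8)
The plan is to translate the desired isomorphism into the vanishing of two local cohomology sheaves. For the open immersion $j : X \setminus Y \rInto X$ with closed complement $Y$ there is the canonical exact sequence
$$ 0 \rTo \mc{H}^0_Y(F) \rTo F \rTo j_* j^* F \rTo \mc{H}^1_Y(F) \rTo 0\;, $$
whose middle arrow $F \rTo j_*j^*F$ is precisely the adjunction morphism. Hence $j_*j^*F \simeq F$ holds if and only if $\mc{H}^0_Y(F) = \mc{H}^1_Y(F)=0$. Since the functors $\mc{H}^q_Y$ commute with the finite direct sum $F = \oplus_{i=1}^l F_i$, it is enough to prove $\mc{H}^0_Y(F_i) = \mc{H}^1_Y(F_i) = 0$ for each $i$.

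Next I would move the computation onto each $Z_i$. Writing $\iota_i : Z_i \rInto X$ for the closed immersion, we have $F_i = \iota_{i*}G_i$ with $G_i$ a vector bundle on $Z_i$, and $Y \subseteq Z_i$ by hypothesis. Because $\iota_i$ is a closed (in particular affine) immersion, $\iota_{i*}$ is exact and commutes with the sections supported along $Y$, so that $\mc{H}^q_Y(F_i) \simeq \iota_{i*}\mc{H}^q_Y(G_i)$, where now local cohomology is taken on $Z_i$ along $Y \subseteq Z_i$. The problem is thus reduced to showing $\mc{H}^q_Y(G_i)=0$ for $q=0,1$, for a locally free sheaf $G_i$ on the smooth variety $Z_i$.

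Finally I would invoke Grothendieck's depth criterion: for a coherent sheaf $G$ one has $\mc{H}^q_Y(G)=0$ for all $q < \depth_Y G$. Since $Z_i$ is smooth, hence Cohen-Macaulay, and $G_i$ is locally free, at every point of $Y$ the depth of $G_i$ along $Y$ equals the codimension, so that $\depth_Y G_i = \codim_{Z_i}Y \geq 2$ by hypothesis. Consequently $\mc{H}^0_Y(G_i) = \mc{H}^1_Y(G_i)=0$; reassembling over $i$ gives $\mc{H}^0_Y(F) = \mc{H}^1_Y(F)=0$, and therefore $F \simeq j_*j^*F$, as claimed.

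The hard part will be the reduction step of the second paragraph, namely checking that the local cohomology of $F_i$ computed on the ambient smooth variety $X$ agrees with that of $G_i$ computed on the smooth subvariety $Z_i$ --- this is the compatibility of $\mc{H}^q_Y(-)$ with the exact closed pushforward $\iota_{i*}$ --- together with the verification that the relevant depth is \emph{exactly} the codimension and not merely bounded below by it (here the Cohen-Macaulay property of $Z_i$ is what guarantees $\depth = \codim$). Once these identifications are secured, the codimension hypothesis $\codim_{Z_i}Y \geq 2$ feeds directly into the vanishing in the two degrees $0$ and $1$ that govern the adjunction morphism.
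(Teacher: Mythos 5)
Your proposal is correct and follows essentially the same route as the paper's proof: both hinge on the four-term exact sequence identifying $j_*j^*F \simeq F$ with the vanishing of $\mc{H}^0_Y(F)$ and $\mc{H}^1_Y(F)$, Grothendieck's depth criterion for local cohomology, and the fact that a vector bundle on a smooth (hence Cohen--Macaulay) subvariety has depth along $Y$ equal to $\codim_{Z_i}Y \geq 2$. The only cosmetic difference is that you explicitly push the computation onto each $Z_i$ via the exact closed pushforward, whereas the paper treats each $F_i$ directly as a Cohen--Macaulay sheaf on $X$, using that depth is intrinsic to the module; the two formulations are equivalent.
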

\begin{proof}Recall that for a locally noetherian scheme $X$, 
$Y$ a closed subscheme, $E \in \Coh(X)$, we have the following condition on the local cohomology module $\mc{H}^i_Y(E)$ (see \cite{HartshorneLC}):
$$ \forall \; \mf{p} \in Y \quad \depth E_{\mf{p}} \geq k \iff 
\mc{H}_Y^i(E) =0 \quad \forall \; i <k \;,$$where here $\mf{p}$ indicates a \emph{scheme-theoretic point} of $Y$.
If $E$ is a Cohen-Macauley 
sheaf and $j$ is the open immersion $j : X \setminus Y \rInto X$,
the above condition implies: 
$$ \forall \; \mf{p} \in Y \quad \dim_{\FS_{X, \mf{p}}} 
E_{\mf{p}} \geq 2 \iff 
j_* j^* E = E \;.$$Since every sheaf $F_i$ in the hypothesis of the lemma is Cohen-Macauley, because a vector bundle on the smooth subvariety $Z_i$, and since 
$ \dim_{\FS_{X, \mf{p}}} 
(F_i)_{\mf{p}} \geq 2 $ because of the hypothesis $\codim_{Z_i} Y \geq 2$, we can 
immediately infer from the facts on local cohomology that $j_*j^*F_i = F_i$ for all $i$, from which the lemma follows. 
\end{proof}
\begin{lemma}\label{le}We have the following relations:
$$ ( E^{p,q}_1)_0 = j_* j^* E^{p,q}_1 \qquad , \qquad  ( \mc{E}^{p,q}_1)_0 = j_* j^*
 \mc{E}^{p,q}_1 \;.$$
\end{lemma}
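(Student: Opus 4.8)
The plan is to reduce the statement to Lemma \ref{localcoho} by exploiting the decomposition of $(E^{p,q}_1)_0$ into vector bundles supported on pairwise diagonals. First I would invoke equation (\ref{DDD}), which gives $j^* E^{p,q}_1 = j^*(E^{p,q}_1)_0$ and hence $j_* j^* E^{p,q}_1 = j_* j^*(E^{p,q}_1)_0$; so it suffices to establish the single clean identity $(E^{p,q}_1)_0 = j_* j^*(E^{p,q}_1)_0$ on $X^n$, and afterwards its counterpart for the invariants on $S^n X$.

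For the first identity I would feed $(E^{p,q}_1)_0$ into Lemma \ref{localcoho}, one summand at a time. By Remark \ref{remarchetto} together with the $\Tor$ computation of Lemma \ref{torinv}, each summand $\Tor_{-q}(L_{a(1)}, \dots, L_{a(k)})$ of $(E^{p,q}_1)_0$ is a vector bundle on a smooth subvariety $Z_a$ of $X^n$: for $p=0$, where the only non-zero term is $q=0$, this is all of $X^n$, while for $1 \leq p \leq k$ one has $|I_0(a)|=2$ and $Z_a = \Delta_{I_0(a)}$, a pairwise diagonal of codimension $2$. Thus $(E^{p,q}_1)_0$ has exactly the shape required by Lemma \ref{localcoho}, with $Y = W$, provided the relevant codimension condition holds summand by summand.

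The key geometric step, which I expect to be the main point to verify, is the estimate $\codim_{Z_a}(W \cap Z_a) \geq 2$. For $p=0$ this is immediate, since $\codim_{X^n} W = 4$. For $Z_a = \Delta_{I_0}$ with $|I_0|=2$ one argues that every irreducible component of $W$ --- namely $\Delta_J$ with $|J|=3$, and $\Delta_I \cap \Delta_{I'}$ with $|I|=|I'|=2$, $I \cap I' = \emptyset$, all of codimension $4$ in $X^n$ --- meets $\Delta_{I_0}$ in a closed set of codimension at least $2$ inside $\Delta_{I_0}$: either the component is already contained in $\Delta_{I_0}$, in which case it has codimension $4-2=2$ there, or its intersection with $\Delta_{I_0}$ is a smaller diagonal of even higher codimension. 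Granting this, Lemma \ref{localcoho} yields $j_* j^*(E^{p,q}_1)_0 = (E^{p,q}_1)_0$ (the degenerate cases $p>k$ and $p=0$, $q<0$, where $(E^{p,q}_1)_0 = 0$, being trivial), and combining this with the reduction above proves the first relation.

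Finally, the relation for the invariants follows by applying $\perm_n$-invariants to the first one. Since $(\mc{E}^{p,q}_1)_0 = ((E^{p,q}_1)_0)^{\perm_n}$ and $\mc{E}^{p,q}_1 = (E^{p,q}_1)^{\perm_n}$ by definition, and since the functor $[-]^{\perm_n}$ commutes with both $j^*$ and $j_*$ through the flat base change along the quotient $\pi : X^n \rTo S^n X$ exactly as in the proof of Lemma \ref{415}, taking invariants in $(E^{p,q}_1)_0 = j_* j^* E^{p,q}_1$ descends the identity to $S^n X$ and gives $(\mc{E}^{p,q}_1)_0 = j_* j^* \mc{E}^{p,q}_1$, as desired.
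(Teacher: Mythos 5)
Your proof is correct and takes essentially the same route as the paper's: reduce via equation (\ref{DDD}) to showing $(E^{p,q}_1)_0 = j_*j^*(E^{p,q}_1)_0$, apply the local cohomology lemma \ref{localcoho} to the summands of $(E^{p,q}_1)_0$, which are vector bundles supported on the codimension-$2$ diagonals $\Delta_{I_0}$, using the estimate $\codim_{\Delta_{I_0}}(Z \cap \Delta_{I_0}) \geq 2$ for each component $Z$ of $W$, and then obtain the second relation by taking $\perm_n$-invariants (which commute with $j^*$ and $j_*$ as in lemma \ref{415}). You merely spell out details the paper leaves implicit, such as the case $p=0$ and the component-by-component codimension check.
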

\begin{proof}The first relation  follows easily from  lemma \ref{localcoho} remarking firstly that $(E^{p,q}_1)_0$ is a 
vector bundle or a direct sum of vector bundles supported in  diagonals 
$\Delta_I$, $|I|=2$, which are
$2$-codimensional smooth subvarieties; secondly, that, if $|I|=2$,  $\codim_{\Delta_I} Z \cap \Delta_{I} \geq 2$ for every irreducible component $Z$ of $W$ intersecting $\Delta_I$.  The second relation follows from the first taking $\perm_n$-invariants. 
\end{proof}
\subsection{The case $k=2$}
Remarks made so far about the spectral sequences $E^{p,q}_1$, $\mc{E}^{p,q}_1$ and 
their 
restrictions to the open subsets $X^n_{**}$, $S^n_{**}X$, respectively, are sufficient to draw conclusions on the sheaf
$\mu_*(L^{[n]} \tens L^{[n]})$.
The method we follow consists in getting the result on $S^n_{**}X$ and 
then in extending it on the whole $S^nX$ by applying the functor $j_*$ using lemmas \ref{414}, \ref{415}. We now study in details the spectral sequence $j^* 
\mc{E}^{p,q}_1= (j^* E^{p,q}_1)^{\perm_n}$ for $k=2$.
\begin{pps}\label{318}
For $k=2$ the spectral sequence $j^* \mc{E}^{p,q}_1$ degenerates at level 
$\mc{E}_2$. 
\end{pps}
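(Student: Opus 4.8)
The plan is to exploit that, after restriction to $X^n_{**}$, the complex $j^*\comp{\mc{C}}_L$ is concentrated in degrees $0$ and $1$, so that for $k=2$ the spectral sequence $j^*\mc{E}^{p,q}_1$ is supported in the three columns $p \in \{0,1,2\}$ and in the rows $q \leq 0$. First I would note that this restriction on the columns forces $d_r = 0$ for every $r \geq 3$ for pure range reasons: such a differential would map a column $p \geq 0$ to a column $p+r \geq 3$, where every term vanishes. Moreover, since $\mc{C}^0_L$ is locally free, $E^{0,q}_1 \simeq \Tor_{-q}(\mc{C}^0_L, \mc{C}^0_L)$ vanishes for $q \neq 0$, hence $\mc{E}^{0,q}_2 = 0$ unless $q=0$. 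Therefore the only higher differential that can fail to vanish is $d_2 \colon \mc{E}^{0,0}_2 \rTo \mc{E}^{2,-1}_2$, and degeneration at $\mc{E}_2$ is equivalent to the vanishing of the target $\mc{E}^{2,-1}_2$.

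To identify this target, I would look at the $d_1$-complex in the row $q=-1$. Again because $\mc{C}^0_L$ is locally free, $\Tor_1(\mc{C}^0_L, \mc{C}^1_L)=0$, so $\mc{E}^{1,-1}_1 = 0$; and $\mc{E}^{3,-1}_1 = 0$ since there are no terms in columns $p>k=2$. Thus the row $q=-1$ is concentrated in degree $p=2$ and $\mc{E}^{2,-1}_2 \simeq \mc{E}^{2,-1}_1 = (j^*\Tor_1(\mc{C}^1_L, \mc{C}^1_L))^{\perm_n}$. Writing $\mc{C}^1_L = \bigoplus_{|I|=2} L_I$, the sheaf $\Tor_1(\mc{C}^1_L, \mc{C}^1_L)$ splits as $\bigoplus_{|I|=|I'|=2}\Tor_1(L_I, L_{I'})$. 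For $I \neq I'$ the summand $\Tor_1(L_I, L_{I'})$ is supported on $\Delta_I \cap \Delta_{I'} \subseteq W$ and so is killed by $j^*$; only the diagonal terms $\Tor_1(L_I, L_I)$ survive on $X^n_{**}$.

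It then remains to show that $\bigl(\bigoplus_{|I|=2}\Tor_1(L_I, L_I)\bigr)^{\perm_n}$ vanishes. By lemma \ref{torinv}, $\Tor_1(L_I, L_I) \simeq (N^*_I \tens \rho_2) \tens L_I^{\tens 2}$ is a rank $2$ bundle supported on $\Delta_I$, where $N_I$ is the normal bundle of $\Delta_I$ in $X^n$. Since $\perm_n$ permutes the $2$-element subsets $I$ transitively, Danila's lemma \ref{Danila} reduces the invariants to $(\Tor_1(L_{I_0}, L_{I_0}))^{\Stab_{\perm_n}(I_0)}$ for $I_0 = \{1,2\}$, with $\Stab_{\perm_n}(I_0) = \perm(I_0) \times \perm(\overline{I_0})$. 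The heart of the argument is the action of the transposition $\sigma \in \perm(I_0)$ on the rank $2$ fiber: geometrically $\sigma$ reverses the normal directions to $\Delta_{I_0}$ and so acts by $-\id$ on $N^*_{I_0}$, while the two linearization signs $\epsilon_{\sigma, I_0} = -1$ coming from remark \ref{farfalla} on the two copies of $L_{I_0}$ multiply to $+1$ on $L_{I_0}^{\tens 2}$; hence $\sigma$ acts on the whole fiber by $-\id$. The fiber is therefore a direct sum of copies of the sign representation of $\perm(I_0)$, whose invariants vanish, and a fortiori $(\Tor_1(L_{I_0}, L_{I_0}))^{\Stab_{\perm_n}(I_0)} = 0$.

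This gives $\mc{E}^{2,-1}_2 = 0$, so $d_2 = 0$ and $j^*\mc{E}^{p,q}_1$ degenerates at $\mc{E}_2$. The main obstacle is exactly this last sign computation: one has to check with care --- via the Koszul resolution of $\FS_{\Delta_{I_0}}$, on which $\sigma$ sends the conormal generators $u_i = x^{(i)}_1 - x^{(i)}_2$ to $-u_i$ --- that the geometric $-\id$ on the conormal bundle is genuinely not cancelled by the two linearization signs, so that the surviving $\perm(I_0)$-representation on $\Tor_1$ is the invariant-free sign representation and not the trivial one.
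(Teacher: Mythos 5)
Your proof is correct and follows essentially the same route as the paper's: both reduce degeneration to the vanishing of the $\perm_n$-invariants of $j^*E^{2,-1}_1 = \bigoplus_{|I|=2}\Tor_1(L_I,L_I)$, and both kill these invariants via Danila's lemma together with the observation that $\perm(I_0)$ acts on the fiber of $N^*_{I_0}\tens L_{I_0}^{\tens 2}$ by (two copies of) the sign representation, the geometric $-\id$ on the conormal bundle being uncancelled by the two linearization signs on $L_{I_0}^{\tens 2}$. The only cosmetic difference is that the paper checks the vanishing of invariants on sections over affine opens $U^n$ (via lemma \ref{lemmino}), while you argue fiberwise on the rank-$2$ bundle; the content is identical.
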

\begin{proof}Taking $\perm_n$-invariants in  (\ref{DDD}) we get
$j^* \mc{E}^{p,q}_1 = j^* (\mc{E}^{p,q}_1)_0$. For $q <0$, by remark \ref{remarchetto},   
$(\mc{E}^{p,q}_1)_0 \neq 0$ only if $0 \leq p \leq k$, 
$2(1-p) \leq q \leq 0$. We  analyse complexes 
$j^*\mc{E}^{\bullet ,q}_1$ for $q=-1,-2$. 
For $q=-1$ the only possibility is $j^* \mc{E}^{2,-1}_1$. 
We prove that it vanishes. To do this it is sufficient to prove that 
$(E^{2,-1}_1)_0^{\perm_n}=0$. By lemma \ref{lemmino}, it is sufficient to prove 
that $H^0(U^n,(E^{2,-1}_1)_0^{\perm_n} )=0$ for every affine open subset $U$
of $X$. 
Since 
$(E^{2,-1}_1)_0 = \bigoplus_{|I|=2} L_I^{\tens^2}\tens N_{I}^* $
we have: 
$$ 
H^0(U^n ,(E^{2,-1}_1)_0)^{\perm_n} = \left[ 
\bigoplus_{|I|=2} H^0(U^n, L_I^{\tens^2} \tens N^*_{I}) 
\right] ^{\perm_n} =   H^0(U^n, L_{12}^{\tens ^2} \tens 
N_{12}^*)^{\Stab_{\perm_n}(\{1,2\})} \;,$$applying Danila's lemma \ref{Danila} keeping 
into account that there is only one homogeneous component\footnote{there is only one $\perm_n$-orbit in the set of 
multi-indexes of length $2$}
for the action of 
$\perm_n$, indexed, for example, by the multi-index $\{1, 2\}$. 
Now $\Stab_{\perm_n}(\{ 1, 2 \}) \simeq \perm(\{ 1, 2 \}) \times \perm(
\overline{ \{ 1, 2 \}})$. 
The factor 
$\perm(\{1,2\})$
acts on $L_{12}$ with the alternant representation $\epsilon_{2}$; it acts
moreover fiberwise on $N_{12}^*$ with the representation $\epsilon_2 
\oplus \epsilon_{2}$. 
Therefore $\perm(\{1,2\})$ acts fiberwise on $L_{12}^{\tens 2} \tens 
N_{12}^*$ with the representation $\epsilon_2^{\tens 2} \tens 
(\epsilon_2 
\oplus \epsilon_{2})= \id \tens (\epsilon_2 \oplus \epsilon_2)$. Consequently
there are no $\perm(\{1,2\})$-invariant sections: hence $H^0(U^n, L_{12}^{\tens ^2}~\tens~N_{12}^*)^{\Stab_{\perm_n}(\{1,2\})} = 
0$ and $j^* \mc{E}^{2,-1}_1 =0$. 
The remaining term $j^* \mc{E}^{2,-2}_1$ is on the diagonal.
This proves that 
the spectral sequence degenerates at level $\mc{E}_2$.
\end{proof}
\begin{theorem}\label{thm:BDk=2}Let $X$ be a smooth quasi-projective algebraic surface, $L$ a line bundle on $X$. Let 
$X^{[n]}$ be the Hilbert scheme of $n$ points on $X$, $S^n X$ the
symmetric variety and $\mu: X^{[n]} \rTo S^n X$ the Hilbert-Chow
morphism.
We have the exact sequence:  
\begin{equation}\label{eq:BDk=2}
0 \rTo \mu_*(L^{[n]} \tens L^{[n]}) \rTo (\mc{C}^0_L \tens 
\mc{C}^0_L)^{\perm_n} \rTo^{(\del^0_L \tens \id)^{\perm_n}} (\mc{C}^1_L \tens 
\mc{C}^0_L)^{\perm_n} \rTo 0
\end{equation}
\end{theorem}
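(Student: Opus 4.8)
The plan is to establish the sequence first on the big open set $S^n_{**}X$, where Proposition~\ref{318} gives control of the spectral sequence, and then to extend it to all of $S^nX$ by the local cohomology lemmas of this section. By Corollary~\ref{crl: inva} I identify $\mu_*(L^{[n]} \tens L^{[n]})$ with the term $\mc{E}^{0,0}_\infty$ of the spectral sequence $\mc{E}^{p,q}_1 = (E^{p,q}_1)^{\perm_n}$. Restricting to $S^n_{**}X$, the complex $j^*\comp{\mc{C}}_L$ reduces to the two-term complex $0 \rTo j^*\mc{C}^0_L \rTo j^*\mc{C}^1_L \rTo 0$, so the nonzero columns of $j^*\mc{E}^{p,q}_1$ are confined to $p \in \{0,1,2\}$, and by Proposition~\ref{318} the sequence degenerates at $\mc{E}_2$. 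No differential enters $\mc{E}^{0,0}$, while the outgoing $d_2$ takes values in $j^*\mc{E}^{2,-1}_2$, a subquotient of $j^*\mc{E}^{2,-1}_1 = 0$; hence on $S^n_{**}X$ one has $j^*\mu_*(L^{[n]} \tens L^{[n]}) \simeq \mc{E}^{0,0}_\infty = \mc{E}^{0,0}_2 = \ker(j^* d^{0,0}_1)$.

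Next I match this kernel with that of the single map in the statement. The differential $d^{0,0}_1$ decomposes as the pair of components $(\del^0_L \tens \id)^{\perm_n}$ and $\pm (\id \tens \del^0_L)^{\perm_n}$ landing in the two summands $(\mc{C}^1_L \tens \mc{C}^0_L)^{\perm_n}$ and $(\mc{C}^0_L \tens \mc{C}^1_L)^{\perm_n}$ of $\mc{E}^{1,0}_1$, so $\ker d^{0,0}_1$ is the intersection of the two kernels. The crucial observation is that on $\perm_n$-invariant sections these two kernels coincide: writing a local invariant section of $\mc{C}^0_L \tens \mc{C}^0_L$ as $(f_{ij})$, invariance under the transposition $(a\,b)$ identifies $f_{ia}$ with $f_{ib}$ (and $f_{aj}$ with $f_{bj}$) via the coordinate swap $\sigma_*$, which acts as the identity on sections restricted to the diagonal $\Delta_{ab}$; combining this with the relations $(f_{bj}-f_{aj})|_{\Delta_{ab}} = 0$ imposed by $(\del^0_L \tens \id)f = 0$ yields $(f_{ia}-f_{ib})|_{\Delta_{ab}} = 0$, that is $(\id \tens \del^0_L)f = 0$, and symmetrically. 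Thus $\ker(j^* d^{0,0}_1) = \ker\big(j^*(\del^0_L \tens \id)^{\perm_n}\big)$ on $S^n_{**}X$.

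To obtain the statement on all of $S^nX$ I argue that both $\mu_*(L^{[n]} \tens L^{[n]})$ and $\mc{K} := \ker\big((\del^0_L \tens \id)^{\perm_n}\big)$ are unchanged by $j_* j^*$ and agree on $S^n_{**}X$. The former is $j_*j^*$-fixed by Lemma~\ref{414}, being the pushforward of the vector bundle $L^{[n]} \tens L^{[n]}$. For the latter, the sheaves $(\mc{C}^0_L \tens \mc{C}^0_L)^{\perm_n}$ and $(\mc{C}^1_L \tens \mc{C}^0_L)^{\perm_n}$ are $j_*j^*$-fixed by Lemmas~\ref{415}, \ref{le} and~\ref{localcoho} (they are vector bundles, respectively on $X^n$ and on the codimension-$2$ diagonals $\Delta_I$, $|I|=2$, whose intersection with $W$ has codimension $\geq 2$); since $j^*$ is exact and $j_*$ is left exact, $\mc{K}$, the kernel of a morphism between two $j_*j^*$-fixed sheaves, is itself $j_*j^*$-fixed. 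Hence $\mc{K} = j_*(\mc{K}|_{S^n_{**}X}) = j_*(\mu_*|_{S^n_{**}X}) = \mu_*(L^{[n]} \tens L^{[n]})$ on all of $S^nX$.

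It remains to prove that $(\del^0_L \tens \id)^{\perm_n}$ is surjective onto $(\mc{C}^1_L \tens \mc{C}^0_L)^{\perm_n}$, which is the main obstacle. By Lemma~\ref{lemmino} it suffices to show, for every affine open $U \subseteq X$, that $H^0(U^n, \mc{C}^0_L \tens \mc{C}^0_L)^{\perm_n} \rTo H^0(U^n, \mc{C}^1_L \tens \mc{C}^0_L)^{\perm_n}$ is onto. Decomposing $\mc{C}^1_L \tens \mc{C}^0_L = \bigoplus_{|I|=2,\, j} L_I \tens L_j$ into its two $\perm_n$-orbit types ($j \in I$ and $j \notin I$) and computing invariants by Danila's lemma~\ref{Danila}, one reduces to lifting a single $\Stab$-invariant section over a fixed orbit representative; since $U^n$ is affine, sections of $\mc{C}^0_L \tens \mc{C}^0_L$ restrict surjectively to the diagonals $\Delta_I$, and an explicit invariant preimage is produced by choosing the appropriate off-diagonal components of $(f_{ij})$ and symmetrizing. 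This surjectivity, together with the kernel identification above, yields the exact sequence~(\ref{eq:BDk=2}). The delicate point throughout is the interplay between the $\perm_n$-linearization of $\comp{\mc{C}}_L$ and restriction to the pairwise diagonals, which is exactly what makes both the reduction from the two-component differential $d^{0,0}_1$ to the single map $(\del^0_L \tens \id)^{\perm_n}$ and the surjectivity work, and which is most transparently handled by the Danila-type computation of invariant sections on the charts $U^n$.
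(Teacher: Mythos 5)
Your proposal is correct, and it follows the paper's overall scaffolding — identification of $\mu_*(L^{[n]}\tens L^{[n]})$ with $\mc{E}^{0,0}_\infty$ (Corollary \ref{crl: inva}), restriction to $S^n_{**}X$ where Proposition \ref{318} gives degeneration, extension back to $S^nX$ via the $j_*j^*$ lemmas \ref{414}, \ref{415}, \ref{le}, and surjectivity checked on affine charts $U^n$ via Lemma \ref{lemmino} and Danila's lemma — but you execute the central step by a genuinely different argument. The paper identifies $H^0(j^*\mc{E}^{\bullet,0}_1)$ with $\ker\, j^*(\del^0_L\tens\id)^{\perm_n}$ structurally: it views $j^*\mc{E}^{\bullet,0}_1$ as (a shift of) the mapping cone of $j^*\comp{\mc{A}}\to j^*\comp{\mc{B}}$, where $\comp{\mc{B}}$ is the two-term complex $(\mc{C}^0_L\tens\mc{C}^1_L)^{\perm_n}\to(\mc{C}^1_L\tens\mc{C}^1_L)^{\perm_n}$, and kills $\comp{\mc{B}}$ by computing with Danila's lemma that both of its terms equal $H^0(U,L^{\tens 2})\tens S^{n-2}H^0(U,\FS_X)$ on each chart, with the identity as induced map. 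You instead never compute those invariants at all: you show directly that the two components of $d_1^{0,0}$ have the same kernel on $\perm_n$-invariant sections, exploiting that the transposition $(a\,b)$ fixes $\Delta_{ab}$ pointwise. Your route is more elementary and self-contained; the paper's route is more structural and recycles the Danila computations it needs anyway for the cohomological applications of section \ref{sect:cohom}. Your replacement for the paper's last step — the kernel of a morphism between two $j_*j^*$-fixed sheaves is itself $j_*j^*$-fixed, since $j^*$ is exact and $j_*$ left exact — is also a clean substitute for the paper's application of $j_*$ to the resolution $j^*\comp{\mc{A}}$ of $j^*\mu_*(L^{[n]}\tens L^{[n]})$. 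One bookkeeping slip to fix in your symmetry argument: for an invariant section $(f_{ij})$, invariance under $(a\,b)$ \emph{by itself} already forces $(f_{aj}-f_{bj})|_{\Delta_{ab}}=0$ and $(f_{ia}-f_{ib})|_{\Delta_{ab}}=0$ whenever $j,i\notin\{a,b\}$, so these conditions are not where the content lies; the genuine content of the kernel equality is in the components with index inside $\{a,b\}$, where invariance gives $f_{ab}|_{\Delta_{ab}}=f_{ba}|_{\Delta_{ab}}$ and $f_{aa}|_{\Delta_{ab}}=f_{bb}|_{\Delta_{ab}}$, whence the surviving components of $(\del^0_L\tens\id)f$ and $(\id\tens\del^0_L)f$ agree up to sign. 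With the indices sorted out this way, your argument goes through and yields the theorem.
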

\begin{proof}The complex $j^* \mc{E}^{\bullet, 0}_1$ is the complex: 
$$ 0  \rTo (j^* \mc{C}^0_L \tens j^* \mc{C}^0_L)^{\perm_n} 
\rTo \begin{array}{c}
(j^* \mc{C}^0_L \tens j^* \mc{C}^1_L)^{\perm_n} \\ \bigoplus \\ 
(j^* \mc{C}^1_L \tens j^* \mc{C}^0_L)^{\perm_n} \end{array} \rTo^{(\partial^0_L \tens \id - \id \tens \partial^0_L)^{\perm_n}} 
(j^* \mc{C}^1_L \tens 
j^* \mc{C}^1_L )^{\perm_n} \rTo 0 \;.$$ 
We know \emph{a priori}, by the degeneration of 
the spectral sequence $j^* \mc{E}^{p,q}_1$ at level $\mc{E}_2$ and by the fact that the limit is zero above 
the diagonal, that this complex is exact in degree $>0$ and its 
cohomology in degree $0$ is $j^* \mu_*(L^{[n]} \tens L^{[n]})$. 
We show now that we have isomorphisms: \
$$ (j^* \mc{C}^0_L \tens j^* \mc{C}^1_L)^{\perm_n} \rTo^{(\partial^0_L \tens \id )^{\perm_n} }_{\simeq} (j^* \mc{C}^1_L \tens 
j^* \mc{C}^1_L )^{\perm_n} \lTo^{(\id \tens \partial^0_L)^{\perm_n}}_{\simeq} (j^* \mc{C}^1_L \tens j^* \mc{C}^0_L)^{\perm_n} \;.$$Since $j^*$ is exact, 
it is sufficient to 
prove that we have isomorphisms on $S^nX$: 
$$(\mc{C}^0_L \tens \mc{C}^1_L)^{\perm_n} \rTo^{(\partial^0_L 
\tens \id )^{\perm_n} }_{\simeq} (E^{2,0}_1)_0^{\perm_n} \lTo^{(\id \tens \partial^0_L)^{\perm_n}}_{\simeq} 
(\mc{C}^1_L \tens \mc{C}^0_L)^{\perm_n} $$By lemma \ref{lemmino} this is equivalent to proving that, for any affine open set $U$ of $X$ we have isomorphisms: 
$$ H^0(U^n,\mc{C}^0_L \tens \mc{C}^1_L)^{\perm_n}  \rTo ^{\simeq} 
H^0(U^n,(E^{2,0}_1 )_0)^{\perm_n}  \lTo^{\simeq} H^0(U^n,\mc{C}^1_L \tens 
\mc{C}^0_L)^{\perm_n} \; ;$$where the arrows are 
induced by the arrows above. Now: 
$$ 
H^0(U^n ,\mc{C}^0_L \tens \mc{C}^1_L )^{\perm_n} = \left[ 
\bigoplus_{i, |I|=2} H^0(U^n, L_i \tens L_I)\right]^{\perm_n} \;.$$ 
There are 
two homogeneous components for the action of $\perm_n$, since there are two 
$\perm_n$-orbits in the set $\{ 1, \dots, n\} \times \{I \subseteq \{1, \dots,n \} \; | \; |I|=2 \}$. They are indexed, for example, by $\{1\}, \{1,2\}$ and 
$\{1 \}, \{2,3\}$. Note that the stabilizer for the first couple is $\perm(\overline{\{1,2\}})$, while for the second is $\perm(\{2,3\}) \times 
\perm(\overline{\{1,2,3\}})$. Therefore: 
$$ H^0(U^n,\mc{C}^0_L \tens \mc{C}^1_L)^{\perm_n}  = 
H^0(U^n, L_1 \tens L_{12})^{\perm(\overline{\{1,2\}})} \bigoplus 
H^0(U^n, L_1 \tens L_{23})^{\perm(\{2,3\}) \times \perm(\overline{\{1,2,3\}})}\;,$$but now the second summand vanish, since $\perm(\{2,3\})$ acts with the alternant representation $\epsilon_2$ on $L_{23}$. Consequently:
$$ H^0(U^n,\mc{C}^0_L \tens \mc{C}^1_L)^{\perm_n}  = 
H^0(U, L^{\tens^2}) \tens S^{n-2}H^0(U, \FS_X) \;.$$The 
same computation works for 
$H^0(U^n,\mc{C}^1_L \tens \mc{C}^0_L)^{\perm_n} $. Finally: 
$$ H^0(U^n,(E^{2,0}_1 )_0)^{\perm_n}  = \left[ \bigoplus_{|I|=2} 
H^0(U^n, L_I^{\tens^2} ) \right]^{\perm_n} \simeq H^0(U^n, L_{12}^{\tens^2} 
)^{\perm(\{1,2\}) \times \perm(\overline{\{1,2\}})}$$because there is only one homogeneous component. Note that 
the action of $\perm(\{1,2\})$ is given by the representation 
$\epsilon_2^{\tens 2}=1$, hence trivial. Therefore: 
$$ H^0(U^n,(E^{2,0}_1 )_0)^{\perm_n}  \simeq H^0(U, L^{\tens^2}) \tens S^{n-2}H^0(U, \FS_X) \;.$$The induced maps are the identities. 
Therefore we have the wanted isomorphism. 

Consider now the complexes:
\begin{gather*} \comp{\mc{A}} \colon 0 \rTo (\mc{C}^0_L \tens \mc{C}^0_L)^{\perm_n} \rTo^{(\partial^0_L \tens \id )^{\perm_n} } (\mc{C}^1_L \tens 
\mc{C}^0_L )^{\perm_n} \rTo 0 \\
\comp{\mc{B}}\colon 0 \rTo (\mc{C}^0_L \tens \mc{C}^1_L)^{\perm_n} \rTo^{(\partial^0_L \tens \id )^{\perm_n} } (\mc{C}^1_L \tens 
\mc{C}^1_L )^{\perm_n} \rTo 0 \;.
\end{gather*}
The complex
$j^* \mc{E}^{\bullet, 0}_1$ coincides, up to a shift, with the mapping cone 
of the natural morphism $j^*\comp{\mc{A}} \rTo j^* \comp{\mc{B}}$. 
Since we just proved that $j^*\mc{B}$ is 
quasi-isomorphic to $0$, then $j^* \comp{\mc{A}} \simeq j^* 
\mc{E}^{\bullet, 0}_1$. Therefore $j^*\comp{\mc{A}}$ gives a resolution $ j^* \mu_*(L^{[n]} \tens L^{[n]}) \rTo ^{\simeq } j^* \comp{\mc{A}} $; consequently, applying the functor $j_*$ we obtain an exact sequence:
$$ 0 \rTo \mu_*(L^{[n]} \tens L^{[n]}) \rTo (\mc{C}^0_L \tens \mc{C}^0_L)^{\perm_n} \rTo^{(\partial^0_L \tens \id )^{\perm_n} } (\mc{C}^1_L \tens 
\mc{C}^0_L )^{\perm_n} \;,$$since, by lemma \ref{le},
$j_*j^* (\mc{C}^1_L \tens 
\mc{C}^0_L )^{\perm_n} = (\mc{C}^1_L \tens 
\mc{C}^0_L )^{\perm_n}$.
Now the map $(\partial^0_L \tens \id )^{\perm_n}$ is surjective, since, applying lemma \ref{lemmino}, it induces the following surjection on the sections on affine open subsets $U^n$: 
$$ \begin{array}{c}
H^0(U, L^{\tens ^2}) \tens S^{n-1}H^0(U, \FS_X) \\
\bigoplus \\
H^0(U,L)^{\tens ^2}\tens S^{n-2}H^0(U, \FS_X) \end{array} \rOnto H^0(U,L^{\tens^2}) 
\tens S^{n-2}H^0(U, \FS_X) \;.$$Therefore we get the wanted exact sequence in the statement of the theorem. 
\end{proof}
\section{Exterior powers of  tautological bundles}\label{sect:ext}
In the previous section we studied the invariants 
$\bkrh(L^{[n]}\tens \cdots \tens L^{[n]})^{\perm_n}$ of the image of a 
tensor power of a tautological bundle, or, equivalently, their direct 
image $\mu_*(L^{[n]}\tens \cdots \tens L^{[n]})$ for the 
Hilbert-Chow morphism $\mu$. The reduction to the open set $S^n_{**}X$ 
allowed to compute explicitely the invariants for $k=2$. For higher $k$, 
however, it is still quite difficult to get informations from the 
spectral sequence $j^* \mc{E}^{p,q}_1$, for the simple reason that it does 
not degenerate at level $\mc{E}_2$. In this section we introduce a further simplification, the idea being to consider the $k$-fold tensor power of 
tautological bundles  $L^{[n]}\tens \cdots \tens L^{[n]}$ as a 
$\perm_k$-equivariant bundle on the Hilbert scheme, for the action of $\perm_k$
that permutes the factors, and to decompose 
it in
Schur functors 
$S^{\lambda}L^{[n]}$. This method is effectively useful to compute 
the direct image of the simplest Schur functor, the exterior power 
$\Lambda^k L^{[n]}$.  
\subsection{Derived Schur functors}\label{derivedperm}
Consider the category of finite modules over a commutative ring, or of coherent sheaves over a quasi-projective variety.
The aim of this subsection is to
describe how the group $\perm_k$ acts on the $k$-fold tensor power $\comp{C}
\tens \cdots \tens \comp{C}$ of a complex $\comp{C}$, and to extend this
action to the derived tensor power $\comp{C}
\tens^L \cdots \tens^L \comp{C}$; we will finally  
define general Schur functors of a complex 
of locally free sheaves and its derived version. 

\subsubsection*{Derived $\perm_k$-action}The action of $\perm_k$ is fully
understood once it is understood on \emph{consecutive transpositions}. For the 
double tensor power 
of a complex $\comp{C}$
the right involution on $\comp{C} \tens \comp{C}$
is defined by
$ i(u \tens v) := (-1)^{pq} v \tens u$ if $u \in C^p$, $v
  \in C^q$; the sign $(-1)^{pq}$ balances the sign in the definition
of the differential 
$d_{\comp{C} \tens \comp{C}}^n := \oplus_{p+q=n} \left[    
d_{\comp{C}}^p \tens \id_{C^q} + (-1)^p \id_{C^p} \tens d_{\comp{C}}^q 
\right ]$. 
Suppose now that the complex $\comp{C}$ is right bounded.
To extend this action to the derived double tensor power  $\comp{C} \tens^L
  \comp{C}$ it suffices to replace the complex $\comp{C}$ by a
  $\tens$-acyclic or projective resolution $\comp{K}$ of the complex
  $\comp{C}$ and to take the involution just defined on $\comp{K}
  \tens \comp{K}$. 
\sloppy
Consider now the $k$-fold derived
  tensor power $\comp{C} \tens ^L \cdots \tens ^L \comp{C}$. If
  $\comp{K}$ is an $\tens$-acyclic resolution of $\comp{C}$, the group
  $\perm_k$ acts on the derived tensor product $\comp{C} \tens ^L \cdots \tens
  ^L \comp{C} \simeq \comp{K} \tens \cdots \tens \comp{K}$ by
  permutation of the factors $\comp{K}$, where the action of a transposition on
  two consecutive factors is exactly the one described above. 
This action does not
  depend on the choice of the resolution $\comp{K}$.
\begin{remark}\label{rmk:ssaction}We want now to understand how the 
trasposition of two consecutive terms acts 
on the hyperderived spectral sequence 
\begin{equation}\label{eq: css} E^{p,q}_1 = \bigoplus_{i_1+\dots +i_k =p} \Tor_{-q}(C^{i_1}, \dots, 
C^{i_k}) \end{equation}
associated to $\comp{C} \tens^L \cdots 
 \tens^L \comp{C}$. 
In the following we will always consider multiple complexes 
$L^{\bullet, \dots, \bullet}$ with \emph{commuting partial differentials} $\del_i$;
the signs will be only introduced in the differential of the total complex
$\tot L^{\bullet, \dots, \bullet}$, which is defined, on the component 
$L^{i_1, \dots, i_k}$, by
$d= \sum_{j=1}^k(-1)^{\sum_{l=1}^{j-1}i_l}\del_j$.
Consider now projective resolutions
$R^{i, \bullet}$  of the terms $C^i$; form the double complex $R^{\bullet,\bullet}$, with commuting differentials $d^{\prime}$ and $d^{\prime \prime}$: its total complex 
$\comp{K} := \tot \bicomp{R}$ is a projective resolution of the complex 
$\comp{C}$. The multiple complex $\bicomp{R} \tens \cdots \tens 
\bicomp{R}$, $k$-fold tensor product of the double complex $\bicomp{R}$, 
has commuting partial differentials \begin{gather*}
d_i^{\prime} := \underset{i-1-{\rm times}}{\underbrace{\id \tens \cdots \tens  \id }}\tens d^{\prime} \tens \id \tens \cdots \tens \id \qquad
d_i^{\prime \prime} = \underset{i-1-{\rm times}}{\underbrace{\id \tens \cdots \tens  \id }}\tens d^{\prime \prime} \tens \id \tens \cdots \tens \id \;.
\end{gather*}Consider now the following two multiple complexes, obtained by contracting partially this $2k$-complex. 
Let $\mcomp{\tilde{M}}$ be the $k+1$-complex
defined by: 
$$\tilde{M}^{i_1, \dots,i_k,q} = \bigoplus_{h_1+\dots+h_k=q}R^{i_1,h_1} \tens \cdots \tens  R^{i_k,h_k}$$with commuting partial differentials 
$\del^{\prime}_{\tilde{M},i} := (-1)^{\epsilon^{\second}_j} d_j^{\prime}$; $\del^{\second}_{\tilde{M}} := (-1)^{\sum_{l=1}^k i_l}\sum_{j=1}^k (-1)^{\epsilon_j^{\prime} + \epsilon_j^{\second} +i_j} d^{\second}_j$, where $\epsilon_j^{\prime}= \sum_{l=1}^{j-1}i_l$ and $\epsilon_j^{\second} = \sum_{l=1}^{j-1} h_l$.
Let moreover 
$\bicomp{\tilde{N}}$ be the bicomplex $$\tilde{N}^{p,q} := \bigoplus_{i_1+\dots+i_k=p}
\tilde{M}^{i_1, \dots,i_k,q}$$ with commuting differentials
$d^{\prime}_{\tilde{N}}= \sum_{j=1}^k
(-1)^{\epsilon^{\prime}_j} \del^{\prime}_{\tilde{M},j}= \sum_{j=1}^k(-1)^{\epsilon_j^{\prime} + \epsilon_j^{\second}} d^{\prime}_j$ and $d^{\second}_{\tilde{N}} = \del^{\second}_{\tilde{M}}$. Remark that
the derived tensor product ${\comp{C} \tens ^L \cdots \tens^L \comp{C}} \simeq 
\comp{K} \tens \cdots \tens \comp{K}$ coincides  with the total complexes:
$$ \comp{K} \tens \cdots \tens \comp{K} = \tot (\bicomp{R} \tens \cdots \tens 
\bicomp{R}) = \tot (\mcomp{\tilde{M}}) = \tot (\bicomp{\tilde{N}}) \;.$$
The spectral sequence (\ref{eq: css}) is then exactly 
the spectral sequence ${}^{\prime}E^{p,q}_1(\bicomp{\tilde{N}}):= H^{q}_{II}(\tilde{N}^{p,\bullet}) = \oplus_{i_1 + \dots +i_k=p} H^q_{(k+1)}(\tilde{M}^{i_1, \dots,i_k,\bullet})$, 
associated to the double complex $\bicomp{\tilde{N}}$ (see \cite[XVII, 2]{CartanEilenbergHA}). The complex $E^{\bullet,q}_1$ is exactly the complex
$\tot H^q_{(k+1)}(\mcomp{\tilde{M}})$, where the cohomology 
$H^q_{(k+1)}$ is taken on the last component.

The transposition of factors $\tau_{j,j+1}$ acts on the term $\tilde{N}^{p,q}$
with the involution $\widehat{\tau_{j,j+1}}$
mapping: 
$$ \widehat{\tau_{j,j+1}}: 
R^{i_1, h_1} \tens \cdots \tens R^{i_j, h_j} \tens 
R^{i_{j+1}, h_{j+1}} \tens \cdots \tens R^{i_k, h_k} \rTo R^{i_1, h_1} \tens \cdots \tens R^{i_{j+1}, h_{j+1}} \tens 
R^{i_{j}, h_j} \tens \cdots \tens R^{i_k, h_k}$$and defined by
\begin{equation}\label{eq: invoss}\widehat{\tau_{j,j+1}}(u_1 \tens \cdots \tens  u_j \tens u_{j+1} \tens \cdots \tens u_{k} ) =
 (-1)^{(i_j+h_j)(i_{j+1} +h_{j+1})} u_1 \tens \cdots \tens  u_{j+1} \tens u_{j} \tens \cdots \tens u_{k} \;.\end{equation}The map $\widehat{\tau_{j,j+1}}$ is an automorphism 
of the double complex $\bicomp{\tilde{N}}$:
 we deduce an action $\widehat{\tau_{j,j+1}}$ 
on the spectral sequence (\ref{eq: css}) above, exchanging the multitors:
\begin{equation}\label{eq: invotorss}  \widehat{\tau_{j,j+1}}: \Tor_{-q}(C^{i_1}, \dots, C^{i_j}, C^{i_{j+1}}, C^{i_k}) \rTo \Tor_{-q}(C^{i_1}, \dots, C^{i_{j+1}}, C^{i_j}, C^{i_k})\end{equation}as the application 
induced induced in the $q$-cohomology by the isomorphism of complexes:
$$\widehat{\tau_{j,j+1}}: (\tilde{M}^{i_1, \dots, i_j,i_{j+1}, \dots, i_k,\bullet},
d^{\second}_{\tilde{M}})
 \rTo  (\tilde{M}^{i_1, \dots, i_{j+1},i_{j}, \dots, i_k,\bullet}, d^{\second}_{\tilde{M}})$$defined by  (\ref{eq: invoss}).

In order to clearly interprete this action and to compare (\ref{eq: invotorss})
with the permutation of factors 
$\widetilde{\tau_{j,j+1}}$ in the multitor $\Tor_{-q}(C^{i_1}, \dots, C^{i_j}, 
C^{i_{j+1}}, \dots, C^{i_k})$ studied in appendix \ref{app: perm}, where 
the terms $C^{i_l}$ are considered just as sheaves and not as elements of a 
complex, we introduce the more natural multiple complex $\mcomp{M}$, defined as
$M^{i_1, \dots,i_k,q} := \tilde{M}^{i_1, \dots,i_k,q}$, but with commuting partial 
differentials $\del^{\prime}_{M,i}=d^{\prime}_i$, $\del^{\second}_M := \sum_{j=1}^k 
(-1)^{\epsilon_j^{\second}} d^{\second}_i$. Consider moreover the double complex $\bicomp{N}$, 
defined as: $N^{p,q}:= \tilde{N}^{p,q}$, $d^{\prime}_N = 
\sum_{j=1}^{k}(-1)^{\epsilon_j^{\prime}} \del^{\prime}_{M,j}$; $d^{\second}_N := 
\del^{\second}_M$. The multiple complexes $\mcomp{\tilde{M}}$ and 
$\mcomp{M}$ are isomorphic via the isomorphism $\phi$ given, on the 
component $R^{i_1, h_1} \tens \cdots \tens R^{i_k, h_k}$, by: 
$\phi^{i_1, h_1, \dots, i_k, h_k}= (-1)^{\sum_{l=1}^{k} i_l \epsilon^{\second}_l}$;
the isomorphism $\phi$ induces an isomorphism between $\bicomp{\tilde{N}}$ and
$\bicomp{N}$; moreover $\phi$ is compatible with the action of $\perm_k$ 
on $\bicomp{\tilde{N}}$ and $\bicomp{N}$.
The spectral sequence ${}^{\prime}E^{p,q}_1(\bicomp{N})$ is 
\emph{the same} as  ${}^{\prime}E^{p,q}_1(\bicomp{\tilde{N}})$ and 
the isomorphism $\phi$ induces the identity at the spectral sequence level.
The action $\widehat{\tau_{j,j+1}}$ of the 
transposition $\tau_{j,j+1}$ on $\bicomp{N}$ is now clear: it exchanges 
$$ \widehat{\tau_{j,j+1}}: R^{i_1, h_1} \tens \cdots \tens R^{i_j, h_j} \tens 
R^{i_{j+1}, h_{j+1}} \tens \cdots \tens R^{i_k, h_k} \rTo R^{i_1, h_1} \tens \cdots \tens R^{i_{j+1}, h_{j+1}} \tens 
R^{i_{j}, h_j} \tens \cdots \tens R^{i_k, h_k}$$and it is now defined by:
\begin{equation}\label{eq: invasssimpler}\widehat{\tau_{j,j+1}}(u_1 \tens \cdots \tens  u_j \tens u_{j+1} \tens \cdots \tens u_{k} ) =
 (-1)^{i_j i_{j+1} +h_j h_{j+1}} u_1 \tens \cdots \tens  u_{j+1} \tens u_{j} \tens \cdots \tens u_{k}\;.\end{equation}
The action $\widehat{\tau_{j,j+1}}$ induced on the spectral sequence $E^{p,q}_1$ 
exchanges 
\begin{equation}\label{eq: invatorsssimpler}\widehat{\tau_{j,j+1}} : \Tor_{-q}(C^{i_1}, \dots, C^{i_j}, 
C^{i_{j+1}}, \dots, C^{i_k}) \rTo \Tor_{-q}(C^{i_1}, \dots, C^{i_{j+1}}, 
C^{i_{j}}, \dots, C^{i_k}) \end{equation}as the map induced in $q$-cohomology 
by the map of complexes: 
$$ \widehat{\tau_{j,j+1}}: (M^{i_1, \dots, i_j,i_{j+1}, \dots, i_k, \bullet}, d^{\second}_M ) \rTo 
(M^{i_1, \dots, i_{j+1},i_{j}, \dots, i_k, \bullet}, d^{\second}_M )$$given by (\ref{eq: invasssimpler}). It is easy now to compare the actions $\widehat{\tau_{j,j+1}}$
and $\widetilde{\tau_{j,j+1}}$, since the complexes $(M^{i_1, \dots, i_j,i_{j+1}, \dots, i_k, \bullet}, d^{\second}_M )$ and $
(M^{i_1, \dots, i_{j+1},i_{j}, \dots, i_k, \bullet}, d^{\second}_M )$ above are 
tensor products of resolutions $R^{i_l,\bullet}$ of $C^{i_l}$ and are
exactly
the complexes used in appendix~\ref{app: perm} to compute $\widetilde{\tau_{j,j+1}}$: the sign $(-1)^{h_j h_{j+1}}$ in the definition (\ref{eq: invasssimpler})  coincides with the action of
$\widetilde{\tau_{j,j+1}}$, while the sign $(-1)^{i_j i_{j+1}}$ is due to the compatibility with the differential $d^{\prime}_N$, because we are considering the terms
$C^{i_l}$ as terms of the complex $\comp{C}$. As a consequence the action
$\widehat{\tau_{j,j+1}}$ on the spectral sequence $E^{p,q}_1$, exchanging 
multitors in (\ref{eq: invatorsssimpler}), differs from the action $\widetilde{\tau_{j,j+1}}$, considered in appendix~\ref{app: perm}, 
for the sign $(-1)^{i_j i_{j+1}}$:
$$\widehat{\tau_{j,j+1}} = (-1)^{i_j i_{j+1}} \widetilde{\tau_{j,j+1}} \;.$$
\end{remark}
\subsubsection*{Derived Schur functors}Let $V_{\nu}$ be the irreducible representation
of the group $\perm_k$ associated to the partition $\nu: \nu_1 \geq \nu_2 \geq 
\dots  \geq \nu_l$ of $k$. 
On 
a quasi-projective variety $M$, on which $\perm_k$ acts trivially, a locally free $\perm_k$-sheaf $E$
decomposes  as a direct sum of 
locally free subsheaves:
$ E \simeq \bigoplus_{\nu} V_{\nu} \tens \mc{H}om_{\perm_k}(V_{\nu} \tens \FS_M, E) $. Now, for every locally free sheaf $W$ on $M$, the $k$-fold tensor power 
$W^{\tens^k}$ is naturally equipped with the $\perm_k$-action that permutes the factors. We denote with 
$S^{\nu} W$ the Schur functor\footnote{The irreducible representation 
$V_{\nu}$ is isomorphic to its dual $V^*_{\nu}$, since any irreducible representation is determined by its character and since, by Frobenius formula (\cite{FultonHarrisRT}, 4.10), the character of $V_{\nu}$ is real}, associated 
to
the partition $\nu$~of~$k$:
$$ S^{\nu} W : = \mc{H}om_{\perm_k} (V_{\nu} \tens \FS_M, W^{\tens^k}) \simeq 
(W^{\tens ^k} \tens V_{\nu} )^{\perm_k} \; .$$ 
\sloppy Consider now a complex of coherent sheaves $\comp{C}$ on $M$ and 
$\comp{K}$ a locally free resolution of $\comp{C}$. Let us form the $k$-fold tensor power: 
 $\comp{P} =\comp{K} \tens \cdots \tens \comp{K}$. We can decompose the complex $\comp{P}$ as:
$\comp{P} = \bigoplus_{\nu} V_{\nu} \tens~\mc{H}om_{\perm_k}
(V_{\nu} \tens \FS_M, \comp{P})$. 
We denote with $S^{\nu}_L \comp{C} \in \B{D}^-(M)$ 
and we call it the \emph{derived Schur functor 
of the complex $\comp{C}$, associated to the partition $\nu$}, the complex $$ 
S^{\nu}_L \comp{C} : = \mc{H}om_{\perm_k}
(V_{\nu} \tens \FS_M, \comp{P}) \simeq (\comp{P} \tens V_{\nu})^{\perm_n} 
\;.$$ 
Its isomorphism class does not depend on the 
choice of the resolution $\comp{K}$.
\begin{remark}
Suppose now that $X$ is a quasi-projective variety with the action of a finite
group $G$ and that $\comp{C}$ is a complex of $G$-equivariant sheaves on
$X$, $\comp{C} \in \B{D}^{-}_G(X)$. Then the $\perm_k$-action on $\comp{C} \tens ^L \cdots \tens
  ^L \comp{C}$ commutes with the diagonal action of $G$ on $
 \comp{C} \tens ^L \cdots \tens
  ^L \comp{C}$, defined as the diagonal action on the complex
  $\comp{K} \tens \cdots \tens \comp{K}$, where $\comp{K}$ is a $G$-equivariant locally free
  resolution of $\comp{C}$. We then have a well defined $G \times \perm_k$
  action on the derived tensor power $\comp{C} \tens ^L \cdots \tens
  ^L \comp{C}$. 
\end{remark}
Since tensorizing with $V_{\lambda}$ and taking invariants 
$[-]^{\perm_k}$ are exact functors, we deduce from  \ref{cor:imageidentif} and 
\ref{crl: inva} the following corollary. Let $\lambda$ be a partition of $k$.
The spectral sequence $(E^{p,q}_1 \tens V_{\lambda})^{\perm_k}$ is the 
hyperderived spectral sequence associated to 
the derived Schur functor $S^{\lambda}_L \comp{\mc{C}}_L$  
of the complex $\comp{\mc{C}}_L$ and abutting to the cohomology $H^{p+q}(S^{\lambda}_L \comp{\mc{C}}_L)$. Then:
\begin{crl}\label{qoro}
The term 
$(E^{0,0}_{\infty} \tens V_{\lambda})^{\perm_k}$ of the spectral sequence 
$(E^{p,q}_1 \tens V_{\lambda})^{\perm_k}$ 
can be identified with 
the image $\bkrh(S^{\lambda}L^{[n]})$ of the Schur functor 
$S^{\lambda}L^{[n]}$ for the BKR transform. 
As a consequence, the  direct 
image $\mu_*(S^{\lambda}L^{[n]})$ 
of the Schur functor $S^{\lambda}L^{[n]}$ can be identified with the term 
$ \mc{E}^{0,0}_{\infty}(\lambda) 
$ of the 
spectral sequence of invariants: 
$$ \mc{E}^{p,q}_{1}(\lambda):=(E^{p,q}_1 \tens V_{\lambda})^{\perm_n \times \perm_k} $$of 
$E^{p,q}_1 \tens V_{\lambda}$ for the group $\perm_n \times \perm_k$.
\end{crl}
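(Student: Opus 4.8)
The plan is to derive both assertions formally from the identifications already established in Corollary~\ref{cor:imageidentif} and Corollary~\ref{crl: inva}, by applying the exact functors $(-\tens V_{\lambda})^{\perm_k}$ and $[-]^{\perm_n}$ and exploiting that $\bkrh$ and $\B{R}\mu_*$ are additive $\mbb{C}$-linear functors, which therefore commute with the operation of cutting out a $\perm_k$-isotypic summand.

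First I would record the defining identification $S^{\lambda}L^{[n]} \simeq (L^{[n]} \tens \cdots \tens L^{[n]} \tens V_{\lambda})^{\perm_k}$, which exhibits the Schur functor as the $\lambda$-isotypic direct summand of the $k$-fold tensor power regarded as a $\perm_k$-equivariant bundle on $X^{[n]}$. Since $\bkrh$ is an exact equivalence and $\mathrm{char}(\mbb{C})=0$, it commutes with the permutative $\perm_k$-action and with the exact projector $(-\tens V_{\lambda})^{\perm_k}$; hence $\bkrh(S^{\lambda}L^{[n]}) \simeq (\bkrh(L^{[n]} \tens \cdots \tens L^{[n]}) \tens V_{\lambda})^{\perm_k}$. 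By Corollary~\ref{cor:imageidentif} the image $\bkrh(L^{[n]} \tens \cdots \tens L^{[n]})$ is concentrated in degree $0$ and is identified with $E^{0,0}_{\infty}$, and this identification is $\perm_k$-equivariant because the comparison morphism $\alpha$ of Theorem~\ref{treotto} is $\perm_k$-equivariant while the $\perm_k$-action on $E^{p,q}_1$ was built compatibly in Remark~\ref{rmk:ssaction}. Substituting gives $\bkrh(S^{\lambda}L^{[n]}) \simeq (E^{0,0}_{\infty} \tens V_{\lambda})^{\perm_k}$; since $(-\tens V_{\lambda})^{\perm_k}$ is exact it commutes with every page of the spectral sequence and hence with its limit, so the right-hand side is exactly the $E^{0,0}_{\infty}$ term of $(E^{p,q}_1 \tens V_{\lambda})^{\perm_k}$.

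For the second assertion I would apply $[-]^{\perm_n}$. By Proposition~\ref{pps:invamor}(1) one has $\bkrh(-)^{\perm_n} \simeq \B{R}\mu_*(-)$, and $\B{R}\mu_*(S^{\lambda}L^{[n]})$ is concentrated in degree $0$ since $S^{\lambda}L^{[n]}$ is a direct summand of $L^{[n]} \tens \cdots \tens L^{[n]}$, whose higher direct images vanish by Corollary~\ref{crl: inva}. As the geometric $\perm_n$-action on $X^n$ commutes with the permutative $\perm_k$-action on the tensor factors and $[-]^{\perm_n}$ is exact, taking $\perm_n$-invariants of the isomorphism just obtained yields $\mu_*(S^{\lambda}L^{[n]}) \simeq (E^{0,0}_{\infty} \tens V_{\lambda})^{\perm_n \times \perm_k} = \mc{E}^{0,0}_{\infty}(\lambda)$, the limit term of the spectral sequence $\mc{E}^{p,q}_1(\lambda) := (E^{p,q}_1 \tens V_{\lambda})^{\perm_n \times \perm_k}$ produced by applying the exact functor $(-\tens V_{\lambda})^{\perm_n \times \perm_k}$ to $E^{p,q}_1$.

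The only genuinely delicate point, which I would isolate as the main obstacle, is the compatibility of the two $\perm_k$-actions: the intrinsic permutative action on $\bkrh(L^{[n]} \tens \cdots \tens L^{[n]})$ that defines the Schur functor must agree, under the identification of Corollary~\ref{cor:imageidentif}, with the combinatorial $\perm_k$-action on the limit term $E^{0,0}_{\infty}$ of the hyperderived spectral sequence. This is precisely what the sign bookkeeping of Remark~\ref{rmk:ssaction} secures through the $\perm_k$-equivariance of $\alpha$; everything else is a formal consequence of the exactness of the functors involved and of the additivity of $\bkrh$ and $\B{R}\mu_*$.
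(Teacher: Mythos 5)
Your proof is correct and follows essentially the same route as the paper: the paper deduces the corollary in one sentence from Corollaries \ref{cor:imageidentif} and \ref{crl: inva} by exactness of the functors $(-\tens V_{\lambda})^{\perm_k}$ and $[-]^{\perm_n}$, exactly as you do. The compatibility of the permutative $\perm_k$-action with the combinatorial action on the spectral sequence, which you rightly isolate as the delicate point, is precisely what the paper's construction in subsection \ref{derivedperm} (via the $\perm_k$-equivariance of $\alpha$ and Remark \ref{rmk:ssaction}) is designed to guarantee.
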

\subsection{The exterior power}Let $\lambda_{\epsilon} = 1 + \dots + 1$ be the partition 
of $k$ associated to the alternant representation $\epsilon_k$ of $\perm_k$.
In this subsection we will analyse the spectral sequence \begin{equation*}
\label{spe} \mc{E}^{p,q}_1(\lambda_{\epsilon}): = (E^{p,q}_1 \tens \epsilon_k)^{\perm_n \times \perm_k} \; ,\end{equation*}and, in particular, its restriction  $j^* \mc{E}^{p,q}_1(\lambda_{\epsilon})$  
to the open set $S^n_{**}X$. By corollary \ref{qoro} the term $\mc{E}^{0,0}_{\infty}(\lambda_{\epsilon})$ can be identified with the direct image $\mu_*(\Lambda^kL^{[n]})$ of the exterior power $\Lambda^kL^{[n]}$ for the Hilbert-Chow morphism. Remembering the notation introduced in remark \ref{remarchetto}, we  indicate with $\mc{E}^{p,q}_1(\lambda_{\epsilon})_0$ the 
component: $$\mc{E}^{p,q}_1(\lambda_{\epsilon})_0 := [(E^{p,q}_1)_0 \tens \epsilon_k]^{\perm_n \times \perm_k} \;.$$In corollary \ref{crl: degeneration} we will prove that
\emph{the spectral sequence $j^*\mc{E}^{p,q}_1(\lambda_{\epsilon})$ degenerates at level $\mc{E}_2$}. 
\begin{remark}\label{rem:rest}To prove the previous statement it will be sufficient to prove that for $q<0$ the complexes $j^*\mc{E}^{\bullet,q}_1(\lambda_{\epsilon}) = j^* \mc{E}^{\bullet,q}_1(\lambda_{\epsilon})_0$, 
are exact in degree $l \neq -q/2+1$, by remark \ref{remarchetto}.   
Since $j^*$ is exact, this will be implied by the exactness of the complexes $\mc{E}^{\bullet,q}_1(\lambda_{\epsilon})_0$ for $q<0$ in degree $l \neq -q/2+1$. To prove this, by lemma 
\ref{lemmino} it suffices to \emph{prove that, for $q<0$ and for every affine open subset $U$ of $X$, the complexes 
\begin{equation} \label{eq: epq}
\mf{E}^{\bullet,q}:= \left[ H^0(U^n, (E^{\bullet,q}_1)_0 \tens \epsilon_k) \right]^{\perm_n \times \perm_k}	
\end{equation}are exact in degree $l \neq -q/2+1$.}
\end{remark}The rest of  this section will be devoted to prove this last statement. 
\subsubsection{The comprehensive $G \times H$-action}
From now on we will make heavy use of appendix C.
\begin{notat}In the following we will indicate with $G$ the symmetric group $G:= \perm_n$, 
   and with $H$ the symmetric group $H:= \perm_k$. 
 Finally, if $A \subseteq \{1, \dots,n\}$ (resp. $A \subseteq \{1, \dots,k\}$) we will indicate with $G(A)$ (resp. $H(A)$) the subgroup $\perm(A)$ of $G$ (resp. of $H$) consisting of permutations of $A$. 
 \end{notat}
\begin{notat}Let $a \in \mc{I}^l$, $l \geq 0$. For brevity's sake, indicate with $\mc{F}^{l,q}(a)$ the sheaf $\mc{F}^{l,q}(a) :=\Tor_{-q}(L_{a(1)}, \dots, L_{a(k)})$. Remembering notation \ref{not: puremultitor}, lemma \ref{torinv} and remark \ref{C8}, it can be \emph{identified} with
\begin{equation}\label{eq: identiftor}\mc{F}^{l,q}(a)  \simeq \Tor^l_{-q}(L_{I_0(a)}) \tens \Tens_{j \in J(a)} 
L_j^{\tens ^{\lambda_j(a)}} \simeq 
\Lambda^{-q}(N^*_{I_0(a)} \tens \rho_l) \tens L_{I_0(a)}^{\tens^l} \tens \Tens_{j \in J(a)} L_{j}^{\lambda_j(a)}
\;.\end{equation}Set now $F^{l,q}(a):=H^0(U^n,\mc{F}^{l,q}(a))$, where $U$ is an affine open set in $X^n$.
\end{notat}
\begin{remark}\label{totalaction}
We summarize here the comprehensive $G \times H$-action on
 the $G \times H$-equivariant term: 
\begin{equation} \label{eq: tor4}(E^{p,q}_1)_0 = \bigoplus_{a \in \mc{I}^p}  \mc{F}^{p,q}(a)  \;.\end{equation}The group $G \times H$ acts on the left on the set of indexes $\mc{I}^p$  by setting $(\sigma, \tau)\cdot a := \sigma a \tau^{-1} $.

\emph{The comprehensive $G$-action.} An element $\sigma \in G$ acts on the equivariant term $(E^{p,q}_1)_0$ above with the action induced by the $G$-action on the complex $\comp{\mc{C}}_L$, explicited in remark \ref{farfalla}; the element $\sigma$ carries the sheaf $L_{a(i)}$ to the sheaf $L_{\sigma a(i)}$, introducing the sign $\epsilon_{\sigma, a(i)}$.
The subgroup $G(I_0(a))$ hence
acts with the representation $\epsilon_2$ on the fibers of the sheaf $L_{I_0(a)}$
over the diagonal $\Delta_{I_0(a)}$; moreover it
acts geometrically on the fibers of the conormal bundle $N^*_{I_0(a)}$ with the 
representation $\epsilon_{2} \oplus \epsilon_{2} \simeq \mbb{C}^2 \tens \epsilon_2$. 

\emph{The comprehensive $H$-action.} The group 
$H$ acts on the term $(E^{p,q}_1)_0$ with the  action induced by the action 
on the hyperderived spectral sequence $E^{p,q}_1$, explained in remark \ref{rmk:ssaction}. Hence a consecutive transposition $\tau_{j, j+1} \in H$ operates 
on $(E^{p,q}_1)_0$
via the automorphism $\widehat{\tau_{j,j+1}}$, whose restriction to $\mc{F}^{p,q}(a)$:
$ \widehat{\tau_{j,j+1}}: \mc{F}^{p,q}(a) \rTo \mc{F}^{p,q}(a \tau^{-1}_{j,j+1})$, for any $a \in \mc{I}^l$, 
is given by $$ \widehat{\tau_{j,j+1}} = (-1)^{(|a(j)|-1)(|a(j+1)|-1)} \widetilde{\tau_{j,j+1}} \;,$$where $\widetilde{\tau_{j,j+1}}$ is the permutation of factors considered in appendix \ref{app: perm}; the sign $(-1)^{(|a(j)|-1)(|a(j+1)|-1)}$ is due to the fact that 
 $L_{a(i)}$ is a subsheaf of the term $\mc{C}^{|a(i)|-1}_L$ of the complex $\comp{\mc{C}}_L$.

Consider now a general permutation $\tau$ of $H$, and let, as in notations of 
lemma \ref{permsign}, $\sigma_0(\tau)$, $\sigma_j(\tau)$, for $j \in J(a)$, be the unique increasing 
bijections $\sigma_0(\tau): \tau(S_0(a)) \rTo S_0(a)$, $\sigma_j(\tau)
: \tau(a^{-1}(\{ j \})) \rTo a^{-1}(\{ j \})$, respectively;  set $\beta_0(\tau):= \sigma_0(\tau) \circ \trest_{S_0(a)}$, 
 $\beta_j(\tau) := \sigma_j(\tau) 
\circ \tau \trest_{a^{-1}(\{ j \})}$, seen as permutations 
in $\perm_{p}$, $\perm_{\lambda_{j}(a)}$, respectively; denote moreover
 with
$\alpha(\tau)$  
the automorphism induced by $\beta_0(\tau)$ on $\Lambda^{-q}(N^*_{I_0(a)} \tens \rho_{p})$. Then by what we just explained and by lemma \ref{permsign}, \emph{once identified $\mc{F}^{p,q}(a)$ and $\mc{F}^{p,q}(a \tau^{-1})$ via 
(\ref{eq: identiftor})},
 the comprehensive action of an element $\tau \in H$
$$ \widehat{\tau} : \mc{F}^{p,q}(a) \rTo 
\mc{F}^{p,q}(a\tau^{-1})$$ can be identified with 
the automorphism $$ \sgn(\beta_0(\tau)) \;\alpha(\tau)\tens \beta_0(\tau) \tens \Tens_{j \in J(a)} \beta_j(\tau) \in \Aut \Big( \Lambda^{-q}(N^*_{I_0(a)} \tens \rho_{p}) \tens 
L_{I_0(a)}^{\tens^p}\tens \Tens_{j \in J(a)} L_j^{\tens \lambda_j(a)} \Big) \;,$$
where $\beta_j(\tau)$, $j \in J(a) \cup \{0 \}$ act by permutation of factors.
\end{remark}
\subsubsection{Homogeneous components and stabilizers}
We begin the study of the terms $\mf{E}^{p,q}$ 
decomposing the $G \times H$-equivariant term $H^0(U^n, (E^{p,q}_1)_0)$ in homogeneous components in order to use Danila's lemma to compute invariants.
Let  $\mc{O}$ be 
 a $G \times H$-orbit in $\mc{I}^p$: we define the \emph{homogeneous component} 
$W^{p,q}_{\mc{O}}$ as
$ W^{p,q}_{\mc{O}} := \bigoplus_{a \in \mc{O}} 
F^{p,q}(a)
$. Therefore the term $\mf{E}^{p,q}$ can be rewritten as: 
$$ \mf{E}^{p,q} = \bigoplus_{\mc{O} \in \mc{I}^p/G \times H} \left(W^{p,q}_{\mc{O}} \tens \epsilon_{k} \right)^{G \times H} \;.$$
\begin{notat}Let $K \subseteq G \times H$ a subgroup of $G \times H$. 
We will indicate with $\Ind^H_{K} \epsilon_k $ the representation of 
$K$ induced by the alternant representation $\epsilon_k$ of $H$ 
via the natural morphism 
$K \rTo H$ induced by the second projection. 
\end{notat}In these notations
Danila's lemma \ref{Danila} becomes: 
\begin{lemma}\label{lmm: danilaversion}Let $\mc{O} \in \mc{I}^p/G \times H$ be a $G \times H$-orbit in $\mc{I}^p$
and let $a \in \mc{O}$. Then:
$$ (W^{p,q}_{\mc{O}} \tens \epsilon_k)^{G \times H} \rTo^{\simeq}
\Big ( F^{p,q}(a)
\tens \Ind^H_{\Stab_{G \times H}(a)} \epsilon_k \Big)^{\Stab_{G \times H}(a)}$$
\end{lemma}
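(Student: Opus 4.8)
The plan is to recognise Lemma~\ref{lmm: danilaversion} as a direct application of Danila's Lemma~\ref{Danila} to the group $G \times H$ acting on the orbit $\mc{O}$, once the external tensor factor $\epsilon_k$ has been correctly tracked. First I would regard $\epsilon_k$ not merely as an $H$-representation but as a $G \times H$-representation, namely the inflation of the sign along the second projection $\pi_2 : G \times H \rTo H$. With this convention $W^{p,q}_{\mc{O}} \tens \epsilon_k$ is a genuine $G \times H$-module carrying the decomposition
$$ W^{p,q}_{\mc{O}} \tens \epsilon_k = \bigoplus_{a \in \mc{O}} \big( F^{p,q}(a) \tens \epsilon_k \big) \;, $$
indexed by the orbit $\mc{O}$, on which $G \times H$ acts transitively.

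Next I would verify that this decomposition is compatible with the $G \times H$-action on $\mc{O}$, so that the hypotheses of Lemma~\ref{Danila} are satisfied. By remark~\ref{totalaction}, an element $(\sigma, \tau) \in G \times H$ sends $\mc{F}^{p,q}(a)$ isomorphically onto $\mc{F}^{p,q}(\sigma a \tau^{-1}) = \mc{F}^{p,q}((\sigma, \tau) \cdot a)$, and hence the summand $F^{p,q}(a)$ onto $F^{p,q}((\sigma,\tau)\cdot a)$; on the factor $\epsilon_k$ it acts by the scalar $\sgn(\tau)$, which is invertible. Thus $(\sigma,\tau)$ restricts to an isomorphism $F^{p,q}(a) \tens \epsilon_k \rTo F^{p,q}((\sigma,\tau)\cdot a) \tens \epsilon_k$, which is exactly the compatibility required.

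Applying Lemma~\ref{Danila} to $M = W^{p,q}_{\mc{O}} \tens \epsilon_k$ with this decomposition would then yield, for any chosen $a \in \mc{O}$, the isomorphism
$$ \big( W^{p,q}_{\mc{O}} \tens \epsilon_k \big)^{G \times H} \rTo^{\simeq} \big( F^{p,q}(a) \tens \epsilon_k \big)^{\Stab_{G \times H}(a)} \;, $$
induced by the projection onto the summand indexed by $a$. It then remains only to identify the $\Stab_{G\times H}(a)$-module structure on the factor $\epsilon_k$ on the right: by construction $\Stab_{G \times H}(a)$ acts on $\epsilon_k$ through the restriction $\pi_2|_{\Stab_{G\times H}(a)} : \Stab_{G \times H}(a) \rTo H$, which is precisely the representation denoted $\Ind^H_{\Stab_{G\times H}(a)} \epsilon_k$. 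Substituting this identification gives the statement of the lemma.

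The only genuinely delicate point is this last bookkeeping step: one must keep the two roles of $\epsilon_k$ distinct — as a global $G\times H$-representation it is the inflation of the sign along $\pi_2$, whereas after restriction to the stabilizer it becomes exactly $\Ind^H_{\Stab_{G\times H}(a)}\epsilon_k$ — and confirm that no additional sign intervenes. Since $\epsilon_k$ is one-dimensional and $G \times H$ acts on it by the scalar $\sgn(\tau)$ \emph{independently} of the index $a$, it does not interact with the signs appearing in the comprehensive action on $F^{p,q}(a)$ described in remark~\ref{totalaction}, so the identification is immediate.
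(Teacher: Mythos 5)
Your proposal is correct and follows exactly the route the paper intends: the paper offers no separate proof, introducing the lemma with the phrase ``In these notations Danila's lemma \ref{Danila} becomes,'' i.e.\ it is precisely the application of Lemma \ref{Danila} to the transitive $G \times H$-action on the orbit $\mc{O}$, with $\epsilon_k$ inflated along the second projection. Your verification of the compatibility of the decomposition with the $G \times H$-action and the identification of the stabilizer's action on $\epsilon_k$ with $\Ind^H_{\Stab_{G\times H}(a)}\epsilon_k$ simply makes explicit the bookkeeping the paper leaves implicit.
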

The classification of all the $G \times H$-orbits and the determination of stabilizers in all generality turns out to be 
an unnecessary technical computation, due to
the following lemma; as a consequence, we will consider only some of the orbits, which will be called relevant.
\begin{lemma}\label{lmm: rele}Let $\mc{O}$ the $G \times H$-orbit of an element $a \in \mc{I}^p$ such that $a \trest_{\{1, \dots,k\} \setminus S_0(a)}$ is not injective. Then 
$( W^{p,q}_{\mc{O}} \tens \epsilon_k)^{G \times H} =0$.
\end{lemma}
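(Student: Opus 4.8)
The plan is to reduce the statement to Danila's lemma \ref{lmm: danilaversion} and then to exhibit a subgroup of the stabilizer of $a$ on which the twisting representation restricts to a nontrivial sign character while acting trivially on the relevant sheaf. First I would unwind the hypothesis: saying that $a\trest_{\{1,\dots,k\}\setminus S_0(a)}$ is not injective means that two distinct singleton-valued indices $i\neq i'$ satisfy $a(i)=a(i')=\{j_0\}$ for some $j_0\in J(a)$; equivalently $m:=\lambda_{j_0}(a)=|a^{-1}(\{j_0\})|\geq 2$. Fix such a $j_0$ and consider the subgroup
\[ K_0 := \{(\id,\tau)\;|\;\tau\in H(a^{-1}(\{j_0\}))\}\simeq\perm_m \]
of $G\times H$, where $H(a^{-1}(\{j_0\}))$ permutes the indices of $\{1,\dots,k\}$ sitting over $\{j_0\}$ and fixes the others. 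Since every such $\tau$ satisfies $a\circ\tau^{-1}=a$, we have $K_0\subseteq\Stab_{G\times H}(a)$.

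Next I would compute the comprehensive action of $K_0$ on the summand $\mc{F}^{p,q}(a)$ by means of remark \ref{totalaction}. For $(\id,\tau)\in K_0$ the $G$-part is trivial, and because $\tau$ fixes $S_0(a)$ pointwise and fixes every $a^{-1}(\{j\})$ with $j\neq j_0$, the data attached to $\tau$ degenerate: $\beta_0(\tau)=\id$ (so $\sgn(\beta_0(\tau))=1$ and $\alpha(\tau)=\id$ on $\Lambda^{-q}(N^*_{I_0(a)}\tens\rho_p)$) and $\beta_j(\tau)=\id$ for $j\neq j_0$. Moreover the sign $(-1)^{(|a(i)|-1)(|a(i+1)|-1)}$ relating $\widehat\tau$ to $\widetilde\tau$ is trivial because all the moved indices are singleton-valued. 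Hence, once $\mc{F}^{p,q}(a)$ is identified via (\ref{eq: identiftor}), $\widehat\tau$ acts only through $\beta_{j_0}(\tau)$, which permutes the factors of the tensor power $L_{j_0}^{\tens m}$ of the \emph{line} bundle $L_{j_0}$. A quick check shows that permuting the factors of a tensor power of a line bundle is the identity, so $K_0$ acts trivially on $\mc{F}^{p,q}(a)$ and therefore on the sections $F^{p,q}(a)=H^0(U^n,\mc{F}^{p,q}(a))$.

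Finally I would apply Danila's lemma \ref{lmm: danilaversion}, which gives
\[(W^{p,q}_{\mc{O}}\tens\epsilon_k)^{G\times H}\simeq\bigl(F^{p,q}(a)\tens\Ind^H_{\Stab_{G\times H}(a)}\epsilon_k\bigr)^{\Stab_{G\times H}(a)},\]
and then restrict the invariants to $K_0$. On $\Ind^H_{\Stab_{G\times H}(a)}\epsilon_k$ the element $(\id,\tau)$ acts by $\sgn_{\perm_k}(\tau)=\sgn_{\perm_m}(\tau)$, i.e. by the sign character of $K_0\simeq\perm_m$, which is nontrivial since $m\geq 2$; combined with the trivial action on $F^{p,q}(a)$, the group $K_0$ acts on the whole tensor product by this nontrivial sign character, so its $K_0$-invariants vanish. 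Since $\Stab_{G\times H}(a)$-invariants are contained in the $K_0$-invariants, the left-hand side is zero, as claimed. The only delicate point is bookkeeping the comprehensive action precisely enough to see that all signs collapse to the bare factor permutation on $L_{j_0}^{\tens m}$; once remark \ref{totalaction} is granted this is routine, and the triviality of factor permutation on a line-bundle power is exactly what lets the twist by $\epsilon_k$ annihilate the invariants.
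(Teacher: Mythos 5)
Your proof is correct and follows essentially the same route as the paper: both isolate the subgroup $H(a^{-1}(\{j_0\}))\simeq\perm_m$ inside $\Stab_{G\times H}(a)$, invoke lemma \ref{lmm: danilaversion} to reduce to invariants over (a subgroup of) the stabilizer, and kill them using the factor $L_{j_0}^{\tens m}$ twisted by the sign character. The only cosmetic difference is the final step: the paper phrases the vanishing as $(L_{j_0}^{\tens m}\tens\epsilon_m)^{\perm_m}=\Lambda^m L_{j_0}=0$, whereas you make explicit that the factor permutation on a line-bundle power is the identity, so the sign twist leaves no invariants --- the same computation stated two ways.
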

\begin{proof}The hypothesis on $a$ is equivalent to the existence of a $j_0 \in J(a)$ such that $|a^{-1}(\{ j_0 \})| = \lambda_{j_0}(a) \geq 2$. The group $H(a^{-1}(\{ j_0 \}))$ is then a nontrivial subgroup 
of $\Stab_{G \times H}(a)$. Applying  lemma \ref{lmm: danilaversion}, we get:
$$ ( W^{p,q}_{\mc{O}} \tens \epsilon_k)^{G \times H} \subseteq \Big(
F^{p,q}(a)
\tens \Ind^H_{H(a^{-1}(\{ j_0 \}))} \epsilon_k \Big)^{H(a^{-1}(\{ j_0 \}))} \;.$$Now the group $H(a^{-1}(\{ j_0 \}))$ acts nontrivially only on the factor 
$L_{j_0}^{\tens {\lambda_{j_0}(a)}}$ of $\mc{F}^{p,q}(a)$,
by remark \ref{totalaction}.  
Since the representation $\Ind^H_{H(a^{-1}(\{j_0\}))} \epsilon_k =\epsilon_{\lambda_{j_0}(a)}$ and
since $(L_{j_0}^{\tens \lambda_{j_0}(a) } \tens \epsilon_{\lambda_{j_0}(a)} )^{H(a^{-1}(\{ j_0 \}))} = 
\Lambda^{\lambda_{j_0}(a)} L_{j_0} =0 $, the right hand side above vanishes.
 \end{proof}
\begin{remark}Let $\mc{I}^p_0 := \{a \in \mc{I}^p \; | \; a \trest_{\{1, \dots,k\} \setminus S_0(a)} \; \textrm{is injective} \}$. The set $\mc{I}^p_0$ is $G \times H$-invariant.
We will call \emph{relevant} 
  the orbits in 
$\mc{I}^p_0 / G \times H$; the other ones will be called \emph{irrelevant}.
\end{remark}
\begin{remark}\label{rmk: rel}By the preceding lemma we have only to classify relevant orbits. 
The $G \times H$-invariant map $ t : \mc{I}^p_0  {\rTo}  \{ 0,1,2 \} $, defined by $
a  \rMapsto  |I_0(a) \cap J(a)| $, induces easily an injection $t : \mc{I}^p_0 / G \times H {\rInto} \{ 0,1,2 \}$ at the quotient level; as a consequence relevant orbits $\mc{O}$ are 
classified 
by the value $t(\mc{O}) \in \{ 0,1,2\}$. For a relevant orbit $\mc{O}$, correspondent to the value $t(\mc{O})=t$, the \emph{relevant homogeneous component} $W^{p,q}_{\mc{O}}$ is: 
\begin{equation*}\label{eqn: re} W^{p,q}_{\mc{O}}= \bigoplus_{\substack{ a \in \mc{I}^p_0 \\ t(a)=t}}
F^{p,q}(a)
  \;. \end{equation*}
\end{remark}
The next proposition determines the stabilizer of an element in a relevant orbit. 
In what follows, for brevity's sake, where there will be no risk of confusion, 
we will write $S_0$, $I_0$, $J$ instead of $S_0(a)$, $I_0(a)$,~$J(a)$.
\begin{notat}If $a \in \mc{I}^p_0$ and $R \subseteq J$, denote with 
$\Delta(R)$ the diagonal subgroup of $G(R) \times H(a^{-1}(R)) $ given by $\Delta(R) := 
\{(\sigma, a^*\sigma) \; | \; \sigma \in G(R) \}$, where $a^* \sigma$ is the permutation of $a^{-1}(R)$ 
defined as: $a^* \sigma := a^{-1} \sigma a$. \end{notat}
\begin{pps}\label{pps: stab}
Let $a \in \mc{I}^p_{0}$. Then\footnote{Here and in the following we set $G(\emptyset)=H(\emptyset)=\{1 \}$.} 
$$ \Stab_{G \times H}(a) = G(I_0 \setminus J) \times  H(S_0) \times \Delta(I_0 \cap J) \times \Delta (J \setminus I_0) \times G(\overline{I_0 \cup J})\;.$$
\end{pps}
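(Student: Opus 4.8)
The plan is to unwind the stabilizer condition into a pointwise identity and then read off the five factors from the way $\sigma$ and $\tau$ are forced to act on the natural partitions of $\{1,\dots,n\}$ and $\{1,\dots,k\}$ attached to $a$. First I would note that, for the action $(\sigma,\tau)\cdot a=\sigma a\tau^{-1}$, we have $(\sigma,\tau)\in\Stab_{G\times H}(a)$ if and only if $\sigma(a(i))=a(\tau(i))$ for every $i\in\{1,\dots,k\}$. Since $\sigma$ acts on subsets by a cardinality-preserving bijection, comparing $|a(i)|$ with $|a(\tau(i))|$ shows at once that $\tau$ preserves $S_0$ (hence also its complement). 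Recalling from remark \ref{remarchetto} that for $a\in\mc{I}^p$ with $p\geq1$ every $a(i)$ with $i\in S_0$ equals the two-element set $I_0$, the relations for $i\in S_0$ give $\sigma(I_0)=I_0$, while the relations for $i\notin S_0$ (where $a(i)$ is a singleton) give $\sigma(J)=J$. The case $p=0$ is immediate since then $I_0=\emptyset$.

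Next I would exploit the defining hypothesis $a\in\mc{I}^p_0$, namely the injectivity of $a$ on $\{1,\dots,k\}\setminus S_0$: this identifies the singleton indices bijectively with $J$ via $i\mapsto a(i)$, so for $i\notin S_0$ the equation $\sigma(a(i))=a(\tau(i))$ determines $\tau(i)$ uniquely from $\sigma$, i.e. $\tau$ restricted to $\{1,\dots,k\}\setminus S_0$ equals $a^{-1}\sigma a$. Thus on the complement of $S_0$ the corresponding component of $\tau$ is not free but slaved to $\sigma|_J$, which is exactly the content of the diagonal subgroups $\Delta(\cdot)$. Because $\sigma$ preserves both $I_0$ and $J$ setwise, it preserves each block of the partition $\{1,\dots,n\}=(I_0\setminus J)\sqcup(I_0\cap J)\sqcup(J\setminus I_0)\sqcup\overline{I_0\cup J}$, so $\sigma$ decomposes as a product of permutations of these four blocks; in particular $\sigma|_{I_0}=\sigma|_{I_0\setminus J}\times\sigma|_{I_0\cap J}$.

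I would then read off the factors block by block. On $\overline{I_0\cup J}$ the element $\sigma$ enters none of the defining relations and is entirely free, giving $G(\overline{I_0\cup J})$; likewise, once $\sigma(I_0)=I_0$ holds the relations for $i\in S_0$ reduce to $I_0=I_0$, so $\tau|_{S_0}$ is unconstrained and independent of $\sigma$, giving $H(S_0)$. On $I_0\setminus J$ the points occur in $a$ only through the $S_0$-indices, where only set-preservation of $I_0$ is required and no constraint on $\tau$ is induced, so $\sigma|_{I_0\setminus J}$ is free in $G(I_0\setminus J)$. On $I_0\cap J$ and on $J\setminus I_0$ the points occur as singletons, so $\sigma$ restricted there drags $\tau$ along diagonally, producing exactly $\Delta(I_0\cap J)$ and $\Delta(J\setminus I_0)$. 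Since $\sigma$ respects the partition, these contributions are mutually independent and the stabilizer is their direct product, which is the asserted formula.

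The main point requiring care will be the overlap $I_0\cap J$: its elements lie in $I_0$ (so they are seen by the $S_0$-indices, forcing $\sigma(I_0)=I_0$) and simultaneously in $J$ (so they are seen as singletons, forcing the diagonal relation). I would verify that these two requirements are compatible and together yield the factor $\Delta(I_0\cap J)$ rather than an independent copy of $G(I_0\cap J)$, and that the key splitting $\sigma|_{I_0}=\sigma|_{I_0\setminus J}\times\sigma|_{I_0\cap J}$ — legitimate precisely because $\sigma$ fixes $J$ setwise and hence cannot mix $I_0\cap J$ with $I_0\setminus J$ — is what separates the free factor $G(I_0\setminus J)$ from the diagonal factor. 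The degenerate subcases, in which some of these blocks are empty or singletons so that $G(\cdot)$, $H(\cdot)$ or $\Delta(\cdot)$ collapses to the trivial group, then follow automatically, so that no separate treatment of the three values $t(a)\in\{0,1,2\}$ of remark \ref{rmk: rel} is needed.
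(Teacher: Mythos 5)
Your proposal is correct and takes essentially the same route as the paper's proof: unwind $\sigma a = a\tau$ pointwise, deduce that $\tau$ preserves $S_0$ and that $\sigma$ preserves $I_0$ and $J$ (hence the four blocks of $\{1,\dots,n\}$), and use injectivity of $a$ off $S_0$ to slave $\tau\trest_{\overline{S_0}}$ diagonally to $\sigma$ via $a^*$. Your explicit remark that $a(i)=I_0$ for every $i\in S_0$ (forced by $|I_0(a)|\leq 2$) is left implicit in the paper, but it is precisely what makes the relations for $i \in S_0$ collapse to $\sigma(I_0)=I_0$ and hence makes $H(S_0)$ a free factor.
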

\begin{proof} An element $(\sigma, \tau)$ is in $\Stab_{G \times H}(a)$ if 
and only if 
 $\sigma a = a \tau$. This implies that the sets $I_0, J$ and, consequently, 
 $I_0 \cap J$, $I_0 \setminus J$, $J \setminus I_0$, $\overline{I_0 \cup J}$ are globally fixed by $\sigma$; hence $$\sigma = \sigma_1 \sigma_2 \sigma_3 \sigma_4 \in G(I_0 \setminus J) \times G(I_0 \cap J) \times 
 G( J \setminus I_0) \times G(\overline{I_0 \cup J})$$
On the other hand we can always write $\tau = \tau_1 \tau ^{\prime} \in H(S_0) \times H(\overline{S_0})$, since $S_0$ and $\overline{S_0}$ have to be preserved by 
$\tau$.
Since $\sigma_1$, $\sigma_4$, $\tau_1$ are already in $\Stab_{G \times H}(a)$, then we have: 
 $ \sigma^{\prime} a = a \tau^{\prime} $, where $\sigma^{\prime}=\sigma_2 \sigma_3 \in G(I_0 \cap J) \times 
 G( J \setminus I_0)$. Suppose that $\sigma^{\prime} (i) =j$, with $i,j \in J$ such that $\{ i,j \} \subseteq I_0 \cap J$ or $\{ i,j \} \subset J \setminus I_0$. 
Then one has to have $\tau^{\prime}(a ^{-1}( \{ i \})) = a^{-1}(\{ j \})$. Hence $\tau^{\prime} = a^* \sigma_2 a^* \sigma_3$. 
This proves the lemma. 
\end{proof}
\subsubsection{Invariants}
\begin{notat}\label{not: f}
Denote with $\mc{F}^{l,q}_i(a)$ the sheaves on $X^{I_0}$, $X^{J \setminus I_0}$, 
$X^{\overline{I_0 \cup J}}$, respectively, 
\begin{gather*}
 \mc{F}^{l,q}_1(a) := \Tor^l_{-q}(L_{\Delta}) \tens \Tens_{j \in I_0 \cap J} L_j 
\simeq \Lambda^{-q}(N^*_{\Delta} \tens \rho_l) \tens L_{\Delta}^{\tens^{l}} \tens \Tens_{j \in I_0 \cap J} L_j 
\\
\mc{F}^{l,q}_2(a):= \boxtimes_{j \in J \setminus I_0} 
L_{j} \\
\mc{F}_3^{l,q}(a):= \FS_{X^{\overline{I_0 \cup J}}} \;,
\end{gather*}
where $\Delta$ is the diagonal in $X^{I_0} \simeq X^2$.
Therefore we can write: 
$ \mc{F}^{l,q}(a)  \simeq \mc{F}^{l,q}_1(a) 
\boxtimes 
\mc{F}^{l,q}_2(a) \boxtimes \mc{F}^{l,q}_3(a)$. Moreover, denote with $F^{l,q}_i(a)$
the corresponding spaces of sections of $\mc{F}^{l,q}_i(a)$ over 
affine open subsets of the form $U^s$, that is, $F_i^{l,q}(a) := 
H^0(U^s, \mc{F}^{l,q}_i(a))$, where $s=I_0, J \setminus I_0, \overline{J \cup I_0}$, if $i=1,2,3$, respectively.  
Therefore, by K\"unneth formula: 
$$ F^{l,q}(a) 
= F^{l,q}_1(a) \tens 
F^{l,q}_2(a) \tens F^{l,q}_3(a) \;.$$If $a \in \mc{I}^l_0$, $l=0$, 
the definitions of 
$\mc{F}^{l,q}_i(a)$ 
and $F^{l,q}_i(a)$ have still a sense if $i=2,3$. In this case we have: $F^{0,q}(a)
= 
F^{0,q}_2(a) \tens 
F^{0,q}_3(a)$. It is clear that $F^{0,q}(a)=0$ if $q\neq 0$. Finally we indicate with
$\mf{I}^{p,q}(a)$ the invariants: 
$$ \mf{I}^{p,q}(a) := \left( F^{p,q}(a) \tens \Ind^H_{\Stab_{G \times H}(a)} \epsilon_k \right)^{\Stab_{G \times H}(a)} \;.$$In these notations lemma \ref{lmm: danilaversion} can be rewritten as follows: for a relevant orbit $\mc{O}$
and for any fixed $a \in \mc{O}$:
$ \left( W_{\mc{O}}^{p,q} \tens \epsilon_k \right)^{G \times H} \simeq 
\mf{I}^{p,q}(a)$.
\end{notat}
\begin{notat}Denote with $G_i(a)$ the following subgroups of 
$\Stab_{G \times H}(a)$ 
isolated in proposition \ref{pps: stab}: 
$G_1(a) := G(I_0 \setminus J) \times H(S_0) \times \Delta(I_0 \cap J)$, $
G_2(a) := \Delta(J \setminus I_0)$, 
$G_3(a) := G(\overline{I_0 \cup J})$.
\end{notat}
\begin{remark}\label{rimarcotensor}In the above notations, 
since the group $G_i(a)$ acts nontrivially only on the factor 
$F_i^{l,q}(a)$, we can write: 
\begin{equation}\label{Inva} \mf{I}^{l,q}(a) = \Tens_{i=1}^3 
\left( F^{l,q}_i(a) \tens \Ind_{G_i(a)}^H \epsilon_k \right)^{G_i(a)} 
\end{equation}
If $l=0$ we have $\mc{I}^{0,0}(a) = 
(F_2^{0,0}(a) \tens \Ind_{G_2(a)}^H \epsilon_k )^{G_2(a)} \tens (F_3^{0,0}(a) \tens \Ind_{G_3(a)}^H \epsilon_k )^{G_3(a)}$. 
\end{remark}
\begin{remark}\label{rmk: g1action}Let $a \in \mc{I}^p_0$, $p \geq 0$. 
It is clear that the $G_i(a)$-action on 
the factors $F_i^{l,q}(a)$ is induced by a natural $G_i(a)$-action  on the sheaves
$\mc{F}_i^{l,q}(a)$. The $G_2(a)$- and $G_3(a)$-actions on $\mc{F}_2^{p,q}(a)$ and $\mc{F}^{p,q}_3(a)$ are clear. We 
explicit here the $G_1(a)$-action on 
$\mc{F}^{p,q}_1(a)$ in terms of what explained in remark \ref{totalaction}. \begin{itemize}
\item The group $G(I_0 \setminus J)$, identified with $\perm_{2-t}$, acts 
on $\mc{F}^{p,q}_1(a)$, fiberwise on $\Delta \subseteq X^{I_0}$, with the 
representation 
$$ \Lambda^{-q}(\id_{\mbb{C}^2} \tens \epsilon_{2-t} \tens \id_{\mbb{C}^{p-1}}) \tens \epsilon_{2-t}^{\tens ^p}
\simeq \id_{\Lambda^{-q}( \mbb{C}^{2(p-1)} )} \tens \epsilon_{2-t}^{\tens^{ p-q}} \;.$$ \item 
The action of the group $\Delta(I_0 \cap J)$, identified with $\perm_t$, 
is trivial on the factor 
$\Tens_{j \in I_0 \cap J} L_j \trest_{\Delta} \simeq L_{\Delta}^{\tens ^t}$; 
on the other hand, the factors in $G(I_0 \cap J)$ of the elements in $\Delta(I_0 \cap J)$ act nontrivially on $\Tor_{-q}^l(L_{\Delta})$: hence $\Delta(I_0 \cap J)$ acts on $\mc{F}^{p,q}_1(a)$ with the representation 
$$ \Lambda^{-q}(\id_{\mbb{C}^2} \tens \epsilon_{t} \tens \id_{\mbb{C}^{p-1}}) \tens \epsilon_{t}^{\tens ^p}
\simeq \id_{\Lambda^{-q}(\mbb{C}^{2(p-1)}) }\tens \epsilon_{t}^{\tens^{ p-q}} \;.$$\item 
The  $H(S_0)$-sheaf $\mc{F}^{p,q}_1(a)$ is isomorphic, as $\perm_p$-sheaf,  to
the sheaf
 $\Lambda^{-q}(N^*_{\Delta} \tens \rho_p) \tens L_{\Delta}^{\tens ^{p+t}} \tens \epsilon_p$.
\end{itemize}Hence, considering the comprehensive $G_1(a)$-action, 
 the sheaf $\mc{F}^{p,q}_1(a)$ is isomorphic, as $\perm_{2-t} \times \perm_p \times \perm_t$-sheaf, to the sheaf: 
$$ \mc{F}^{p,q}_1(a) \simeq \Lambda^{-q}(N^*_{\Delta} \tens \rho_p) \tens L_{\Delta}^{\tens ^{p+t}} \tens \epsilon_p \tens \epsilon_{2-t}^{\tens ^{p-q}} \tens \epsilon_{t}^{\tens ^{p-q}} \;.$$
\end{remark}
\subsubsection*{The terms $\mf{E}^{p,q}$.}
We are ready to compute the invariants 
$ \mf{I}^{p,q}(a) $.
For $q \leq 0$ even, $l>0$, denote with $\mf{F}^{l,q}$ the module: 
$$ \mf{F}^{l,q}:=  H^0(U, L^{\tens^l} \tens 
K_X^{^{\tens^{-q/2}}}) \tens \Lambda^{k-l}H^0(U,L) \tens
S^{n-k+l-2}H^0(U,\FS_X) \;.$$If $l=0$, set $\mf{F}^{0,0} = \Lambda^kH^0(U,L) \tens S^{n-k}H^0(U,\FS_X)$. Remark that $\mf{F}^{l,q}=0$ if $l>k$. 
\begin{pps}\label{lapropo}Let $a \in \mc{I}^p_0$, $t(a)=t$. The term 
$\mf{I}^{p,q}(a)$ is non zero  only if $p=q=0$ or
$$1 \leq p \leq k, \; 2-2p \leq q \leq 0, \;
 \mbox{$q$ is even and} \left \{ 
\begin{array}{l} \mbox{$p$ even, $0 \leq t \leq 1$} \\ \mbox{$p$ odd, $1 \leq t \leq 2$}  \end{array} \right. $$In this cases we have $\mf{I}^{p,q}(a) = \mf{F}^{p+t,q}$.
\end{pps}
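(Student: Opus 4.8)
The plan is to compute the invariants $\mf{I}^{p,q}(a)$ by means of the tensor factorisation of remark \ref{rimarcotensor}. By (\ref{Inva}) we have $\mf{I}^{p,q}(a) \simeq \Tens_{i=1}^3 \left( F^{p,q}_i(a)\tens\Ind^H_{G_i(a)}\epsilon_k\right)^{G_i(a)}$, one factor for each of the subgroups $G_1(a)=G(I_0\setminus J)\times H(S_0)\times\Delta(I_0\cap J)$, $G_2(a)=\Delta(J\setminus I_0)$, $G_3(a)=G(\overline{I_0\cup J})$ isolated in proposition \ref{pps: stab}. I would then identify these three factors with the three tensor factors $H^0(U,L^{\tens^{p+t}}\tens K_X^{\tens^{-q/2}})$, $\Lambda^{k-p-t}H^0(U,L)$ and $S^{n-k+p+t-2}H^0(U,\FS_X)$ of $\mf{F}^{p+t,q}$, paying attention at each stage to the character $\Ind^H_{G_i(a)}\epsilon_k$, which is the origin of the parity obstructions.

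First the two elementary factors. The projection $G_3(a)\to H$ is trivial, since the indices of $\overline{I_0\cup J}$ are not met by $a$, so $\Ind^H_{G_3(a)}\epsilon_k$ is the trivial character; as $G_3(a)\cong\perm_{|\overline{I_0\cup J}|}$ permutes the factors of $F^{p,q}_3(a)=H^0(U,\FS_X)^{\tens |\overline{I_0\cup J}|}$, its invariants are $S^{|\overline{I_0\cup J}|}H^0(U,\FS_X)$. Using $|S_0|=p$ and the injectivity of $a$ off $S_0$ (definition of a relevant orbit) one gets $|\overline{I_0\cup J}|=n-k+p+t-2$. For $G_2(a)=\Delta(J\setminus I_0)\cong\perm_{k-p-t}$, the projection $(\sigma,a^*\sigma)\mapsto a^*\sigma$ has sign $\sgn(\sigma)$, so $\Ind^H_{G_2(a)}\epsilon_k$ is the sign character and the invariants of $F^{p,q}_2(a)=H^0(U,L)^{\tens (k-p-t)}$ become the exterior power $\Lambda^{k-p-t}H^0(U,L)$. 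These match the last two factors of $\mf{F}^{p+t,q}$.

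The factor attached to $G_1(a)\cong\perm_{2-t}\times\perm_p\times\perm_t$ is the crux. Here I would feed in the explicit $G_1(a)$-action of remark \ref{rmk: g1action}: after tensoring with $\Ind^H_{G_1(a)}\epsilon_k$ (trivial on $G(I_0\setminus J)$, the sign $\epsilon_p$ on $H(S_0)$, the sign $\epsilon_t$ on $\Delta(I_0\cap J)$) the sheaf $\mc{F}^{p,q}_1(a)$ carries on $\perm_{2-t}$ the scalar character $\epsilon_{2-t}^{\tens^{p-q}}$, on $\perm_t$ the scalar character $\epsilon_t^{\tens^{p-q+1}}$, and on $\perm_p=H(S_0)$ the action on $\Lambda^{-q}(N^*_\Delta\tens\rho_p)$ (the two copies of $\epsilon_p$ cancelling). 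Consequently the invariants vanish unless both $\epsilon_{2-t}^{\tens^{p-q}}$ and $\epsilon_t^{\tens^{p-q+1}}$ are trivial, which forces $p$ even when $t=0$ and $p$ odd when $t=2$, and imposes nothing when $t=1$; this is exactly the case distinction of the statement, once the parity of $q$ is pinned down below.

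The main obstacle is the representation-theoretic computation of $[\Lambda^{-q}(N^*_\Delta\tens\rho_p)]^{\perm_p}$ as a sheaf on $\Delta\simeq X$, where $N^*_\Delta\simeq\Omega^1_X$ has rank $2$. The plan is to use the splitting $\mathbb{C}^p\simeq\rho_p\oplus\mathbf{1}$ of the permutation representation, which yields the graded isomorphism $\Lambda^\bullet(N^*_\Delta\tens\mathbb{C}^p)\simeq\Lambda^\bullet(N^*_\Delta\tens\rho_p)\tens\Lambda^\bullet N^*_\Delta$ of $\perm_p$-sheaves and hence, in each exterior degree, a three-term relation. Since $N^*_\Delta\tens\mathbb{C}^p\simeq(N^*_\Delta)^{\oplus p}$ with $\perm_p$ permuting the summands with Koszul signs, the invariants $[\Lambda^\bullet((N^*_\Delta)^{\oplus p})]^{\perm_p}$ are read off the free graded-commutative algebra on $\FS_X\oplus N^*_\Delta\oplus K_X$ (placed in exterior degrees $0,1,2$), restricted to total multiplicity $p$; this is computed directly and the three-term relation is then inverted by a short induction on the exterior degree, giving $[\Lambda^{-q}(N^*_\Delta\tens\rho_p)]^{\perm_p}\simeq K_X^{\tens^{-q/2}}$ precisely when $q$ is even and $2-2p\le q\le 0$, and $0$ otherwise. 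This step produces both the parity "$q$ even" and the range "$2-2p\le q\le 0$". Tensoring with $L_\Delta^{\tens^{p+t}}\simeq L^{\tens^{p+t}}$ and taking sections over $U$ gives the first factor $H^0(U,L^{\tens^{p+t}}\tens K_X^{\tens^{-q/2}})$ of $\mf{F}^{p+t,q}$; assembling the three factors, and disposing of the degenerate case $p=q=0$ (where $I_0=\emptyset$, so only the $G_2$ and $G_3$ factors survive and give $\mf{F}^{0,0}$) separately, finishes the proof.
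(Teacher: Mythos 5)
Your proposal is correct, and for most of its length it coincides with the paper's own proof: the factor-by-factor computation through (\ref{Inva}), the identification of the $G_3(a)$-factor with $S^{n-k+p+t-2}H^0(U,\FS_X)$ (trivial induced character), of the $G_2(a)$-factor with $\Lambda^{k-p-t}H^0(U,L)$ (sign character), the parity obstructions read off the scalar characters $\epsilon_{2-t}^{\tens^{p-q}}$ and $\epsilon_t^{\tens^{p-q+1}}$ after the two copies of $\epsilon_p$ cancel, and the separate treatment of $p=q=0$ are exactly the paper's Steps 1--3. The genuine difference is how you compute the crux, $\bigl[\Lambda^{-q}(N^*_{\Delta}\tens\rho_p)\bigr]^{\perm_p}$. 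The paper delegates this to lemma \ref{cinquediciotto} and corollary \ref{crl: invator}: character orthogonality (via $V\tens\rho_p\simeq\rho_p\oplus\rho_p$) shows invariants exist only in even exterior degree and are then one-dimensional, and Cauchy's formula $\Lambda^q(V\tens\rho_k)\simeq\bigoplus_{\lambda}S^{\lambda}V\tens S^{\lambda^{\prime}}\rho_k$ together with the dimension formula for Schur functors identifies them with $S_{h,h}V\simeq(\Lambda^2V)^{\tens^h}$, i.e.\ with $K_X^{\tens^{-q/2}}$. You instead split $\mbb{C}^p\simeq\rho_p\oplus\mathbf{1}$, observe that $\bigl[\Lambda^{\bullet}(N^*_{\Delta}\tens\mbb{C}^p)\bigr]^{\perm_p}$ is the $p$-th graded symmetric power of $\FS_X\oplus N^*_{\Delta}\oplus K_X$, namely $\bigoplus_{a+b+c=p}\Lambda^bN^*_{\Delta}\tens K_X^{\tens^c}$ placed in exterior degree $b+2c$, and then solve the three-term recursion $T_m=I_m\oplus(I_{m-1}\tens N^*_{\Delta})\oplus(I_{m-2}\tens K_X)$ (where $T_m$, $I_m$ denote the invariants of $\Lambda^m(N^*_{\Delta}\tens\mbb{C}^p)$ and $\Lambda^m(N^*_{\Delta}\tens\rho_p)$) by induction on $m$; this is sound and yields $I_{2h}=K_X^{\tens^h}$ for $0\le h\le p-1$ and $I_m=0$ otherwise, recovering in one stroke the parity of $q$, the range $2-2p\le q\le 0$, and the identification with $K_X^{\tens^{-q/2}}$. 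Your route is more elementary and self-contained (no characters, no Cauchy formula, no Schur-functor dimension count); what the paper's route buys is the finer $GL(V)\times\perm_k$-equivariant statement (the invariants are the Schur functor $S_{h,h}V$) and reusability, since the one-dimensionality provided by lemma \ref{cinquediciotto} is invoked again in lemma \ref{invaperm} and hence in proposition \ref{pps: diffinv}; so within the paper that lemma cannot be dispensed with, but as a proof of this proposition your argument is a valid substitute.
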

\begin{proof}
By remark \ref{rimarcotensor}, it is sufficient to compute separately 
the factors in (\ref{Inva}). 

\emph{Step 1.}
The invariants $( F_3^{p,q}(a) \tens \Ind_{G_3(a)}^H\epsilon_k )^{G_3(a)}$ 
are very easy to compute since $G_3(a) = 
G(\overline{I_0 \cup J})$ just permutes geometrically the factors 
and $\Ind_{G_3(a)}^H\epsilon_k=1$; then:
$$\left( F_3^{p,q}(a) \tens \Ind_{G_3(a)}^H\epsilon_k \right)^{G_3(a)} 
\simeq \left[ H^0(U, \FS_X)^{\tens ^{|\overline{I_0 \cup J}|}} \right]^{\perm_{|\overline{I_0\cup J}|}} \simeq 
S^{n-k+p+t-2}H^0(U, \FS_X)
\;.$$

\emph{Step 2.} We pass now to the computation of the invariants $( F_2^{p,q}(a) \tens \Ind_{G_2(a)}^H\epsilon_k )^{G_2(a)}$. 
By K\"unneth formula 
$ F_2^{p,q}(a) := H^0(U^{J \setminus I_0}, \boxtimes_{j \in  J \setminus I_0} L_{j}) \simeq 
H^0(U,L)^{\tens ^{|J \setminus I_0|}}$. The group 
$\Delta(J \setminus I_0)$ acts on $F_2^{p,q}(a)$ just exchanging the factors; 
consequently, since $\Ind^H_{\Delta(J \setminus I_0)} \epsilon_k \simeq \epsilon_{|J \setminus I_0|}$, we have: 
\begin{equation*}
\Big (F_2^{p,q} \tens \Ind_{G_2(a)}^H\epsilon_k \Big )^{G_2(a)} 
 \simeq  (H^0(U,L)^{\tens ^{|J \setminus I_0|}} \tens \epsilon_{|J \setminus I_0|})^{\perm_{|J \setminus I_0|} }  \simeq \Lambda^{k-p-t}H^0(U,L) 
\end{equation*}The computation done so far can be remade in exactly the same way for $p=0$, considering $I_0= \emptyset$: it gives: $\mf{I}^{0,0}(a) = \mf{F}^{0,0}$.

\emph{Step 3.} We compute now the invariants 
$( F^{p,q}_1(a) \tens \Ind^{G_1(a)}_{H} \epsilon_k )^{G_1(a)}$.  
Remark that $F^{p,q}_1(a)$ is the space of sections over $U^{I_0}$ of the sheaf 
$\mc{F}^{p,q}_1(a)$, supported on the diagonal 
$\Delta$, on which $G_1(a)$ acts trivially. Hence: 
$$ \Big(F^{p,q}_1(a) \tens \Ind_{G_1(a)}^H \epsilon_k \Big)^{G_1(a)} \simeq 
H^0\Big(\Delta, \Big( \mc{F}^{p,q}_1(a) \tens \Ind_{G_1(a)}^{H} \epsilon_k 
\Big)^{G_1(a)} \Big) \;.$$
The $G_1(a)$-invariants of the sheaf $\mc{F}^{p,q}_1(a) \tens \Ind_{G_1(a)}^{H} \epsilon_k$ are now easy to compute, due to remark \ref{rmk: g1action}. Keeping into account that $\Ind^{H}_{G_1(a)} \epsilon_k \simeq \Ind_{H(S_0)}^H \epsilon_k \tens \Ind_{\Delta(I_0 \cap J)}^H \epsilon_k \simeq \epsilon_p \tens \epsilon_t$, 
we have:
$$ \Big( \mc{F}^{p,q}_1(a) \tens \Ind_{G_1(a)}^{H} \epsilon_k \Big)^{G_1(a)} \simeq  
\Big( \Lambda^{-q}(N^*_{\Delta} \tens \rho_p )\Big)^{\perm_p} \tens 
\left(\epsilon_{2-t}^{\tens ^{p-q}} \right)^{\perm_{2-t}} \tens \left(\epsilon_t^{\tens ^{p-q+1}} \right)^{\perm_t}  \tens L_{\Delta}^{\tens^{p+t}}  \;.$$
By lemma \ref{cinquediciotto} the representation 
$\Lambda^{-q}(N^*_{\Delta} \tens \rho_p )$ has non zero $\perm_p$-invariants
if and only if $q$ is even; in this case, by corollary \ref{crl: invator}: 
$$ \Big( \mc{F}^{p,q}_1(a) \tens \Ind_{G_1(a)}^{H} \epsilon_k \Big)^{G_1(a)} \simeq  
\Big( \Lambda^{2}N^*_{\Delta} \Big)^{-q/2} \tens 
\left(\epsilon_{2-t}^{\tens ^{p-q}} \right)^{\perm_{2-t}} \tens \left(\epsilon_t^{\tens ^{p-q+1}} \right)^{\perm_t}  \tens L_{\Delta}^{\tens^{p+t}} $$
which is nonzero if and only if $0 \leq t \leq 1$, $p$ even, or 
$1 \leq t \leq 2$, $p$ odd. In these cases the invariants are: 
$$ \Big( F^{p,q}_1(a) \tens \Ind_{G_1(a)}^H \epsilon_k \Big)^{G_1(a)} \simeq 
H^0(U, K_X^{\tens^{-q/2}} \tens L^{\tens^{p+t}}) \;.$$This proves the proposition.
\end{proof}
\begin{remark}\label{rem:nonzeroinv}
Note that, in the case of nonzero invariants $0 \neq \mf{I}^{p,q}(a)$, both the groups 
$G(I_0 \setminus J)$ and $\Delta(I_0 \cap J)$ act trivially on $F_1^{p,q}(a) 
\tens \Ind_{G_1(a)}^H \epsilon_k$.
\end{remark}
\begin{crl} Let $0 \leq p \leq k$, $2(1-p) \leq q \leq 0$, $q$ even. If $p$ is even the term $\mf{E}^{p, q}$ is isomorphic to:
$$ \mf{E}^{p, q}  \simeq   \left \{ \begin{array}{ll} \mf{F}^{p,q} \oplus \mf{F}^{p+1,q}
  & \mathrm{if}  \; \; 0<p \leq k    \\ \mf{F}^{0,0} & \mathrm{if}  \;  p=q=0 \end{array} \right. 
\;.$$If $p$ is odd,  we have: 
$ \mf{E}^{p, q} \simeq     \mf{F}^{p+1,q} \oplus \mf{F}^{p+2,q}$. 
\end{crl}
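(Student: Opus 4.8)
The corollary follows by assembling the pieces computed in Proposition \ref{lapropo}. The plan is to start from the decomposition of the term $\mf{E}^{p,q}$ into contributions from relevant orbits. By Lemma \ref{lmm: rele} only relevant orbits contribute, and by Remark \ref{rmk: rel} these are indexed by the value $t(\mc{O}) \in \{0,1,2\}$; by Lemma \ref{lmm: danilaversion} (in the form recalled at the end of Notation \ref{not: f}) each such orbit contributes the term $\mf{I}^{p,q}(a) = \mf{F}^{p+t,q}$ for a representative $a$ with $t(a)=t$. Thus I would write
$$ \mf{E}^{p,q} = \bigoplus_{\mc{O} \in \mc{I}^p_0/G \times H} \mf{I}^{p,q}(a_{\mc{O}}) = \bigoplus_{t} \mf{F}^{p+t, q}\;,$$
where the last sum runs over those $t \in \{0,1,2\}$ for which a relevant orbit with $t(\mc{O})=t$ actually exists in $\mc{I}^p_0$ and for which the parity constraint of Proposition \ref{lapropo} is satisfied.

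**Applying the parity constraint.**

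The key input is the parity dichotomy in Proposition \ref{lapropo}: a relevant orbit with parameter $t$ yields a nonzero $\mf{I}^{p,q}(a)$ only when either ($p$ even and $0 \leq t \leq 1$) or ($p$ odd and $1 \leq t \leq 2$). So for $p$ even the surviving values are $t=0$ and $t=1$, giving summands $\mf{F}^{p,q}$ and $\mf{F}^{p+1,q}$; for $p$ odd the surviving values are $t=1$ and $t=2$, giving $\mf{F}^{p+1,q}$ and $\mf{F}^{p+2,q}$. This reproduces exactly the two cases in the statement. I must also check existence of the relevant orbits: given $0 \leq p \leq k$, one has to verify that for each admissible $t$ there is indeed an element $a \in \mc{I}^p_0$ with $|I_0(a) \cap J(a)| = t$, which is a matter of exhibiting a combinatorial configuration (choose $S_0(a)$ of size $p$ with $I_0(a)$ a fixed pair, and distribute the remaining $k-p$ values of $a$ injectively, arranging $t$ of them to land in $I_0(a)$). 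Here the constraint $n \geq k-p+t-\text{(appropriate bound)}$ must be watched so that $\mf{F}^{p+t,q}$ is not forced to vanish for range reasons.

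**Treating the special corner and assembling.**

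The degenerate case $p=q=0$ must be handled separately, since Proposition \ref{lapropo} records $\mf{I}^{0,0}(a) = \mf{F}^{0,0}$ and $F^{0,q}(a)=0$ for $q \neq 0$; here only $t=0$ survives (an element of $\mc{I}^0_0$ has $I_0 = \emptyset$, so $t=0$ necessarily), giving $\mf{E}^{0,0} \simeq \mf{F}^{0,0}$, which matches the second line of the even-$p$ case. For even $0 < p \leq k$ the two admissible values $t=0,1$ give $\mf{F}^{p,q} \oplus \mf{F}^{p+1,q}$, and for odd $p$ the values $t=1,2$ give $\mf{F}^{p+1,q} \oplus \mf{F}^{p+2,q}$, completing the verification. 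The main obstacle I anticipate is purely bookkeeping rather than conceptual: one must confirm that distinct relevant orbits (distinct $t$) contribute to distinct summands with no overlap or cancellation, and that the isomorphism of Lemma \ref{lmm: danilaversion} is applied with a correctly chosen representative $a$ in each orbit so that the identification $\mf{I}^{p,q}(a)=\mf{F}^{p+t,q}$ from Proposition \ref{lapropo} is legitimate and independent of that choice. Since all the genuine representation-theoretic work was already carried out in Proposition \ref{lapropo}, this corollary is essentially a tabulation of its output against the two parities of $p$.
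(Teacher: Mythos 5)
Your proposal is correct and follows exactly the route the paper intends: the paper states this corollary without proof as an immediate tabulation of Proposition \ref{lapropo}, obtained by decomposing $\mf{E}^{p,q}$ into homogeneous components, discarding irrelevant orbits via Lemma \ref{lmm: rele}, indexing the relevant ones by $t\in\{0,1,2\}$ via Remark \ref{rmk: rel}, and applying Lemma \ref{lmm: danilaversion} together with the parity constraint of the proposition. Your bookkeeping concern about orbit existence is also handled automatically by the convention $\mf{F}^{l,q}=0$ for $l>k$ (and the vanishing of negative-degree symmetric powers), so the missing summands are zero exactly when the corresponding orbits fail to exist.
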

\begin{remark}Remark that $\mf{E}^{k, q}= \mf{F}^{k,q}$ if $k$ even, while, if $k$ odd,
$\mf{E}^{k-1, q}= \mf{F}^{k,q}$, $\mf{E}^{k, q}= 0$.
\end{remark}
\subsubsection*{The differentials $\mf{E}^{p,q} \longrightarrow \mf{E}^{p+1,q}$}
\begin{remark}\label{rmk: diffinvprimo}The map $\mf{E}^{p,q} \rTo \mf{E}^{p+1,q}$ is the map of 
$G \times H$-invariants of the map of sections: 
\begin{equation}\label{eq: sec}
f: \bigoplus_{a \in \mc{I}^p_0} 
F^{p,q}(a)  \tens \epsilon_k\rTo^{\oplus_{a,b} f_{a,b}} \bigoplus_{b \in \mc{I}^{p+1}_0} F^{p+1,q}(b) \tens \epsilon_k \end{equation} and is induced by the differential $(E^{p,q}_1)_0 \rTo (E^{p+1,q}_1)_0$.
By definition of $(E^{p,q}_1)_0$ and $(E^{p+1,q}_1)_0$ any nontrivial morphism 
$\mc{F}^{p,q}(a) \rTo \mc{F}^{p+1}(b)$, whose map of sections $f_{a,b}$ appears in 
(\ref{eq: sec}), has to be induced by a restriction $L_{j_0} \rTo L_{I_0(a)}$ for $j_0 \in I_0(a) \cap J(a)$, twisted by a sign coming from the definition of the differential $\partial^p_{\comp{\mc{C}}_L \tens \cdots \tens \comp{\mc{C}}_L}$, which we will explicit later. Hence, if such a map $\mc{F}^{p,q}(a) \rTo \mc{F}^{p+1}(b)$ is nontrivial, this forces $t(a) \geq 1$, $t(b) = t(a)-1$. Moreover, 
$b$ has to be obtained by $a$ in the following way: $b(i) = a(i)$, if $i \in \{1, \dots, k\} \setminus a^{-1}(\{ j_0 \})$, $b(a^{-1}(\{ j_0 \}))=I_0(a)$. Hence $I_0(b) = I_0(a)$; 
$J(b) = J(a) \setminus \{ j_0 \}$. Consequently: 
\begin{gather*}
\Stab_{G \times H}(a) := H(S_0(a)) \times \Delta(I_0(a) \cap J(a)) \times G_2(a) \times G_3(a) \\
\Stab_{G \times H}(b) := G( (I_0(a) \setminus J(a)) \cup \{ j_0 \} ) 
\times H(S_0(a) \cup a^{-1}(\{ j_0 \})) \times G_2(a) \times G_3(a)
\end{gather*}Therefore, if $p \geq 0$, $t(a)=0$, there are no 
nontrivial maps $f_{a,b}$ from $F^{p,q}(a)$ in (\ref{eq: sec}).
\end{remark}
\begin{remark}
\label{rmk: diffinvsecondo}
By the preceding remark and by proposition \ref{lapropo},
if $p>0$ is even and $t(a) = 1$ all possible maps 
$F^{p,q}(a) \rTo F^{p+1,q}(b)$
induce the zero map between invariants. 
Consequently, if $p>0$ is even there are no nontrivial maps $\mf{E}^{p,q} \rTo 
\mf{E}^{p+1,q}$. On the other hand, if $p$ is odd the only possible morphism: 
$ \mf{E}^{p,q} = \mf{F}^{p+1,q} \oplus \mf{F}^{p+2,q} \rTo \mf{F}^{p+1,q} \oplus \mf{F}^{p+2,q}  = \mf{E}^{p+1.q}$ are diagonal. We will indicate with $\alpha_t$ the morphism $\mf{F}^{p+t,q} \rTo \mf{F}^{(p+1)+(t-1),q}$.
\end{remark}
\begin{pps}\label{pps: diffinv}
Let $p$ be odd, $0<p \leq k-1$, $q$ even, 
$-2p+2 \leq q \leq 0$, $1 \leq t \leq 2$.
The morphism of invariants 
$ \alpha_t: \mf{F}^{p+t,q} \rTo  \mf{F}^{(p+1)+(t-1),q} $ 
is an isomorphism.
\end{pps}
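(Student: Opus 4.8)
The plan is to make the morphism $\alpha_t$ completely explicit on a single pair of orbit representatives and to recognize it as a nonzero scalar multiple of a canonical isomorphism; verifying that the scalar does not vanish is the only real content. First I would fix a representative $a \in \mc{I}^p_0$ with $t(a) = t$ of the source orbit. By remark \ref{rmk: diffinvprimo}, every $b \in \mc{I}^{p+1}_0$ for which the section map $f_{a,b}$ is nonzero is obtained from $a$ by choosing some $j_0 \in I_0(a) \cap J(a)$ — there are exactly $t = |I_0(a) \cap J(a)|$ such choices, since $a$ is injective off $S_0(a)$, so each $a^{-1}(\{j_0\})$ is a single position $i_0$ — and merging $i_0$ into $I_0(a)$ while leaving every other value of $a$ unchanged; all these $b$ lie in a single $G \times H$-orbit with $t$-value $t-1$. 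Since $I_0(b) = I_0(a)$, $J(b) = J(a) \setminus \{j_0\}$, and $\overline{I_0 \cup J}$ is unchanged, the factorizations of remark \ref{rimarcotensor} for source and target share the same $\mc{F}_2$- and $\mc{F}_3$-factors, so each $f_{a,b}$ is the identity on the $G_2$- and $G_3$-invariant parts $\Lambda^{k-p-t}H^0(U,L)$ and $S^{n-k+p+t-2}H^0(U,\FS_X)$.

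Next I would analyze the $\mc{F}_1$-factor. The elementary map is induced by the restriction $L_{j_0} \to L_{I_0(a)}$ appearing in the {\v C}ech differential; but on the diagonal $\Delta = \Delta_{I_0(a)}$ both $L_{j_0}\trest_{\Delta}$ and $L_{I_0(a)}\trest_{\Delta}$ are canonically $L$, so this restriction is an isomorphism of the $G_1$-invariant parts, each equal to $H^0(U, L^{\tens^{p+t}} \tens K_X^{\tens^{-q/2}})$ by proposition \ref{lapropo}. Thus every elementary contribution is, up to a sign, the canonical identity of $\mf{F}^{p+t,q}$ with itself. To assemble $\alpha_t$ I would invoke the morphism version of Danila's lemma from appendix \ref{app: Danila}, which expresses the map on invariants as the signed sum, over the $t$ choices of $j_0$, of the elementary maps reindexed to the fixed target representative $b$ by a permutation in $\Stab_{G \times H}(b)$.

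The main obstacle is the sign bookkeeping needed to see that this signed sum does \emph{not} cancel. Three families of signs interact: the {\v C}ech signs $\epsilon_{j_0, I_0(a)}$, the position signs $(-1)^{\sum_{l<j}i_l}$ coming from the differential of the total complex $\comp{\mc{C}}_L \tens \cdots \tens \comp{\mc{C}}_L$, and the $\epsilon_k$-twist together with the identification signs of remark \ref{totalaction} (governed by lemma \ref{permsign}). The key observation is that the $t$ choices of $j_0$ are permuted transitively by the diagonal subgroup $\Delta(I_0(a) \cap J(a)) \cong \perm_t$ inside $\Stab_{G\times H}(a)$, and by remark \ref{rem:nonzeroinv} this subgroup acts trivially on the nonzero invariant $F_1^{p,q}(a) \tens \Ind_{G_1(a)}^H \epsilon_k$; consequently, once each elementary term is transported to the common representative $b$, the sign discrepancy between different $j_0$ — notably the opposite {\v C}ech signs $\epsilon_{j_0,I_0}=+1$, $\epsilon_{j_1,I_0}=-1$ that arise when $t=2$ and $I_0=\{j_0,j_1\}$ — is exactly compensated by the $\epsilon_t$-sign of the reindexing permutation. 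Hence all $t$ terms coincide, $\alpha_t = \pm\, t \cdot \id_{\mf{F}^{p+t,q}}$, and since $\mathrm{char}(k)=0$ and $1 \leq t \leq 2$ the factor is invertible, so $\alpha_t$ is an isomorphism.
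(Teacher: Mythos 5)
Your reduction to a single pair of representatives, and your observations that the $\mc{F}_2$- and $\mc{F}_3$-factors are untouched (since $I_0(b)=I_0(a)$, $J(b)=J(a)\setminus\{j_0\}$, $G_i(a)=G_i(b)$ for $i=2,3$), agree with the paper. But there are two genuine gaps, and they sit exactly at the point you declare to be ``the only real content.'' First, you apply Danila's lemma for morphisms in the wrong direction. Lemma \ref{Danilamorphism} computes $f^{G}(u)=\sum_{[g]} f_{g(i_0),j_0}(gu)$, where the sum runs over the elements of the \emph{source} orbit that map nontrivially into a \emph{fixed} target representative $b$ --- not over the $t$ targets reachable from a fixed source $a$. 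For fixed $b$ (with $|S_0(b)|=p+1$), a source $c=ga$ with $f_{c,b}\neq 0$ is obtained by splitting off any one of the $p+1$ positions of $S_0(b)$ into a singleton (the residual choice of $j_0\in I_0\setminus J(b)$ being absorbed by the factor $Q$ of lemma \ref{DMS}); so after the reduction of lemma \ref{DMS} the relevant sum has $p+1$ terms, indexed by the cosets $H(S_0(a)\cup\{j_0\})/H(S_0(a))$, not $t$ terms indexed by $I_0(a)\cap J(a)$. Your formula $\alpha_t=\pm\, t\cdot \id$ is therefore not what the lemma produces, and the cancellation risk you worry about (the two {\v C}ech signs when $t=2$) is not where the actual cancellation risk lies: it lies among the $p+1$ position terms.

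Second, and more seriously, the assertion that the elementary map ``is an isomorphism of the $G_1$-invariant parts'' because both line bundles restrict to $L$ on $\Delta$ is a non sequitur, and it is precisely the nontrivial point of the proposition. By lemma \ref{torinv} and corollary \ref{naturalissimo}, the elementary map on the $\mc{F}_1$-factor is the inclusion $\gamma_i:\Lambda^{-q}(N^*_{\Delta}\tens\rho_p(i))\tens L_{\Delta}^{\tens^{p+t}}\to \Lambda^{-q}(N^*_{\Delta}\tens\rho_{p+1})\tens L_{\Delta}^{\tens^{p+t}}$, which for $q<0$ is a strict inclusion; the source carries a $\perm_p$-action, the target a $\perm_{p+1}$-action, and the image of the $\perm_p$-invariant line (one-dimensional by lemma \ref{cinquediciotto}) is \emph{not} contained in the $\perm_{p+1}$-invariants. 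What has to be proved is that the symmetrized map --- concretely, $u\mapsto\sum_{i=1}^{p+1}\theta_i\gamma_{p+1}(u)$, i.e.\ the map of $\perm_{p+1}$-invariants of $\oplus_{i=1}^{p+1}\gamma_i$ --- is nonzero between the two one-dimensional invariant spaces. This is exactly lemma \ref{invaperm}, point 4, whose proof requires the explicit computation with the bivector $\omega=\sum_i ue_i\wedge ve_i$ showing that the relevant projections of $\omega^l$ do not vanish. Your proposal never engages with this (it is invisible in the case $q=0$, but is the whole difficulty for $q<0$), and no amount of sign bookkeeping among the $t$ choices of $j_0$ can substitute for it.
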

\begin{proof}The morphism  $\alpha_t$
is induced by the 
$G \times H$-equivariant morphism between relevant homogeneous components (see remark \ref{rmk: rel}): 
\begin{equation}\label{eq:Finv} 
f :
\bigoplus_{\substack{a \in \mc{I}^{p}_0 \\ t(a)=t}} F^{p,q}(a) \tens \epsilon_k
\rTo ^{\oplus_{a, b} f_{a, b}}  
\bigoplus_{\substack{b \in \mc{I}^{p+1}_0 \\ t(b) =t(a)-1}} F^{p+1,q}(b) 
\tens \epsilon_k  \;.
\end{equation}
Fix now $a \in \mc{I}^{p}_0$, 
with $t(a)=t$ 
and $b \in 
\mc{I}^{p+1}_0$, $t(b)=t-1$, such that the morphism $f_{a, b}~:~F^{p,q}(a) \rTo 
F^{p+1,q}( b) $ is non zero: it has  to be of the form described in remark
\ref{rmk: diffinvprimo}.
We can decompose $\Stab_{G \times H}(b)$ as a direct product $\Stab_{G \times H}(b) \simeq P \times Q$ where
$ Q = G((I_0(a) \setminus J(a)) \cup \{ j_0\})$, $
P  = H(S_0(a) \cup \{j_0\}) \times G_2(a) \times G_3(a)$. Now $Q$ acts trivially on $F^{p,q}(b) \tens \Ind^H_{\Stab_{G \times H}(b)} \epsilon_k
$, by remark \ref{rem:nonzeroinv}. Moreover $\Delta(I_0(a) \cap J(a))$ acts trivially on $F^{p,q}(a) \tens \Ind_{\Stab_{G \times H}(a)} \epsilon_k$ and 
$G_i(a) = G_i(b)$ for $i=2,3$. Hence: 
$$ \left( F^{p,q}(a) \tens \Ind^H_{\Stab_{G \times H}(a)} \epsilon_k\right) ^{\Stab_{G \times H}(a)} = \left( F^{p,q}(a) \tens \Ind^H_{\Stab_{G \times H}(a)} \epsilon_k\right) ^{\Stab_{G \times H}(a) \cap \Stab_{G \times H}(b)} \;.$$Moreover, the hypothesis of 
lemma \ref{DMS} are verified: we obtain that the morphism $f^{G \times H}$ of $G \times H$-invariants of (\ref{eq:Finv}) coincides 
with $|Q| \cdot \tilde{f}^P$ where $\tilde{f}^P$ is the morphism of $P$-invariants of the map:
$$ \tilde{f}:  \bigoplus_{[g] \in P/\Stab_{G \times H}(a)} F^{p,q}(ga) \tens 
\Ind^H_{P} \epsilon_k
 \rTo  
F^{p,q}(b) \tens \Ind^H_{P} \epsilon_k \;.
 $$Now the orbit $P/ \Stab_{G \times H}(a)$ of $P$ in $G \times H / \Stab_{G \times H}(a)$ is in bijection with the cosets $H(S_0(a) \cup \{ j_0 \}) / H(S_0(a))$.  Keeping into account that $G_i(ga)=G_i(b)$ for $i=2,3$ and for all $g \in H(S_0(a) \cup \{ j_0 \})$, and decomposing each term 
as in remark \ref{rimarcotensor}, we  see that the map $\tilde{f}^P$ is induced by the map 
of $H(S_0(a) \cup \{ j_0 \})$-invariants of the map: 
\begin{equation}\label{eq: f1} 
 \bigoplus_{[g] \in 
H(S_0(a) \cup \{ j_0 \})/H(S_0(a))} F_1^{p,q}(ga) \tens 
 \epsilon_{p+1} 
 \rTo 
F_1^{p,q}(b) \tens  \epsilon_{p+1}  \;,
\end{equation}
where here $\epsilon_{p+1}$ is the alternant representation of $H(S_0(a) \cup \{ j_0 \})$. The map (\ref{eq: f1}) is in turn induced at the level of section by the map of $H(S_0 \cup \{ j_0 \})$-sheaves on $U^{I_0(a)} \simeq U^2$:
$$ \eta : \bigoplus_{[g] \in H(S_0 \cup \{ j_0\}) / H(S_0)} \mc{F}_1^{p,q}(ga)
\tens \epsilon_{p+1} \rTo^{\oplus_{[g]} \eta_{ga,b}} \mc{F}^{p+1,q}_1(b) \tens \epsilon_{p+1} \;.
$$
The proposition follows from the following lemma
and lemma \ref{invaperm}, point 4. We use notations \ref{not: repstandard(i)}.
\end{proof}
\begin{lemma}The map of $\perm_{p+1}$-invariants of the map 
$\eta$
 can be identified, up to a sign, with the map of 
$\perm_{p+1}$-invariants of the map: 
\begin{equation}\label{eq: identif} \oplus_{i=1}^{p+1} \gamma_i : \bigoplus_{i=1}^{p+1} \Lambda^{-q}(N^*_{\Delta} \tens \rho_{p}(i)) \tens L_{\Delta}^{\tens ^{p+t}} \rTo 
\Lambda^{-q}(N^*_{\Delta} \tens \rho_{p+1}) \tens L_{\Delta}^{\tens ^{p+t}} \; ,\end{equation}induced by natural inclusions $\rho_p(i) \rTo \rho_{p+1}$.
\end{lemma}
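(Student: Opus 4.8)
The plan is to make every sheaf occurring in $\eta$ explicit through lemma \ref{torinv} and then recognise the resulting morphism as the one functorially induced on exterior powers by the inclusions of standard representations. First I would invoke notation \ref{not: f} and lemma \ref{torinv} to write each source summand as $\mc{F}^{p,q}_1(ga) \simeq \Lambda^{-q}(N^*_{\Delta} \tens \rho_p) \tens L_{\Delta}^{\tens^{p+t}}$ and the target as $\mc{F}^{p+1,q}_1(b) \simeq \Lambda^{-q}(N^*_{\Delta} \tens \rho_{p+1}) \tens L_{\Delta}^{\tens^{p+t}}$, the two powers of $L_{\Delta}$ agreeing precisely because $t(b)=t(a)-1$. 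Since $H(S_0) \simeq \perm_p$ is exactly the stabiliser of $j_0$ inside $H(S_0\cup\{j_0\}) \simeq \perm_{p+1}$, the index set $H(S_0\cup\{j_0\})/H(S_0)$ of $\eta$ is canonically the orbit $S_0 \cup \{j_0\}$, i.e.\ a set of $p+1$ slots $i$; I would match this indexing with the $p+1$ inclusions $\rho_p(i) \rTo \rho_{p+1}$ of notation \ref{not: repstandard(i)}, where $\rho_p(i)$ is the standard representation of the stabiliser of the $i$-th slot.

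The key step is to verify that, under these identifications, each component $\eta_{ga,b}$ becomes $\gamma_i = \Lambda^{-q}(N^*_{\Delta} \tens (\rho_p(i) \rTo \rho_{p+1})) \tens \id_{L_{\Delta}^{\tens^{p+t}}}$. By remark \ref{rmk: diffinvprimo} the map $\eta_{ga,b}$ is induced by the restriction $L_{j_0} \rTo L_{\Delta}$, that is, by adjoining one further diagonal factor to the multi-Tor $\Tor^p_{-q}(L_{\Delta})$ in the slot prescribed by the coset $[g]$. I would therefore unwind the Koszul computation behind lemma \ref{torinv}, in which $\rho_l$ records the $l$ coincident diagonal factors as the reduced part of the permutation module $\mbb{C}^l$: inserting the new factor in the $i$-th slot corresponds, on reduced parts, to the standard-representation inclusion $\rho_p(i) \rTo \rho_{p+1}$ indexed by that slot. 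Applying the functor $\Lambda^{-q}(N^*_{\Delta} \tens -)$ and tensoring with the identity on the untouched $L_{\Delta}^{\tens^{p+t}}$ then produces exactly $\gamma_i$.

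It remains to reconcile the alternating twists and the residual sign. On both sides the relevant $\perm_{p+1} = H(S_0 \cup \{j_0\})$ acts through $\rho_{\bullet}$ together with the same $\epsilon_{p+1}$-twist, which on the source comes from the $\epsilon_k$ in (\ref{eq: f1}) and the computation of remark \ref{rmk: g1action}; hence the equivariant identification of $\eta$ with $\oplus_i \gamma_i$ descends to one of the induced maps on $\perm_{p+1}$-invariants. The global sign is the product of the signs flagged in remark \ref{totalaction} (the discrepancy $(-1)^{(|a(j)|-1)(|a(j+1)|-1)}$ between $\widehat{\tau}$ and $\widetilde{\tau}$), the sign built into the differential $\partial^p_{\comp{\mc{C}}_L \tens \cdots \tens \comp{\mc{C}}_L}$ of remark \ref{rmk: diffinvprimo}, and the relabelling sign incurred by the choice of coset representatives; since each of these is independent of the summand $i$, they collapse to a single overall sign, which is all the statement asserts.

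The main obstacle will be the second step: making the compatibility between the geometric restriction $L_{j_0} \rTo L_{\Delta}$ and the purely representation-theoretic inclusion $\rho_p(i) \rTo \rho_{p+1}$ fully precise. This means tracing through the proof of lemma \ref{torinv} how the operation of adding one coincident diagonal factor acts on the $\Lambda^{-q}(N^*_{\Delta}\tens \rho_{\bullet})$ part, and checking that no further twist by $N^*_{\Delta}$ nor permutation of the exterior factors is introduced beyond the functorial one. Once this identification is secured, combining it with lemma \ref{invaperm}, point 4, closes the proof of proposition \ref{pps: diffinv}.
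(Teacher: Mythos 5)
Your overall strategy follows the paper's own: make every term explicit through lemma \ref{torinv} and remark \ref{rmk: g1action}, recognise the geometric restriction $L_{j_0} \to L_{\Delta}$ as the representation-theoretic inclusion $\rho_p(i) \to \rho_{p+1}$, and descend to $\perm_{p+1}$-invariants. Note that the step you single out as the main obstacle --- tracing through the Koszul computation of lemma \ref{torinv} to see that adjoining one diagonal factor induces exactly $\gamma_i$, with no extra twist --- is precisely corollary \ref{naturalissimo}, which the paper has already proved in the appendix; you could simply cite it instead of re-deriving it.

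The genuine gap is in your assembly step. You propose to match each component $\eta_{ga,b}$ with the corresponding $\gamma_i$ and then argue that the residual signs (the $\widehat{\tau}/\widetilde{\tau}$ discrepancy, the sign in the differential $\partial^p_{\comp{\mc{C}}_L \tens \cdots \tens \comp{\mc{C}}_L}$, and the relabelling sign from coset representatives) ``are independent of the summand $i$'', so that they collapse to one global sign. This is false as stated: the component of the differential issuing from the summand whose $\mc{C}^0$-factor sits in the $i$-th slot carries the Koszul sign $(-1)^{i-1}$ --- the paper records exactly this in the footnote to the present lemma --- and the sign $\epsilon_{p+1}(\theta_i)$ attached to a coset representative is likewise $i$-dependent. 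The statement is still true, but only because these $i$-dependent signs compensate one another, a cancellation your argument neither notices nor verifies; a naive component-by-component comparison therefore does not close. The clean way around this, and what the paper does, is never to compare more than one component: by Danila's lemma for morphisms (lemma \ref{Danilamorphism} together with remark \ref{rmk: dmss}), the map of invariants is $u \mapsto \sum_{i=1}^{p+1} \theta_i\, \eta_{a,b}(u)$, which involves only the single component $\eta_{a,b}$; one identifies that one component with $(-1)^p \gamma_{p+1}$ via corollary \ref{naturalissimo}, and then recognises $\sum_{i} \theta_i \gamma_{p+1}(u)$, again by remark \ref{rmk: dmss}, as the map of invariants of $\oplus_i \gamma_i$. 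Without this single-component reduction (or an explicit proof that your $i$-dependent signs cancel), the identification of the two maps of invariants is not established.
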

\begin{proof}It is clear that it is sufficient to prove the lemma when $L$ is trivial. 
We can identify $S_0 \cup \{j_0 \} \simeq \{1, \dots, p+1\}$ and $S_0 \simeq \{1, \dots,p\}$; hence  $H(S_0 \cup \{j_o \})/H(S_0) \simeq \perm_{p+1} / \perm_p$
can be identified with $\{1, \dots, p+1\}$ via the bijection carrying $i$ to the class of any permutation $\theta_i$ such that $\theta_i(p+1)=i$. Hence, by lemma \ref{Danilamorphism} and remark \ref{rmk: dmss}, the map of $\perm_{p+1}$-invariants $$ 
\eta^{\perm_{p+1}} : (\mc{F}^{p,q}_1(a) \tens \epsilon_{p})^{\perm_p} \simeq 
\left( \oplus_{i=1}^{p+1} \mc{F}^{p,q}_1( a\theta_i^{-1}) \tens \epsilon_{p+1}
\right)^{\perm_{p+1}} \rTo ( \mc{F}^{p+1,b}_1(b) \tens \epsilon_{p+1})^{\perm_{p+1}}$$is given 
by $$\eta^{\perm_{p+1}}(u) = 
\sum_{i=1}^{p+1} \theta_i
\eta_{a,b}(u) \;.
$$
In notations \ref{not: repstandard(i)}, we can now identify $\mc{F}^{p,q}_1(a) \tens \epsilon_{p} \simeq 
\Lambda^{-q}(N^*_{\Delta} \tens \rho_p(p+1))$ and 
$\mc{F}^{p+1,b}_1(b) \tens \epsilon_{p+1} \simeq \Lambda^{-q}(N^*_{\Delta} \tens \rho_{p+1})$, as $\perm_p$ and $\perm_{p+1}$-representations, respectively; 
moreover, the map $\eta_{a,b}$ identifies, up to the sign 
$(-1)^{p}$ coming from the definition of differential\footnote{We recall that the differential 
 $d^p_{\comp{\mc{C}}_L \tens \dots \tens \comp{\mc{C}}_L}$ acts on the term $\mc{C}^1 \tens \dots \tens \mc{C}^1 \tens \mc{C}^0 \tens \mc{C}^1 \tens \dots \tens \mc{C}^1$, where $\mc{C}^0$ is on the $i$-th factor, as $(-1)^{i-1} \id \tens \dots \tens \id \tens d^0_{\mc{C}^0} \tens \id \tens \dots \tens \id$.
  Hence the map $\eta_{a\theta_i^{-1},b}:\mc{F}^{p,q}_1(a \theta_i^{-1}) \rTo \mc{F}^{p,q}_1(b)$
 will also be affected by this sign. }
 $d^p_{\comp{\mc{C}}_L \tens \dots \tens \comp{\mc{C}}_L}$, to the natural inclusion
$\gamma_{p+1}$, considered in  corollary \ref{naturalissimo}. Hence $\eta^{\perm_{p+1}}$ can be identified, up to a sign, to $\sum_{i=1}^{p+1} \theta_i \gamma_{p+1}(u)$, which 
is, by remark \ref{rmk: dmss}, the map of $\perm_p$-invariants of the map (\ref{eq: identif}).
\end{proof}The following proposition is now consequence of remarks \ref{rmk: diffinvprimo}, \ref{rmk: diffinvsecondo} and proposition \ref{pps: diffinv}.
Consider the complexes \begin{gather*} A_i^{\bullet,q} := 0 \rTo \mf{F}^{2i,q} \oplus \mf{F}^{2i+1,q} \rTo^{(\alpha_1,\alpha_2)} \mf{F}^{2i,q} \oplus \mf{F}^{2i+1,q} \rTo 0 
\end{gather*}concentrated in degrees -1 and 0, where $\alpha_1$ and $\alpha_2$ 
 are isomorphisms.
Remark that 
they vanish if $i > k/2$; moreover
for 
$k$ even, $A_{k/2}^{\bullet,q}$ becomes: $A_{k/2}^{\bullet,q} =   0 \rTo \mf{F}^{k,q} \rTo^{\alpha_1} \mf{F}^{k,q} \rTo 0$. 
\begin{pps}\label{pps:isocomplexes}The complexes $\mf{E}^{\bullet, q}$ are isomorphic to: 
$$ \mf{E}^{\bullet, q} \simeq \left \{ 
\begin{array}{ll}  \mf{F}^{0,0} \oplus \bigoplus_{i=1}^{k/2}A^{\bullet,q}_i[-2i]  & \mbox{if $q=0$} \\ \\
(\mf{F}^{-q/2 +1,q} \oplus\mf{F}^{-q/2 +2,q}) [+q/2-1] \oplus \bigoplus_{i=-q/4+3/2}^{k/2}A^{\bullet,q}_i[-2i]  & \mbox{if $-2k+2 \leq q<0$}  \\ & \quad \mbox{and $q \equiv 2 \mod 4$}\\
\bigoplus_{i= -q/4+1}^{k/2}A^{\bullet,q}_i[-2i]  & \mbox{if $-2k+2 \leq q<0$} 
\\ & \quad \mbox{and $q \equiv 0 \mod 4$} 
\end{array} \right.\;$$ 
and they vanish elsewhere. Consequently, for $q<0$, they are exact in degree $p > -q/2 +1$.
\end{pps}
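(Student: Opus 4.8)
The plan is to assemble the complex $\mf{E}^{\bullet,q}$ out of the term-by-term description given in the corollary following Proposition \ref{lapropo} together with the description of its differentials in remarks \ref{rmk: diffinvprimo} and \ref{rmk: diffinvsecondo}, the crucial input being the isomorphism result of Proposition \ref{pps: diffinv}. Recall that for $q \leq 0$ even with $2(1-p) \leq q$, the term $\mf{E}^{p,q}$ equals $\mf{F}^{p,q} \oplus \mf{F}^{p+1,q}$ when $p$ is even and $\mf{F}^{p+1,q} \oplus \mf{F}^{p+2,q}$ when $p$ is odd (with the single exception $\mf{E}^{0,0} = \mf{F}^{0,0}$), and that it vanishes outside the range $-q/2+1 \leq p \leq k$. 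By remark \ref{rmk: diffinvsecondo} the differential $\mf{E}^{p,q} \to \mf{E}^{p+1,q}$ vanishes whenever $p$ is even, as well as on the exceptional term $\mf{F}^{0,0}$, whereas for $p$ odd it is the diagonal map $(\alpha_1,\alpha_2)$.

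First I would pair each odd degree $2i-1$ with the following even degree $2i$. Since both $\mf{E}^{2i-1,q}$ and $\mf{E}^{2i,q}$ equal $\mf{F}^{2i,q} \oplus \mf{F}^{2i+1,q}$ and the connecting differential is $(\alpha_1,\alpha_2)$, which is an isomorphism by Proposition \ref{pps: diffinv}, this two-term piece placed in degrees $2i-1,2i$ is precisely $A_i^{\bullet,q}[-2i]$. Because the even-to-odd differentials vanish, these pieces split off as direct summands of $\mf{E}^{\bullet,q}$; the only terms not absorbed into such a pair are those at the very bottom of the complex. Next I would determine the unpaired bottom term by the parity of the smallest nonvanishing degree $-q/2+1$, which is governed by $q \mod 4$. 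If $q=0$ the bottom degree is $0$ and carries the exceptional summand $\mf{F}^{0,0}$ with zero differential, giving $\mf{E}^{\bullet,0} \simeq \mf{F}^{0,0} \oplus \bigoplus_{i=1}^{k/2} A_i^{\bullet,0}[-2i]$. If $q \equiv 2 \mod 4$ then $-q/2+1$ is even, so $\mf{E}^{-q/2+1,q} = \mf{F}^{-q/2+1,q} \oplus \mf{F}^{-q/2+2,q}$ sits alone in degree $-q/2+1$ (its outgoing differential vanishing), producing the summand $(\mf{F}^{-q/2+1,q} \oplus \mf{F}^{-q/2+2,q})[+q/2-1]$ followed by the pairs $A_i^{\bullet,q}[-2i]$ starting at $i=-q/4+3/2$. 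If $q \equiv 0 \mod 4$ then $-q/2+1$ is odd and already opens a pair, so no unpaired term survives and $\mf{E}^{\bullet,q} \simeq \bigoplus_{i=-q/4+1}^{k/2} A_i^{\bullet,q}[-2i]$.

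In each case the upper limit $k/2$ records the vanishing $\mf{F}^{l,q}=0$ for $l>k$, with $A_{k/2}^{\bullet,q}$ degenerating to $0 \to \mf{F}^{k,q} \to \mf{F}^{k,q} \to 0$ when $k$ is even. Finally, since every $A_i^{\bullet,q}$ has isomorphism differentials and is therefore acyclic, cohomology can only survive in the isolated bottom summand, which lives entirely in degree $-q/2+1$; combined with the vanishing of $\mf{E}^{p,q}$ for $p < -q/2+1$, this yields exactness in every degree $p > -q/2+1$, as claimed.

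The argument is essentially bookkeeping, so the main obstacle is the careful tracking of the homological shifts and of the parity of the bottom degree: one must check that the isolated boundary term appears exactly when $-q/2+1$ is even (i.e.\ $q \equiv 2 \mod 4$ or $q=0$) and is absent otherwise, and that Proposition \ref{pps: diffinv} genuinely applies down to the bottommost odd degree, where the inequality $-2p+2 \leq q$ holds with equality. No geometric or representation-theoretic input beyond Propositions \ref{lapropo} and \ref{pps: diffinv} is required.
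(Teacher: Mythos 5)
Your proposal is correct and takes essentially the same route as the paper: the paper states the proposition as a direct consequence of remarks \ref{rmk: diffinvprimo}, \ref{rmk: diffinvsecondo} and proposition \ref{pps: diffinv}, and your argument is precisely the parity-and-shift bookkeeping that deduction requires, including the two points that genuinely need checking (the splitting caused by the vanishing of the even-to-odd differentials, and that proposition \ref{pps: diffinv} still applies at the bottommost odd degree where $-2p+2=q$ holds with equality).
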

The following corollary is a consequence of remark 
\ref{rem:rest} and 
propositions \ref{pps:isocomplexes}.  
\begin{crl}\label{crl: degeneration}
The spectral sequence $j^* \mc{E}^{p,q}_1(\lambda_{\epsilon})$ degenerates at 
level 
$\mc{E}_2$.  
\end{crl}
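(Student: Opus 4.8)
The plan is to obtain the corollary as a direct synthesis of the reduction recorded in remark \ref{rem:rest} and the explicit structural description of the complexes $\mf{E}^{\bullet,q}$ furnished by proposition \ref{pps:isocomplexes}. First I would recall from remark \ref{rem:rest} that the degeneration of $j^*\mc{E}^{p,q}_1(\lambda_{\epsilon})$ at level $\mc{E}_2$ is reduced, via the exactness of $j^*$ and lemma \ref{lemmino}, to the single assertion that for every $q<0$ the complex $\mf{E}^{\bullet,q}$ is exact in all degrees $l \neq -q/2+1$. So the entire task is to verify this exactness, and for that I would simply read off proposition \ref{pps:isocomplexes}.

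The key observation is that each two-term complex $A_i^{\bullet,q}$ has the isomorphisms $\alpha_1,\alpha_2$ as its only differential, hence is acyclic; consequently every shifted summand $A_i^{\bullet,q}[-2i]$ contributes nothing to cohomology. For $q \equiv 0 \pmod 4$ proposition \ref{pps:isocomplexes} writes $\mf{E}^{\bullet,q}$ as a direct sum of such acyclic pieces, so the complex is exact everywhere. For $q \equiv 2 \pmod 4$ it adds a single summand $(\mf{F}^{-q/2+1,q}\oplus\mf{F}^{-q/2+2,q})$ placed in degree $-q/2+1$, so the cohomology is concentrated in that one degree. In either case $\mf{E}^{\bullet,q}$ is exact in every degree $l\neq -q/2+1$, which is exactly what remark \ref{rem:rest} demanded; the corollary then follows.

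For completeness I would also spell out why the reduction in remark \ref{rem:rest} is valid, since this is the only conceptual point. After passing to $d_1$-cohomology, the nonzero terms of $\mc{E}^{p,q}_2$ sit at $(0,0)$ and, for $q<0$, only along the locus $p=-q/2+1$ with $q\equiv 2\pmod 4$; writing such a term as $(m+1,-2m)$ with $m$ odd, a higher differential $d_r$ (with $r\geq 2$) sends it to $(m+1+r,-2m-r+1)$, and matching this against the same locus forces $m'=m-1$ and hence $r=m'-m=-1$, a contradiction, while the differentials out of $(0,0)$ are excluded identically. Thus all $d_r$ with $r\geq 2$ vanish. I expect this parity bookkeeping to be the only delicate step: it is precisely the restriction $q\equiv 2\pmod 4$, rather than mere evenness of $q$, that keeps the surviving terms rigidly separated, so one must be careful to use it. Everything heavier --- the computation of the invariants $\mf{I}^{p,q}(a)$ and the identification of the maps $\alpha_t$ as isomorphisms --- has already been done in proposition \ref{pps:isocomplexes} and the preceding lemmas, so the corollary itself is short.
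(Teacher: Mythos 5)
Your proposal is correct and takes essentially the same route as the paper: the paper's own proof of corollary \ref{crl: degeneration} is exactly the one-line deduction from remark \ref{rem:rest} together with proposition \ref{pps:isocomplexes}, and your contribution is merely to spell out the differential bookkeeping that the paper leaves implicit in remark \ref{rem:rest}. One small correction to your closing comment: the congruence $q \equiv 2 \pmod 4$ is not in fact essential there, since concentration of the $\mc{E}_2$-terms with $q<0$ on the line $p = -q/2+1$ (together with the vanishing of everything with $q>0$ and, on the row $q=0$, outside $(0,0)$) already forces $r=-1$ for any higher differential with nonzero source and target, so mere evenness of $q$ would do.
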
We can finally prove the following formula.
\begin{theorem}\label{BDext}
Let $X$ be a smooth quasi-projective algebraic surface, 
$L$ a line bundle on $X$. Let $X^{[n]}$ be the Hilbert scheme of $n$ points over $X$ and $L^{[n]}$ the tautological bundle on $X^{[n]}$, naturally associated to the line bundle $L$ on $X$. Then, for any $0 \leq k \leq n$, we have: 
\begin{equation}\label{exterior}\B{R} \mu_*(\Lambda^k L^{[n]}) \simeq 
(\Lambda^k \mc{C}^0_L)^G \;.
\end{equation}
\end{theorem}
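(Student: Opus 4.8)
The plan is to combine the two structural facts just established for the alternant case---the degeneration of $j^*\mc{E}^{p,q}_1(\lambda_{\epsilon})$ at level $\mc{E}_2$ (Corollary \ref{crl: degeneration}) and the explicit form of the complexes $\mf{E}^{\bullet, q}$ (Proposition \ref{pps:isocomplexes})---to identify the image on the big open set $S^n_{**}X$, and then to propagate this identification to the whole of $S^nX$ by the local cohomology lemmas. As a preliminary reduction I would note that $\B{R}\mu_*(\Lambda^k L^{[n]})$ is concentrated in degree $0$: since $\Lambda^k L^{[n]}$ is a direct summand of $L^{[n]} \tens \cdots \tens L^{[n]}$, and $R^i p_* q^*(L^{[n]} \tens \cdots \tens L^{[n]}) = 0$ for $i>0$ by Theorem \ref{treotto}, applying the exact functor $\pi_*^{\perm_n}$ and using (\ref{GGG}) gives $R^i \mu_*(\Lambda^k L^{[n]}) = 0$ for $i>0$. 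Thus $\B{R}\mu_*(\Lambda^k L^{[n]}) \simeq \mu_*(\Lambda^k L^{[n]})$, and by Corollary \ref{qoro} with $\lambda = \lambda_{\epsilon}$ and $V_{\lambda_{\epsilon}} = \epsilon_k$ this sheaf is identified with the term $\mc{E}^{0,0}_{\infty}(\lambda_{\epsilon})$ of $\mc{E}^{p,q}_1(\lambda_{\epsilon}) = (E^{p,q}_1 \tens \epsilon_k)^{G \times H}$.

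Next I would compute the restriction to $S^n_{**}X$. Because $j^*$ is exact it commutes with the formation of the spectral sequence and its limit, so $j^* \mc{E}^{0,0}_{\infty}(\lambda_{\epsilon})$ is the $E_{\infty}^{0,0}$ term of $j^* \mc{E}^{p,q}_1(\lambda_{\epsilon})$. By Corollary \ref{crl: degeneration} this sequence degenerates at $\mc{E}_2$, so $j^* \mc{E}^{0,0}_{\infty}(\lambda_{\epsilon})$ equals the degree-$0$ cohomology of the complex $j^* \mc{E}^{\bullet, 0}_1(\lambda_{\epsilon})$. On affine opens $U^n$ this cohomology is computed by the complex $\mf{E}^{\bullet, 0}$, which by Proposition \ref{pps:isocomplexes} splits as $\mf{F}^{0,0} \oplus \bigoplus_i A^{\bullet, 0}_i[-2i]$ with each $A^{\bullet, 0}_i$ acyclic; hence the only surviving cohomology sits in degree $0$ and equals $\mf{F}^{0,0} = \Lambda^k H^0(U, L) \tens S^{n-k} H^0(U, \FS_X)$. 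A direct computation with Danila's lemma \ref{Danila} identifies $\mf{F}^{0,0}$ with $H^0(U^n, (\Lambda^k \mc{C}^0_L)^G)$: indeed $\Lambda^k \mc{C}^0_L = \bigoplus_{i_1 < \cdots < i_k} L_{i_1} \tens \cdots \tens L_{i_k}$, the group $G$ permutes the summands transitively with stabilizer $\perm(I_0) \times \perm(\overline{I_0})$ of $I_0 = \{1, \dots, k\}$, and $\perm(I_0)$ acts by the alternant representation, so K\"unneth yields exactly $\Lambda^k H^0(U, L) \tens S^{n-k} H^0(U, \FS_X)$. Via Lemma \ref{lemmino} this gives, on $S^n_{**}X$,
$$ j^* \mu_*(\Lambda^k L^{[n]}) \simeq j^* (\Lambda^k \mc{C}^0_L)^{G} \;. $$

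Finally I would extend the isomorphism across the complement of $S^n_{**}X$. Since $\Lambda^k L^{[n]}$ is locally free, Lemma \ref{414} gives $\mu_*(\Lambda^k L^{[n]}) \simeq j_* j^* \mu_*(\Lambda^k L^{[n]})$; since $\Lambda^k \mc{C}^0_L$ is a $\perm_n$-vector bundle on $X^n$, Lemma \ref{415} gives $(\Lambda^k \mc{C}^0_L)^{G} \simeq j_* j^* (\Lambda^k \mc{C}^0_L)^{G}$. Applying $j_*$ to the isomorphism over $S^n_{**}X$ and combining the two identities yields $\mu_*(\Lambda^k L^{[n]}) \simeq (\Lambda^k \mc{C}^0_L)^{G}$, which with the degree-$0$ concentration established at the outset proves (\ref{exterior}). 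The genuinely hard work lies entirely in the preparatory results already in hand---controlling the $G \times H$-invariants of the terms $(E^{p,q}_1)_0$ in Proposition \ref{lapropo} and producing the acyclic complexes $A^{\bullet, q}_i$ that force the degeneration---so that within this proof the only steps needing care are the clean matching of $\mf{F}^{0,0}$ with the sections of $(\Lambda^k \mc{C}^0_L)^{G}$ and the verification that the codimension hypotheses of Lemmas \ref{414} and \ref{415} legitimately permit the extension to all of $S^nX$.
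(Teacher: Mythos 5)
Your proof is correct and follows essentially the same route as the paper's: degeneration of $j^*\mc{E}^{p,q}_1(\lambda_{\epsilon})$ at level $\mc{E}_2$ plus Proposition \ref{pps:isocomplexes} identify $j^*\mu_*(\Lambda^k L^{[n]})$ with $j^*(\Lambda^k \mc{C}^0_L)^G$ on $S^n_{**}X$, and Lemmas \ref{414} and \ref{415} extend this across the codimension-$\geq 2$ complement. Your two additions---making the degree-$0$ concentration explicit via the direct-summand argument from Theorem \ref{treotto}, and the Danila/K\"unneth identification of $\mf{F}^{0,0}$ with $H^0(U^n,(\Lambda^k\mc{C}^0_L)^G)$ (which in the paper is the tautological equality $\mc{E}^{0,0}_1(\lambda_{\epsilon})=(\Lambda^k\mc{C}^0_L)^G$ combined with the vanishing of the first differential)---are harmless elaborations of the same argument.
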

\begin{proof}The degeneration of the spectral sequence $j^*\mc{E}^{p,q}_1(\lambda_{\epsilon})$ at level $\mc{E}_2$, corollary \ref{qoro} and
the fact that the limit of $j^* \mc{E}^{p,q}_1(\lambda_{\epsilon})$ is zero above the diagonal, 
 imply that the complex $j^*\mc{E}^{\bullet,0}_1(\lambda_{\epsilon}) \simeq 
j^*(\Lambda^k \comp{\mc{C}}_L)^{G}$ is a resolution of
the sheaf $j^*\mu_*(\Lambda^k L^{[n]})$. Moreover, by proposition  \ref{pps:isocomplexes} the morphism $\mf{E}^{0,0} \rTo \mf{E}^{1,0}$ is zero; hence, by lemma 
\ref{lemmino}, the morphism: $j^*\mc{E}^{0,0}_1(\lambda_{\epsilon}) \rTo 
j^*\mc{E}^{1,0}_1(\lambda_{\epsilon})$ is zero. Consequently the complex $j^*
(\Lambda^k \comp{\mc{C}}_L)^G$ is quasi-isomorphic to the sheaf $j^*(\Lambda^k \mc{C}^0_L)^G$. Hence: 
$$ j^*\mu_*(\Lambda^kL^{[n]}) \simeq  j^*(\Lambda^k \comp{\mc{C}}_L)^G  \simeq j^*(\Lambda^k \mc{C}^0_L)^G \;.$$Now, applying the functor $j_*$ and recalling lemmas \ref{414} and \ref{415} we get the result. 
\end{proof} 
\section{Cohomology}\label{sect:cohom}
The aim of this section is to apply Brion-Danila-type formulas, found in the 
previous sections, to cohomology computations. In particular 
we will compute the cohomology of Hilbert schemes with values in the double tensor power $L^{[n]} \tens L^{[n]}$ of tautological bundles and in their general exterior powers $\Lambda^kL^{[n]}$. We obtain analogous results for the cohomology with values in  $L^{[n]} \tens L^{[n]} \tens \mc{D}_A$ and 
$\Lambda^k L^{[n]} \tens \mc{D}_A$, where $\mc{D}_A$ is the determinant line bundle on $X^{[n]}$ associated to a line bundle $A$ on $X$.
In this way we reobtain Danila's results 
\cite{Danila2001} on the cohomology of a tautological bundle, we generalize her results on 
the double symmetric power \cite{Danila2004}, and we give new general 
formulas for the exterior powers.
\subsection{Cohomology with values in the double tensor power}
Recall that we indicate with $G$ the symmetric group $\perm_n$. The short exact sequence (\ref{eq:BDk=2}):
$$ 0 \rTo \mu_*(L^{[n]} \tens L^{[n]}) \rTo (\mc{C}^0_L \tens \mc{C}^0_L)^G \rTo ^{(\del^0_L \tens \id)^G} (\mc{C}^1_L \tens \mc{C}^0_L)^G \rTo 0 $$and the fact that $R^i \mu_*(L^{[n]} \tens L^{[n]}) =0$ for $i>0$ induce a long exact cohomology sequence: 
\begin{multline} \cdots
\rTo H^*(X^{[n]}, L^{[n]} \tens L^{[n]}) \rTo H^*_G(X^n, \mc{C}^0_L \tens \mc{C}^0_L) \rTo  \\ \rTo H^*_G(X^n, \mc{C}^1_L \tens \mc{C}^0_L) \rTo H^{*+1}(X^{[n]}, L^{[n]} \tens L^{[n]}) \rTo \cdots \;.
\end{multline}Now, with the same computations made in the proof of proposition 
\ref{thm:BDk=2}, we get: 
\begin{align*}
 H^*_G(X^n, \mc{C}^0_L \tens \mc{C}^0_L) = & \; H^*(L^{\tens^2}) \tens S^{n-1} H^*(\FS_X) \bigoplus H^*(L)^{\tens ^2} \tens S^{n-2} H^*(\FS_X) \\
H^*_G(X^n, \mc{C}^1_L \tens \mc{C}^0_L) = &\; H^*(L^{\tens ^2}) \tens S^{n-2} H^*(\FS_X)
\end{align*}The differential $ (\del^0_{L } \tens \id)^{G} : (\mc{C}^0_L \tens \mc{C}^0_L)^G \rTo ^{} (\mc{C}^1_L \tens \mc{C}^0_L)^G $ induces the morphism: 
\begin{equation} \label{d} H^*(L^{\tens^2}) \tens S^{n-1}H^*(\FS_X) \bigoplus H^*(L)^{\tens^2} \tens
S^{n-2} H^*(\FS_X) \rTo H^*(L^{\tens^2}) \tens S^{n-2}H^*(\FS_X) \;.
\end{equation} 
We will prove now that it is an epimorphism: consequently the long exact cohomology sequence splits. It suffices to prove the surjectivity of the first component: 
$$ D_{L^{\tens 2}}:=H^*(L^{\tens ^2}) \tens S^{n-1}H^*(\FS_X) \rTo H^*(L^{\tens ^2}) \tens S^{n-2} H^*(\FS_X) \;.$$Remark that the second component is induced by the canonical coupling: $H^*(L)^{\tens 2} \rTo H^*(L^{\tens 2})$. We begin by the following: 
\begin{lemma}\label{lem:epicohom}
Let $F$ be a line bundle on $X$. Consider, for $k \in \mbb{N}^*$, the
embedding:
\begin{equation}\label{KKK}
 X \times S^{k-1}X \rInto X \times S^kX 
\end{equation}given by $(x,z) \rMapsto (x,
x+z)$. The restriction morphism: 
$$ D_F: H^*(F) \tens S^kH^*(\FS_X) \rTo H^*(F) \tens S^{k-1}
H^*(\FS_X)$$induced by this embedding is given, for $\alpha \in 
H^*(F)$ and $u_i \in H^*(\FS_X)$, $i=1, \dots, k$, homogeneous of
degree $p_i$, by the formula:
$$ \alpha \tens u_1 \cdots u_k \rTo \frac{1}{k} \sum_{i=1}^k
(-1)^{(\sum_{j<i} p_j)p_i} \alpha u_i \tens u_1 \cdots \hat{u_i} \cdots u_k \;.$$ 
\end{lemma}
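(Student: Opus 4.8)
The plan is to realize the embedding \eqref{KKK} at the level of the symmetric-group covers, so that $D_F$ becomes the composition of a symmetrization with a partial-diagonal restriction; the cup products and the Koszul signs can then be read off directly.

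I would first record the relevant identifications. Denoting by $\pi_k\colon X^k\to S^kX$ the quotient map, one has $\pi_k^*\colon H^*(S^kX,\FS)\xrightarrow{\sim} H^*(X^k,\FS)^{\perm_k}=S^kH^*(\FS_X)$ (the transfer argument, in characteristic $0$), and by Künneth $H^*(X\times S^kX,F\boxtimes\FS)\simeq H^*(F)\tens S^kH^*(\FS_X)$, with the analogous statement for $k-1$. I fix once and for all the normalization in which the symmetric monomial $u_1\cdots u_k$ corresponds to the normalized symmetrizer $\tfrac1{k!}\sum_{\sigma\in\perm_k}\epsilon(\sigma)\,u_{\sigma^{-1}(1)}\tens\cdots\tens u_{\sigma^{-1}(k)}$, where $\epsilon(\sigma)$ is the graded (Koszul) sign. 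With this convention $(\mathrm{id}_X\times\pi_k)^*$ is just the inclusion of invariants, expressing the monomial as this symmetrizer, while $(\mathrm{id}_X\times\pi_{k-1})^*$ is an injection onto the $\perm_{k-1}$-invariants.

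Next I would factor $\iota$. Let $\gamma\colon X\times X^{k-1}\to X\times X^k$ be the partial diagonal $(x,x_1,\dots,x_{k-1})\mapsto(x,x,x_1,\dots,x_{k-1})$, identifying the $F$-factor with the first symmetric factor, and set $\Psi:=(\mathrm{id}_X\times\pi_k)\circ\gamma$. A direct check on points shows $\Psi=\iota\circ(\mathrm{id}_X\times\pi_{k-1})$, since both send $(x,x_1,\dots,x_{k-1})$ to $(x,\,x+x_1+\cdots+x_{k-1})$. Because $(\mathrm{id}_X\times\pi_{k-1})^*$ is injective and $\Psi^*=\iota^*\circ(\mathrm{id}_X\times\pi_{k-1})^*$, the map $\iota^*=D_F$ is completely determined by $\Psi^*=\gamma^*\circ(\mathrm{id}_X\times\pi_k)^*$, whose image automatically lands in the $\perm_{k-1}$-invariants.

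The final step is the explicit evaluation of $\gamma^*\circ(\mathrm{id}_X\times\pi_k)^*$. The restriction $\gamma^*$ is the cup product $H^*(F)\tens H^*(\FS_X)\to H^*(F)$ on the (adjacent) zeroth and first tensor factors, and the identity on the remaining $k-1$ factors. Applying it to the symmetrizer of $\alpha\tens u_1\cdots u_k$ and regrouping the terms according to which class $u_i$ is carried onto the diagonal, each group of $(k-1)!$ terms reassembles the normalized symmetrizer of the remaining classes — that is, the monomial $u_1\cdots\widehat{u_i}\cdots u_k$ in $S^{k-1}H^*(\FS_X)$ — while bringing $u_i$ next to $\alpha$ produces exactly the Koszul sign $(-1)^{(\sum_{j<i}p_j)p_i}$. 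The leftover constant $(k-1)!/k!=1/k$ is the asserted prefactor, so one recovers precisely the formula for $D_F$. The only delicate point is this last bookkeeping: one must keep the normalization of $H^*(S^kX,\FS)\simeq S^kH^*(\FS_X)$ consistent and verify that the graded signs of the supercommutative cup product on $X$ combine with the symmetrization to yield exactly $(-1)^{(\sum_{j<i}p_j)p_i}$; all the remaining geometric input (flat base change, Künneth, and the identification of the diagonal restriction with cup product) is routine.
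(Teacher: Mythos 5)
Your proof is correct and follows essentially the same route as the paper's: lift the embedding to the partial diagonal $X\times X^{k-1}\rInto X\times X^k$, identify the symmetric monomial $u_1\cdots u_k$ with the normalized Koszul-signed symmetrizer, and read off the cup product, the $1/k$ factor, and the signs by regrouping. The only addition is your explicit injectivity remark for $(\id_X\times\pi_{k-1})^*$, which the paper leaves implicit.
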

\begin{proof}The embedding (\ref{KKK}) is induced by the embedding:
\begin{diagram}[height=.6cm] 
X \times X^{k-1} & \rInto & X \times X^k \;. \\
(x, z_1, \dots, z_{k-1}) & \rMapsto & (x,x,z_1, \dots, z_{k-1}) 
\end{diagram}
The induced morphism in cohomology is given by:
$ \alpha \tens u_1 \tens \cdots \tens u_k \rTo \alpha u_1 \tens u_2
\tens \cdots \tens u_k$. The canonical projection $X^k \rTo S^k X$
identifies $u_1 \cdots u_k \in H^*(S^kX) \simeq S^kH^*(\FS_X)$ with the
element:
$$ \frac{1}{k!} \sum_{\sigma \in \perm_k} \epsilon_{\sigma, p_1,
  \dots, p_k} \sigma(u_1 \tens \cdots \tens u_k) $$of $H^*(\FS_{X^k})
  \simeq H^*(\FS_X) ^{\tens k}$, where $\epsilon_{\sigma, p_1,
  \dots, p_k}$ is a sign depending on the permutation $\sigma$ and on the
  respective degrees of $u_i$ and it is characterized, in the graduated
  algebra $\mbb{C}[u_1, \dots, u_k]$ (where $\textrm{deg}(u_i) =p_i$), 
  by the relation: $\sigma(u_1 \tens \cdots \tens u_k) =   
\epsilon_{\sigma, p_1,
  \dots, p_k} u_1 \tens \cdots \tens u_k $. For the product 
of consecutive transpositions $\tau=\tau_{1,2} \cdots  \tau_{i-1,i}$  
we
  have $\epsilon_{\tau, p_1,
  \dots, p_k} = (-1)^{(\sum_{j<i}p_j )p_i}$. The formula follows. 
\end{proof}
\begin{lemma}
The restriction 
morphism $D_F : H^*(F) \tens S^k H^*(\FS_X) \rTo H^*(F) \tens
S^{k-1} 
H^*(\FS_X) $ is surjective and has a canonical section. 
\end{lemma}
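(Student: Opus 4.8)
The plan is to produce a canonical right inverse by inserting the unit of $H^*(\FS_X)$. Write $V:=H^*(\FS_X)$, a graded $\mbb{C}$-vector space carrying the cup product, and $M:=H^*(F)$, a graded $V$-module, so that $H^*(F)\tens S^kH^*(\FS_X)=M\tens S^kV$ and $D_F$ is the map computed explicitly in lemma \ref{lem:epicohom}. First I would introduce the canonical \emph{insertion map} $\iota\colon M\tens S^{k-1}V\to M\tens S^kV$ defined by $\iota(\alpha\tens w):=\alpha\tens(1\cdot w)$, where $1\in H^0(\FS_X)$ is the unit and $1\cdot w$ is the symmetric product with this degree-$0$ vector. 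The entire statement then reduces to showing that the composite $D_F\circ\iota$ is an automorphism of $M\tens S^{k-1}V$: granting this, $D_F$ is surjective and $s:=\iota\circ(D_F\circ\iota)^{-1}$ is a section. This $s$ is \emph{canonical}, since both $\iota$ and the operator $D_F\circ\iota$ (hence its inverse) are defined without any choice.

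To prove that $D_F\circ\iota$ is invertible I would filter $S^mV$ by \emph{reduced degree}. Choosing any vector-space complement $W$ of $\mbb{C}\cdot 1$ in $V$, one has a decomposition $S^mV=\bigoplus_{j=0}^{m}1^{j}\,S^{m-j}W$, and I declare the reduced degree of a monomial $1^{j}w$ with $w\in S^{m-j}W$ to be $m-j$. Feeding a monomial $\alpha\tens 1^{\,k-r}w$ (with $w\in S^rW$, i.e.\ of reduced degree $r$) into the formula of lemma \ref{lem:epicohom}, and using that $1$ has degree $0$, that $\alpha\cdot 1=\alpha$, and that the signs attached to contracting a degree-$0$ factor are trivial, I find that the contractions of the $k-r$ unit factors contribute $\tfrac{k-r}{k}\,\alpha\tens 1^{\,k-1-r}w$, again of reduced degree $r$, whereas every contraction of a factor lying in $W$ lands in reduced degree $r-1$. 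Hence, with respect to the (finite) decreasing filtration by reduced degree, $D_F\circ\iota=\Lambda+N$, where $\Lambda$ acts on the reduced-degree-$r$ part as the scalar $\tfrac{k-r}{k}$ and $N$ strictly lowers the reduced degree.

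Since the source $M\tens S^{k-1}V$ only involves reduced degrees $0\le r\le k-1$, every scalar $\tfrac{k-r}{k}$ is nonzero, so $\Lambda$ is invertible; and $N$ is nilpotent because it strictly decreases a bounded grading. Therefore $D_F\circ\iota=\Lambda(\id+\Lambda^{-1}N)$ is invertible, with inverse given by the finite Neumann series $\bigl(\sum_{j\ge 0}(-\Lambda^{-1}N)^{j}\bigr)\Lambda^{-1}$. This simultaneously yields the surjectivity of $D_F$ and the section $s$.

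I expect the only delicate point to be the case of a non-proper $X$, where $H^0(\FS_X)$ may fail to be one-dimensional (indeed infinite-dimensional) and a naive comparison of graded dimensions on source and target breaks down. This is precisely why I would organise the argument around the nilpotence of $N$ rather than around a dimension count, and why the auxiliary complement $W$ is used \emph{only} to establish invertibility of $D_F\circ\iota$ and never enters the definition of $s=\iota\circ(D_F\circ\iota)^{-1}$, so that the resulting section remains canonical.
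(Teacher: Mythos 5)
Your proof is correct and is essentially the paper's own argument: the paper likewise composes $D_F$ with the canonical insertion map (denoted $\sigma$ there, your $\iota$), performs the same computation showing that contracting the inserted unit returns $\tfrac{k-j}{k}$ times the original monomial plus terms with fewer non-unit factors, and deduces invertibility of $\Psi = D_F\circ\sigma$ from the resulting annihilating polynomial $\prod_{j=0}^{k-1}\bigl(\Psi-\tfrac{k-j}{k}\bigr)=0$, whose roots are all nonzero. The only cosmetic difference is that you split the paper's filtration $W_j$ by choosing a (necessarily graded) complement of $\mbb{C}\cdot 1$ in $H^*(\FS_X)$ and conclude via a diagonal-plus-nilpotent decomposition and a Neumann series, whereas the paper works with the filtration itself and so never makes a choice.
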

\begin{proof} Let $u_i \in H^*(\FS_X)$ be of degree $p_i$, $i=1, \dots,
k$, $\alpha \in H^*(F)$. 
Let $\lambda_j$ be the morphism:
\begin{diagram}[height=.6cm]
\lambda_j : H^*(F) \tens S^j H^*(\FS_X) & \rTo &  H^*(F) \tens S^{k-1} 
H^*(\FS_X) \\
\alpha \tens u_1 \cdots u_j & \rMapsto & \alpha \tens
\underset{\textrm{$k-j-1$-times}}{\underbrace{1 \cdots1}} \cdot \; u_1 \cdots u_j
\end{diagram}We have $\lambda_{k-1}=\id$, $\lambda_{-1}=0$. Let $W_i$ be 
the image of $\lambda_i$. We have the filtration:
$$ \{ 0 \} = W_{-1} \subsetneq W_0 \subsetneq \dots \subsetneq W_{k-2}
\subsetneq W_{k-1} = H^*(F) \tens S^{k-1} 
H^*(\FS_X) \; .$$Let now $\sigma$ be the morphism:
\begin{diagram}[height=.6cm] H^*(F) \tens S^{k-1} 
H^*(\FS_X) & \rTo & H^*(F) \tens S^{k} 
H^*(\FS_X) \\
\alpha \tens u_1 \cdots u_{k-1} & \rMapsto & \alpha \tens 1 \cdot u_1 \cdots
u_{k-1} \end{diagram}Remembering the expression of the morphism $D$ from lemma 
\ref{lem:epicohom} we have the following relation:
\begin{align*}
(D_F \circ \sigma)(\lambda_j(\alpha \tens u_1 \cdots u_j) ) = & \;
D_F(\alpha \tens \underset{\textrm{$k-j$-times}}{\underbrace{1 \dots1}}
\cdot \; u_1 \cdots u_j ) \\  = &\; \frac{k-j}{k} \alpha  \tens
\underset{\textrm{$k-j-1$-times}}{\underbrace{1 \cdots 1}} \cdot \; 
u_1 \cdots u_j
+ \\ & \qquad \qquad \; +\sum_{h=1}^j C^j_h \alpha u_h \tens
\underset{\textrm{$k-j$-times}}{\underbrace{1 \cdots 1}} \cdot  \; u_1 \cdots
\hat{u_h} \cdots u_j \\ 
=&\; \frac{k-j}{k} \lambda_j(\alpha \tens u_1 \cdots u_j) +v  
\end{align*}where $v \in W_{j-1}$, for some rational constants
$C^j_h$. This means that, indicating with 
$\Psi$ the endomorphism $D_F \circ \sigma$ of $H^*(F) \tens
S^{k-1} H^*(\FS_X)$ we have: 
$ \left ( \Psi - (k-j)/k \right) (W_j) \subseteq W_{j-1}$, which implies:
$$ \prod_{j=0}^{k-1} \left( \Psi - \frac{k-j}{k} \right) =0 \;.$$
Consequently, 
$\Psi$ is invertible. Therefore $D_F$ is surjective and has a
canonical section. 
\end{proof}\noindent
A consequence of this lemma is that the kernel of $D_F$ is
isomorphic to a direct factor of the image of $\sigma$. 
The next lemma
allows us to characterize such a direct factor. 
\begin{lemma}
Let $a \in X$ be a point of $X$. Consider the morphism: 
$$ \id \tens \nu : H^*(F) \tens S^kH^*(\FS_X) \rTo H^*(F) \tens
S^{k-1}H^*( \FS_X) $$where $\nu$ is the morphism induced in cohomology
by the inclusion $S^{k-1}X \rInto S^kX$ given by $z \rMapsto
a+z$. Therefore $\id\tens \nu$ is surjective and its kernel is
isomorphic to the kernel of $D_F$. 
\end{lemma}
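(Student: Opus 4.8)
The plan is to treat this in complete parallel with the preceding two lemmas on $D_F$: first make $\id \tens \nu$ explicit, deduce its surjectivity by exhibiting a section, and then obtain the isomorphism of kernels by a graded dimension count. First I would identify the map $\nu$. The inclusion $S^{k-1}X \rInto S^kX$, $z \rMapsto a+z$, is induced by $X^{k-1} \rInto X^k$, $(z_1,\dots,z_{k-1}) \rMapsto (a,z_1,\dots,z_{k-1})$, whose pullback on $H^*(\FS_X)^{\tens k}$ applies $\mathrm{ev}_a \tens \id^{\tens(k-1)}$, where $\mathrm{ev}_a\colon H^*(\FS_X) \rTo \mbb{C}$ is restriction to the point $a$. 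Since $X$ is connected, $\mathrm{ev}_a$ is the projection onto the degree-zero part $H^0(\FS_X)=\mbb{C}\cdot 1$ and annihilates $H^{>0}(\FS_X)$. Symmetrizing with the same signs as in the proof of lemma \ref{lem:epicohom}, this yields, for $\alpha \in H^*(F)$ and $u_i$ homogeneous of degree $p_i$,
\[ (\id \tens \nu)(\alpha \tens u_1 \cdots u_k) = \frac{1}{k}\sum_{i=1}^k (-1)^{(\sum_{j<i}p_j)p_i}\,\mathrm{ev}_a(u_i)\,\alpha \tens u_1 \cdots \hat{u_i}\cdots u_k \;, \]
i.e. the formula of lemma \ref{lem:epicohom} with $\alpha u_i$ replaced by $\mathrm{ev}_a(u_i)\,\alpha$.

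Next I would reuse the maps $\lambda_j$, $\sigma$ and the filtration $W_{-1}\subset\cdots\subset W_{k-1}$ from the proof of the preceding lemma, write $H^*(\FS_X)=\mbb{C}\cdot 1\oplus H^{>0}(\FS_X)$, and set $\Psi := (\id \tens \nu)\circ\sigma$. Evaluating $\Psi$ on a generator $\lambda_j(\alpha\tens u_1\cdots u_j)$ of the summand $H^*(F)\tens S^j(H^{>0}(\FS_X))$, with all $u_i$ of positive degree, I note that $\sigma\lambda_j(\alpha\tens u_1\cdots u_j)=\alpha\tens 1^{k-j}u_1\cdots u_j$ carries exactly $k-j$ factors equal to $1$; since $\mathrm{ev}_a$ kills the positive-degree factors, $\mathrm{ev}_a(1)=1$, and removing a degree-zero factor introduces no sign, the displayed formula collapses to
\[ \Psi\big(\lambda_j(\alpha\tens u_1\cdots u_j)\big)=\frac{k-j}{k}\,\lambda_j(\alpha\tens u_1\cdots u_j) \;, \]
with no lower-order terms at all. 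Thus $\Psi$ acts as the nonzero scalar $(k-j)/k$ on the $j$-th summand ($0\le j\le k-1$), so $\Psi$ is invertible and $\sigma\circ\Psi^{-1}$ is a canonical section; this proves that $\id\tens\nu$ is surjective. This step is in fact cleaner than for $D_F$, where $\Psi$ was only triangular along the filtration rather than diagonal.

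Finally, for the kernels I would observe that $D_F$ and $\id\tens\nu$ are surjective morphisms of $\mbb{Z}$-graded modules sharing the same source $H^*(F)\tens S^kH^*(\FS_X)$ and the same target $H^*(F)\tens S^{k-1}H^*(\FS_X)$, the surjectivity of $D_F$ being the content of the preceding lemma. Comparing the two resulting short exact sequences degree by degree shows that $\ker(\id\tens\nu)$ and $\ker D_F$ have the same graded dimension, hence are isomorphic as $\mbb{Z}$-graded vector spaces. I expect the only genuinely delicate point to be the clean identification of $\nu$ with the projection onto the degree-zero part localized at $a$; once that is in hand the surjectivity reduces to the bookkeeping already carried out for $D_F$, and the isomorphism of kernels is immediate from the dimension count.
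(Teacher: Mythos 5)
Your identification of the map $\id\tens\nu$ and your overall strategy (compose with $\sigma$ and show that $\tilde\Psi:=(\id\tens\nu)\circ\sigma$ is invertible) are exactly the paper's. However, two of your steps silently assume that $X$ is \emph{projective}, whereas the paper's $X$ is only quasi-projective, and both steps break in that generality. First, you assert $H^0(\FS_X)=\mbb{C}\cdot 1$, so that $\mathrm{ev}_a$ is the projection onto constants and $H^*(F)\tens S^{k-1}H^*(\FS_X)$ decomposes as $\bigoplus_j H^*(F)\tens 1^{k-1-j}\cdot S^j\big(H^{>0}(\FS_X)\big)$. For a non-proper surface this fails: for $X=\mbb{A}^2$ one has $H^0(\FS_X)=\mbb{C}[x,y]$, the displayed summands do not span the whole space (they miss everything involving non-constant degree-zero classes), and on such classes $\mathrm{ev}_a(u_i)\neq 0$, so cross terms do appear. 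Hence your ``diagonal'' computation controls $\tilde\Psi$ only on a proper subspace and does not yield invertibility. The paper's proof avoids this by establishing the weaker but sufficient triangularity $\big(\tilde\Psi-(k-j)/k\big)(W_j)\subseteq W_{j-1}$ for the filtration $W_j=\mathrm{Im}\,\lambda_j$, using only that $u_h(a)=0$ when $\deg u_h>0$; the degree-zero factors are precisely what produces the lower-order term $v\in W_{j-1}$, and one still gets $\prod_{j=0}^{k-1}\big(\tilde\Psi-(k-j)/k\big)=0$, hence invertibility. (Your diagonalization can be repaired by splitting $H^0(\FS_X)$ by evaluation at $a$ rather than by degree, i.e.\ replacing $H^{>0}(\FS_X)$ with $\ker\mathrm{ev}_a$.)

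Second, and more seriously, your concluding dimension count is invalid here. Since the cohomology groups may be infinite dimensional (again, $X=\mbb{A}^2$), the graded pieces of $H^*(F)\tens S^kH^*(\FS_X)$ are in general infinite dimensional, and two surjections with the same graded source and target need not have isomorphic kernels: on $V=\bigoplus_{n\geq 0}\mbb{C}e_n$, the identity and the shift $e_n\mapsto e_{n-1}$, $e_0\mapsto 0$, are both surjective maps $V\to V$ with kernels $0$ and $\mbb{C}e_0$. So ``same source, same target, both surjective'' proves nothing in this setting. You already hold the correct ingredient: both $D_F\circ\sigma$ and $(\id\tens\nu)\circ\sigma$ are invertible, hence $\ker D_F$ and $\ker(\id\tens\nu)$ are each direct complements of the \emph{same} subspace $\mathrm{Im}\,\sigma$ inside $H^*(F)\tens S^kH^*(\FS_X)$; therefore each is isomorphic, as a graded vector space, to the quotient by $\mathrm{Im}\,\sigma$, and in particular to the other. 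This dimension-free argument is precisely how the paper concludes, and it costs nothing beyond what you have already proved.
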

\begin{proof} The morphism $\id \tens \nu$ is given by: 
$$ \id \tens \nu (\alpha \tens u_1 \cdots u_k ) = 
\frac{1}{k} \sum_{i=1}^k
(-1)^{(\sum_{j<i} p_j)p_i} \alpha u_i(a) \tens u_1 \cdots \hat{u_i} \cdots
u_k \;.$$We know that $u_h(a)=0$ if $\textrm{deg} \;u_h >0$. Therefore,
if we denote with $\tilde{\Psi}$ the endomorphism 
$\id \tens \nu \circ \sigma$ of $H^*(F) \tens S^{k-1} H^*(\FS_X)$, we
have:
\begin{eqnarray*} 
\tilde{\Psi}(\lambda_j(\alpha \tens u_1 \cdots u_j )) & = & 
(\id \tens \nu)(\alpha \tens
\underset{\textrm{$k-j$-times}}{\underbrace{1 \cdots 1}} \cdot \; u_1 \cdots u_j
)  \\ & = & 
\frac{k-j}{k} \lambda_j (\alpha \tens u_1 \cdots u_j) +v 
\end{eqnarray*}where $v \in W_{j-1}$. Therefore 
$ \left (\tilde{\Psi} -(k-j)/k \right)(W_j) \subseteq W_{j-1}$
and 
we have again for 
$\tilde{\Psi}$ the relation:
$$ \prod_{j=0}^{k-1} \left(  \tilde{\Psi} - \frac{k-j}{k}
\right) =0 $$which implies that $\tilde{\Psi}$ is invertible, 
that $\id \tens \nu$ is surjective and that $\pim \, \sigma$ is a direct
factor of $\ker (\id \tens \nu)$. \end{proof}
The immediate consequence of the previous lemma is that:
\begin{equation}\label{eq: prel}
\ker D_F \simeq \ker(\id \tens \nu) \simeq H^*(F) \tens \ker \nu \;.
\end{equation}
Now it is an elementary fact that if $V$, $W$, $Z$ are 
three vector spaces, not necessarily of finite
dimension over a field $k$ and if 
$ F = (f,g) :V \oplus W \rTo Z$ is a linear map such that the component
$f$ is surjective, then $\ker F \simeq \ker f \oplus W$. This fact and equation
(\ref{eq: prel}) applied to morphism 
 (\ref{d}) yield:
\begin{theorem}Let $X$ be a smooth quasi-projective surface. 
Let $a$ be a point in $X$. Let $\mc{J}$ be the kernel of the morphism:
$S^{n-1}H^*(\FS_X)  \rTo S^{n-2} H^*(\FS_X) $, induced by the morphism:
\begin{diagram}[height=.6cm]
S^{n-2}X & \rTo & S^{n-1}X \\
x & \rMapsto & a+x
\end{diagram}
We have the isomorphism of $\mbb{Z}$-graded modules and $\perm_2$-representations:
$$ H^*(X^{[n]}, L^{[n]} \tens L^{[n]}) \simeq   H^*(L^{\tens^2}) \tens \mc{J}
\bigoplus H^*(L)^{\tens ^2} \tens S^{n-2}H^*(
\FS_X) \;. $$
\end{theorem}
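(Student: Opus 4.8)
The plan is to combine the short exact sequence (\ref{eq:BDk=2}) with the vanishing $R^i\mu_*(L^{[n]} \tens L^{[n]})=0$ for $i>0$, and to read off the cohomology as the kernel of the resulting connecting differential. First I would pass to hypercohomology on $S^nX$: by proposition \ref{pps:invamor} we have $H^*(X^{[n]}, L^{[n]} \tens L^{[n]}) \simeq \mbb{H}^*(S^nX, \mu_*(L^{[n]} \tens L^{[n]}))$, so the long exact cohomology sequence attached to (\ref{eq:BDk=2}), together with the computations of $H^*_G(X^n, \mc{C}^0_L \tens \mc{C}^0_L)$ and $H^*_G(X^n, \mc{C}^1_L \tens \mc{C}^0_L)$ recalled above, reduces the whole question to the morphism (\ref{d}).

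Writing (\ref{d}) as $(f,g) : V \oplus W \rTo Z$ with $V = H^*(L^{\tens^2}) \tens S^{n-1}H^*(\FS_X)$, $W = H^*(L)^{\tens^2} \tens S^{n-2}H^*(\FS_X)$ and $Z = H^*(L^{\tens^2}) \tens S^{n-2}H^*(\FS_X)$, the first component $f$ is precisely the restriction morphism $D_{L^{\tens^2}}$, which the preceding lemmas show to be surjective. Hence $(f,g)$ is an epimorphism, the long exact sequence breaks into short exact pieces, and $H^*(X^{[n]}, L^{[n]} \tens L^{[n]}) \simeq \ker(f,g)$ as a $\mbb{Z}$-graded module. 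I would then compute this kernel by the elementary splitting together with (\ref{eq: prel}): since $f$ is onto, $\ker(f,g) \simeq \ker f \oplus W$, while (\ref{eq: prel}) gives $\ker f = \ker D_{L^{\tens^2}} \simeq H^*(L^{\tens^2}) \tens \ker\nu = H^*(L^{\tens^2}) \tens \mc{J}$, where $\nu$ is induced by $S^{n-2}X \rInto S^{n-1}X$, $x \mapsto a+x$. This already yields the stated isomorphism of $\mbb{Z}$-graded modules.

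The delicate part, which I expect to be the main obstacle, is to upgrade this to an isomorphism of $\perm_2$-representations, $\perm_2$ permuting the two tautological factors. The differential (\ref{d}) is not itself $\perm_2$-equivariant, since its target $H^*_G(X^n, \mc{C}^1_L \tens \mc{C}^0_L)$ treats the two factors asymmetrically; nevertheless $\ker(f,g)$ is the image of the $\perm_2$-equivariant module $H^*(X^{[n]}, L^{[n]} \tens L^{[n]})$ under the morphism induced by the $\perm_2$-equivariant injection in (\ref{eq:BDk=2}), hence is $\perm_2$-stable inside $V \oplus W$. I would then use that the swap of factors preserves the decomposition $V \oplus W$ --- it fixes the diagonal indices $(\{i\},\{i\})$ and exchanges off-diagonal pairs $(\{i\},\{j\}) \leftrightarrow (\{j\},\{i\})$ --- so that both projections $\mathrm{pr}_V$ and $\mathrm{pr}_W$ are $\perm_2$-equivariant.

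Restricting $\mathrm{pr}_W$ to $\ker(f,g)$ produces a short exact sequence of $\perm_2$-representations $0 \rTo \ker f \rTo \ker(f,g) \rTo W \rTo 0$, which splits $\perm_2$-equivariantly by Maschke's theorem since we work in characteristic zero. It then remains to identify the two actions: $\perm_2$ acts trivially on $V$ --- hence on $\ker f = H^*(L^{\tens^2}) \tens \mc{J}$ --- because on the diagonal component the two coincident copies of $L$ fuse, through the symmetric multiplication $L \tens L \rTo L^{\tens^2}$, into a single sheaf on which the swap is the identity; whereas on $W = H^*(L)^{\tens^2} \tens S^{n-2}H^*(\FS_X)$ the swap genuinely permutes the two copies of $H^*(L)$, with the appropriate Koszul sign. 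This pins down the $\perm_2$-structure and gives the asserted isomorphism; taking invariants and anti-invariants afterwards recovers the formulas for $S^2L^{[n]}$ and $\Lambda^2 L^{[n]}$.
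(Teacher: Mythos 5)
Your proposal is correct and follows essentially the same route as the paper: the long exact cohomology sequence attached to (\ref{eq:BDk=2}) together with $R^i\mu_*=0$ for $i>0$, the surjectivity of the first component $D_{L^{\tens 2}}$ of (\ref{d}), the elementary splitting $\ker(f,g)\simeq \ker f\oplus W$, and the identification $\ker D_{L^{\tens 2}}\simeq H^*(L^{\tens 2})\tens\mc{J}$ via (\ref{eq: prel}). Your additional verification of the $\perm_2$-structure --- stability of the decomposition $V\oplus W$ under the swap, triviality of the action on the diagonal component, and the graded swap with Koszul signs on $W$ --- is a sound filling-in of a point the paper asserts but leaves implicit.
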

We introduce now the determinant line bundle $\mc{D}_A$, associated to a line bundle $A$ on the surface
$X$. Consider the bundle $A^{ \boxtimes ^n} := 
A \boxtimes \dots
\boxtimes A$
on the product $X^n$. By
Drezet-Kempf-Narasimhan lemma \cite[Th\'eor\`eme 2.3]{DrezetNarasimhan1989}, 
it descends to a line bundle $A^{\boxtimes^{n}}/\perm_n$ on the quotient 
$X^n /\perm_n \simeq S^n X$. 
\begin{definition}
We call determinant line bundle $\mc{D}_A$ on the Hilbert scheme
$X^{[n]}$ the pull back of $A^{\boxtimes^n}/\perm_n$ for the Hilbert-Chow morphism:$$ \mc{D}_A := \mu^*( A^{\boxtimes ^n}/
\perm_n) \; .$$
\end{definition}
Tensorizing the short exact sequence 
(\ref{eq:BDk=2}) with $\mc{D}_A$ and taking the cohomology, we get:
\begin{theorem}
\label{thm: longexact}
Let $X$ be a smooth quasi-projective algebraic surface and $L$, $A$ be line bundles on $X$. Then we have the long exact sequence: 
\begin{multline*}\label{eq: longexact}
\cdots \rTo H^*(X^{[n]}, L^{[n]} \tens L^{[n]} \tens \mc{D}_A) \rTo \begin{array}
{c}H^*(L^{\tens ^2} \tens A) \tens S^{n-1} H^*(A) \\ \bigoplus \\  
H^*(L \tens A)^{\tens ^2} \tens S^{n-2}H^*(A)  \end{array} \\ 
\rTo^{m} H^*(L^{\tens ^2} \tens A^{\tens ^2} ) \tens S^{n-2}H^*(A) \rTo H^{*+1} 
(X^{[n]}, L^{[n]} \tens L^{[n]} \tens \mc{D}_A) \rTo \cdots 
\end{multline*}The two components $m_1$ and $m_2$ of the map $m$ are given by: 
\begin{align*}
m_1(\alpha \tens u_1 \cdots  u_{n-1}) = &\; \frac{1}{n-1}\sum_{i=1}^{n-1} \alpha u_i \tens (-1)^{(\sum_{j<i}p_j) p_i}u_1  \cdots  \hat{u_i}  \cdots  u_{n-1}
\\ 
m_2(\beta \tens \gamma \tens v) = &\; \beta \gamma \tens v
\end{align*}where $\alpha \in H^*(L^{\tens ^2}\tens A)$, 
$\beta, \gamma \in H^*(L \tens A)$, $u_1, \dots, u_n \in H^*(A)$, with $u_i$  of degree $p_i$ and
$v \in S^{n-2}H^*(A)$.
\end{theorem}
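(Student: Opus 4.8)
The plan is to tensor the short exact sequence (\ref{eq:BDk=2}) by the determinant line bundle and pass to the induced long exact sequence in cohomology, identifying each term by hand. Write $\mc{A} := A^{\boxtimes n}/\perm_n$ for the descent of $A^{\boxtimes n}$ to $S^nX$, so that $\mc{D}_A = \mu^*\mc{A}$ and $\pi^*\mc{A} \simeq A^{\boxtimes n}$. First I would observe, via the projection formula together with the vanishing $R^i\mu_*(L^{[n]}\tens L^{[n]}) = 0$ for $i>0$ coming from Theorem \ref{thm:BDk=2}, that $\B{R}\mu_*(L^{[n]}\tens L^{[n]}\tens\mc{D}_A) \simeq \mu_*(L^{[n]}\tens L^{[n]})\tens\mc{A}$, whence $H^*(X^{[n]}, L^{[n]}\tens L^{[n]}\tens\mc{D}_A) \simeq H^*(S^nX, \mu_*(L^{[n]}\tens L^{[n]})\tens\mc{A})$. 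Since $\mc{A}$ is a line bundle, tensoring (\ref{eq:BDk=2}) by $\mc{A}$ preserves exactness, and the resulting long exact cohomology sequence starts from exactly this group.

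Next I would identify the two remaining terms. For a $\perm_n$-equivariant sheaf $F$ on $X^n$, the projection formula for the invariant push-forward, the isomorphism $\pi^*\mc{A}\simeq A^{\boxtimes n}$, and the exactness of $[-]^{\perm_n}$ in characteristic zero give $H^*(S^nX, F^{\perm_n}\tens\mc{A}) \simeq H^*(X^n, F\tens A^{\boxtimes n})^{\perm_n}$. Applying this to $F = \mc{C}^0_L\tens\mc{C}^0_L$ and $F = \mc{C}^1_L\tens\mc{C}^0_L$, I would evaluate the invariant cohomology exactly as in the proof of Theorem \ref{thm:BDk=2}, splitting into $\perm_n$-orbits and combining Danila's Lemma \ref{Danila} with the Künneth formula. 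For $\mc{C}^0_L\tens\mc{C}^0_L\tens A^{\boxtimes n} = \bigoplus_{i,j} L_i\tens L_j\tens A^{\boxtimes n}$, the diagonal orbit $i=j$ contributes $H^*(L^{\tens 2}\tens A)\tens S^{n-1}H^*(A)$ and the off-diagonal orbit $i\neq j$ contributes $H^*(L\tens A)^{\tens 2}\tens S^{n-2}H^*(A)$; for $\mc{C}^1_L\tens\mc{C}^0_L\tens A^{\boxtimes n} = \bigoplus_{|I|=2,\,c} L_I\tens L_c\tens A^{\boxtimes n}$ the orbit with $c\notin I$ vanishes, because the transposition in $\perm(I)$ acts on $L_I$ by the alternant representation $\epsilon_2$ while fixing the symmetric contribution of $A^{\boxtimes n}$ on the diagonal $\Delta_I$, so that only the orbit with $c\in I$ survives and yields $H^*(L^{\tens 2}\tens A^{\tens 2})\tens S^{n-2}H^*(A)$. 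These are precisely the middle and right terms of the stated sequence.

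Finally I would identify the connecting map $m = (m_1, m_2)$ induced by $(\partial^0_L\tens\id)^{\perm_n}\tens\id_{\mc{A}}$. On the summand $H^*(L\tens A)^{\tens 2}\tens S^{n-2}H^*(A)$ the differential restricts $L_i\tens L_j$ to the diagonal $\Delta_{\{i,j\}}$, where the two $L\tens A$ factors multiply, giving the cup-product coupling $m_2(\beta\tens\gamma\tens v) = \beta\gamma\tens v$. On the summand $H^*(L^{\tens 2}\tens A)\tens S^{n-1}H^*(A)$ the differential restricts the slot carrying $L_i^{\tens 2}$ onto the diagonals $\Delta_{\{i,c\}}$ for the remaining indices $c$; after passing to invariants this becomes the restriction map of the doubling embedding $X\times S^{n-2}X\hookrightarrow X\times S^{n-1}X$, $(x,z)\mapsto(x,x+z)$. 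This is the twisted analogue of Lemma \ref{lem:epicohom} (with $F = L^{\tens 2}\tens A$, cupping with classes $u_i\in H^*(A)$ so as to raise the twist to $L^{\tens 2}\tens A^{\tens 2}$), which produces the stated formula for $m_1$ with the sign $(-1)^{(\sum_{j<i}p_j)p_i}$. The main obstacle is exactly this last identification: one must track the single differential $(\partial^0_L\tens\id)^{\perm_n}$ through the Danila localizations and the Künneth isomorphisms on both summands simultaneously and check that it decomposes as the pair $(m_1,m_2)$ with these precise formulas, the delicate point being the sign bookkeeping inherited from the signs $\epsilon_{i,J}$ in the definition of $\partial^0_L$ and from the permutation of the tensor factors.
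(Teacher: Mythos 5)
Your proposal is correct and follows essentially the same route as the paper's proof: tensor the short exact sequence (\ref{eq:BDk=2}) by the descended line bundle $A^{\boxtimes n}/\perm_n$ (so that the first cohomology group becomes $H^*(X^{[n]}, L^{[n]}\tens L^{[n]}\tens \mc{D}_A)$ by the projection formula and the vanishing of higher direct images), take the long exact cohomology sequence, compute the two invariant cohomology groups by Danila's lemma and K\"unneth exactly as in Theorem \ref{thm:BDk=2}, and identify $m_1$ via the twisted analogue of Lemma \ref{lem:epicohom} and $m_2$ as the evident cup-product coupling. The details you supply --- the invariant push-forward bookkeeping, the orbit analysis, and the vanishing of the $c\notin I$ orbit by the alternant representation --- are precisely the steps the paper leaves implicit when it says the computation is ``completely analogous''.
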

\begin{proof}With a completely analogous computation as the one done in the proof 
of theorem~\ref{thm:BDk=2} we can compute:
\begin{align*}
H^*(X^n, (\mc{C}^0_L \tens \mc{C}^0_L ) \tens A^{\boxtimes})^G = & \; H^*(L^{\tens ^2} \tens A) \tens S^{n-1} H^*(A)  \bigoplus  H^*(L \tens A)^{\tens ^2} \tens S^{n-2}H^*(A) \\
H^*(X^n, (\mc{C}^1_L \tens \mc{C}^0_L ) \tens A^{\boxtimes})^G = & H^*(L^{\tens ^2} \tens A^{\tens ^2} ) \tens S^{n-2}H^*(A) \;.
\end{align*}The expression of the map $m_1$ can be computed exactly as in the proof of lemma \ref{lem:epicohom}; the expression of $m_2$ is evident.
\end{proof}
\subsection{Cohomology with values in exterior powers}\label{cohoext}
By theorem \ref{BDext}, by definition of $\mc{D}_A$ and by the projection formula we get: 
$$ \B{R}\mu_*(\Lambda^k L^{[n]} \tens \mc{D}_A) =\left( \Lambda^k \mc{C}^0_L  
\right)^G \tens A^{\boxtimes ^n} /\perm_n  \;,$$generalizing Brion-Danila formula for all $k$ (see \cite[prop. 6.1]{Danila2001}).
We have the following formula for the cohomology of the general exterior power $\Lambda^kL^{[n]}$, twisted by the determinant~$\mc{D}_A$:
\begin{theorem}
Let $X$ be a smooth quasi-projective surface and $L$, $A$ be line bundles on $X$. Then: 
$$ H^*(X^{[n]},  \Lambda^k L^{[n]} \tens \mc{D}_A) = \Lambda^k H^*(L \tens A)
\tens S^{n-k}H^*(A) \;.$$
\end{theorem}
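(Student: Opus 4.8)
The plan is to combine the projection formula with the Brion--Danila-type formula (D) of Theorem \ref{BDext} and then to carry out a K\"unneth-and-invariants computation completely parallel to the one in the proof of Theorem \ref{thm:BDk=1}. Since $\mc{D}_A = \mu^*(A^{\boxtimes^n}/\perm_n)$, the projection formula together with Theorem \ref{BDext} gives, as already recorded just above the statement, $\B{R}\mu_*(\Lambda^k L^{[n]} \tens \mc{D}_A) \simeq (\Lambda^k \mc{C}^0_L)^G \tens A^{\boxtimes^n}/\perm_n$, a complex concentrated in degree $0$. Because $\mu$ is proper, $\B{R}\Gamma(X^{[n]}, -) \simeq \B{R}\Gamma(S^nX, -) \circ \B{R}\mu_*$, so I would first reduce to computing the ordinary sheaf cohomology $H^*(S^nX, (\Lambda^k \mc{C}^0_L)^G \tens A^{\boxtimes^n}/\perm_n)$. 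Then, since $\pi : X^n \ra S^nX$ is finite (hence $\pi_*$ is exact) and since taking $\perm_n$-invariants is exact in characteristic $0$, descent of $A^{\boxtimes^n}$ and the projection formula identify $(\Lambda^k \mc{C}^0_L)^G \tens A^{\boxtimes^n}/\perm_n$ with $\pi_*^{\perm_n}(\Lambda^k \mc{C}^0_L \tens A^{\boxtimes^n})$, and therefore identify the cohomology above with the invariant cohomology upstairs, $H^*(X^n, \Lambda^k \mc{C}^0_L \tens A^{\boxtimes^n})^G$.

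Next I would make the exterior power explicit. As $\mc{C}^0_L = \bigoplus_{i=1}^n L_i$ is a direct sum of line bundles $L_i = p_i^* L$ on $X^n$, one has $\Lambda^k \mc{C}^0_L \simeq \bigoplus_{|I|=k} \bigotimes_{i \in I} L_i$, the sum running over $k$-subsets $I \subseteq \{1,\dots,n\}$; tensoring with $A^{\boxtimes^n}$, the $I$-summand is the external product equal to $L \tens A$ on the factors indexed by $I$ and to $A$ elsewhere. By the K\"unneth formula its cohomology is $H^*(L \tens A)^{\tens k} \tens H^*(A)^{\tens(n-k)}$. The group $G = \perm_n$ permutes these summands transitively on the set of $k$-subsets, so I would apply Danila's Lemma \ref{Danila} to reduce the $G$-invariants to the $\Stab_{\perm_n}(I_0)$-invariants of a single summand for $I_0 = \{1,\dots,k\}$, where $\Stab_{\perm_n}(I_0) \simeq \perm(I_0) \times \perm(\overline{I_0}) \simeq \perm_k \times \perm_{n-k}$.

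The conclusion is then the representation-theoretic computation of $\left(H^*(L\tens A)^{\tens k} \tens H^*(A)^{\tens(n-k)}\right)^{\perm_k \times \perm_{n-k}}$. The factor $\perm(\overline{I_0}) = \perm_{n-k}$ merely permutes the $n-k$ copies of $H^*(A)$, so its invariants give the graded symmetric power $S^{n-k} H^*(A)$. The factor $\perm(I_0) = \perm_k$ permutes the $k$ copies of $H^*(L\tens A)$, but --- and this is the delicate point --- it does so twisted by the alternating character $\epsilon_k$, since the identification of the summand $\bigotimes_{i\in I_0} L_i$ with a subsheaf of $\Lambda^k \mc{C}^0_L$ introduces the sign of the reordering of the wedge factors (exactly as the factor $\perm(I_0)$ acted through $\epsilon_{|I_0|}$ in the proof of Theorem \ref{thm:BDk=1}). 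Hence the $\perm_k$-invariants of $H^*(L\tens A)^{\tens k}\tens \epsilon_k$ compute the graded exterior power $\Lambda^k H^*(L\tens A)$. The main obstacle I anticipate is precisely bookkeeping these signs in the graded setting: one must check that the Koszul signs arising from permuting the graded tensor factors of $H^*(L\tens A)$ (resp. $H^*(A)$), combined with the wedge sign $\epsilon_k$, reproduce the super exterior (resp. symmetric) power, so that $\left(H^*(L\tens A)^{\tens k}\tens\epsilon_k\right)^{\perm_k} \simeq \Lambda^k H^*(L\tens A)$ and $\left(H^*(A)^{\tens(n-k)}\right)^{\perm_{n-k}} \simeq S^{n-k}H^*(A)$. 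Putting the two factors together yields the asserted isomorphism $H^*(X^{[n]}, \Lambda^k L^{[n]} \tens \mc{D}_A) \simeq \Lambda^k H^*(L\tens A) \tens S^{n-k} H^*(A)$ of $\mbb{Z}$-graded modules.
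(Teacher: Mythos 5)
Your proof is correct and follows essentially the same route as the paper's: Theorem \ref{BDext} plus the projection formula and $\B{R}\Gamma_{X^{[n]}} = \B{R}\Gamma_{S^nX}\circ\B{R}\mu_*$ reduce the problem to $\perm_n$-invariant cohomology on $X^n$, which is then computed by Danila's lemma and K\"unneth. The only cosmetic difference is bookkeeping: the paper realizes $\Lambda^k\mc{C}^0_L$ as the $\epsilon_k$-twisted $\perm_k$-invariants of $(\mc{C}^0_L)^{\tens k}$ and applies Danila's lemma to the $\perm_n\times\perm_k$-action, whereas you decompose $\Lambda^k\mc{C}^0_L \simeq \bigoplus_{|I|=k}\bigotimes_{i\in I}L_i$ over $k$-subsets and apply it to the $\perm_n$-action alone, the sign $\epsilon_k$ entering through the wedge-reordering of the summand --- both bookkeepings land on the identical final computation $\bigl[H^*(L\tens A)^{\tens k}\tens\epsilon_k\bigr]^{\perm_k}\tens\bigl[H^*(A)^{\tens (n-k)}\bigr]^{\perm_{n-k}}$.
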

\begin{proof}
\begin{eqnarray*}
\B{R}\Gamma_{X^{[n]}}(\Lambda^k L^{[n]} \tens \mc{D}_A) &=& \B{R} 
\Gamma_{S^n X } \circ 
\B{R} \mu_* (\Lambda^k L^{[n]} \tens \mc{D}_A) \\
& = & \B{R} \Gamma_{S^n X} \circ \pi_*^{G} \left( \Lambda^k \mc{C}^0_L  
  \tens \pi^* A^{\boxtimes ^n}/\perm_n \right) \\
 & = & \B{R} \Gamma^{G}_{X^n}( \Lambda^k \mc{C}^0_L  
 \tens A^{\boxtimes ^n}) \\
& = & \B{R} \Gamma_{X^n}^{\perm_n \times \perm_k} \left( (\mc{C}^0_L )^{\tens ^k}  \tens A^{\boxtimes ^n} \tens \epsilon_k \right) \end{eqnarray*}Then, by Danila's lemma and K\"unneth formula:
\begin{eqnarray*}H^*(X^{[n]}, \Lambda^k L^{[n]} \tens \mc{D}_A) & = & 
\left[ H^*(L \tens A)^{\tens ^k } \tens \epsilon_k \right]^{\perm_k} \Tens \left [ 
H^*(A)^{\tens ^{n-k}}
\right]^{\perm_{n-k}} \\
& = & \Lambda^k H^*(L \tens A) \tens S^{n-k} H^*(A)
\end{eqnarray*}
\end{proof}
\appendix
\numberwithin{equation}{section}
\section{The {\v C}ech complex for closed subschemes}
\label{app: Cech}We prove here, under some reasonable transversality 
hypothesis, 
the existence of a {\v C}ech-type resolution for a finite 
scheme-theoretic union of closed subschemes of smooth scheme: it can be thought as a generalization of the Chinese remainder theorem. We need the following lemmas. 
\begin{lemmaa}[Peskine-Szpiro, Kempf-Laksov {\cite[lemma 7]{KempfLaksov1974}}]
\label{PS}
Let $(A, \G{m})$ be a Cohen-Macauley noetherian 
local ring and $I \subseteq A$ an ideal.
Let \[
0 \rTo K^0 \rTo K^1 \rTo \dots \rTo K^{n-1} \rTo K^n \rTo 0 
\]be a complex of free modules. Suppose that
$\mathrm{Supp}(\comp{K}):=\bigcup_{i=1}^n \mathrm{Supp}H^i(\comp{K}) \subseteq 
V(I)$. Then $H^i(\comp{K})=0$ for all $i < {\rm ht}(I)$.
\end{lemmaa}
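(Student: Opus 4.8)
The plan is to argue by induction on $h := \hth(I)$, leaning on the fact that the Cohen--Macaulay hypothesis forces $\hth(I) = \operatorname{grade}(I,A)$, the common length of the maximal $A$-regular sequences contained in $I$. The base case $h = 0$ is vacuous, as there is then no $i \geq 0$ with $i < h$; so I would assume $h \geq 1$ and that the assertion holds over every Cohen--Macaulay local ring for ideals of height $< h$.

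Since $\operatorname{grade}(I,A) = h \geq 1$ and $A$ is Cohen--Macaulay, every associated prime of $A$ is minimal, hence of height $0$; because $\hth(I) \geq 1$, the ideal $I$ is contained in no associated prime, and prime avoidance lets me pick $x \in I$ that is a nonzerodivisor on $A$. I would then pass to $\bar A := A/xA$, again Cohen--Macaulay, where the image $\bar I$ of $I$ has $\hth(\bar I) = h-1$. As each $K^i$ is free, $x$ is a nonzerodivisor on $\comp{K}$, giving a short exact sequence of complexes
\[
0 \rTo \comp{K} \rTo^{x} \comp{K} \rTo \comp{\bar K} \rTo 0, \qquad \comp{\bar K} := \comp{K}\tens_A \bar A,
\]
with $\comp{\bar K}$ a complex of free $\bar A$-modules.

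The long exact cohomology sequence of this short exact sequence breaks into
\[
0 \rTo H^i(\comp{K})/x\,H^i(\comp{K}) \rTo H^i(\comp{\bar K}) \rTo H^{i+1}(\comp{K})[x] \rTo 0,
\]
where $[x]$ denotes the $x$-torsion submodule. The outer terms are supported in $\Supp H^i(\comp{K}) \cup \Supp H^{i+1}(\comp{K}) \subseteq V(I)$, so $\Supp H^i(\comp{\bar K}) \subseteq V(\bar I)$ and the inductive hypothesis gives $H^i(\comp{\bar K}) = 0$ for $i < h-1$. For such $i$ the sequence forces $H^i(\comp{K}) = x\,H^i(\comp{K})$, hence $H^i(\comp{K}) = 0$ by Nakayama's lemma (the module is finitely generated and $x \in \G{m}$); it also yields $H^{i+1}(\comp{K})[x] = 0$.

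The remaining degree $i = h-1$, where the inductive hypothesis no longer controls $\comp{\bar K}$, is the step I expect to be the main obstacle. Here I would combine two facts: taking $i = h-2$ in the sequences above (or, when $h=1$, using directly that $H^0(\comp{K})$ is a submodule of the free module $K^0$) shows that $x$ acts as a nonzerodivisor on $H^{h-1}(\comp{K})$; meanwhile $\Supp H^{h-1}(\comp{K}) \subseteq V(I) \subseteq V(x)$ forces $x^N H^{h-1}(\comp{K}) = 0$ for some $N$. Since a power of a nonzerodivisor is again a nonzerodivisor, $H^{h-1}(\comp{K}) = 0$, completing the induction. The naive Nakayama argument alone only reaches $i < h-1$; the essential point is to bridge to $i = h-1$ by pairing the vanishing of the $x$-torsion in that degree with the inclusion $\Supp H^{h-1}(\comp{K}) \subseteq V(x)$.
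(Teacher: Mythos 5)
There is no proof in the paper to compare yours against: Lemma \ref{PS} is imported verbatim from Kempf--Laksov \cite[lemma 7]{KempfLaksov1974} and is only \emph{used} later (in the proof of Lemma \ref{excess}). Your argument is a correct, complete, self-contained proof, and it is the standard d\'evissage one: induction on $h=\hth(I)$, using $\hth(I)=\operatorname{grade}(I,A)$ over a Cohen--Macaulay ring to pick $x\in I$ regular on $A$, comparing $\comp{K}$ with $\comp{K}\tens_A A/xA$ through the long exact sequence of $0\to\comp{K}\to\comp{K}\to\comp{K}\tens_A A/xA\to 0$, and then, at the critical degree $i=h-1$ that Nakayama alone cannot reach, playing the $x$-torsion-freeness of $H^{h-1}(\comp{K})$ (obtained from the sequence at $i=h-2$, or from $H^{0}(\comp{K})\subseteq K^{0}$ when $h=1$) against $\Supp H^{h-1}(\comp{K})\subseteq V(I)\subseteq V(x)$. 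The reduction step is also justified correctly: since $x\in I$ is $A$-regular, the image of $I$ in the Cohen--Macaulay ring $A/xA$ has grade, hence height, equal to $h-1$.

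Two small points would make the write-up airtight. First, you use the support hypothesis for $H^{0}(\comp{K})$ as well, both when verifying the hypothesis for $\comp{K}\tens_A A/xA$ and in the final torsion argument when $h=1$. The paper's $\bigcup_{i=1}^{n}\Supp H^{i}(\comp{K})$ must indeed be read as a union over all $i\geq 0$: as literally written it makes the lemma false (take $\comp{K}\colon 0\to A\to 0$ and $I$ of positive height), and in the paper's sole application every cohomology module, including $H^{0}$, is supported in $V(I)$. So your reading is the intended one, but it deserves an explicit remark. Second, Nakayama's lemma and the implication $\Supp M\subseteq V(x)\Rightarrow x^{N}M=0$ both require the $H^{i}(\comp{K})$, hence the $K^{i}$, to be finitely generated; this finite-rank hypothesis is implicit in the statement and holds in the paper's application (where $\comp{K}$ is a tensor product of finite free resolutions), and you do flag where it enters, so it suffices to record it as a standing assumption.
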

\begin{lemmaa}\label{excess}
Let $(A, \G{m})$ be a  
noetherian regular local ring, $M_{1}, \dots, M_{k}$ finite Cohen-Macauley 
modules over $A$. 
 Let $$c(M_1, \dots, M_k) := \left( 
\sum_{i=1}^k \codim M_i  \right ) - \codim M_1 \tens \dots \tens M_k \; .$$
Then $ \Tor_i(M_1, \dots, M_k)=0 $ for $i \gneq c(M_1, \dots,M_k) $. 
\end{lemmaa}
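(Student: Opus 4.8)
The plan is to deduce the vanishing directly from the Peskine--Szpiro acyclicity lemma \ref{PS}, after producing a single free complex whose cohomology computes the multi-$\Tor$ and recording its total length together with its support. First I would observe that each $M_i$ is a \emph{perfect} module: since $A$ is regular local it is Cohen--Macauley with $\depth A = \dim A$, and by the Auslander--Buchsbaum formula $\projdim M_i = \dim A - \depth M_i = \dim A - \dim M_i = \codim M_i =: c_i$, the middle equality being the Cohen--Macauleyness of $M_i$. In particular each $M_i$ admits a finite free resolution $\comp{P_i} \rTo M_i$ of length exactly $c_i$.

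Next I would form the tensor product complex $T := \comp{P_1} \tens_A \cdots \tens_A \comp{P_k}$, a complex of finite free $A$-modules concentrated in homological degrees $0, \dots, N$, with $N := \sum_{i=1}^k c_i$, whose homology computes the multi-$\Tor$: $H_j(T) = \Tor_j(M_1, \dots, M_k)$. The geometric input is the support estimate $\Supp \Tor_j(M_1, \dots, M_k) \subseteq \bigcap_i \Supp M_i = \Supp(M_1 \tens \cdots \tens M_k) = V(I)$, where $I := \Ann(M_1 \tens \cdots \tens M_k)$; this holds because at any prime lying outside some $\Supp M_i$ the localized module, and hence every localized multi-$\Tor$, vanishes. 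By definition $\hth(I) = \codim(M_1 \tens \cdots \tens M_k) =: c_0$, so that the excess is $c = N - c_0$.

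Finally I would re-index $T$ as a cohomological complex $0 \rTo K^0 \rTo \cdots \rTo K^N \rTo 0$ by setting $K^j := T_{N-j}$, so that $H^j(\comp{K}) = \Tor_{N-j}(M_1, \dots, M_k)$ and $\Supp(\comp{K}) \subseteq V(I)$. Applying lemma \ref{PS} over the Cohen--Macauley (indeed regular) ring $A$ yields $H^j(\comp{K}) = 0$ for $j < \hth(I) = c_0$, i.e. $\Tor_{N-j}(M_1, \dots, M_k) = 0$ for $j < c_0$; writing $i = N - j$ this reads $\Tor_i(M_1, \dots, M_k) = 0$ for $i > N - c_0 = c$, which is exactly the assertion.

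I expect the only genuinely delicate point to be the bookkeeping: matching the low-degree (cohomological) vanishing furnished by lemma \ref{PS} with the high-degree vanishing of $\Tor$ demanded by the statement, which is precisely why the explicit re-indexing $K^j = T_{N-j}$ is performed above. The two ingredients feeding the lemma --- that the tensor complex has length exactly $N = \sum_i \codim M_i$ (perfectness of each $M_i$) and that all its homology is supported on the height-$c_0$ set $V(I)$ --- are each elementary, so no substantial obstacle remains once they are in place.
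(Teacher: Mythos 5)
Your proof is correct and follows essentially the same route as the paper's: Auslander--Buchsbaum plus Cohen--Macauleyness gives free resolutions of length exactly $\codim M_i$, the tensor product of these resolutions computes the multi-Tor, its homology is supported on $V(\Ann(M_1 \tens \cdots \tens M_k))$, and the Peskine--Szpiro lemma \ref{PS} yields the vanishing after the degree re-indexing you spell out. One cosmetic remark: the identity $\hth(I) = \codim(M_1 \tens \cdots \tens M_k) = \dim A - \dim A/I$ is not a definition but a consequence of $A$ being regular local (hence catenary and equidimensional) --- exactly the point the paper makes in the last line of its own proof.
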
\begin{proof} The lemma is an easy consequence of 
Peskine-Szpiro
lemma and the existence, for the Cohen-Macauley modules $M_i$, 
of finite free resolutions of length equal to the codimensions $\codim M_i$, by Auslander-Buchsbaum formula \cite[Theorem 19.1]{MatsumuraCRT}.
For every module 
$M_i$ let's take its finite free resolution 
$\comp{R}_i \rTo M_i \rTo 0$, written:
$$ 0 \rTo R_i ^0 \rTo R_i^1 \rTo \dots \rTo R^{\codim M_i}_i \rTo
M_i 
\rTo 0 \;.$$
We can then compute $\Tor_i(M_1, \dots, M_k)$ as the cohomology of the 
total complex: $\comp{R} := \comp{R_1} \tens \dots \tens \comp{R}_k $. Now 
$\comp{R}$ is a finite complex of free modules of length $l= \sum_{i=1}^k 
\codim M_i$ and, for all $i$, 
$\Tor_i(M_1, \dots, M_k)= H^{l-i}(\comp{R})$ is supported in $\Supp(M_1 \tens 
\dots \tens M_k) = V(\Ann(M_1 \tens \dots \tens M_k))$. Therefore by 
Peskine and Szpiro lemma, $H^{l-i}(\comp{R}) =0$ for $l-i < 
\hth( \Ann(M_1 \tens \dots \tens M_k))$, that is if $i > 
l - \hth( \Ann(M_1 \tens \dots \tens M_k))$. Now for a 
noetherian regular
local ring, $\hth(I)= \codim V(I) = 
\dim A - \dim A/I $, and this implies the result. \end{proof}
Let $I \subseteq \{1, \dots, l \}$. We will indicate with $\overline{I}$ the 
complementary of $I$ in $\{1, \dots ,l\}$. If $M_1 ,\dots, M_l$ are modules 
on a ring $A$, we will indicate with $M_I = \tens_{i \in I} M_i$, with 
$\Tor_i(M_I) = H^{-i}(\tens_{i \in I}^L M_i)$ and with $c(M_I) = c(M_{i_1}, \dots, M_{i_h})$ if $I=\{i_1, \dots, i_h \}$.
\begin{ppsa}{\sloppy Let $(A, \mf{m})$ be a noetherian regular local ring and 
$M_i$, $i=1, \dots, l$, Cohen-Macauley modules on $A$. Consider the exact 
sequences: 
$$ 0 \rTo N_i \rTo E_i \rTo M_i \rTo 0 $$where $E_i$ are free $A$-modules. 
Let $\comp{K}_i$ be the complex (in degree 0 and 1): 
$\comp{K}_i:= E_i \rTo M_i \rTo 0$.
Suppose that $c(M_1, \dots , M_l)=0$. Then the complex 
$ \comp{K} := \tens_i \comp{K}_i :$
\begin{multline*} 0  \rTo \tens_{i=1}^l E_i \rTo \oplus_{i=1}^l M_i \tens E_{ \overline{\{ i \}}} 
\rTo \oplus_{|I|=2 } M_I \tens E_{\overline{I}} \rTo \dots \rTo \tens_{i=1}^l 
M_i \rTo 0
\end{multline*}is a right resolution of the module $\tens_{i=1}^lN_i$ .} 
\end{ppsa}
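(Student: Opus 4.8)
The plan is to compute the $l$-fold derived tensor product $\comp{K}_1\tens^L\cdots\tens^L\comp{K}_l$ of the two-term complexes $\comp{K}_i=[E_i\to M_i]$ (in degrees $0,1$) in two different ways and to match the outcomes. First I would record the elementary observation that $\comp{K}_i$ is quasi-isomorphic to $N_i$ placed in degree $0$: indeed $H^0(\comp{K}_i)=\ker(E_i\to M_i)=N_i$ and $H^1(\comp{K}_i)=\coker(E_i\to M_i)=0$, the inclusion $N_i\hookrightarrow E_i$ furnishing an explicit quasi-isomorphism $N_i[0]\to\comp{K}_i$. Since the ordinary tensor product $\bigotimes_i\comp{K}_i$ is exactly the complex $\comp{K}$ displayed in the statement (its degree $p$ term being $\bigoplus_{|I|=p}M_I\tens E_{\overline{I}}$), the whole problem reduces to identifying the cohomology of $\comp{K}$.

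The first key step is purely about the modules $M_i$: the hypothesis $c(M_1,\dots,M_l)=0$ propagates to every subset, i.e.\ $c(M_I)=0$ for all $\emptyset\neq I\subseteq\{1,\dots,l\}$. This follows from subadditivity of codimension under intersection of supports, using $\Supp(M_I)=\bigcap_{i\in I}\Supp(M_i)$ and $\codim(V\cap W)\leq\codim V+\codim W$: writing $c_i=\codim M_i$ and splitting $\{1,\dots,l\}=I\sqcup\overline{I}$, the chain $\sum_i c_i=\codim\bigl(\bigcap_i\Supp M_i\bigr)\leq\codim\bigl(\bigcap_{i\in I}\Supp M_i\bigr)+\codim\bigl(\bigcap_{i\in\overline I}\Supp M_i\bigr)\leq\sum_i c_i$ forces all inequalities to be equalities, whence $c(M_I)=0$. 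By the excess lemma (Lemma \ref{excess}) this yields the vanishing of all higher multitors: $\Tor_{-q}(M_I)=0$ for every $q<0$ and every $I$.

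The second step is the double computation. On one hand, because $\comp{K}_i\simeq N_i[0]$ and the derived tensor product is a functor on the derived category, one has $\comp{K}_1\tens^L\cdots\tens^L\comp{K}_l\simeq N_1\tens^L\cdots\tens^L N_l$; being a derived tensor product of honest modules, this object is concentrated in non-positive degrees, with $H^0=\bigotimes_i N_i$ (its $\Tor_0$). On the other hand, I would run the hyperderived multitor spectral sequence $E^{p,q}_1=\bigoplus_{i_1+\cdots+i_l=p}\Tor_{-q}(\comp{K}_1^{i_1},\dots,\comp{K}_l^{i_l})$ abutting to the same derived tensor product, constructed exactly as in (\ref{eq: fss}). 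Its term indexed by $I=\{j:i_j=1\}$ is the multitor of the family $\{M_j\}_{j\in I}$ together with $\{E_j\}_{j\in\overline I}$; since the $E_j$ are free they pull out of the multitor, so this term equals $\Tor_{-q}(M_I)\tens E_{\overline I}$, which by the first step vanishes for $q<0$ and equals $M_I\tens E_{\overline I}$ for $q=0$. Hence the spectral sequence is concentrated in the single row $q=0$, where it degenerates, and the complex $(E^{\bullet,0}_1,d_1)$ is canonically identified with $\comp{K}$; therefore $H^m(\comp{K})=H^m(\comp{K}_1\tens^L\cdots\tens^L\comp{K}_l)$.

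Comparing the two computations finishes the argument: the common object is concentrated in non-positive degrees (first computation) and, as $H^\bullet(\comp{K})$, in non-negative degrees (since $\comp{K}$ lives in degrees $0,\dots,l$), so it sits in degree $0$ alone. Thus $H^m(\comp{K})=0$ for $m\neq0$ and $H^0(\comp{K})=\bigotimes_i N_i$, the edge morphism of the spectral sequence identifying this $H^0$ with the natural image of $\bigotimes_i N_i$; this is precisely the assertion that $\comp{K}$ is a right resolution of $\bigotimes_i N_i$. I expect the main obstacle to be the bookkeeping in the spectral sequence, namely verifying that the $q=0$ row with its differential $d_1$ really reproduces the {\v C}ech-type differential of $\comp{K}$ with the correct signs, and that extracting the free factors $E_j$ from the multitors is compatible with these differentials; the codimension propagation together with the excess lemma does all the substantive vanishing work.
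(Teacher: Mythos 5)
Your proof is correct and takes essentially the same route as the paper's: both propagate the hypothesis $c(M_1,\dots,M_l)=0$ to every subfamily $M_I$ via the codimension subadditivity on a regular local ring (which the paper establishes by reduction to the diagonal and you invoke as a known inequality), then use lemma \ref{excess} to kill all higher multitors $\Tor_{i}(M_I)$, and finally read off the conclusion from the hyperderived spectral sequence of $\comp{K}_1\tens^L\cdots\tens^L\comp{K}_l$, whose only nonvanishing row $E^{\bullet,0}_1$ is the complex $\comp{K}$ and whose abutment is $\Tor_{-\bullet}(N_1,\dots,N_l)$, concentrated in degree $0$ where it equals $\tens_{i=1}^l N_i$. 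The differences are purely presentational (your squeeze argument for $c(M_I)=0$ versus the paper's two-sided inequality $0\leq c(M_H)\leq c(M_1,\dots,M_l)$).
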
\begin{proof}
We first prove that for all $\emptyset \neq H \subseteq \{1, \dots, l \}$ one has:
\begin{equation}\label{ttx}0 \leq c(M_H) \leq c(M_1, \dots, M_l) \;.
\end{equation} To prove the first inequality 
it is not restrictive to take $H= \{1, \dots, l \}$. Now, embed $X:= \Spec(A)$ 
into $X^l$ with the diagonal immersion $i : X \rInto X^l$. On $X^l$ we have: 
$\codim_{X^l}(M_1 \boxtimes \cdots \boxtimes M_l) = \sum_i \codim M_i$. Then, since 
$M_1 \tens \dots \tens M_l = i^*(M_1 \boxtimes \cdots \boxtimes M_l)$, in the identification given by $i$,
$ \Supp(M_1 \tens \cdots \tens M_l ) = 
\Delta \cap \Supp(M_1 \boxtimes \cdots \boxtimes M_l)$ and since $X$ is a 
smooth scheme: 
$$ \dim \Delta \cap \Supp(M_1 \boxtimes \cdots \boxtimes M_l)  \geq \dim \Delta 
+ \dim \Supp(M_1 \boxtimes \cdots \boxtimes M_l) - \dim X^l \;, $$which implies
$ \codim_{X}(M_1 \tens \cdots \tens M_l) \leq \sum_{i} \codim M_i$. 
To prove the second inequality, we have, because of the first inequality in 
(\ref{ttx}): 
\begin{eqnarray*} c(M_1, \dots, M_l) & = & \sum_{i=1}^l \codim M_i - \codim(M_1 \tens \cdots \tens M_l) \\
& \geq & c(M_H) + c(M_{\overline{H}}) \geq c(M_H) \;.
\end{eqnarray*}

Now, to prove the exactness of the complex $\tens_{i =1}^l \comp{K}_i$, remark that, by lemma \ref{excess} and by the inequality (\ref{ttx}), the hypothesis
$c(M_1, \dots, M_l)=0$ implies \begin{equation}\label{torello}
\Tor_i(M_H)=0 \quad  \forall \;  \emptyset \neq 
H \subseteq \{1, \dots, l \} \quad, \quad \forall i>0 \;.
\end{equation}
The complexes $\comp{K}_i$ are  right 
resolutions of the modules $N_i$. Consequently, 
$ \Tor_{-q}(N_1, \dots, N_l) = H^q(\tens^L_{i} \comp{K}_i) $. 
On the other hand the cohomology $H^{p+q}(\tens^L_{i} \comp{K}_i)$
is the limit of the fourth quadrant spectral sequence: 
 $$ E^{p,q}_1:= \oplus_{i_1 + \dots +i_l =p }\Tor_{-q}(K^{i_1}_1, \dots, K^{i_l}_l) \;.$$Since $K^0_i$ is acyclic for all $i$ because free, the term $E^{p,q}_1$ becomes a direct sum:
$$ E^{p,q}_1 = \oplus_{|H|=p }\Tor_{-q}(M_H) \tens E_{\overline{H}} \;.$$
The vanishing (\ref{torello}) implies $E^{p,q}_1=0$ for all $q<0$. Note that the complex $E^{\bullet,0}_1$ is exactly $\tens_{i =1}^l \comp{K}_i$. The spectral sequence degenerates at level $E_2$; then
$ E^{p}_{\infty} \simeq E^{p,0}_2 = \Tor_{-p}(N_1, \dots ,N_l)=0$ if $p>0$ and $
E^{0}_{\infty} \simeq E^{0,0}_2=  \tens_{i=1}^l N_i $, hence the complex $\tens_{i =1}^l \comp{K}$ gives a right resolution of the module
$\tens_{i=1}^l N_i $. 
\end{proof}
\begin{crla}[Chinese Remainder Theorem]\label{ccx}Let $(A, \mf{m})$ be a noetherian regular local ring, and let $\mf{a}_i$, $i=1, \dots, l$, be ideals 
of $A$ such that:
\begin{enumerate}
\item the modules $A/\mf{a}_i$ are Cohen-Macauley for all $i$ 
\item $\hth(\sum_{i=1}^l \mf{a}_i) = \sum_{i=1}^l \hth(\mf{a}_i) $.
\end{enumerate}Then $\cap_{i=1}^l \mf{a}_i = \prod_{i=1}^l \mf{a}_i$ and the complex
$ (\comp{\check{\mc{ K}}}, \partial^{\bullet})$, defined by 
$$ \check{\mc{K}}^p= \oplus_{|I|=p+1} \left( A/\sum_{i \in I} \mf{a}_i \right) \qquad \partial^p(f)_J = 
\sum_{i \in J} \epsilon_{i,J}  \left( f_{J \setminus \{ i \}} \mod \sum_{j \in J} \mf{a}_j  \right)\;,$$where 
$\epsilon_{i,J}= (-1)^{\sharp \{ h \in J, h <i \}}$, 
is a right resolution of the module $A/ \cap_{i=1}^l \mf{a}_i$.
\end{crla}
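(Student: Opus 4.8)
The plan is to obtain Corollary \ref{ccx} as a direct specialization of the preceding proposition, taking $E_i := A$ and $M_i := A/\mf{a}_i$ with the canonical surjections $A \to A/\mf{a}_i$, so that $N_i = \mf{a}_i$ and $\comp{K}_i$ is the two-term complex $[A \to A/\mf{a}_i]$ placed in degrees $0$ and $1$. Hypothesis (1) of the corollary is precisely the Cohen--Macauley assumption of the proposition. For the remaining hypothesis $c(M_1, \dots, M_l) = 0$ I would argue that $\Ann(A/\mf{a}_i) = \mf{a}_i$ gives $\codim M_i = \hth \mf{a}_i$, while $M_1 \tens \cdots \tens M_l \simeq A/\sum_i \mf{a}_i$ gives $\codim(M_1 \tens \cdots \tens M_l) = \hth(\sum_i \mf{a}_i)$; condition (2) then reads exactly $c(M_1, \dots, M_l) = \sum_i \hth \mf{a}_i - \hth(\sum_i \mf{a}_i) = 0$.

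Applying the proposition, and using the identification $\bigl(\tens_{i \in I} A/\mf{a}_i\bigr) \tens A \simeq A/\sum_{i \in I} \mf{a}_i$ for the terms $M_I \tens E_{\bar I}$, the complex $\comp{K} = \tens_i \comp{K}_i$ becomes, in degree $p$, the sum $\oplus_{|I|=p} A/\sum_{i \in I}\mf{a}_i$, and it is a right resolution of $\tens_i N_i = \mf{a}_1 \tens \cdots \tens \mf{a}_l$. The augmentation $\tens_i \mf{a}_i \to A$ is the tensor product of the inclusions $\mf{a}_i \hookrightarrow A$, that is the multiplication map $a_1 \tens \cdots \tens a_l \mapsto a_1 \cdots a_l$, whose image is the product ideal $\prod_i \mf{a}_i$. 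Exactness of $0 \to \tens_i \mf{a}_i \to A \to \oplus_i A/\mf{a}_i \to \cdots$ at the term $A$ yields both assertions of the corollary at once: on the one hand the augmentation is injective with image $\prod_i \mf{a}_i$; on the other hand $\ker(A \to \oplus_i A/\mf{a}_i) = \cap_i \mf{a}_i$. Since image and kernel coincide, $\prod_i \mf{a}_i = \cap_i \mf{a}_i$, which is the first claim. Truncating the resolution --- dropping $\tens_i \mf{a}_i$ together with the degree-zero term $A$ and reindexing so that $\check{\mc{K}}^p := K^{p+1}$ --- produces a complex with $\ker(\check{\mc{K}}^0 \to \check{\mc{K}}^1) = \mathrm{im}(A \to \oplus_i A/\mf{a}_i) \simeq A/\cap_i \mf{a}_i$ and with cohomology vanishing in positive degrees inherited from $\comp{K}$, i.e. the desired right resolution $A/\cap_i \mf{a}_i \simeq \comp{\check{\mc{K}}}$.

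It remains to match the differentials and signs: the differential of a tensor product of the two-term complexes $\comp{K}_i$ carries exactly the Koszul sign $\epsilon_{i,J} = (-1)^{\sharp\{h \in J \mid h < i\}}$ of $\partial^p$, which I would verify by unwinding the standard sign rule for the differential of a tensor product of complexes. Since the homological substance has already been isolated in Lemma \ref{excess} and in the preceding proposition, the only genuine content here is the identification of the abstract augmentation map with multiplication --- this is what converts the statement ``$\comp{K}$ is a resolution of $\tens_i N_i$'' into the concrete equality $\prod_i \mf{a}_i = \cap_i \mf{a}_i$ --- together with the bookkeeping of the degree shift. I expect the sign verification in $\partial^p$ to be the main, though minor, obstacle, the rest being a formal translation of the proposition.
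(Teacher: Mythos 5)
Your proof is correct and is essentially the paper's own argument: the paper likewise deduces the corollary by applying the preceding proposition to the complexes $A \to A/\mf{a}_i \to 0$ (so $E_i = A$, $M_i = A/\mf{a}_i$, $N_i = \mf{a}_i$), observing that hypothesis (2) is exactly $c(A/\mf{a}_1, \dots, A/\mf{a}_l)=0$ since height equals codimension over a Cohen--Macaulay ring. The details you spell out --- identifying the augmentation $\tens_i \mf{a}_i \to A$ with the multiplication map to get $\cap_i \mf{a}_i = \prod_i \mf{a}_i$, the truncation and reindexing, and the Koszul sign check --- are precisely what the paper leaves implicit in its one-line proof.
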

\begin{proof}The result follows by applying the preceding proposition to the complexes 
$ A \rTo A/\mf{a}_i \rTo 0$, keeping into account that the second hypothesis is exactly the condition $c(A/\mf{a}_1, \dots, A/\mf{a}_l)=0$, since for a Cohen-Macauley ring  the height $\hth(I)$ of an ideal $I$ is the codimension of the module 
$A/I$.
\end{proof}
\section{Danila's lemma for morphisms}
\label{app: Danila}
We extend here Danila's lemma \ref{Danila} to morphisms. Let $I$, $J$ be two finite sets with 
a transitive action of the finite group $G$; let moreover $M$, $N$ be $R[G]$-modules with decompositions $M = \oplus_{i \in I}M_i$, 
$N =\oplus_{j \in J} N_j$ compatible with the $G$-action, in the sense of subsection
\ref{preliminary}.  Let $f :M \rTo N$ a map of $R[G]$-modules;
 writing 
$f$ as a sum $f= \oplus_{i,j} f_{i,j}$, the $G$-equivariance is equivalent to 
$f_{g(i), g(j)} \circ g = g \circ f_{i,j}$ for all $i \in I$, $j \in J$, $g \in G$. Danila's lemma allows to identify the map of invariants $f^G : M^G \rTo N^G$ with a map: 
$$ f^G : M_{i_0} ^{\Stab_G(i_0)} \rTo N_{j_0} ^{\Stab_G(j_0)} \;.$$In what follows we will always assume that $f \neq 0$ and that $(i_0, j_0)$ is chosen in a way that 
$ f_{i_0,j_0} \neq 0$. Making identifications\footnote{Here and in the following, if $K$ is a subgroup of $G$, not necessarily normal, we will consider $G/K$ just as the set of (left) cosets, with the natural (left) $G$-action}
 $I \simeq G/\Stab_G(i_0)$, $J \simeq 
G/\Stab_G(j_0)$ we have: 
\begin{lemmaa}\label{Danilamorphism}The map $f^G: M_{i_0} ^{\Stab_G(i_0)} \rTo N_{j_0} ^{\Stab_G(j_0)} $ is given by: 
\begin{equation}\label{eq: Danilamorphism} f^G(u) = \sum_{[g] \in G/ \Stab(i_0)} f_{g(i_0),j_0}(gu) \;.\end{equation}
\end{lemmaa}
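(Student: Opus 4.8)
The plan is to make explicit the inverse of the isomorphism supplied by Danila's lemma \ref{Danila} and then to chase the element $u$ through the composite defining $f^G$. First I would describe the inverse of the isomorphism $\mathrm{pr}_{i_0}\colon M^G \rTo M_{i_0}^{\Stab_G(i_0)}$. Writing a $G$-invariant element as $m = \sum_{i \in I} m_i$, invariance forces $m_{g(i_0)} = g\, m_{i_0}$ for every $g$, so $m$ is determined by $m_{i_0}$, and $m_{i_0}$ is $\Stab_G(i_0)$-invariant. Conversely, given $u \in M_{i_0}^{\Stab_G(i_0)}$, the recipe $m_{g(i_0)} := g u$ is well defined: if $g(i_0) = g'(i_0)$ then $s := g^{-1}g' \in \Stab_G(i_0)$ and $g'u = g s u = g u$. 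Hence the inverse of the Danila isomorphism is the \emph{spreading} map
$$ u \longmapsto \tilde{u} := \sum_{[g] \in G/\Stab_G(i_0)} g u \in M^G, $$
which carries no normalising factor and satisfies $\mathrm{pr}_{i_0}(\tilde u) = u$ and $h\tilde u = \tilde u$ for all $h \in G$ (by reindexing the cosets).

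With this in hand the computation is immediate. Under the identifications $M^G \simeq M_{i_0}^{\Stab_G(i_0)}$ and $N^G \simeq N_{j_0}^{\Stab_G(j_0)}$, the map $f^G$ sends $u$ to $\tilde u$, applies $f$, and projects onto the summand $N_{j_0}$; the $G$-equivariance of $f$ ensures $f(\tilde u) \in N^G$, so the final projection lands in $N_{j_0}^{\Stab_G(j_0)}$. Since $g u$ lies in the summand $M_{g(i_0)}$, the $N_{j_0}$-component of $f(gu)$ is exactly $f_{g(i_0),j_0}(g u)$; summing over cosets gives
$$ f^G(u) = \mathrm{pr}_{j_0}\Big( f\big( \tilde u \big) \Big) = \sum_{[g] \in G/\Stab_G(i_0)} f_{g(i_0),j_0}(g u), $$
which is precisely formula (\ref{eq: Danilamorphism}).

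There is no genuine obstacle in this argument; it is a direct unwinding of Danila's lemma \ref{Danila}. The only points demanding care are the explicit description of the inverse isomorphism — in particular checking that the correct expression is the bare sum over cosets and not an average with a factor $1/|\Stab_G(i_0)|$ — and the bookkeeping showing that $gu$ depends only on the coset $[g]$, so that both $\tilde u$ and the displayed sum are well defined. For completeness I would also note that the equivariance relation $f_{g(i),g(j)} \circ g = g \circ f_{i,j}$ makes the right-hand side manifestly $\Stab_G(j_0)$-invariant and independent of the chosen representatives, although this is automatic once it is recognised as $\mathrm{pr}_{j_0} f(\tilde u)$ with $f(\tilde u)$ already $G$-invariant.
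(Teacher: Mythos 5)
Your proof is correct and follows essentially the same route as the paper: both invert Danila's isomorphism via the spreading map $u \mapsto \sum_{[g] \in G/\Stab_G(i_0)} gu$, apply $f$, and read off the $N_{j_0}$-component, with your version merely streamlining the final step by noting that $f(\tilde u) \in N^G$ makes the $\Stab_G(j_0)$-invariance of the projection automatic, where the paper checks it by hand after writing $f(\tilde u) = \bigoplus_{[h]} hw$.
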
\begin{proof}The identification $M_{i_0}^{\Stab_{G}(i_0)} \simeq M^G$ is given by $u \rMapsto \sum_{[g] \in G/\Stab_G(i_0)} gu$. Therefore, applying $f$ and using $G$-equivariance, we get: 
$$ f\Big(\sum_{[g] \in G/\Stab_G(i_0)} gu \Big) = \bigoplus_{[h] \in G/\Stab_G(j_0)} h \sum_{g \in G/\Stab_G(i_0)}
f_{h^{-1}g(i_0), j_0} (h^{-1}gu) = \bigoplus_{[h] \in G/\Stab_G(j_0)} 
hw
\;,$$since the the sum $w=\sum_{[g] \in G/\Stab_G(i_0)}
f_{h^{-1}g(i_0), j_0} (h^{-1}gu) $ does not depend on $h$, because the map $[g] {\rTo} [h^{-1}g]$ is a riparametrization of the set $G/\Stab_G(i_0)$. We have now only to prove that $w$ is $\Stab_G(j_0)$-invariant. But the only effect of $h \in \Stab_G(j_0)$ on $w$ is again to permute the order of the 
summands in the expression of $w$: hence $w$ is $\Stab_G(j_0)$-invariant and is identified with $f^G(u)$.
\end{proof}
Let now $K_1,K_2$ subgroups of $G$. The group $K_1$ acts on the cosets $G/K_2$. Denote with $K_1/K_2$ the orbit of $[e]$ in $G/K_2$ for the $K_1$-action\footnote{Remark that the orbit $K_1/K_2$ is in bijection with the cosets $K_1/K_1\cap K_2$}. It is clear that $\Stab_{K_1}[e] \simeq K_1 \cap K_2$. 
\begin{lemmaa}\label{DMS}Consider the $G$-equivariant morphism $f : M   
\rTo   N$. Suppose that $\Stab_G(j_0)$ decomposes in a direct product $\Stab_{G}(j_0) \simeq P \times Q$, with $Q$ acting trivially on $N_{j_0}$. Suppose moreover that $f_{g(i_0),j_0} =0 $ if $[g] \not \in \Stab_G(j_0)/ \Stab_G(i_0)$ and that $$M_{i_0}^{\Stab_G(i_0) \cap \Stab_G(j_0)} = M_{i_0}^{\Stab_G(i_0)}\;. $$ Then the map of invariants $f^G$ 
can be identified with $|Q|\cdot \tilde{f}^P$, where
$\tilde{f}$ is the $P$-equivariant map: 
$$ \tilde{f} : \bigoplus_{[g] \in P/ \Stab(i_0)} M_{g(i_0)} \rTo N_{j_0} \;.$$
\end{lemmaa}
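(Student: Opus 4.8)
The plan is to deduce the formula directly from Danila's morphism lemma (Lemma \ref{Danilamorphism}) and then to carry out the group-theoretic bookkeeping that separates the factor $P$ from the factor $Q$. First I would write, for $u\in M_{i_0}^{\Stab_G(i_0)}$, the explicit expression $f^G(u)=\sum_{[g]\in G/\Stab_G(i_0)}f_{g(i_0),j_0}(gu)$ furnished by Lemma \ref{Danilamorphism}. The vanishing hypothesis $f_{g(i_0),j_0}=0$ for $[g]\notin\Stab_G(j_0)/\Stab_G(i_0)$ then kills every term except those indexed by the $\Stab_G(j_0)$-orbit of $[e]$, so the sum runs only over classes of elements $t\in\Stab_G(j_0)=P\times Q$.

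Next I would simplify a surviving term. Writing $t=pq$ with $p\in P$, $q\in Q$ and using the equivariance relation $f_{g(i),g(j)}\circ g=g\circ f_{i,j}$ together with $p(j_0)=q(j_0)=j_0$ and the triviality of $Q$ on $N_{j_0}$, I obtain $f_{t(i_0),j_0}(tu)=p\cdot f_{i_0,j_0}(u)$; thus each term depends only on the $P$-component of $t$. Setting $v:=f_{i_0,j_0}(u)$, equivariance under $s\in\Stab_G(i_0)\cap\Stab_G(j_0)$ (which fixes both $i_0$ and $j_0$ and satisfies $su=u$) shows $v\in N_{j_0}^{\Stab_G(i_0)\cap\Stab_G(j_0)}$, and with the triviality of $Q$ this means the $P$-image of $\Stab_G(i_0)\cap\Stab_G(j_0)$ already fixes $v$.

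The decisive step is to reorganise the sum along the projection $\pi_P:P\times Q\to P$. Since the $Q$-component is immaterial and the $P$-image of $\Stab_G(i_0)\cap\Stab_G(j_0)$ fixes $v$, the surviving terms group into fibres over the classes $[p]\in P/(P\cap\Stab_G(i_0))$, collapsing the sum to $|Q|\sum_{[p]\in P/(P\cap\Stab_G(i_0))}p\cdot v$. I would then recognise this last sum, via Lemma \ref{Danilamorphism} applied to the $P$-equivariant map $\tilde f$, as exactly $\tilde f^P(u)$, giving $f^G=|Q|\cdot\tilde f^P$. For the statement to typecheck one finally identifies the two domains and the two targets: the targets agree because $N_{j_0}^{\Stab_G(j_0)}=N_{j_0}^P$ by $Q$-triviality, and the domains agree because the hypothesis $M_{i_0}^{\Stab_G(i_0)\cap\Stab_G(j_0)}=M_{i_0}^{\Stab_G(i_0)}$ matches the source $M_{i_0}^{\Stab_G(i_0)}$ of $f^G$ with the source $M_{i_0}^{P\cap\Stab_G(i_0)}$ of $\tilde f^P$.

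The hard part will be the multiplicity count in the decisive step: ensuring that the reorganised sum carries the factor exactly $|Q|$, and that the domain identification is exact, both come down to the group-theoretic compatibility $\Stab_G(i_0)\cap\Stab_G(j_0)=P\cap\Stab_G(i_0)$ (equivalently $\Stab_G(i_0)\cap\Stab_G(j_0)\subseteq P$, i.e. $\Stab_G(i_0)\cap Q=\{e\}$). I would verify this from the product structure $\Stab_G(j_0)=P\times Q$ and the triviality of $Q$, since without the containment an extra index $[\Stab_G(i_0)\cap\Stab_G(j_0):P\cap\Stab_G(i_0)]$ would spoil the constant; it is precisely this containment that makes the invariance hypothesis identify $M_{i_0}^{P\cap\Stab_G(i_0)}$ with $M_{i_0}^{\Stab_G(i_0)}$.
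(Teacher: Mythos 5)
Your first three steps reproduce the paper's own argument almost exactly: the paper likewise starts from Lemma \ref{Danilamorphism}, uses the vanishing hypothesis to replace $f^G$ by the $\Stab_G(j_0)$-invariants of the restriction of $f$ to the orbit $\Stab_G(j_0)/\Stab_G(i_0)$ (this is where the module hypothesis identifies the two domains), and then computes those invariants in two stages, the triviality of the $Q$-action on $N_{j_0}$ producing the factor $|Q|$ and the remaining sum being recognized as $\tilde f^P$ via Remark \ref{rmk: dmss}. Up to that point the two proofs coincide.

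The gap is in your last paragraph. You correctly isolate the condition on which both the constant and the domain identification hinge, namely $\Stab_G(i_0)\cap\Stab_G(j_0)\subseteq P$, but your plan to \emph{derive} it ``from the product structure $\Stab_G(j_0)=P\times Q$ and the triviality of $Q$'' cannot succeed: the containment is not a consequence of the stated hypotheses. For instance, let $G=\mathbb{Z}/2$ act trivially on $I=\{i_0\}$, $J=\{j_0\}$ and on $M=N=\mathbb{C}$, let $f=\mathrm{id}$, and take $P=\{e\}$, $Q=G$; every hypothesis of the lemma holds, yet $f^G=\mathrm{id}$ while $|Q|\cdot\tilde f^P=2\,\mathrm{id}$. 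In general one gets $f^G=\bigl(|Q|\,|P\cap\Stab_G(i_0)|/|\Stab_G(i_0)\cap\Stab_G(j_0)|\bigr)\cdot\tilde f^P$, so the containment must be imposed (or checked in each application) rather than deduced. Also, your parenthetical ``i.e.\ $\Stab_G(i_0)\cap Q=\{e\}$'' is not equivalent to it: the correct reformulation is that the $Q$-component of every element of $\Stab_G(i_0)\cap\Stab_G(j_0)$ is trivial, which is stronger, since an element of the intersection may have nontrivial components in both factors. To be fair, your diagnosis is sharper than the paper's exposition: the paper's proof silently assumes the same containment when it re-indexes the orbit as $\Stab_G(j_0)/P\times P/\Stab_G(i_0)$ (the cardinalities only match when $\Stab_G(i_0)\cap\Stab_G(j_0)\subseteq P$) and when it writes $M_{i_0}^{\Stab_G(i_0)\cap P}=M_{i_0}^{\Stab_G(i_0)}$; the containment does hold in the one place the lemma is used, Proposition \ref{pps: diffinv}, where it should really be verified explicitly. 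So your proof becomes complete only if you add this containment as an explicit hypothesis or verify it at the point of application; as written, the final verification step would fail.
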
\begin{proof}By the hypothesis on $f$ and by lemma \ref{Danilamorphism}, it is easy to see that $f^G$ coincides 
with the map $(f^{\prime})^{\Stab_G(j_0)}$ of $\Stab_G(j_0)$-invariants of the $\Stab_G(j_0)$-equivariant restriction: 
$$ f^{\prime}
\colon \bigoplus_{[g] \in \Stab_G(j_0)/ \Stab_G(i_0)} M_{g(i_0)} \rTo N_{j_0} \;.$$
Therefore it is sufficient to identify $(f^{\prime})^{\Stab_{G}(j_0)}$ with $|Q|\cdot \tilde{f}^P$. Decomposing the expression of $f^{\prime}$ according to the decomposition of $\Stab_G(j_0) \simeq P \times Q $, we can rewrite: $$
f^{\prime}
\colon \bigoplus_{[k] \in \Stab_G(j_0)/ P} \bigoplus_{[g] \in 
 P/ \Stab_G(i_0)} M_{kg(i_0)} \rTo N_{j_0} \;.$$Now, since the $P$ and $Q$ actions commute, we can take first $Q$-invariants, and then $P$-invariants. 
Remark that $Q$ acts freely on $\Stab_G(j_0)/ P \simeq Q$ and therefore permutes
the components of the  direct sum indexed by $\Stab_G(j_0)/ P$.
Therefore: $(f^{\prime})^{\Stab_G(j_0)}$ is given by 
\begin{align*} (f^{\prime})^{\Stab_G(j_0)}(u) = \; & ((f^{\prime})^Q)^P(u) =  \sum_{k \in Q} k 
\sum_{[g] \in  P/ \Stab_G(i_0)} f_{g(i_0),j_0} (gu) \\
= \; & |Q| \sum_{[g] \in  P/ \Stab_G(i_0)} f_{g(i_0),j_0} (gu) =  |Q| \cdot \tilde{f}^P(u) 
 \end{align*}since $Q$ acts trivially on $N_{j_0}$, where $u \in 
M^{\Stab_G(i_0) \cap P}_{i_0} = M_{i_0}^{\Stab_G(i_0)}$. \end{proof}
\begin{remarka}\label{rmk: dmss}In the notations of lemma \ref{Danilamorphism}, if $J=\{j_0\}$, and hence $\Stab_G(j_0)=G$, we can rewrite (\ref{eq: Danilamorphism})~as: 
$$ f^G(u) = \sum_{g \in G/\Stab_G(i_0)} g f_{i_0,j_0}(u) \;.$$This 
applies to the map $\tilde{f}$ in lemma \ref{DMS}.
\end{remarka}
\section{Permutations of factors in a multitor}\label{app: perm}
\begin{notata}\label{not: torpart}Let $S_0, \dots, S_h$ be  a partition of the set $\{1, \dots, l\}$. 
Let $X$ be an algebraic variety and  $E_i$ coherent sheaves on $X$, $i=0, \dots,h$. We will denote with $\Tor^{S_0, \dots, S_h}_q(E_0, \dots, E_h)$ the multitor: 
$ \Tor_q^{S_0, \dots, S_h}(E_0, \dots, E_h):= \Tor_q(F_1, \dots, F_l)$ with
$F_j=E_i$ if $j \in S_i$.  
\end{notata}
 Let $Y$ be a smooth subvariety of codimension $r$ of a smooth variety $X$ and let $i: Y \rInto X$ be the closed immersion. Denote with $N_{Y/X}$ the normal bundle
of $Y$ in $X$.
Consider the line bundles $L, E_1, \dots, E_h$  on $X$. Let 
$L_Y$ be the restriction of the line bundle $L$ to the subvariety $Y$. 
It is clear that the multitor $\Tor_q^{S_0, \dots, S_h}(L_Y, E_1, \dots, E_h)$, where $\{ S_i \}_{i}$ is a partition of $\{1, \dots, l\}$, does not depend on the order of the factors and it is always 
isomorphic  to
$ \Tor_q^{|S_0|}(L_Y) \tens \bigotimes_{i=1}^h E_i^{\tens |S_i|}$, in the notations of \ref{not: puremultitor}.
However a permutation of factors in the multitor acts as an 
automorphism, which we are interested in studying. 

\begin{remarka}\label{rmka:auto}In all generality, if $F_i$ are coherent sheaves on a smooth variety $X$ and
if $\comp{R}_i$ are locally free resolutions of the sheaves $F_i$, then the action of a consecutive transposition $\tau_{j,j+1}$:
$$ \widetilde{\tau_{j,j+1}}: \Tor_{q}(F_1, \dots,F_j,F_{j+1}\dots, F_l) \rTo 
\Tor_q(F_1, \dots F_{j+1},F_j, \dots, F_l)$$is given by the action induced in 
$(-q)$-cohomology by the morphism of complexes:
$$ \widetilde{\tau_{j,j+1}}: \comp{R}_1 \tens \cdots \tens \comp{R}_j \tens \comp{R}_{j+1} \tens \cdots \tens
\comp{R}_l \rTo \comp{R}_1 \tens \cdots \tens \comp{R}_{j+1} \tens \comp{R}_{j} 
\tens \cdots \tens
\comp{R}_l$$defined as $\widetilde{\tau_{j,j+1}}(u_1 \tens \cdots \tens 
u_{j} \tens u_{j+1} \tens \cdots \tens u_l) = (-1)^{h_j h_{j+1}}u_1 \tens \cdots \tens 
u_{j+1} \tens u_{j} \tens \cdots \tens u_l$, where~$u_m \in R^{h_m}_m$. 
\end{remarka}
\subsection{Action on a pure multitor.}
We  suppose first that $S_i = \emptyset$, for $i=1, \dots, h$: we are in the case of the "pure" multitor 
$ \Tor_q^l(L_Y):= \Tor_{q}^{\{1, \dots,l \}}(L_Y)$.
The following lemmas explain how $\perm_l$ acts permutating the factors on a multitor of this kind and what are the invariants
for this action. 
\begin{notat}\label{not: repstandard(i)}
Let $R_l \simeq \mbb{C}^l$ and $\rho_l$ be the natural\footnote{This means that $\perm_l$ acts permutating the vectors of the canonical basis in $R_l$} and the 
standard representations of $\perm_l$, respectively. Let $e_i$ the canonical basis of $R_l$. Denote with $R_{l-1}(i)$, $1 \leq i \leq l$, the vector space
${R_{l-1}(i):=R_l/<e_i>}\simeq \sum_{1 \leq j \leq l, j\neq i} \mbb{C}e_j$. 
It is isomorphic to the natural representation of $\perm_{l-1}$ and it splits in $R_{l-1}(i)~=~1~\oplus~\rho_{l-1}(i)$, where $\rho_{l-1}(i)$ is the standard representation of $\perm_{l-1}$, embedded in $R_{l-1}(i)$. We will indicate with 
$\sigma_{l}$ and $\sigma_{l-1}(i)$ the elements $\sigma_l = \sum_{i=1}^l e_i$, $\sigma_{l-1}(i) = \sum_{1 \leq j \leq l, j \neq i} e_j$; we will call them \emph{the canonical elements of $R_{l}$ and $R_{l-1}(i)$}, respectively. They are invariants for the action of $\perm_l$, $\perm_{l-1}$, respectively.
\end{notat}
\begin{lemmaa}\label{torinv}We have an isomorphism of 
$\perm_l$-representations\footnote{the symmetric group acts here on both factors: on the first via the standard representation, on the second permuting the factors of $L_Y^{\tens l}$.}: \begin{equation} \label{torinvaequa}
\Tor^l_q(L_Y) \simeq
	\Lambda^q(N^*_{Y/X} \tens \rho_l) \tens L_Y^{\tens ^l}\;. \end{equation}
	\end{lemmaa}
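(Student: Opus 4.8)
The plan is to compute the pure multitor $\Tor^l_q(L_Y)$ by means of a single Koszul resolution and to make every identification $\perm_l$-equivariant, so that the symmetry action can simply be read off at the end. Since $Y$ is smooth of codimension $r$ in the smooth variety $X$, the closed immersion $i : Y \rInto X$ is regular; locally $Y$ is cut out by a regular sequence, and $\FS_Y$ admits the Koszul resolution $K^{\bullet} = \Lambda^{\bullet}\mc{E} \rTo \FS_Y$, where $\mc{E}$ is locally free of rank $r$ on $X$ with $\mc{E}\tens_{\FS_X}\FS_Y \simeq N^*_{Y/X}$ and the differential is contraction with the cosection $s : \mc{E} \rTo \FS_X$ defining $Y$. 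Tensoring by $L$ produces a locally free resolution $L\tens K^{\bullet}$ of $L_Y$ on $X$.

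First I would resolve each of the $l$ factors by $L\tens K^{\bullet}$ and form the total tensor product, which computes the multitor. By Remark \ref{rmka:auto} the $\perm_l$-action permuting the factors of $\Tor^l_q(L_Y)$ is induced on resolutions by permuting the $l$ copies of $L\tens K^{\bullet}$ together with the Koszul signs. Now $\bigotimes_{j=1}^l (L\tens K^{\bullet}) \simeq L^{\tens l}\tens \Lambda^{\bullet}(\mc{E}\tens R_l)$ is again a Koszul complex: it is the Koszul complex of the cosection $\tilde{s}$ on $\mc{E}\tens R_l$ whose restriction to each of the $l$ summands is $s$, where $R_l = \mbb{C}^l$ carries the natural representation of $\perm_l$. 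Under the $\perm_l$-equivariant self-duality of $R_l$, the cosection $\tilde{s}$ is carried by the invariant line spanned by the canonical element $\sigma_l$ of notation \ref{not: repstandard(i)}; this whole identification is $\perm_l$-equivariant.

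The key step is then the $\perm_l$-equivariant splitting $R_l = \langle\sigma_l\rangle \oplus \rho_l \simeq 1\oplus\rho_l$ of notation \ref{not: repstandard(i)}. Under it $\mc{E}\tens R_l \simeq \mc{E}\oplus(\mc{E}\tens\rho_l)$, and $\tilde{s}$ is supported entirely in the first summand, where it equals $s$. Hence the Koszul complex factors as a tensor product $\Lambda^{\bullet}(\mc{E}\tens R_l) \simeq K(s)\tens \Lambda^{\bullet}(\mc{E}\tens\rho_l)$, the second factor carrying the \emph{zero} differential. Taking homology and using that $K(s)$ is a resolution of $\FS_Y$, only its degree-$0$ homology survives the Künneth computation; all contributions involving higher homology of $K(s)$ vanish, leaving $L^{\tens l}\tens\FS_Y\tens\Lambda^q(\mc{E}\tens\rho_l)$. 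Restricting the locally free sheaves to $Y$ yields $\Lambda^q(N^*_{Y/X}\tens\rho_l)\tens L_Y^{\tens l}$, which is the asserted isomorphism (\ref{torinvaequa}). The $\perm_l$-action is now transparent: it acts on $N^*_{Y/X}\tens\rho_l$ only through the standard representation $\rho_l$ and on $L_Y^{\tens l}$ by permutation of the tensor factors, exactly as recorded in the footnote to the statement.

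I expect the main obstacle to be bookkeeping rather than conceptual. Two points need care: first, one must check that the local Koszul identifications are canonical in $N^*_{Y/X}$, so that they glue to the global isomorphism (equivalently, one may phrase the argument through the intrinsic self-intersection isomorphism $\B{L}i^*i_*\FS_Y \simeq \bigoplus_q \Lambda^q N^*_{Y/X}[q]$ and its $L$-twisted iteration); second, one must keep the Koszul and Künneth signs consistent, so that the action surviving on the quotient $\rho_l$ is genuinely the standard representation and the action on $L_Y^{\tens l}$ is the bare permutation of factors. Both are routine once the equivariant splitting $R_l = 1\oplus\rho_l$ is in place, which is precisely what forces the standard representation $\rho_l$, rather than the natural representation $R_l$, to appear in the final answer.
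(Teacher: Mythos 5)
Your proof is correct and follows essentially the same route as the paper's: resolve each factor by the ($L$-twisted) Koszul complex, identify the $l$-fold tensor product with the Koszul complex of $s \tens \sigma_l$ on $F \tens R_l$, use the equivariant splitting $R_l \simeq 1 \oplus \rho_l$ to factor it as $K(F,s)$ tensored with a zero-differential Koszul complex on $F \tens \rho_l$, and conclude by K\"unneth together with gluing of the local identifications. The only cosmetic differences are that you phrase the local model via a cosection/regular immersion where the paper uses a transverse section of a bundle $F$ with $F\trest_Y \simeq N_{Y/X}$, that the paper defers the twist by $L$ to the last line, and that your parenthetical reformulation via $\B{L}i^*i_*\FS_Y$ is not in the paper, which simply asserts that the local isomorphisms are independent of the choice of $(F,s)$ and glue.
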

	\begin{proof}Suppose that $L$ is trivial.
	We first verify the statement locally. Suppose $Y$ is the scheme of zeros of a section $s$ 
	of a vector bundle $F$ of rank $r$, transverse to the zero section. 
	Consider the Koszul resolution
	$\comp{K}:= \comp{K}(F,s)$ of the structural sheaf $\FS_Y$. 
	The Koszul complex $\comp{K}(F \tens R_l, s \tens \sigma_l)$ is $\perm_l$-isomorphic to the 
	tensor product $\comp{K} \tens \cdots \tens \comp{K}$ and 
	consequently its $(-q)$-cohomology 
	is $\perm_l$-isomorphic to 
	$\Tor^l_{q}(\FS_Y)$. 
	On the other hand, consider
	the Koszul complex $\comp{K}(F \tens \rho_l, 0)$; 
	we have the isomorphism of $\perm_l$-representations:
	$$ \comp{K}(F \tens R_l, s \tens \sigma_l) \rTo ^{\simeq} \comp{K}(F, s) \tens \comp{K}(F \tens 
	\rho_l,0)  
	$$where $\perm_l$ acts trivially on $\comp{K} (F ,s)$. 
Since $\comp{K}(F \tens 
	\rho_l,0)$ is a complex of locally free sheaves with zero differentials, we have: 
\begin{equation}\label{eq: pretorrepr} H^{-q}(\comp{K}(F \tens R_l, s \tens \sigma_l)) \simeq \bigoplus_{i+j=-q}
H^i(\comp{K} (F ,s)) \tens K^{j}(F \tens 
	\rho_l,0) \;.\end{equation}
Consequently we obtain an isomorphism of 
	$\perm_l$- representations:
	\begin{equation} 
\label{eq:torrepr}
\Tor^l_q(\FS_Y) \simeq \FS_Y \tens \Lambda^q(F^* \tens \rho_l) 
	\simeq \Lambda^q(N_{Y/X}^* \tens \rho_l) \end{equation}because $F\trest_Y \simeq N_{Y/X}$.  
It is easy to verify that the isomorphism (\ref{eq:torrepr}) does not depend
	on the choice of the vector bundle $F$ and on the section $s$; 
moreover, the local isomorphisms glue 
together and allow to define the above isomorphism globally.	

If $L$ is not trivial the lemma is an immediate consequence of what we just proved, together with the fact that a locally free resolution of $L_Y$ is provided by the complex: $\comp{K} \tens L$. 
\end{proof}Let $i \in \{1, \dots, l\}$; denote with $\overline{\{ i\}}$ the complementary of $\{ i \}$ in $\{1, \dots,l\}$.
\begin{crla}\label{naturalissimo}
The natural map:
\begin{equation}\label{naturalissima} 
\Tor_q^{\overline{ \{ i\}}, \{ i\}}(L_Y,L)  \simeq \Tor_q(\underset{i-1-{\rm times}}{\underbrace{L_Y, \dots, L_Y}}, L, \underset{l+1-i-{\rm times}}{\underbrace{L_Y, \dots, L_Y}}) \rTo \Tor_q(\underset{l+1-{\rm times}}{\underbrace{L_Y, \dots, L_Y}}) = \Tor_q^{l+1}(L_Y) \;,\end{equation}induced by the restriction $L \rTo L_Y$, can be identified to the natural map: 
$$ \gamma_i: \Lambda^q(N_{Y/X}^* \tens \rho_l(i)) \tens L_Y^{\tens l+1} \rInto \Lambda^q(N_{Y/X}^* \tens \rho_{l+1}) \tens L_Y^{\tens l+1}$$ induced by the inclusion $\rho_l(i) \rInto \rho_{l+1}$.
\end{crla}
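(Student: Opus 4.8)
The plan is to compute both multitors in \eqref{naturalissima} by the very Koszul resolutions used in the proof of Lemma~\ref{torinv}, and to track the morphism induced by the restriction $L \rTo L_Y$ through that computation. As in Lemma~\ref{torinv}, I would first reduce to the case $L = \FS_X$: the factor $L_Y^{\tens l+1}$ appears identically on both sides of \eqref{naturalissima} and of $\gamma_i$, and the restriction $L \rTo L_Y$ is simply $\FS_X \rTo \FS_Y$ tensored with line bundles, so it suffices to identify the map $\Tor_q^{\overline{\{i\}},\{i\}}(\FS_Y,\FS_X) \rTo \Tor_q^{l+1}(\FS_Y)$. Working locally, I realize $Y$ as the transverse zero scheme of a section $s$ of a rank-$r$ bundle $F$ with $F\trest_Y \simeq N_{Y/X}$, resolving each copy of $\FS_Y$ by the Koszul complex $\comp{K} = \comp{K}(F,s)$ and the single copy of $\FS_X$ in the $i$-th slot by itself.

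The next step is to identify the source complex. Since $\FS_X$ is its own resolution and contributes only in degree $0$, the source multitor is computed by $\bigotimes_{j\neq i}\comp{K}(F,s) \simeq \comp{K}(F\tens R^{(i)}, s\tens\sigma^{(i)})$, where $R^{(i)} := \bigoplus_{j\neq i}\mbb{C}e_j \subseteq R_{l+1}$ and $\sigma^{(i)} := \sum_{j\neq i}e_j$, while the target is $\bigotimes_{j=1}^{l+1}\comp{K}(F,s) \simeq \comp{K}(F\tens R_{l+1}, s\tens\sigma_{l+1})$. The restriction $\FS_X \rTo \FS_Y$ lifts to the chain map equal to the identity on $\Lambda^0 = \FS_X$ in the $i$-th slot; writing the target as $\Lambda^{\bullet}(F^*\tens R_{l+1}^*)$ with differential $\iota_{s\tens\sigma_{l+1}}$, this is exactly the inclusion of the subcomplex of elements of $\Lambda^0$ in the $e_i$-direction. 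Because $\sigma_{l+1} = \sigma^{(i)} + e_i$ and $\iota_{s\tens e_i}$ kills that $\Lambda^0$-part, this subcomplex is precisely $\comp{K}(F\tens R^{(i)}, s\tens\sigma^{(i)})$, and the chain map is the one induced by the inclusion $R^{(i)} \hookrightarrow R_{l+1}$.

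Finally I pass to cohomology using the decomposition $\comp{K}(F\tens R_m, s\tens\sigma_m) \simeq \comp{K}(F,s)\tens\comp{K}(F\tens\rho_m,0)$ established in the proof of Lemma~\ref{torinv}, applied with $m=l$ on the coordinates $\neq i$ and with $m=l+1$. Setting $\rho^{(i)} := \rho_{l+1}\cap R^{(i)}$ makes the two splittings compatible and realizes $\rho^{(i)}$ as the summand $\rho_l(i)$ of $\rho_{l+1}\trest_{\perm_l}$ of Notation~\ref{not: repstandard(i)}, with $\rho^{(i)}\hookrightarrow\rho_{l+1}$ the inclusion appearing in the statement. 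As $\comp{K}(F\tens\rho_m,0)$ has zero differential, taking $(-q)$-cohomology yields $\FS_Y \tens \Lambda^q(F^*\tens\rho_m)$ on each side; the map on the $\FS_Y = H^0(\comp{K}(F,s))$ factor is the identity, since both augmentations send $\Lambda^0 = \FS_X$ onto $\FS_Y$ compatibly, and the map on the $\Lambda^q$-factor is $\Lambda^q\bigl(\id_{F^*}\tens(\rho^{(i)}\hookrightarrow\rho_{l+1})\bigr)$. Restricting to $Y$ identifies $F^*$ with $N^*_{Y/X}$ and recovers $\gamma_i$. The main point I would treat with care is this last identification on cohomology: namely that the $\sigma^{(i)}$-versus-$\sigma_{l+1}$ discrepancy between the two chosen splittings affects only the $H^0$-factor, on which the induced map is the identity, so that the whole content of the morphism is carried by $\Lambda^q$ of the inclusion $\rho_l(i)\hookrightarrow\rho_{l+1}$. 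Exactly as in Lemma~\ref{torinv}, the local isomorphisms are canonical and glue, yielding the global statement.
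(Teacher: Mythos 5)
Your proof is correct and takes essentially the same route as the paper's: reduce to trivial $L$, compute the source and target multitors by the Koszul complexes $\comp{K}(F \tens R_l(i), s \tens \sigma_l(i))$ and $\comp{K}(F \tens R_{l+1}, s \tens \sigma_{l+1})$, realize the map as the inclusion of complexes induced by $R_l(i) \rInto R_{l+1}$, and pass to cohomology via the splitting used in lemma \ref{torinv} to obtain $\gamma_i$. Your extra verification that the $\sigma_l(i)$-versus-$\sigma_{l+1}$ mismatch of splittings is harmless on cohomology (by choosing representatives with no component along the $\comp{K}(F,s)$-direction) is a detail the paper leaves implicit, not a different argument.
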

\begin{proof}
It is clear that it is sufficient to 
prove the statement for $L$ trivial. 
As in
the proof of lemma \ref{torinv}, we have: 
$ \Tor_q^{\overline{ \{ i\}}, \{ i\}}(\FS_Y, \FS_X) \simeq H^{-q}(\comp{K}(F \tens R_l(i), s \tens \sigma_l(i)))$, in notation \ref{not: repstandard(i)}. 
The natural map: 
$$ \Tor_q^{\overline{ \{ i\}}, \{ i\}}(\FS_Y, \FS_X) \simeq H^{-q}(\comp{K}(F \tens R_l(i), s \tens \sigma_l(i))) \rInto H^{-q}(\comp{K}(F \tens R_{l+1}, s \tens \sigma_{l+1})) \simeq \Tor_q^{l+1}(\FS_Y) $$
is induced by the inclusion $R_l(i) \rInto R_{l+1}$. This inclusion yields the 
inclusion $\rho_l(i) \rInto \rho_{l+1}$; hence the natural map above can be identified, by (\ref{eq: pretorrepr}) and (\ref{eq:torrepr}) to the inclusion: 
$$ \gamma_i: \Lambda^q(N^*_{Y/X} \tens \rho_l(i)) \simeq \FS_Y \tens \Lambda^q(F^* \tens \rho_l(i)) \rInto \FS_Y \tens \Lambda^q(F^* \tens \rho_{l+1}) \simeq  
\Lambda^q(N^*_{Y/X} \tens \rho_{l+1}) \;.$$   
\end{proof}
We come now to the computation of the $\perm_k$-invariants 
of the representation (\ref{torinvaequa}). 
\begin{lemmaa}\label{cinquediciotto}Let $V$ be a vector space of dimension $2$ and consider the $GL(V)\times \perm_k$-- representation $\Lambda^q(V \tens \rho_k)$. It has 
$\perm_k$-invariants if and only if $q=2h$, $\exists h \in \mbb{N}$; in this case the invariants are isomorphic, as a $GL(V)$-representation, to the Schur functor $S_{h,h}V \simeq (\Lambda^2 V)^{\tens h}$. 
\end{lemmaa}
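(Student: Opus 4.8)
The plan is to reduce the computation to the \emph{permutation} representation, where exterior powers are transparent, and then recover the invariants of $\Lambda^q(V\tens\rho_k)$ by a generating-function inversion. Recall (notation \ref{not: repstandard(i)}) that the natural representation $R_k\simeq\mbb{C}^k$ of $\perm_k$ splits as $R_k\simeq 1\oplus\rho_k$, so that $V\tens R_k\simeq V^{\oplus k}$, with $GL(V)$ acting diagonally and $\perm_k$ permuting the $k$ copies of $V$; this permutation action is far easier to control than the action on $V\tens\rho_k$. Writing $\Lambda_t(A):=\sum_i\Lambda^i(A)\,t^i$ for the total exterior power and using its multiplicativity $\Lambda_t(A\oplus B)=\Lambda_t(A)\,\Lambda_t(B)$ together with the exactness of $[-]^{\perm_k}$ in characteristic $0$, the splitting $V\tens R_k\simeq (V\tens 1)\oplus(V\tens\rho_k)$ would yield, as $GL(V)$-representations,
\[ \sum_q\big[\Lambda^q(V^{\oplus k})\big]^{\perm_k}t^q \;=\;\Lambda_t(V)\cdot\sum_q\big[\Lambda^q(V\tens\rho_k)\big]^{\perm_k}t^q, \]
where $\Lambda_t(V)=1+Vt+(\Lambda^2V)\,t^2$ since $\dim V=2$, and $\perm_k$ acts trivially on this factor.

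The first step is then to compute the left-hand side directly. I would decompose $\Lambda^q(V^{\oplus k})=\bigoplus_{i_1+\dots+i_k=q}\Lambda^{i_1}V\tens\cdots\tens\Lambda^{i_k}V$ with each $i_j\in\{0,1,2\}$, on which $\perm_k$ permutes the summands, and apply Danila's lemma \ref{Danila} orbit by orbit. An orbit is determined by the number $a$ of factors equal to $\Lambda^2V$ and the number $b$ of factors equal to $V$ (so $2a+b=q$), and its invariants equal the invariants of a fixed fibre $(\Lambda^2V)^{\tens a}\tens V^{\tens b}$ under the stabilizer $\perm_a\times\perm_b\times\perm_{k-a-b}$. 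The Koszul signs are decisive here: the factors $\Lambda^2V$ have even degree, so $\perm_a$ permutes them without sign and contributes all of $(\Lambda^2V)^{\tens a}$, whereas the factors $V$ have odd degree, so $\perm_b$ acts on $V^{\tens b}$ by permutation twisted by $\sgn$, whose invariants are exactly $\Lambda^bV$. Each orbit therefore contributes $(\Lambda^2V)^{\tens a}\tens\Lambda^bV$, which forces $b\le 2$; summing over $(a,b)$ with the finiteness constraint $a+b\le k$ gives the closed form
\[ \sum_q\big[\Lambda^q(V^{\oplus k})\big]^{\perm_k}t^q=\frac{\big(1+Vt+(\Lambda^2V)t^2\big)\big(1-(\Lambda^2V)^{\tens k}t^{2k}\big)}{1-(\Lambda^2V)\,t^2}. \]

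Dividing this by $\Lambda_t(V)=1+Vt+(\Lambda^2V)t^2$ collapses the generating function of the invariants we want to
\[ \sum_q\big[\Lambda^q(V\tens\rho_k)\big]^{\perm_k}t^q=\frac{1-(\Lambda^2V)^{\tens k}t^{2k}}{1-(\Lambda^2V)\,t^2}=\sum_{h=0}^{k-1}(\Lambda^2V)^{\tens h}\,t^{2h}. \]
Reading off the coefficient of $t^q$ gives precisely the assertion: the invariants vanish for $q$ odd, and for $q=2h$ they are $(\Lambda^2V)^{\tens h}\simeq S_{h,h}V$, which is one-dimensional since $\dim V=2$; the range $0\le h\le k-1$ is exactly where $\Lambda^q(V\tens\rho_k)$ is nonzero.

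I expect the main obstacle to be the sign bookkeeping in the middle step: it is precisely the contrast between the \emph{symmetric} behaviour of the even-degree factors $\Lambda^2V$ and the \emph{antisymmetric} behaviour of the odd-degree factors $V$ that simultaneously produces the parity constraint ($q$ even) and the exterior-power cutoff at $b=2$. A secondary point requiring care is the finite constraint $a+b\le k$ in the orbit count, which truncates the geometric series and must be kept track of so that the division by $\Lambda_t(V)$ remains legitimate; since the truncation affects only the degrees $q>2(k-1)$, where $\Lambda^q(V\tens\rho_k)$ already vanishes, it does not interfere with the stated conclusion.
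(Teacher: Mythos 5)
Your proposal is correct, but it takes a genuinely different route from the paper's proof. The paper argues in two separate steps: for the parity constraint it uses the $\perm_k$-isomorphism $V\tens\rho_k\simeq\rho_k\oplus\rho_k$ and computes the character inner product $\sum_i\langle\chi_{\Lambda^i\rho_k},\chi_{\Lambda^{q-i}\rho_k}\rangle$, invoking the fact that the exterior powers $\Lambda^i\rho_k$ are pairwise non-isomorphic irreducible $\perm_k$-representations; for the identification of the $GL(V)$-structure it then invokes the Cauchy-type decomposition $\Lambda^q(V\tens\rho_k)\simeq\bigoplus_\lambda S^{\lambda}V\tens S^{\lambda'}\rho_k$ together with the dimension formula for Schur functors to force the unique contributing partition to be $(h,h)$. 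You instead compute the invariants of $\Lambda^q(V\tens R_k)\simeq\Lambda^q(V^{\oplus k})$ directly, applying Danila's lemma \ref{Danila} orbit by orbit with the Koszul signs (even-degree factors $\Lambda^2V$ permuted symmetrically, odd-degree factors $V$ permuted with sign, whence $(\Lambda^2V)^{\tens a}\tens\Lambda^b V$ per orbit and the cutoff $b\le 2$), and then recover $[\Lambda^q(V\tens\rho_k)]^{\perm_k}$ by cancelling $\Lambda_t(V)$ from the factorization $\Lambda_t(V\tens R_k)=\Lambda_t(V)\,\Lambda_t(V\tens\rho_k)$. Your route is more elementary and self-contained (no Cauchy formula, no irreducibility of $\Lambda^i\rho_k$, no hook-length computation), proves parity and identification in one stroke, yields the sharp range $0\le h\le k-1$ explicitly, and is very much in the spirit of the sign-twisted invariant computations the paper carries out in sections \ref{sect:inv} and \ref{sect:ext}; what the paper's argument buys is brevity, given the representation-theoretic background it cites. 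The one point you should make explicit is why the division is legitimate as a statement about representations rather than formal characters: $\Lambda_t(V)$ has constant term $1$, hence is a unit in $R(GL(V))[[t]]$, and $R(GL(V))$ is an integral domain, so cancellation holds in the representation ring; since finite-dimensional algebraic representations of $GL(V)$ are semisimple, equality of classes there implies isomorphism of $GL(V)$-representations. This is routine and does not affect the validity of your argument.
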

\begin{proof}
  The dimension of the space of invariants is 
        given by the scalar product 
	\begin{equation}\label{scalar} \dim \Lambda^q(V \tens \rho_k)^{\perm_k} = \langle \chi_{\Lambda^q(V \tens \rho_k)}, \chi_1 \rangle  \end{equation}between the character of the representation $\Lambda^q(V \tens \rho_k)$ and the character of the trivial one. Now
we have: $$
\Lambda^q( V \tens \rho_k) \simeq \Lambda^q(\rho_k \oplus \rho_k) = 
\bigoplus_{i=1}^q \Lambda^i \rho_k \tens \Lambda^{q-i}\rho_k \; ;
$$ hence the scalar product (\ref{scalar}) becomes:
\begin{equation*}
\dim \Lambda^q(V \tens \rho_k)^{\perm_k}  =
        \sum_{i=1}^q \langle \chi_{\Lambda^i \rho_k} , \chi_{\Lambda^{q-i} \rho_k} \rangle  
\;.\end{equation*}
If $i \neq q-i$, then $\Lambda^i \rho_k$, $\Lambda^{q-i} \rho_k$ are different irreducible representations of $\perm_k$ and $\langle \chi_{\Lambda^i \rho_k} , \chi_{\Lambda^{q-i} \rho_k} \rangle=0$. In the case
$ i=q-i$, which yields $q=2i$, we have $\langle \chi_{\Lambda^q(V \tens \rho_k)}, \chi_1 \rangle =1$. This proves the first sentence.

Consider now the isomorphism of $GL(V) \times \perm_k$-representations (see~\cite[ex. 6.11]{FultonHarrisRT}): 
	$$ \Lambda^q(V \tens \rho_k) \simeq \bigoplus_{\lambda} 
S^{\lambda}V \tens 
	S^{\lambda^{\prime}}\rho_k $$where $\lambda^{\prime}$ denotes the conjugate partition to $\lambda$ and 
where
the sum is on the partitions  of $q$
	having at most $\dim V$ rows and $\dim \rho_k$ columns. The invariants are: \begin{equation} 
		\label{schur}
		\Lambda^q(V \tens \rho_k)^{ \perm_k}   \simeq \bigoplus_{\lambda} S^{\lambda}V \tens [
	S^{\lambda^{\prime}}\rho_k]^{ \perm_k} \;.\end{equation}It means that if 
	$\dim \Lambda^q(V \tens \rho_k)^{ \perm_k}=1$, then $[S^{\lambda^{\prime}}\rho_k]^{ \perm_k}$ is non zero for only one partition $\lambda_0$ with at most $2$ rows and $k-1$ columns, and in this case both $S^{\lambda_0}V$ and $[S^{\lambda_0^{\prime}}\rho_k]^{ \perm_k}$ have to be of dimension $1$. Now the dimension of a Schur functor is given 
by the formula (see~\cite[theorem 6.3]{FultonHarrisRT}): 
	$$ \dim S^{\lambda}V = \prod_{1 \leq i<j \leq k} \frac{\lambda_i - \lambda_j +j-i}{j-i} \;.$$Imposing that $\dim S^{\lambda_0}V =1$ forces
	$\lambda_i = \lambda_j$ for all $i$ and $j$. Since $\lambda_0$ can have at most $2$ rows, this means that $\lambda_0$ has to be of the form $\lambda_0=h+h$, for $h \in \mbb{N}$. Now the Schur functor $S^{h,h}V$ embeds in $(\Lambda^2V)^{\tens h}$ (see~\cite[problem 6.15]{FultonHarrisRT}), but the two vector spaces have the same dimension, hence the result. 
\end{proof}
\begin{lemmaa}\label{invaperm} 
\begin{enumerate}
\item Let $u,v$ be a basis of $V$. Consider the
  $\perm_k$-invariant bivector
$$ \omega = \sum_{i=1}^k ue_i \wedge v e_i \in \Lambda^2( V
\tens R_k)$$and let $\omega^l \in \Lambda^{2l}( V
\tens R_k )$ its $l$-th exterior power. Consider the projection: $\pi^{R_k}: R_k
\rTo \rho_k$. If $1 \leq l \leq k-1$, the image $\omega ^{R_k}_{l}$ of
 $\omega^l / (l+1)!$ in $\Lambda^{2l}( V \tens \rho_k)$ is
nonzero.
\item Let $i \in \{ 1, \dots k \}$, and $G_i = \rm{Stab}_{\perm_k} \{ i
  \}$.
  The
  projection $$\phi_i :V \tens \rho_k \rTo V \tens 
\rho_{k-1}(i) $$induced from the projection from $\pi_i: R_k \rTo R_{k-1}(i)$, is
  $G_i$-equivariant and for $1 \leq l \leq k-2$ the image of
  $\omega^{R_k}_l$ for the projection $\Lambda^{2l}\phi_i$ is exactly
  $\omega^{R_{k-1}(i)}_l$. 
\item The $\perm_k$-equivariant map 
$ \Lambda^q(V \tens 
\rho_k) \rTo \bigoplus_{i=1}^k \Lambda^q(V \tens \rho_{k-1}(i)) $, induced by the natural projections $\rho_{k} \rTo \rho_{k-1}(i)$, gives an isomorphism between the $\perm_k$-invariants.
\item The $\perm_k$-equivariant map 
$ \bigoplus_{i=1}^k \Lambda^q(V \tens \rho_{k-1}(i)) \rTo  \Lambda^q(V \tens 
\rho_k)$, induced by the natural inclusions $\rho_{k-1}(i) \rTo \rho_{k}$, gives an isomorphism between the $\perm_k$-invariants.
\end{enumerate}\end{lemmaa}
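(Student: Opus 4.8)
The plan is to treat $\omega$ as a linear symplectic form on $V\tens R_k$ and to reduce all four assertions to symplectic linear algebra together with the one-dimensionality of the invariants supplied by lemma \ref{cinquediciotto}. Write $f_j:=\pi^{R_k}(e_j)=e_j-\tfrac1k\sigma_k\in\rho_k$, so that $\sum_j f_j=0$, and set $\bar\omega:=\Lambda^2\pi^{R_k}(\omega)=\sum_{j=1}^k uf_j\wedge vf_j$ and $\bar\eta_i:=uf_i\wedge vf_i$. Since $\Lambda^{\bullet}\pi^{R_k}$ is an algebra homomorphism, $\omega^{R_k}_l=\bar\omega^l/(l+1)!$ is a fixed nonzero multiple of $\bar\omega^l$, so part (1) reduces to $\bar\omega^l\neq0$. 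To prove this I would regard $\omega$ as the nondegenerate form on $V\tens R_k$ with $\omega(ue_i,ve_j)=\delta_{ij}$ and $\omega(ue_i,ue_j)=\omega(ve_i,ve_j)=0$. The trivial summand $V\tens 1=\langle u\sigma_k,v\sigma_k\rangle$ is itself nondegenerate because $\omega(u\sigma_k,v\sigma_k)=k\neq0$, and a one-line computation identifies its $\omega$-orthogonal complement with the pairs $uw+vw'$ having $w,w'$ of vanishing coordinate sum, that is, with $V\tens\rho_k$. Hence $V\tens R_k=(V\tens 1)\oplus(V\tens\rho_k)$ is an $\omega$-orthogonal decomposition into symplectic subspaces, $\bar\omega$ is precisely the restriction of $\omega$ to the $2(k-1)$-dimensional space $V\tens\rho_k$, and this restriction is nondegenerate; therefore $\bar\omega^l\neq0$ for all $1\le l\le k-1$, which is part (1).

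For part (2) I would compute $\phi_i$ on the vectors $f_j$. Writing $R_{k-1}(i)=R_k/\langle e_i\rangle$ with $\bar e_j$ the image of $e_j$, and $g_j:=\pi^{R_{k-1}(i)}(\bar e_j)$, one gets $\phi_i(f_j)=g_j$ for $j\neq i$ and, crucially, $\phi_i(f_i)=0$; the latter holds because $\pi_i$ annihilates $e_i$ while $\pi^{R_{k-1}(i)}$ annihilates $\sigma_{k-1}(i)$. Consequently $\Lambda^2\phi_i(\bar\omega)=\sum_{j\neq i}ug_j\wedge vg_j=\bar\omega^{(i)}$ \emph{exactly}, where $\bar\omega^{(i)}:=\Lambda^2\pi^{R_{k-1}(i)}(\sum_{j\neq i}ue_j\wedge ve_j)$. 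Raising to the $l$-th power and dividing by the same $(l+1)!$ on both sides gives $\Lambda^{2l}\phi_i(\omega^{R_k}_l)=\omega^{R_{k-1}(i)}_l$, which is part (2).

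For parts (3) and (4) I would first note, via lemma \ref{cinquediciotto}, that all the invariant spaces in sight vanish for $q$ odd and are at most one-dimensional for $q=2l$; thus any $\perm_k$-equivariant map between them acts as a scalar on invariants, and it suffices to prove that scalar is nonzero in the range where both sides are nonzero. By Danila's lemma \ref{Danila} the $\perm_k$-invariant of $\bigoplus_i\Lambda^{2l}(V\tens\rho_{k-1}(i))$ is the symmetrization $\sum_i\omega^{R_{k-1}(i)}_l$ of the $\Stab_{\perm_k}\{i\}$-invariant generator, while the relevant invariant of $\Lambda^{2l}(V\tens\rho_k)$ is $\omega^{R_k}_l\neq0$ by part (1). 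For the projection map of (3), part (2) shows $\omega^{R_k}_l\mapsto(\omega^{R_{k-1}(i)}_l)_i$, i.e.\ generator to generator, so the map is an isomorphism on invariants for $l\le k-2$. For the inclusion map of (4) I would use $\iota_i(g_j)=f_j+\tfrac1{k-1}f_i$ together with $\sum_{j\neq i}f_j=-f_i$ to obtain the exact identity $\Lambda^2\iota_i(\bar\omega^{(i)})=\bar\omega-\tfrac{k}{k-1}\bar\eta_i$; then, since $\bar\eta_i^2=0$ and $\sum_i\bar\eta_i=\bar\omega$, a short expansion yields $\sum_i\Lambda^{2l}\iota_i((\bar\omega^{(i)})^l)=\tfrac{k(k-1-l)}{k-1}\,\bar\omega^l$, which is nonzero precisely for $l\le k-2$, establishing part (4).

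The main obstacle is the bookkeeping in part (4): one has to produce the sharp expression $\bar\omega-\tfrac{k}{k-1}\bar\eta_i$ for the inclusion image (a mere proportionality does not suffice, since the resulting scalar must actually be computed) and then carry out the summation over $i$, and one must keep careful track of the degenerate top degree $l=k-1$, where the source of (4) and the target of (3) collapse, so that the isomorphism statements hold only in the restricted range $l\le k-2$ (outside even $q$ both sides vanish by lemma \ref{cinquediciotto}). By contrast, the symplectic reduction underlying part (1) and the clean vanishing $\phi_i(f_i)=0$ underlying part (2) are comparatively routine once the orthogonal-complement identification is in place.
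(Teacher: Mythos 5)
Your proof is correct, and where it matters it takes a genuinely different route from the paper's. Parts (2) and (3) coincide in substance with the paper's argument: the commutative square relating $\pi_i$, $\pi^{R_k}$, $\pi^{R_{k-1}(i)}$, then one-dimensionality of the invariant spaces via lemma \ref{cinquediciotto} and Danila's lemma \ref{Danila}, and ``generator maps to generator''. For part (1) the paper instead reduces to the top case $l=k-1$ and expands $\omega^{k-1}=(k-1)!\sum_i \widehat{ue_i\w ve_i}$, observing that every summand projects to one and the same nonzero volume element of $\Lambda^{2k-2}(V\tens\rho_k)$; your orthogonal-splitting argument (the decomposition $V\tens R_k=(V\tens 1)\oplus(V\tens\rho_k)$ is $\omega$-orthogonal, so $\bar\omega$ is the nondegenerate restriction of $\omega$) is more conceptual and yields all $1\le l\le k-1$ at once. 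The sharpest divergence is part (4): the paper dispatches it in one line, ``it follows from point 3 by taking duals'', an argument that tacitly uses the self-duality of $\rho_k$ and $\rho_{k-1}(i)$ together with Schur's lemma to identify the adjoint of the projection $\rho_k\to\rho_{k-1}(i)$ with a nonzero multiple of the natural inclusion; you instead compute directly $\Lambda^2\iota_i(\bar\omega^{(i)})=\bar\omega-\tfrac{k}{k-1}\bar\eta_i$ and hence $\sum_i\Lambda^{2l}\iota_i\bigl((\bar\omega^{(i)})^l\bigr)=\tfrac{k(k-1-l)}{k-1}\,\bar\omega^l$, both of which I have checked. What your computation buys is the explicit constant, whose vanishing at $l=k-1$ exposes the boundary case you rightly flag: at $q=2(k-1)$ the target of (3) and the source of (4) vanish while $\Lambda^{2(k-1)}(V\tens\rho_k)^{\perm_k}$ is one-dimensional, so the isomorphism statements of (3) and (4) hold only for $q$ odd or $q=2l$ with $l\le k-2$ --- a restriction that the paper's statement and proof (which assume both invariant spaces are one-dimensional) pass over silently, and which is harmless because the lemma is applied, in proposition \ref{pps: diffinv}, only in that range.
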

\begin{proof}1. We can restrict ourselves to the case $l=k-1$. We have: 
\begin{equation*} 
\frac{\omega^{k-1}}{k!} =  \frac{1}{k!} \left( \sum_{i=1}^k ue_i \w ve_i
\right)^{k-1} 
  =  \frac{1}{k!} (k-1)! \sum_{i=1}^{k} \widehat{ ue_i \w ve_i}
\end{equation*}where $ \widehat{ ue_i \w ve_i}$ indicates $$ \widehat{
  ue_i \w ve_i} = ue_1 \w
ve_1 \w \dots \w ue_{i-1} \w ve_{i-1} \w ue_{i+1} \w ve_{i+1} \w \dots \w
ue_k \w ve_k \;.$$We now remark that the projection 
$ \Lambda^{2k-2}(\id \tens \pi) ( \widehat{ ue_i \w ve_i})$
is nonzero and always the same for
every~$i$, since $\Lambda^{2k-2}(V \tens \rho_k)$ is the trivial $\perm_k$-representation; therefore the projection of $\omega^{k-1} / k!$ on
$\Lambda^{2k-2}(V \tens \rho_k)$ is $\Lambda^{2k-2}(\id
\tens \pi)( \widehat{ ue_1 \w ve_1})$, that is, a volume element in
$\Lambda^{2k-2}(V \tens \rho_k)$. 

2. The statement is evident once remarked that the commutative
diagram:
\begin{diagram}
\Lambda ^{2l}(V \tens R_k) & \rTo ^{\Lambda ^{2l} \pi_i} & \Lambda
^{2l} (V \tens R_{k-1}(i)) \\
\dTo ^{ \Lambda ^{2l} \pi ^{R_k}} & & \dTo _{\Lambda ^{2l} \pi ^{R_{k-1}(i)}} \\
\Lambda ^{2l} (V \tens \rho_k) & \rTo ^{\Lambda ^{2l} \phi_i} & 
\Lambda ^{2l} (V \tens \rho_{k-1}(i)) 
\end{diagram}is $G_i$-equivariant for $l \leq k-2$, since the diagram:
\begin{diagram}
V \tens R_k & \rTo ^{\pi_i} & V \tens R_{k-1}(i) \\
\dTo ^{\pi ^{R_k}} & & \dTo _{\pi ^{R_{k-1}(i)}} \\
V \tens \rho_k & \rTo ^{\phi_i} & V \tens \rho_{k-1}(i) 
\end{diagram}is $G$-equivariant. 

3. The $\perm_k$--invariants are both $1$-dimensional vector spaces, by Danila's lemma \ref{Danila} and by lemma \ref{cinquediciotto}; therefore it suffices to prove that 
the map between invariants: 
$$ \Lambda^q(V \tens \rho_k)^{\perm_k} \rTo \left[ 
\bigoplus_{i=1}^k \Lambda^q(V \tens \rho_{k-1}(i)) \right ]^{\perm_k}$$is non zero.  This reduces to the following easy consequence of points 1 and 2. Consider the $\perm_{k}$-equivariant diagram: 
\begin{diagram}
\Lambda ^{2l}(V \tens R_k) & \rTo ^{\oplus_i \Lambda ^{2l} \pi_i} & \oplus_{i=1}^{k} 
\Lambda
^{2l} (V \tens R_{k-1}(i)) \\
\dTo ^{ \Lambda ^{2l} \pi ^{R_k}} & \rdTo 
^{\Lambda ^{2l} \psi} & \dTo _{\oplus_i \Lambda ^{2l} \pi ^{R_{k-1}(i)}} \\
\Lambda ^{2l} (V \tens \rho_{k}) & \rTo ^{\oplus_i \Lambda ^{2l} \phi_i} &  \oplus_{i=1}^{k}
\Lambda ^{2l} (V
 \tens \rho_{k-1}(i)) 
\end{diagram}
What we want to prove 
is equivalent
to proving that the morphism $\Lambda ^{2l} \psi : \oplus_i (\Lambda^{2l} 
(\pi^{R_{k-1}(i)} \circ \pi_i ))$ above induces a nonzero morphism
between the vector spaces of invariants:
$$ \Lambda ^{2l}(V \tens R_k) ^{\perm_{k}} \rTo^{ (\Lambda ^{2l} \psi)^{\perm_{k}} }\left [  
\oplus_{i=1}^{k}
\Lambda ^{2l} (V \tens \rho_{k-1}(i)) 
\right]^{\perm_{k}} \; .$$Let us take the $\perm_{k}$-invariant
element $\omega ^l \in \Lambda ^{2l}(V \tens R_k)$ considered in
point 1. We know that the images $\omega_{i,l} = {\rm pr}_i
\circ \Lambda ^{2l} \psi (\omega^l)$ are nonzero. Therefore $\Lambda
^{2l} \psi (\omega^l) = (\omega_{i,l})_i$ 
is a nonzero element; since $\omega^l$ is
$\perm_{k}$-invariant and $\Lambda ^{2l} \psi$ is $\perm_{k}$-equivariant, the element
$\Lambda
^{2l} \psi (\omega^l)$ is necessarily $\perm_{k}$-invariant. Since we proved
that it is nonzero, we are done. 

4. It follows immediately from point 3 by taking duals.
\end{proof}
\begin{crla}\label{crl: invator}If
  $\codim Y =2$ the $\perm_l$-equivariant sheaf 
$\Tor_q^l(L_Y)$ has non zero
$\perm_l$-invariants if and only if $q=2h$, $0 \leq h \leq l-1$: in this case:
$$ \Tor_{2h}^l(L_Y)^{\perm_l} \simeq (\Lambda^2 N^*_{Y/X})^{\tens ^h} \tens L_Y^{\tens ^l} \;.$$ 
\end{crla}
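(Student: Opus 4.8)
If $\codim Y =2$, the $\perm_l$-equivariant sheaf $\Tor_q^l(L_Y)$ has nonzero $\perm_l$-invariants iff $q=2h$, $0\leq h\leq l-1$, and in that case $\Tor_{2h}^l(L_Y)^{\perm_l}\simeq(\Lambda^2 N^*_{Y/X})^{\tens h}\tens L_Y^{\tens l}$.

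Let me think about how I would prove this.

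The starting point is Lemma \ref{torinv}, which already gives the $\perm_l$-equivariant identification $\Tor^l_q(L_Y)\simeq\Lambda^q(N^*_{Y/X}\tens\rho_l)\tens L_Y^{\tens l}$, where $\perm_l$ acts on $\rho_l$ via the standard representation and permutes the factors of $L_Y^{\tens l}$. The codimension hypothesis $\codim Y=2$ means the conormal bundle $N^*_{Y/X}$ is a rank-$2$ bundle, so fiberwise it's exactly the $2$-dimensional space $V$ appearing in Lemma \ref{cinquediciotto}. So the whole corollary should follow by feeding this geometric setup into the purely representation-theoretic Lemma \ref{cinquediciotto}.

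So the plan is straightforward. First I would invoke Lemma \ref{torinv} to rewrite $\Tor^l_q(L_Y)^{\perm_l}\simeq\bigl(\Lambda^q(N^*_{Y/X}\tens\rho_l)\tens L_Y^{\tens l}\bigr)^{\perm_l}$. Since $\perm_l$ acts trivially on the line bundle $L_Y$ (it does not touch the support $Y$, it only permutes the tensor factors, and $L_Y^{\tens l}$ as a sheaf has a canonical symmetric structure making the permutation action trivial on it once we identify all factors with $L_Y$), the $\perm_l$-invariants factor as $\bigl(\Lambda^q(N^*_{Y/X}\tens\rho_l)\bigr)^{\perm_l}\tens L_Y^{\tens l}$. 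Here I should be a little careful: the action permutes the $l$ factors of $L_Y^{\tens l}$, but each factor is the \emph{same} line bundle $L_Y$, so the permutation acts as the identity on $L_Y^{\tens l}$ and the invariants pull out the $L_Y^{\tens l}$ cleanly. This is the one point where I'd want to state precisely why the line-bundle factor does not contribute a sign.

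Then I would apply Lemma \ref{cinquediciotto} fiberwise, with $V=N^*_{Y/X}$ (rank $2$ by the codimension hypothesis) and $k=l$. That lemma says $\Lambda^q(V\tens\rho_l)$ has nonzero $\perm_l$-invariants iff $q=2h$ is even, and in that case the invariants are isomorphic as a $GL(V)$-representation to $S_{h,h}V\simeq(\Lambda^2 V)^{\tens h}$. Substituting $V=N^*_{Y/X}$ gives $(\Lambda^2 N^*_{Y/X})^{\tens h}$, and combining with the $L_Y^{\tens l}$ factor yields the stated formula. The range constraint $0\leq h\leq l-1$ comes directly from Lemma \ref{cinquediciotto}: the partition $\lambda_0=(h,h)$ must fit inside a diagram with at most $\dim V=2$ rows and $\dim\rho_l=l-1$ columns, forcing $h\leq l-1$; and $h\geq 0$ is trivial. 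The only genuinely nontrivial input is Lemma \ref{cinquediciotto}, which is already established, so the main obstacle here is merely bookkeeping: checking that the fiberwise $GL(V)$-isomorphism globalizes to a bundle isomorphism (which it does, since all constructions are functorial in $V$ and $N^*_{Y/X}$ is locally trivial), and confirming the triviality of the $\perm_l$-action on the $L_Y^{\tens l}$ factor. In short, this corollary is a direct geometric specialization of Lemmas \ref{torinv} and \ref{cinquediciotto}, and I'd expect the proof to be only a few lines.
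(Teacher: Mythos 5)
Your proposal is correct and follows exactly the route the paper intends for this corollary: combine the $\perm_l$-equivariant identification of Lemma \ref{torinv} with the representation-theoretic Lemma \ref{cinquediciotto} applied to $V=N^*_{Y/X}$ (rank $2$ by the codimension hypothesis), using that the permutation action on the tensor power of a line bundle is trivial so the $L_Y^{\tens l}$ factor pulls out of the invariants, and reading off the bound $h\leq l-1$ from the requirement that the partition $(h,h)$ fit in $\dim\rho_l=l-1$ columns (equivalently, from $\Lambda^q(V\tens\rho_l)=0$ for $q>2(l-1)$). Your attention to the two bookkeeping points (triviality of the action on $L_Y^{\tens l}$ and functorial globalization of the fiberwise isomorphism) matches what the paper leaves implicit.
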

\subsection{Action on a mixed multitor.} We pass now to the general case of permutation of factors in a mixed multitor. 
\begin{remarka}\label{C8}
Let $S_0, \dots, S_h$ a partition of $\{1, \dots,l\}$ and let $L$, $E_i$, $i=1, \dots,h$ locally free sheaves on the algebraic variety $X$.
By lemma \ref{torinv}, the mixed multitor $\Tor_q^{S_0, \dots, S_h}(L_Y, E_1, \dots, E_h)$, 
can be \emph{naturally identified}
to:
$$ i(S_0, \dots, S_h): \Tor_q^{S_0, \dots, S_h}(L_Y, E_1, \dots, E_h) \rTo^{\simeq}   
\Lambda^q(N_{Y/X}^* \tens \rho_{|S_0|}) \tens L_Y^{\tens ^{|S_0|}} \tens  \bigotimes_{i=1}^h
E_i ^{|S_i|} \;.$$With natural identification we precisely mean the following.
As in notation \ref{not: torpart}, indicate with 
$F_j := L_Y$ if $j \in S_0$ and $F_j := E_i$ if $j \in S_i$. We can work locally.
Denote with $\comp{K}_j$ a locally free resolution of the sheaves $F_j$, 
for all $j=1, \dots, l$; if $j \in S_0$ we can take $\comp{K}_j$ to be 
the Koszul 
resolution $\comp{K}_Y \tens L$ of $L_Y$, otherwise, for $j \in S_i$, $i \neq 0$, take 
$\comp{K}_j$ to be the complex $0 \rTo E_i \rTo 0$. In these notations the multitor 
$\Tor_{q}^{S_0, \dots, S_h}(L_Y, E_1, \dots, E_h)$ is \emph{by definition} 
$H^{-q}(\tens_{j =1}^l \comp{K}_j)$. Let now $\theta \in \perm_l$ the unique 
permutation of $\{1, \dots,l\}$ such that
$\theta(S_i) < \theta(S_j)$, if $i<j$, and
$\theta\trest_{S_i} : S_i \rTo \theta(S_i)$ is increasing. 
The identification $i(S_0, \dots, S_h)$ is then given by the composition: 
\begin{align*} H^{-q}(\Tens_{j =1}^l \comp{K}_j ) \rTo^{H^{-q}(\theta)}_{\simeq} & H^{-q}( 
\Tens_{j=1}^l \comp{K}_{\theta^{-1}(j)}) 
\simeq  \; H^{-q}(\Tens_{i =0}^h \Tens_{j \in S_i} \comp{K}_i)
\simeq H^{-q}(\Tens_{i \in S_0} \comp{K}_{i} \tens \Tens_{i=1}^h E^{\tens|S_i|}_i) 
\\ \simeq & \; 
\Lambda^q(N^*_{Y/X} \tens \rho_{|S_0|}) \tens L_Y^{\tens^{|S_0|}}\tens
\Tens_{i=1}^h E^{\tens|S_i|}_i \;,
\end{align*}since the functor $- \tens \Tens_{i=1}^h E^{\tens |S_i|}_i$ is exact, because $E_i$ are locally free, and by lemma \ref{torinv}.
\end{remarka}
The following lemma is now immediate. 
\begin{lemmaa}\label{permsign}
Let $\tau \in \perm_l$. For $i=0, \dots, h$, let $\sigma_{i}(\tau)$ be the 
unique increasing bijection 
$\sigma_i(\tau) : \tau(S_i) \rTo S_i$;
let $\beta_i(\tau)$ be the permutation $\beta_i(\tau):= \sigma_i(\tau) \circ \tau \trest_{S_i}$ of $S_i$, seen in $\perm_{|S_i|}$.
Let moreover $\alpha(\tau)$ be the automorphism of 
$\Lambda^q(N^*_{Y/X} \tens \rho_{|S_0|})$ given by the action of $\beta_0(\tau)$. 
Then the 
 following diagram commutes: 
\begin{diagram}
 \Tor_q^{S_0, \dots, S_h }(L_Y,E_1, \dots, E_h) & \rTo ^{i(S_0, \dots, S_h)} & \Lambda^q(N^*_{Y/X} \tens \rho_{|S_0|})  \tens L_Y^{\tens |S_0|} \tens 
\bigotimes_{i=1}^h E_i^{\tens |S_i|} \\
  \dTo ^{\tilde{\tau}} & & \dTo_{\alpha(\tau) \tens \beta_0(\tau) \tens 
\Tens_{i=1}^h \beta_i(\tau)} \\
\Tor_q^{\tau(S_0), \dots, \tau(S_h) }(L_Y,E_1, \dots, E_h) 
 & \rTo^{i(\tau(S_0), \dots, \tau(S_h))}  & \Lambda^q(N^*_{Y/X} \tens \rho_{|S_0|})  \tens L_Y^{\tens |S_0|} \tens 
 \bigotimes_{i=1}^h E_i^{\tens |S_i|} \;.
 \end{diagram}where $\beta_0(\tau)$, $\beta_i(\tau)$ act 
on $L_{Y}^{\tens {|S_0|}}$, $E_i^{\tens |S_i|}$, respectively, by permutation of factors.
\end{lemmaa}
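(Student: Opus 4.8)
The plan is to observe that \emph{every} arrow in the diagram is, at the level of tensor products of resolutions, a permutation-of-factors map carrying Koszul signs, and that such maps compose as a strict action of the symmetric group; the lemma then collapses to a purely combinatorial identification of one composite permutation. Fix, as in Remark \ref{C8}, locally free resolutions $\comp{K}_j$ of the factors $F_j$ (the Koszul resolution $\comp{K}_Y \tens L$ when $j \in S_0$, and $0 \rTo E_i \rTo 0$ when $j \in S_i$, $i \geq 1$), so that by definition $\Tor_q^{S_0, \dots, S_h}(L_Y, E_1, \dots, E_h) = H^{-q}(\comp{K}_1 \tens \cdots \tens \comp{K}_l)$. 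First I would recall from Remark \ref{rmka:auto} that the Koszul-signed permutation of factors makes $\tau \mapsto \tilde{\tau}$ a \emph{strict} (sign-corrected) action of $\perm_l$ on this tensor product, so that $\widetilde{\tau_1 \tau_2} = \tilde{\tau}_1 \tilde{\tau}_2$; in particular $\tilde{\tau}$ carries the source complex to $\comp{K}_{\tau^{-1}(1)} \tens \cdots \tens \comp{K}_{\tau^{-1}(l)}$, whose cohomology is the target multitor with partition $\tau(S_0), \dots, \tau(S_h)$.

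Next I would record that, by the construction in Remark \ref{C8}, the identification $i(S_0, \dots, S_h)$ is the composite of the sorting map $\widetilde{\theta}$ (with $\theta$ sending each $S_i$ increasingly onto the contiguous block $B_i$) followed by the isomorphism $\Theta$ of Lemma \ref{torinv} applied to the sorted $S_0$-block $(\comp{K}_Y \tens L)^{\tens |S_0|}$; likewise $i(\tau(S_0), \dots, \tau(S_h)) = \Theta \circ \widetilde{\theta'}$, where $\theta'$ sorts the target partition, sending $\tau(S_i)$ increasingly onto the same block $B_i$. Since the two sorted complexes share the same $S_0$-block in position $B_0$, the identification $\Theta$ is literally the same in both cases. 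Using strictness of the action, the composite $i(\tau(S_0), \dots, \tau(S_h)) \circ \tilde{\tau} \circ i(S_0, \dots, S_h)^{-1}$ equals $\Theta \circ (\widetilde{\theta'}\, \tilde{\tau}\, \widetilde{\theta}^{-1}) \circ \Theta^{-1} = \Theta \circ \tilde{\pi} \circ \Theta^{-1}$, where $\pi := \theta' \circ \tau \circ \theta^{-1}$. This reduces the whole statement to understanding $\tilde{\pi}$ after the identification $\Theta$ of its $S_0$-block.

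The combinatorial core is the claim that $\pi$ preserves each block $B_i$, with $\pi\trest_{B_i}$ equal, under the increasing identification $S_i \simeq B_i \simeq \{1,\dots,|S_i|\}$, to $\beta_i(\tau)$. Indeed $\theta^{-1}(B_i) = S_i$, so $\pi\trest_{B_i} = \theta'\trest_{\tau(S_i)} \circ \tau\trest_{S_i} \circ (\theta\trest_{S_i})^{-1}$; and because $\theta'\trest_{\tau(S_i)}$ and $\theta\trest_{S_i} \circ \sigma_i(\tau)$ are both increasing bijections $\tau(S_i) \rTo B_i$, uniqueness forces $\theta'\trest_{\tau(S_i)} = \theta\trest_{S_i} \circ \sigma_i(\tau)$, whence $\pi\trest_{B_i} = \theta\trest_{S_i} \circ \beta_i(\tau) \circ (\theta\trest_{S_i})^{-1}$, i.e. $\pi\trest_{B_i} = \beta_i(\tau)$ after the increasing identification. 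Since $\pi$ fixes each block setwise and the blocks are contiguous, there are no inversions between distinct blocks, so $\tilde{\pi}$ has no cross-block Koszul signs and splits as the tensor product over $i$ of the within-block maps: on each $E_i$-block (a single cohomological degree) it is the bare factor permutation $\beta_i(\tau)$ of $E_i^{\tens |S_i|}$, while on the $S_0$-block it is $\widetilde{\beta_0(\tau)}$ on $(\comp{K}_Y \tens L)^{\tens |S_0|}$, which by the $\perm_{|S_0|}$-equivariance of Lemma \ref{torinv} conjugates under $\Theta$ to $\alpha(\tau) \tens \beta_0(\tau)$ on $\Lambda^q(N^*_{Y/X} \tens \rho_{|S_0|}) \tens L_Y^{\tens |S_0|}$. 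Assembling the blocks gives exactly $\alpha(\tau) \tens \beta_0(\tau) \tens \Tens_{i=1}^h \beta_i(\tau)$, which is the asserted commutativity.

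The hard part will be the sign bookkeeping. I would need to verify that the Koszul signs of Remark \ref{rmka:auto} really do promote $\tau \mapsto \tilde{\tau}$ to a strict action (so that $\widetilde{\theta'}\,\tilde{\tau}\,\widetilde{\theta}^{-1} = \tilde{\pi}$ with no leftover sign), and that $\pi$'s preservation of contiguous blocks kills every cross-block sign; the residual within-$S_0$-block signs are then absorbed entirely into the equivariant isomorphism of Lemma \ref{torinv}, which is precisely why no global sign survives in the statement. Modulo this verification the conclusion is, as the surrounding text says, immediate.
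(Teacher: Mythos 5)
Your proposal is correct and is essentially the paper's own argument: the paper gives no proof at all, declaring the lemma ``now immediate'' after Remark \ref{C8}, and what that means is precisely your computation --- factor $i(S_0,\dots,S_h)$ as the sorting map $\widetilde{\theta}$ followed by the identification $\Theta$ of Lemma \ref{torinv}, observe that $\pi=\theta'\circ\tau\circ\theta^{-1}$ preserves the contiguous blocks with $\pi$ restricted to each block equal to $\beta_i(\tau)$ by uniqueness of increasing bijections, and absorb the within-$S_0$-block Koszul signs into the $\perm_{|S_0|}$-equivariance of Lemma \ref{torinv}. The one point you flag as unverified, the strictness of the signed permutation action $\tau\mapsto\tilde{\tau}$, is just the standard coherence of the Koszul sign rule (the symmetric monoidal structure on complexes), which the paper also uses implicitly, so it is not a genuine gap.
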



\providecommand{\bysame}{\leavevmode\hbox to3em{\hrulefill}\thinspace}
\providecommand{\MR}{\relax\ifhmode\unskip\space\fi }
\providecommand{\MRhref}[2]{%
  \href{http://www.ams.org/mathscinet-getitem?mr=#1}{#2}
}
\providecommand{\href}[2]{#2}

\vspace{1cm}
\noindent
{Luca Scala \\
Department of Mathematics \\
University of Chicago\\
5734 S. University Avenue \\
Chicago, IL 60637, USA \\
{\em Email Address:} {\tt lucascala@math.uchicago.edu}
}

\end{document}